\numberwithin{equation}{section} 
\theoremstyle{plain}
\newtheorem{theorem}[subsection]{Theorem}
\newtheorem{proposition}[subsection]{Proposition}
\newtheorem{lemma}[subsection]{Lemma}
\newtheorem{corollary}[subsection]{Corollary}
\newtheorem{definition}[subsection]{Definition}
\newtheorem*{mainthm3-repeat}{Theorem \ref{mainthm3}}
\newtheorem*{nontriv-source-rpt}{Proposition \ref{nontriv-source}}
\newtheorem*{sum-product-rpt}{Theorem \ref{sum-product-fp}}
\newtheorem*{mainthm-repeat}{Theorem \ref{mainthm2}}
\newtheorem*{virtually-solvable-repeat}{Lemma \ref{virtually-solvable}}
\renewcommand{\leq}{\leqslant}
\renewcommand{\geq}{\geqslant}
\newcommand{\md}[1]{\ensuremath{(\operatorname{mod}\, #1)}}
\newcommand\E{\mathbb{E}}
\newcommand\Var{\operatorname{Var}}
\newcommand\Z{\mathbb{Z}}
\newcommand\R{\mathbb{R}}
\newcommand\C{\mathbb{C}}
\newcommand\N{\mathbb{N}}
\newcommand\X{\mathbf{X}}
\newcommand\A{\mathbb{A}}
\newcommand\TV{{\operatorname{TV}}}
\newcommand\vol{\operatorname{vol}}
\newcommand\Lip{{\operatorname{Lip}}}
\newcommand\waste{\operatorname{waste}}
\newcommand\Err{\operatorname{Err}}
\newcommand\Energy{\operatorname{Energy}}
\newcommand\BQ{\operatorname{BQ}}
\newcommand\VBQ{\operatorname{VBQ}}
\renewcommand\P{\mathbb{P}}
\newcommand\F{\mathbb{F}}
\renewcommand\b{{\bf b}}
\newcommand\p{\mathfrak{p}}
\newcommand\n{\mathbf{n}}
\newcommand\m{\mathbf{m}}
\newcommand\Nb{\mathbf{N}}
\newcommand\h{\mathbf{h}}
\newcommand\I{\mathbf{I}}
\renewcommand\k{\mathbf{k}}
\renewcommand\l{\mathbf{l}}
\renewcommand\a{\mathbf{a}}
\renewcommand\A{\mathbf{A}}
\renewcommand\r{\mathbf{r}}
\renewcommand\c{\mathbf{c}}
\newcommand\f{\mathbf{f}}
\newcommand\x{\mathbf{x}}
\renewcommand\F{\mathbf{F}}
\newcommand\lambdab{{\boldsymbol\lambda}}
\newcommand\Xib{{\boldsymbol\Xi}}
\begin{document}

\title[A new bound for $r_4(N)$]{New bounds for Szemer\'edi's theorem, III: A polylogarithmic bound for $r_4(N)$}

\author{Ben Green}
\address{Mathematical Institute, Andrew Wiles Building, Radcliffe Observatory Quarter, Woodstock Rd, Oxford OX2 6GG.}
\email{ben.green@maths.ox.ac.uk}

\author{Terence Tao}
\address{Department of Mathematics, UCLA, Los Angeles CA 90095-1555, USA.
}
\email{tao@math.ucla.edu}

\begin{abstract} 
Define $r_4(N)$ to be the largest cardinality of a set $A \subset \{1,\dots,N\}$ which does not contain four elements in arithmetic progression. In 1998 Gowers proved that
\[ r_4(N) \ll N(\log \log N)^{-c}\] for some absolute constant $c>0$. In 2005, the authors improved this to
\[ r_4(N) \ll N e^{-c\sqrt{\log\log N}}.\]
In this paper we further improve this to \[ r_4(N) \ll N(\log N)^{-c},\] which appears to be the limit of our methods.  
\end{abstract}

\maketitle

\setcounter{tocdepth}{1}
\tableofcontents

\section{Introduction}

Let $N \geq 100$ be a natural number (so that $\log \log N$ is positive).  If $k \geq 3$ is a natural number we define $r_k(N)$ to be the largest cardinality of a set $A \subset [N] \coloneqq  \{1,\dots,N\}$ which does not contain an arithmetic progression of $k$ distinct elements.  

Klaus Roth proved in 1953 \cite{roth} that $r_3(N) \ll N(\log \log N)^{-1}$, and so in particular\footnote{See Section \ref{notation-sec} for the asymptotic notation used in this paper.} $r_3(N) = o(N)$ as $N \to \infty$.
Since Szemer\'edi's 1969 proof  \cite{szemeredi-4} that $r_4(N) = o(N)$, and his later proof \cite{szemeredi} that $r_k(N) = o_k(N)$ for $k \geq 5$ (answering a question from \cite{erdos-turan}), it has been natural to ask for similarly effective bounds for these quantities.  It is worth noting that the famous conjecture of Erd\H{o}s \cite{erdos} asserting that every set of natural numbers whose sum of reciprocals is divergent is equivalent to the claim that $\sum_{n=1}^\infty \frac{r_k(2^n)}{2^n} < \infty$ for all $k \geq 3$ (see \cite[Exercise 10.0.6]{taovu-book}).  

A first attempt towards quantitative bounds for higher $k$ was made by Roth in \cite{roth-4}, who provided a new proof that $r_4(N) = o(N)$. A major breakthrough was made in 1998 by Gowers \cite{gowers-4-aps,gowers-long-aps}, who obtained the bound $r_k(N) \ll_k N(\log \log N)^{-\epsilon_k}$ for each $k \geq 4$, where $\epsilon_k \coloneqq 1/2^{2^{k+9}}$.   In the other direction, a classical result of Behrend \cite{behrend} shows that $r_3(N) \gg N \exp( - c \sqrt{\log N} )$ for some absolute constant $c>0$ (see \cite{elkin,gw} for a slight refinement of this bound), and in \cite{rankin} (see also \cite{laba}) the argument was generalised to give the bound $r_{1+2^k}(N) \gg_k N \exp( - c \log^{1/(k+1)} N)$ for any $k \geq 1$.

In the meantime, there has been progress on $r_3(N)$. Szemer\'edi (unpublished) obtained the bound $r_3(N) \ll Ne^{-c\sqrt{\log \log N}}$, and shortly thereafter Heath-Brown \cite{heath} and Szemer\'edi \cite{szem-3ap} independently obtained the bound $r_3(N) \ll N(\log N)^{-c}$ for some absolute constant $c > 0$. The best known value of $c$ has been improved in a series of papers \cite{bloom,bourgain, Bou-2, Sanders,Sanders-2}. Sanders \cite{Sanders-2} was the first to show that any $c < 1$ is admissible, and Bloom \cite{bloom} improved the factor of $\log \log N$ in Sanders's bound.

The only other direct progress on upper bounds for $r_k(N)$ is our previous paper \cite{gtz2}, obtaining the bound $r_4(N) \ll Ne^{-c\sqrt{\log \log N}}$. The main objective of this paper is to obtain a bound for $r_4(N)$ of the same quality as the Heath-Brown and Szemer\'edi bound for $r_3(N)$. 

\begin{theorem}\label{main}  We have $r_4(N) \ll  N(\log N)^{-c}$ for some absolute constant $c > 0$.
\end{theorem}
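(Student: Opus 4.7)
The plan is to implement the density-increment strategy at the quadratic level, mirroring the way Heath-Brown and Szemer\'edi deduce a polylogarithmic bound for $r_3(N)$ from an inverse $U^2$ theorem (i.e.\ from large Fourier coefficients). Suppose $A \subset [N]$ is $4$-AP free with density $\alpha$. A standard generalised von Neumann inequality shows that the count of $4$-APs in $A$ differs from the random prediction $\alpha^4 N^2$ by an amount controlled by $\|1_A - \alpha\|_{U^3[N]}$; since $A$ contains no nontrivial $4$-APs and the count of trivial $4$-APs is $O(N)$, this Gowers $U^3$ norm must be large, of order $\alpha^{O(1)}$.

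\textbf{Inverse theorem and density increment.} Next, one invokes an inverse theorem for the $U^3$ norm: a bounded function with large $U^3$ norm must correlate with a quadratic object, for instance a $2$-step nilsequence of bounded complexity, or equivalently a generalised quadratic phase $n \mapsto e(q(n))$. The key quantitative demand, if one wants a polylogarithmic bound on $r_4$, is that this inverse theorem be \emph{polynomial} in $1/\alpha$, both in the correlation it produces and in the complexity of the quadratic object. One then has to convert this global correlation on $[N]$ into a linear density increment on a long subprogression (or generalised arithmetic progression). Morally, one pigeonholes over level sets of the quadratic, or equivalently restricts to a subprogression on which the quadratic phase is close to a linear character; the target is a subprogression $P$ of size $N^{\Omega(1)}$ on which the density of $A$ jumps from $\alpha$ to $\alpha(1+c)$ for some absolute $c > 0$.

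\textbf{Iteration and main obstacle.} A standard iteration of such increments terminates after $O(\log(1/\alpha))$ steps, at which point $\alpha$ would exceed $1$; provided the ambient size shrinks only from $N$ to $N^{1 - o(1)}$ per step, this forces $\alpha \ll (\log N)^{-c}$. The chief obstacle, and the principal improvement required beyond \cite{gtz2}, is obtaining the $U^3$ inverse theorem and the ensuing linearisation of quadratic phases with polynomial (rather than quasi-polynomial) dependence on $1/\alpha$. The quasi-polynomial losses in earlier work compound under iteration into the weaker $\exp(-c\sqrt{\log\log N})$ bound, because the density increment becomes too small relative to the shrinkage of the ambient progression. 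Achieving polynomial control likely demands refined quantitative equidistribution results on $2$-step nilmanifolds, and an efficient way to manipulate bracket-quadratic phases on subprogressions, perhaps by working with higher-order Bohr-type objects to minimise the dimension cost of each step and to keep the cumulative iteration losses manageable.
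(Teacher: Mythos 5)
Your proposal takes the density-increment route, which is not what the paper does—the paper explicitly abandons density increment in favour of an ``energy decrement'' / arithmetic-regularity approach, proving a quantitative Khintchine-type recurrence theorem (Theorem \ref{khint}): one constructs \emph{coupled} random variables $\a,\r$ on $\Z/p\Z$ (deliberately not independent, and $\a$ not uniformly distributed) such that $\Lambda_{\a,\r}(f) \geq (\E f(\a))^4 - O(\eta)$ while $\P(\r = 0) \ll \exp(-\eta^{-O(1)})/p$. Applied to $f = 1_A$ for a $4$-AP-free set $A$, this immediately forces the density to be small. The positive lower bound comes not from an inverse theorem but from Lemma \ref{cauchy}, a Cauchy-Schwarz positivity statement for the form $F(x)F(y)F(z)F(x-3y+3z)$ on a compact abelian group, and the iteration is controlled by the boundedness of the energy $\E|f(\a) - \f(\a)|^2$ of the approximant, not by the density crossing $1$.

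There are also substantive gaps in your sketch as written. First, the ``key quantitative demand'' you identify—a global inverse $U^3$ theorem over $\Z/N\Z$ or $[N]$ with polynomial dependence on $1/\alpha$ in both correlation and complexity—is not available; no such theorem is known, and the paper works around this by proving only a \emph{local} inverse $U^3$ theorem (Theorem \ref{locu3}) on Bohr sets, where polynomial dependence on the correlation is achievable precisely because the complexity cost is absorbed into the rank of the Bohr set rather than appearing in the exponent. Second, the step ``convert the quadratic correlation into a density increment $\alpha \to \alpha(1+c)$ for absolute $c>0$ on a subprogression of size $N^{\Omega(1)}$'' asserts a multiplicative gain far larger than what pigeonholing over level sets of a quadratic phase yields; the classical linearisation argument produces an additive increment of order $\alpha^{O(1)}$ on a much shorter progression, and even Heath-Brown/Szemer\'edi-type aggregation over many Fourier modes in the $k=3$ case does not transfer straightforwardly to the quadratic setting. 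Without that strong increment, the iteration arithmetic you describe does not close to give a polylogarithmic bound. The energy-decrement scheme avoids both problems: each step either decreases energy by $\eta^{C_2}$ (via the local inverse theorem) or decreases a ``poorly distributed quadratic dimension'' counter (via Theorem \ref{bad-dim}), so termination comes from nonnegativity of a potential function rather than from a delicate balance of density gain against size loss.
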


An analogous result in finite fields was claimed (and published \cite{green-tao-szem}) by us around twelve years ago, although an error in this paper came to light some years later.  This was corrected around 5 years ago in \cite{green-tao-szem-fix}. These papers (like almost all of the previously cited quantitative results on $r_k(N)$) are based on the density increment argument of Roth \cite{roth}.  However we will use a slightly different ``energy decrement'' and ``regularity'' approach here, inspired by the Khintchine-type recurrence theorems for length four progressions established by Bergelson-Host-Kra \cite{bergelson-host-kra} in the ergodic setting, and by the authors \cite{green-reg} in the combinatorial setting.\vspace{11pt}

\emph{Acknowledgments.} The first author is supported by a Simons Investigator grant. The second author is supported by a Simons Investigator grant, the James and Carol Collins Chair, the Mathematical Analysis \& Application Research Fund Endowment, and by NSF grant DMS-1266164.  Part of this paper was written while the authors were in residence at MSRI in Spring 2017, which is supported by NSF grant DMS-1440140.  

We are indebted to the anonymous referee for helpful corrections and suggestions.  Finally, we would like to thank any readers interested in the result of this paper for their patience. Most of the argument was worked out by us in 2005, and the result was claimed in \cite{gtz2}, dedicated to Roth's 80th birthday. Whilst a complete, though not very readable, version has been available on request since around 2012, it has taken us until now to create a potentially publishable manuscript. 

\section{Notation}\label{notation-sec}

We use the asymptotic notation $X \ll Y$ or $X = O(Y)$ to denote $|X| \leq C Y$ for some constant $C$.  Given an asymptotic parameter $N$ going to infinity, we use $X = o(Y)$ to denote the bound $|X| \leq c(N) Y$ for some function $c(N)$ of $N$ that goes to zero as $N$ goes to infinity.  We also write $X \asymp Y$ for $X \ll Y \ll X$.  If we need the implied constant $C$ or decay function $c()$ to depend on an additional parameter, we indicate this by subscripts, e.g. $X = o_k(Y)$ denotes the bound $|X| \leq c_k(N) Y$ for a function $c_k(N)$ that goes to zero as $N \to \infty$ for any fixed choice of $k$.

We will frequently use probabilistic notation, and adopt the convention that boldface variables such as $\a$ or $\r$ represent random variables, whereas non-boldface variables such as $a$ and $r$ represent deterministic variables (or constants).
We write $\P(E)$ for the probability of a random event $E$, and $\E \X$ and $\Var \X$ for the expectation and variance of a real or complex random variable $\X$; we also use $\E(\X|E) = \frac{\E \X 1_E}{\P(E)}$ for the conditional expectation of $\X$ relative to an event $E$ of non-zero probability, where of course $1_E$ denotes the indicator variable of $E$.  In this paper, the random variables $\X$ of which we will compute expectations of will be discrete, in the sense that they take only finitely many values, so there will be no issues of measurability.  The \emph{essential range} of a discrete random variable $\X$ is the set of all values $X$ for which $\P(\X=X)$ is non-zero.

By a slight abuse of notation, we also retain the traditional (in additive combinatorics) use for $\E$ as an average, thus $\E_{a \in A} f(a) \coloneqq \frac{1}{|A|} \sum_{a \in A} f(a)$ for any finite non-empty set $A$ and function $f: A \to \C$, where we use $|A|$ to denote the cardinality of $A$.  Thus for instance $\E_{a \in A} f(a) = \E f(\a)$ if $\a$ is drawn uniformly at random from $A$.

A function $f: A \to \C$ is said to be \emph{$1$-bounded} if one has $|f(a)| \leq 1$ for all $a \in A$.  We will frequently rely on the following probabilistic form of the Cauchy-Schwarz inequality, the proof of which is an exercise.

\begin{lemma}[Cauchy-Schwarz]\label{cauchy-schwarz}  Let $A, B$ be sets, let $f: A \to \C$ be a $1$-bounded function, and let $g: A \times B \to \C$ be another function.  Let $\a, \b, \b'$ be discrete random variables in $A,B,B'$ respectively, such that $\b'$ is a conditionally independent copy of $\b$ relative to $\a$, that is to say that
$$ \P( \b = b, \b' = b' | \a = a ) = \P( \b=b | \a = a) \P( \b = b' | \a = a )$$
for all $a$ in the essential range of $\a$ and all $b,b' \in B$.  Then we have
\begin{equation}\label{fg}
 |\E f(\a) g(\a,\b)|^2 \leq \E g(\a,\b) \overline{g(\a,\b')}.
\end{equation}
\end{lemma}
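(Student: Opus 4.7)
The plan is to reduce the claim to two standard ingredients: the ordinary Cauchy--Schwarz inequality applied to the outer expectation over $\a$, and the ``tensoring'' identity that the square of a conditional expectation equals the expectation of a product over two independent copies. I would begin by writing the left-hand side as a nested expectation, factoring out $f(\a)$ from the inner average: since $f$ depends only on $\a$, one has
\[
\E f(\a) g(\a,\b) \;=\; \E\bigl[f(\a)\,\E(g(\a,\b)\mid \a)\bigr].
\]

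Next I would apply Cauchy--Schwarz to this outer expectation, pairing $f(\a)$ against $\E(g(\a,\b)\mid\a)$. Using $|f(a)|\leq 1$ we get $\E |f(\a)|^2 \leq 1$, so
\[
|\E f(\a) g(\a,\b)|^2 \;\leq\; \E\,\bigl|\E(g(\a,\b)\mid \a)\bigr|^2.
\]
At this point, the key step is to recognize the right-hand side as a two-copy expectation. For each fixed value $a$ in the essential range of $\a$, the conditional independence hypothesis gives
\[
|\E(g(\a,\b)\mid \a=a)|^2 \;=\; \E(g(a,\b)\mid\a=a)\,\overline{\E(g(a,\b')\mid\a=a)} \;=\; \E\bigl(g(a,\b)\overline{g(a,\b')}\bigm|\a=a\bigr),
\]
where the final equality is precisely the factorization of the joint conditional distribution of $(\b,\b')$ given $\a$ stated in the hypothesis. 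Averaging this identity over $\a$ via the tower property yields
\[
\E\,\bigl|\E(g(\a,\b)\mid\a)\bigr|^2 \;=\; \E\,g(\a,\b)\overline{g(\a,\b')},
\]
and combining with the previous display gives \eqref{fg}.

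There is no substantive obstacle: the only points that require care are (i) using that $f$ depends only on $\a$ so that it can be pulled outside the inner conditional expectation, and (ii) invoking the conditional independence of $\b$ and $\b'$ over $\a$ at the exact moment one wants to turn $|\E(\cdot\mid\a)|^2$ into a product over two copies. One could equivalently prove it with no conditioning at all by directly expanding the square $|\E f(\a)g(\a,\b)|^2 = \E f(\a)\overline{f(\a')}g(\a,\b)\overline{g(\a',\b')}$ and then using $|f|\leq 1$ after an application of Cauchy--Schwarz, but the conditional form is cleaner and aligns better with how the lemma will be used in later sections.
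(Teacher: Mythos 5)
Your main argument is correct. The paper itself does not supply a proof of this lemma (it is explicitly left as an exercise), so there is nothing to compare against; the route you take — tower property to pull out the conditional expectation $\E(g(\a,\b)\mid\a)$, ordinary Cauchy--Schwarz on the outer average using $|f|\leq 1$, and then the conditional-independence identity $\lvert\E(g(\a,\b)\mid\a)\rvert^2 = \E\bigl(g(\a,\b)\overline{g(\a,\b')}\bigm|\a\bigr)$ followed by the tower property again — is the natural one and is exactly what the hypothesis $\P(\b=b,\b'=b'\mid\a=a)=\P(\b=b\mid\a=a)\P(\b=b'\mid\a=a)$ is built to deliver. One small point worth making explicit: the right factor there is $\P(\b=b'\mid\a=a)$, not $\P(\b'=b'\mid\a=a)$, so the hypothesis directly encodes that $\b'$ has the same conditional law as $\b$; you implicitly use this when you replace $\overline{\E(g(a,\b)\mid\a=a)}$ by $\overline{\E(g(a,\b')\mid\a=a)}$ before recombining.

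Your closing remark about an equivalent ``no conditioning'' proof is not quite right, however. The expansion $\lvert\E f(\a)g(\a,\b)\rvert^2 = \E f(\a)\overline{f(\a')}g(\a,\b)\overline{g(\a',\b')}$ requires $(\a',\b')$ to be a fully independent copy of $(\a,\b)$, with an independent $\a'$ in particular; but the right-hand side of \eqref{fg} has the \emph{same} $\a$ in both $g$-factors, and there is no inequality passing from the tensor-power expression to $\E g(\a,\b)\overline{g(\a,\b')}$. The conditioning on $\a$ is essential and cannot be stripped out in the way you suggest. This does not affect the validity of your main proof, which stands on its own.
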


We will think of this lemma as allowing one to eliminate a factor $f(\a)$ from a lower bound of the form $|\E f(\a) g(\a,\b)| \geq \eta$, at the cost of duplicating the factor $g$, and worsening the lower bound from $\eta$ to $\eta^2$.


We also have the following variant of Lemma \ref{cauchy-schwarz}:

\begin{lemma}[Popularity principle]\label{popular}  Let $\a$ be a random variable taking values in a set $A$, and let $f: A \to [-C,C]$ be a function for some $C>0$.  If we have $\E f(\a) \geq \eta$ for some $\eta>0$ then, with probability at least $\frac{\eta}{2C}$, the random variable $\a$ attains a value $a \in A$ for which $f(a) \geq \frac{\eta}{2}$.
\end{lemma}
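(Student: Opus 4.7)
My plan is to prove this by a straightforward splitting of the expectation according to whether $f(\a) \geq \eta/2$ or not. Let $E$ denote the event that $f(\a) \geq \eta/2$, and write
\[ \E f(\a) = \E f(\a) 1_E + \E f(\a) 1_{E^c}. \]
On $E^c$ we have $f(\a) < \eta/2$, so the second term is bounded above by $\frac{\eta}{2}\P(E^c) \leq \frac{\eta}{2}$. On $E$ we use the crude bound $f(\a) \leq C$ to get $\E f(\a) 1_E \leq C \P(E)$. Combining and using the hypothesis $\E f(\a) \geq \eta$ yields
\[ \eta \leq C \P(E) + \frac{\eta}{2}, \]
which rearranges to $\P(E) \geq \frac{\eta}{2C}$, precisely the claim.

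There is essentially no obstacle here: the only subtlety is to remember to use the two-sided bound $f(a) \in [-C,C]$ when estimating the contribution from $E$ (since a priori $f$ could be negative somewhere, but this doesn't matter for the upper bound on $E$), and to use the sharper bound $f(a) < \eta/2$ on $E^c$ rather than the trivial bound $|f(a)| \leq C$, as otherwise one would obtain no nontrivial lower bound on $\P(E)$. The factor $2$ in both $\eta/2$ and $\eta/(2C)$ comes from this splitting, and the constant is not optimized (a splitting at $\theta \eta$ for any $\theta \in (0,1)$ would work, giving probability at least $(1-\theta)\eta/C$; the choice $\theta = 1/2$ is simply a convenient one).
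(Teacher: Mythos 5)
Your proof is correct and essentially identical to the paper's: both arguments split according to whether $f(\a) \geq \eta/2$, bound the "small" part by $\eta/2$ and the "large" part by $C\P(E)$, and rearrange. The paper phrases this via the single pointwise inequality $f(\a) \leq \frac{\eta}{2} + C 1_{\a \in \Omega}$ before taking expectations, whereas you split the expectation into two terms explicitly, but these are the same computation.
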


\begin{proof}  If we set $\Omega \coloneqq \{ a \in A: f(a) \geq \eta/2 \}$, then
$$ f(\a) \leq \frac{\eta}{2} + C 1_{\a \in \Omega}$$
and hence on taking expectations
$$ \E f(\a) \leq \frac{\eta}{2} + C \P( \a \in \Omega ).$$
This implies that
$$ \P( \a \in \Omega) \geq \eta/2C$$
giving the claim.
\end{proof}

If $\theta \in \R$, we write $\| \theta\|_{\R/\Z}$ for the distance from $\theta$ to the nearest integer, and $e(\theta)= e^{2\pi i \theta}$.  Observe from elementary trigonometry that
\begin{equation}\label{bilipschitz}
|e(\theta)-1| = 2 |\sin(\pi \theta)| \asymp \| \theta \|_{\R/\Z}
\end{equation}
and hence also
\begin{equation}\label{cos-bound}
1 - \cos(2\pi \theta) = 2 |\sin(\pi \theta)|^2 \asymp \| \theta \|_{\R/\Z}^2.
\end{equation}
We will also use the triangle inequalities
\begin{equation}\label{rz-tri}
\| \theta_1 + \theta_2 \|_{\R/\Z} \leq \| \theta_1 \|_{\R/\Z} + \| \theta_2 \|_{\R/\Z}; \quad \| k \theta \|_{\R/\Z} \leq |k| \|\theta \|_{\R/\Z}
\end{equation}
for $\theta_1,\theta_2 \in \R/\Z$ and $k \in \Z$ frequently in the sequel, often without further comment.

For any prime $p$, we (by slight abuse of notation) let $a \mapsto \frac{a}{p}$ be the obvious homomorphism from $\Z/p\Z$ to $\R/\Z$ that maps $a \md{p}$ to $\frac{a}{p}\md{1}$ for any integer $a$.  We then define $e_p: \Z/p\Z \to \C$ to be the character
$$ e_p(a) \coloneqq e\left( \frac{a}{p} \right) = e^{2\pi i a/p}$$
of $\Z/p\Z$.

\section{High-level overview of argument}\label{overview-sec}

We will establish Theorem \ref{main} by establishing the following result, related to the Khintchine-type recurrence theorems mentioned earlier.  It will be convenient to introduce the notation
$$ \Lambda_{\a,\r}(\f) \coloneqq \E \f(\a) \f(\a+\r) \f(\a+2\r) \f(\a+3\r)$$
whenever $\a,\r$ are random variables on $\Z/p\Z$ and $\f: \Z/p\Z \to [-1,1]$ is a random function; of course, the notation can also be applied to deterministic functions $f: \Z/p\Z \to [-1,1]$.  Later on we will also need the conditional variant
\begin{equation}\label{Lambda-def}
 \Lambda_{\a,\r}(\f|E) \coloneqq \E(\f(\a) \f(\a+\r) \f(\a+2\r) \f(\a+3\r)|E)
\end{equation}
for some events $E$ of non-zero probability.  Informally, this quantity counts the density of arithmetic progressions $\a, \a+\r, \a+2\r, \a+3\r$ on the event $E$ weighted by $\f$, where $\a,\r$ need not be drawn uniformly or independently (and $\f$ may also be coupled to $\a,\r$).

\begin{theorem}\label{khint}
Let $p$ be a prime, let $\eta$ be a real number with $0 < \eta \leq \frac{1}{10}$, and let $f: \Z/p\Z \to [-1,1]$ be a function.  Then there exist random variables $\a,\r \in \Z/p\Z$, not necessarily independent, obeying the near-uniform distribution bound
\begin{equation}\label{eta-1}
\E f(\a) = {\mathbb E}_{x \in \Z/p\Z} f(x) + O(\eta), 
\end{equation}
the recurrence property
\begin{equation}\label{etat}
\Lambda_{\a,\r}(f) \geq (\E f(\a))^4 - O(\eta),
\end{equation}
and the ``thickness'' bound
\begin{equation}\label{r-thick}
\P(\r = 0) \ll \exp( - \eta^{-O(1)} ) / p.
\end{equation}
\end{theorem}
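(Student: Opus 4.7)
The plan is to adapt the Bergelson--Host--Kra Khintchine recurrence theorem for length-four progressions from the ergodic setting to the finite setting of $\Z/p\Z$, using an arithmetic regularity framework along the lines of \cite{green-reg}. The first step is to apply a quadratic (i.e., $U^3$-type) regularity decomposition, writing $f = f_{\mathrm{str}} + f_{\mathrm{sml}} + f_{\mathrm{unif}}$, where $f_{\mathrm{str}}$ is a $1$-bounded degree-$2$ structured function of controlled complexity (built, say, from linear phases $e_p(\alpha_i x)$ and quadratic phases $e_p(\beta_j x^2)$ for $O(\eta^{-O(1)})$ frequencies, possibly restricted to a linear Bohr set), $f_{\mathrm{sml}}$ has $L^2$ norm $O(\eta)$, and $f_{\mathrm{unif}}$ has $U^3$ norm at most $\eta^{C}$ for some large $C$. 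The parameters are tuned so that $\E_x f_{\mathrm{str}}(x) = \E_x f(x) + O(\eta)$.

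The second step is to invoke the generalized von Neumann inequality, which controls the multilinear form $\Lambda_{\a,\r}$ by the $U^3$ norm of any of its four inputs, provided $(\a,\r)$ is sufficiently close to uniform. Combined with a direct Cauchy-Schwarz estimate (via Lemma \ref{cauchy-schwarz}) to absorb $f_{\mathrm{sml}}$, this reduces the problem to establishing $\Lambda_{\a,\r}(f_{\mathrm{str}}) \geq (\E f_{\mathrm{str}}(\a))^4 - O(\eta)$ for some carefully chosen joint distribution on $(\a,\r)$.

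For the structured part, the main idea is to choose $\r$ from a ``quadratic Bohr set'' $B \subseteq \Z/p\Z \setminus \{0\}$ tailored to the frequencies appearing in $f_{\mathrm{str}}$: informally, $r \in B$ precisely when all the relevant linear and quadratic phases evaluated at $r$, $2r$, $3r$ have small fractional parts. With $\a$ drawn uniformly on $\Z/p\Z$ and $\r$ drawn from a suitable measure supported on $B$, the four quantities $f_{\mathrm{str}}(\a+j\r)$ for $j=0,1,2,3$ should be approximately independent with marginals close to the distribution of $f_{\mathrm{str}}(\a)$, yielding $\Lambda_{\a,\r}(f_{\mathrm{str}}) \approx (\E f_{\mathrm{str}}(\a))^4$. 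This is the finitary counterpart of the characteristic factor structure identified by Host--Kra. The near-uniform distribution \eqref{eta-1} is then immediate from $\a$ being uniform, while \eqref{r-thick} follows by excluding $0$ from the support of $\r$ (and using that $B$ is large enough that such an exclusion has negligible cost).

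The hard part will be to make this pipeline quantitatively efficient enough to yield the polynomial dependence $\eta^{-O(1)}$ inside the exponential in \eqref{r-thick}, since naive iterations of the $U^3$ inverse theorem only produce tower-type bounds. I expect the authors use a tailored decomposition that exploits the specific multilinear structure of the 4-AP count, perhaps via the dual function of $\Lambda_{\a,\r}$, to sidestep the full strength of the quadratic inverse theorem and instead work directly with quadratic phases on Bohr sets whose dimension grows only polynomially in $1/\eta$. Simultaneously threading the three constraints \eqref{eta-1}, \eqref{etat}, \eqref{r-thick} through this framework, while correctly coupling $\a$ and $\r$ (they must jointly witness the recurrence while $\r$ remains distributionally ``thick''), is the principal technical obstacle.
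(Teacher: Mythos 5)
The critical flaw is in the third step, where you claim that with $\r$ drawn from a ``quadratic Bohr set'' the four quantities $f_{\mathrm{str}}(\a+j\r)$, $j=0,1,2,3$, become ``approximately independent with marginals close to the distribution of $f_{\mathrm{str}}(\a)$.'' They never do. Any locally quadratic map $Q$ satisfies the algebraic identity $Q(\a)-3Q(\a+\r)+3Q(\a+2\r)-Q(\a+3\r)=0$, so the tuple $(Q(\a),Q(\a+\r),Q(\a+2\r),Q(\a+3\r))$ is confined to a coset of the kernel of $(x,y,z,w)\mapsto x-3y+3z-w$ in $G^4$; it cannot equidistribute on the full product. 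The paper emphasises this point in Example~1 of Section~\ref{overview-sec}, and the crucial lower bound \eqref{etat} is extracted from exactly this \emph{constraint}, not from independence: the structured count becomes $\int_G\int_G\int_G F(x)F(y)F(z)F(x-3y+3z)\,d\mu\,d\mu\,d\mu$, and Lemma~\ref{cauchy} (a Cauchy--Schwarz identity in the variable $w=x-3y$) then yields the one-sided bound $\geq (\int_G F\,d\mu)^4$. An approximate-independence heuristic would only produce $\Lambda_{\a,\r}(f_{\mathrm{str}}) \approx (\E f_{\mathrm{str}}(\a))^4$ with an error of either sign, so it would not deliver the recurrence inequality you need. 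Your proposal never invokes the $x-3y+3z-w=0$ relation or anything playing the role of Lemma~\ref{cauchy}, and without it the argument collapses.

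Beyond this, the mechanism you describe for choosing $\r$ (a quadratic Bohr set that freezes all phases along the progression) is not what the paper does and creates its own problems: freezing $e(\alpha(\a+j\r)^2)$ to match $e(\alpha\a^2)$ requires controlling the $\a$-dependent linear term $2j\alpha\a\r$ together with a genuine quadratic constraint $\alpha\r^2\approx 0$, and this latter set is not a Bohr set, so the thickness estimate \eqref{r-thick} would not come out with the required $\exp(-\eta^{-O(1)})/p$ shape. In the paper, $\r$ is drawn regularly from an ordinary \emph{linear} Bohr set $B(S_c,\exp(-\eta^{-C_4})\rho_c)$ once one conditions on a random cell-label $\c$, and the coupling between $\a$ and $\r$ occurs only through $\c$; the structured part is then handled by equidistribution within the cell plus the constraint and Lemma~\ref{cauchy}. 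Finally, the overall organisation is not a single regularity decomposition $f=f_{\mathrm{str}}+f_{\mathrm{sml}}+f_{\mathrm{unif}}$: the paper runs an iterative scheme (Proposition~\ref{main-abstract}) alternating between an energy decrement (driven by a local inverse $U^3$ theorem when the approximant fails to track $f$, as in Theorem~\ref{bad-ed}) and a dimension decrement on poorly distributed cells (Theorem~\ref{bad-dim}). You correctly identify that achieving polynomial dependence $\eta^{-O(1)}$ in the exponent is the quantitative crux, and it is indeed the local inverse $U^3$ theorem (Theorem~\ref{locu3}) that makes this possible, but the positivity itself rests on Lemma~\ref{cauchy}, which your proposal does not supply.
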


We note that a variant of Theorem \ref{khint} was established by us in \cite{green-reg} (answering a question in \cite{bergelson-host-kra}), in which the random variable $\a$ was uniformly distributed in $\Z/p\Z$, the random variable $\r$ was uniformly distributed in a subset of $\Z/p\Z$ of size $\gg_\eta p$ and was independent of $\a$, and the condition \eqref{r-thick} (which is crucial to the quantitative bound in Theorem \ref{main}) was not present.  Compared to that result, Theorem \ref{khint} obtains the much more quantitative bound \eqref{r-thick}, but at the expense of no longer enforcing independence between $\a$ and $\r$.  The use of non-independent random variables $\a, \r$ is an innovation of this current paper; it is similar to the technique in previous papers of using ``factors'' (finite partitions) to break up the domain $\Z/p\Z$ into smaller ``atoms'' such as Bohr sets and analysing each atom separately. However there will be technical advantages from the more general framework of pairs of independent random variables $\a,\r$.  In particular we will be able to avoid some of the boundary issues arising from irregularity of Bohr sets, by using the smoother device of ``regular probability distributions'' associated to such sets.  Although $f$ is allowed to attain negative values in Theorem \ref{khint}, in our applications we shall only be concerned with the case when $f$ is non-negative.

Let us now see how Theorem \ref{main} follows from Theorem \ref{khint}.  Clearly we may assume that $N \geq 100$. Suppose that $A$ is a subset of $\{1,\dots,N\}$ without any non-trivial four-term arithmetic progressions.  By Bertrand's postulate, we may find a prime $p$ between (say) $2N$ and $4N$.  If we define $f: \Z/p\Z \to [-1,1]$ to be the indicator function $1_A$ of $A$ (viewed as a subset of $\Z/p\Z$), then we have
\begin{equation}\label{epfa}
{\mathbb E}_{x \in \Z/p\Z} f(x) = \frac{|A|}{p}
\end{equation}
and also
\begin{equation}\label{faa}
f(a) f(a+r) f(a+2r) f(a+3r) = 0
\end{equation}
whenever $a,r \in \Z/p\Z$ with $r$ non-zero.  Now let $\a, \r$ be as in Theorem \ref{khint}, with $\eta$ to be chosen later.  From \eqref{eta-1}, \eqref{etat}, \eqref{epfa} we have
$$ \Lambda_{\a,\r}(f) \geq \left(\frac{|A|}{p}\right)^4 - O(\eta).$$
But by \eqref{faa}, \eqref{r-thick}, the left-hand side is $O( \exp( -\eta^{-O(1)} ) / p )$.  Setting $\eta \coloneqq c \log^{-c} p$ for a sufficiently small absolute constant $c>0$, we conclude that
$$ \left(\frac{|A|}{p}\right)^4 \ll \log^{-c} p$$
and hence $A \ll N \log^{-c/4} N$, giving Theorem \ref{main}.\vspace{11pt}

\emph{Remark.} As mentioned previously, the arguments in \cite{green-reg} established a bound of the form \eqref{etat} with $\a$ and $\r$ independent, and also one could ensure that $\a$ was uniformly distributed over $\Z/p\Z$.  As a consequence, one could establish a variant of Theorem \ref{main}, namely that for any $N \geq 1$, $\eta > 0$, and $A \subset [N]$, one had
$$ \frac{|A \cap (A-r) \cap (A-2r) \cap (A-3r)|}{N} \geq \left(\frac{|A|}{N}\right)^4 - \eta$$
for $\gg_\eta N$ choices of $0 \leq r \leq N$.  Unfortunately our methods do not seem to provide a good bound of this form due to our coupling together of $\a$ and $\r$.\vspace{11pt}

It remains to establish Theorem \ref{khint}.  As in \cite{bergelson-host-kra,green-reg}, the lower bound \eqref{etat} will ultimately come from the following consequence of the Cauchy-Schwarz inequality which counts solutions to the equation $x-3y+3z-w=0$ for $x,y,z,w$ in some subset of a compact abelian group; this inequality is a specific feature of the theory of length four progressions which is not available for longer progressions\footnote{For longer progressions, the relevant constraints coming from nilpotent algebra are significantly more complicated than a single linear equation; see \cite{ziegler}.  In any event, the counterexamples in \cite{bergelson-host-kra} indicate that no comparable positivity property with polynomial lower bounds will hold for higher length progressions.}.

\begin{lemma}[Application of Cauchy-Schwarz]\label{cauchy}  Let $G = (G,+)$ be a compact abelian group, let $\mu$ be the probability Haar measure on $G$, and let $F: G \to \R$ be a bounded measurable function.  Then
\[ \int_G \int_G \int_G F(x) F(y) F(z) F(x-3y+3z)\  d\mu(x) d\mu(y) d\mu(z) \geq \left(\int_G F d\mu\right)^4.\]
\end{lemma}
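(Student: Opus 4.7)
My plan is to prove the lemma by Fourier analysis on the compact abelian group $G$, using the orthogonality of the characters in the Pontryagin dual $\hat G$. The linear form $(x,y,z) \mapsto (x,y,z,x-3y+3z)$ has the special feature that the count against $F$ decouples, after Fourier expansion, into a sum of Parseval-type squares; this is the ``complexity one'' feature alluded to in the footnote about the exceptionality of length four.

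First I would expand $F(w) = \sum_{\chi \in \hat G} \hat F(\chi) \chi(w)$, where $\hat F(\chi) \coloneqq \int_G F \overline{\chi}\, d\mu$; since $F$ is real, one has the symmetry $\hat F(\chi^{-1}) = \overline{\hat F(\chi)}$. Inserting this expansion into each of the four factors of the integrand, and using multiplicativity to get $\chi_4(x-3y+3z) = \chi_4(x)\,\chi_4(y)^{-3}\,\chi_4(z)^{3}$, the triple integral becomes
\[
\sum_{\chi_1,\chi_2,\chi_3,\chi_4 \in \hat G} \hat F(\chi_1)\hat F(\chi_2)\hat F(\chi_3)\hat F(\chi_4) \int_G \chi_1\chi_4\,d\mu \int_G \chi_2\chi_4^{-3}\,d\mu \int_G \chi_3\chi_4^{3}\,d\mu.
\]
By orthogonality of characters, each of the three one-variable integrals vanishes unless the character inside is trivial, forcing $\chi_1 = \chi_4^{-1}$, $\chi_2 = \chi_4^{3}$ and $\chi_3 = \chi_4^{-3}$. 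Writing $\chi = \chi_4$ and using the reality of $F$, the whole expression collapses to
\[
\sum_{\chi \in \hat G} \hat F(\chi^{-1}) \hat F(\chi^{3}) \hat F(\chi^{-3}) \hat F(\chi) \;=\; \sum_{\chi \in \hat G} |\hat F(\chi)|^{2}\, |\hat F(\chi^{3})|^{2}.
\]

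This is a sum of non-negative real numbers, so it is bounded below by the single term $\chi = 1$, which equals $|\hat F(1)|^{4} = \left(\int_G F\,d\mu\right)^{4}$. This is precisely the claimed inequality.

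I do not anticipate any serious obstacle; the argument is just Plancherel and orthogonality of characters, and the only care required is in invoking Pontryagin duality for a general compact abelian $G$ rather than finite Fourier analysis, which is entirely standard. As an alternative ``integral'' repackaging, one may observe that $u \mapsto \int_G F(x)F(x-u)\, d\mu(x)$ is positive definite, apply this to the displacement $u = 3y-3z$, and then isolate the zero-frequency contribution to the double integral in $y,z$; this reproduces the same final bound.
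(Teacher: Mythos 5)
Your proof is correct. You expand each copy of $F$ in characters, use $\chi_4(x-3y+3z) = \chi_4(x)\chi_4(y)^{-3}\chi_4(z)^3$ and orthogonality to force $\chi_1 = \chi_4^{-1}$, $\chi_2 = \chi_4^3$, $\chi_3 = \chi_4^{-3}$, and then the reality of $F$ gives the identity
\[
\int_G\!\int_G\!\int_G F(x)F(y)F(z)F(x-3y+3z)\,d\mu^3 \;=\; \sum_{\chi\in\hat G} |\hat F(\chi)|^2\,|\hat F(\chi^3)|^2 ,
\]
which is manifestly $\geq |\hat F(1)|^4 = (\int_G F\,d\mu)^4$. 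The interchange of sum and integral is legitimate for bounded $F$ (approximate by trigonometric polynomials and use that the quadrilinear form is jointly $L^2$-continuous).

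The route you take is genuinely different from the paper's, though the two are close cousins. The paper substitutes $w = x-3y$, uses translation-invariance of $\mu$ and Fubini to write the left-hand side as $\int_G \bigl(\int_G F(w+3y)F(y)\,d\mu(y)\bigr)^2 d\mu(w)$, and then applies Cauchy--Schwarz in $w$ against the constant function, getting $(\int_G\int_G F(w+3y)F(y)\,d\mu\,d\mu)^2 = (\int_G F\,d\mu)^4$. That argument is entirely in the ``time domain'' and needs no Pontryagin duality; it is the version the paper re-uses later in probabilistic/conditional form. Your approach buys an exact Fourier identity for the left-hand side and makes the non-negativity visible term by term, which is aesthetically pleasing and matches the ``complexity one'' heuristic; but it imports the Plancherel apparatus that the paper deliberately avoids at this point. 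Both are standard and complete; in essence your $\sum_\chi |\hat F(\chi)|^2|\hat F(\chi^3)|^2$ is precisely the Plancherel expansion of the paper's $\int_G g(w)^2\,d\mu(w)$ with $g(w) = \int_G F(w+3y)F(y)\,d\mu(y)$.
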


\begin{proof}  Making the change of variables $w = x - 3y$ and using Fubini's theorem, the left-hand side may be rewritten as
$$ \int_G \left(\int_G F(w+3y) F(y)\ d\mu(y)\right)^2\ d\mu(w),$$
which by the Cauchy-Schwarz inequality is at least
$$ \left(\int_G \int_G F(w+3y) F(y)\ d\mu(y)d\mu(w)\right)^2.$$
But by a further application of Fubini's theorem, the expression inside the square is $(\int_G F(x)\ d\mu(x))^2$.  The claim follows.
\end{proof}

To see the relevance of this lemma to Theorem \ref{khint}, and to motivate the strategy of proof of that theorem, let us first test that theorem on some key examples.  To simplify the exposition, our discussion will be somewhat non-rigorous in nature; for instance, we will make liberal use of the non-rigorous symbol $\approx$ without quantifying the nature of the approximation.\vspace{11pt}

\emph{Example 1: a well-distributed pure quadratic factor.}  Let $G$ be the $d$-torus $G = (\R/\Z)^d$ for some bounded $d=O(1)$, and let $F: G \to [-1,1]$ be a smooth function (independent of $p$); for instance, $F$ could be a finite linear combination of characters $\chi: G \to S^1$ of $G$.  Let $\alpha_1,\dots,\alpha_d \in \Z/p\Z$ be ``generic'' frequencies, in the sense that there are no non-trivial linear relations of the form 
\begin{equation}\label{kad}
k_1 \alpha_1 + \dots + k_d \alpha_d = 0
\end{equation}
with $k_1,\dots,k_d=O(1)$ not all equal to zero.  We also introduce some additional frequencies $\beta_1,\dots,\beta_d \in \Z/p\Z$, for which we impose no genericity restrictions.  Let $f: \Z/p\Z \to [-1,1]$ be the function
$$ f(a) \coloneqq F\left( Q(a) \right)$$
where $Q: \Z/p\Z \to G$ is the quadratic polynomial
$$ Q(a) \coloneqq \left( \frac{\alpha_1 a^2 + \beta_1 a}{p}, \dots, \frac{\alpha_d a^2 + \beta_d a}{p} \right),$$
and where we use the obvious division by zero map $a \mapsto \frac{a}{p}$ from $\Z/p\Z$ to $\R/\Z$.  
For any tuples $k = (k_1,\dots,k_d) \in \Z^d \equiv \hat G$ and $\xi = (\xi_1,\dots,\xi_d) \in G$, we define the dot product
$$ k \cdot \xi \coloneqq k_1 \xi_1 + \dots + k_d \xi_d.$$
Because of our genericity hypothesis on the $\alpha_i$, we see from Gauss sum estimates that
$$ {\mathbb E}_{a \in \Z/p\Z} e(k \cdot Q(a)) \approx 0$$
for any bounded tuple $k \in \Z^d$ when $p$ is large.  By the Weyl equidistribution criterion, we thus see that when $p$ is large, the quantity $\frac{\alpha a^2 + \beta a}{p}$ becomes equidistributed in $G$ as $a$ ranges over $\Z/p\Z$.  In particular, as $F$ was assumed to be smooth, we expect to have
$$ \E f(\a) = {\mathbb E}_{a \in \Z/p\Z} f(a) \approx \int_G F(x)\ d\mu(x)$$
if $\a$ is drawn uniformly in $\Z/p\Z$.
Now suppose that $\r$ is also drawn uniformly in $\Z/p\Z$, independently of $\a$.  The tuple 
\begin{equation}\label{tupo}
(Q(\a), Q(\a+\r), Q(\a+2\r), Q(\a+3\r))
\end{equation}
will \emph{not} become equidistributed in $G^4$, because of the elementary algebraic identity
\begin{equation}\label{cons}
 Q(\a) - 3Q(\a+\r) + 3Q(\a+2\r) - Q(\a+3\r) = 0,
\end{equation}
which is a discrete version of the fact that the third derivative of any quadratic polynomial vanishes.  However, this turns out to be the \emph{only} constraint on this tuple in the limit $p \to \infty$.  Indeed, from the genericity hypothesis on the $\alpha_i$, one can verify that the quadratic form
$$ (a,r) \mapsto k_0 \cdot Q_0(a) + k_1 \cdot Q_0(a+r) + k_2 \cdot Q_0(a+2r)  + k_3 \cdot Q_0(a+3r)$$
on $(\Z/p\Z)^2$ for bounded tuples $k_0,k_1,k_2,k_3 \in \Z^d$ vanishes if and only if $(k_0,k_1,k_2,k_3)$ is of the form $(k,-3k, 3k, -k)$ for some tuple $k$, where
$$ Q_0(a) \coloneqq \left( \frac{\alpha_1 a^2}{p}, \dots, \frac{\alpha_d a^2}{p} \right)$$
denotes the purely quadratic component of $Q(a)$.  Using this and a variant of the Weyl equidistribution criterion, one can eventually compute that
$$\Lambda_{\a,\r}(f) \approx \int_G \int_G \int_G F(x) F(y) F(z) F(x-3y+3z)\ d\mu(x) d\mu(y) d\mu(z).$$
Applying Lemma \ref{cauchy}, we conclude (a heuristic version of) Theorem \ref{khint} in this case, taking $\a,\r$ to be independent uniformly distributed variables on $\Z/p\Z$.\vspace{11pt}

\emph{Example 2. A well-distributed impure quadratic factor.} Now we give a ``local'' version of the first example, in which the function $f$ exhibits ``locally quadratic'' behaviour rather than ``globally quadratic'' behaviour.  Let $\eta > 0$ be a small parameter, and suppose that $p$ is very large compared to $\eta$.  We suppose that the cyclic group $\Z/p\Z$ is somehow partitioned into a number $P_1,\dots,P_m$ of arithmetic progressions; the number $m$ of such progressions should be thought of as being moderately large (e.g. $m \sim \exp(1/\eta^{O(1)})$ for some parameter $\eta > 0$).  Consider one such progression, say $P_c = \{ b_c + n s_c: 1 \leq n \leq N_c \}$ for some $b_c,s_c \in \Z/p\Z$ and some $N_c > 0$; one should think of $N_c$ as being reasonably large, e.g. $N_c \gg \exp(-1/\eta^{O(1)}) p$.  To each such progression $P_c$, we associate a torus $G_c = (\R/\Z)^{d_c}$ for some bounded $d_c$ with probability Haar measure $\mu_c$, a smooth function $F_c: G_c \to [-1,1]$, and a collection $\xi_{c,1}, \dots,\xi_{c,d_c} \in \R/\Z$ of frequencies which are generic in the sense that there does not exist any non-trivial relations of the form
\begin{equation}\label{kdj}
 k_1 \xi_{c,1} + \dots + k_{d_c} \xi_{c,d_c} = O\left( \frac{1}{N_c} \right) \md{1}
\end{equation}
for bounded $k_1,\dots,k_{d_c} \in \Z$.  We then define the function $f: \Z/p\Z \to [-1,1]$ by setting
$$ f( b_c + n s_c ) \coloneqq F_c( \xi_{c,d_1} n^2, \dots, \xi_{c,d_c} n^2 ) $$
for $1 \leq c\leq m$ and $1 \leq n \leq N_c$.  One could also add a lower order linear term to the phases $\xi_{c,i} n^2$, as in the preceding example, if desired, but we will not do so here to simplify the exposition slightly.

Within each progression $P_c$, a Weyl equidistribution analysis (using the genericity hypothesis) reveals that the tuple $( \xi_{c,d_1} n^2, \dots, \xi_{c,d_c} n^2 )$ becomes equidistributed in $G_c$ as $p$ becomes large, so that
\begin{equation}\label{epa}
 {\mathbb E}_{a \in P_c} f(a) \approx \int_{G_c} F_c(x)\ d\mu_c(x).
\end{equation}
Now we define the random variables $\a, \r \in \Z/p\Z$ as follows.  We first select a random element $\c$ from $\{1,\dots,m\}$ with $\P( \c = c) = |P_j| / p$ for $c=1,\dots,m$.  Conditioning on the event that $\c$ is equal to $c$, we then select $\a$ uniformly at random from $P_c$, and also select $\r$ uniformly at random from an arithmetic progression of the form 
\begin{equation}\label{rs}
\{ n s_c: |n| \leq \exp(-1/\eta^{-C})N_c \},
\end{equation}
with $\a$ and $\r$ independent after conditioning on $\c = c$.  Note that $\a$ and $\r$ are only \emph{conditionally} independent, relative to the auxiliary variable $\c$; if one does not perform this conditioning, then $\a$ and $\r$ become coupled to each other through their mutual dependence on $\c$.

Without conditioning on $\c$, the random variable $\a$ becomes uniformly distributed on $\Z/p\Z$, thus
$$ \E f(\a) = {\mathbb E}_{a \in \Z/p\Z} f(a).$$
Also, from \eqref{epa} we have the conditional expectation
$$ \E (f(\a) | \c = c ) \approx \int_{G_c} F_c(x)\ d\mu_c(x).$$
A modification of the equidistribution analysis from the first example also gives
\begin{align*}
& \Lambda_{\a,\r}(f  | \c = c ) \\
&\quad \gtrapprox 
\int_{G_c} \int_{G_c} \int_{G_c} F_c(x) F_c(y) F_c(z) F(x-3y+3z)\ d\mu_c(x) d\mu_c(y) d\mu_c(z),
\end{align*}
where the conditional quartic form $\Lambda_{\a,\r}(f  | \c = c )$ was defined in \eqref{Lambda-def},
and hence by Lemma \ref{cauchy} we have
$$ \Lambda_{\a,\r}(f  | \c = c ) \gtrapprox \left(\E(f(\a) | \c = c )\right)^4.$$
Averaging in $c$ (weighted by $\P( \c = c)$) to remove the conditional expectation on the left-hand side, and then applying H\"older's inequality, we obtain a heuristic version of Theorem \ref{khint} in this case.\vspace{11pt}

\emph{Example 3: A poorly distributed pure quadratic factor.} We now return to the situation of the first example, except that we no longer impose the genericity hypothesis, that is to say we allow for a non-trivial relation of the form \eqref{kad}.  Without loss of generality we can take the coefficient $k_d$ of this relation to be non-zero.  Because of this relation, the quantity $Q(\a)$ studied in the first example and the tuple \eqref{tupo} may not necessarily be as equidistributed as before.  However, we can use this irregularity of distribution to modify the representation of $f$ (up to a small error) in such a manner as to reduce the number $d$ of quadratic phases involved.  Namely, we can write
$$ f(a) \coloneqq \tilde F\left( \tilde Q(a), \frac{\gamma a}{p} \right)$$
where
\begin{align*}
\tilde Q(a) &\coloneqq \left( \frac{k_d^{-1} \alpha_1 a^2 + k_d^{-1} \beta_1 a }{p}, \dots, \frac{k_d^{-1} \alpha_{d-1} a^2 + k_d^{-1} \beta_{d-1} a}{p} \right) \\
\gamma &\coloneqq \beta_d + k_1 k_d^{-1} \beta_1 + \dots + k_{d-1} k_d^{-1} \beta_{d-1} \\
\tilde F( x_1,\dots,x_{d-1}, y) & \coloneqq F( k_d x_1, \dots, k_d x_{d-1}, - k_1 x_1 - \dots - k_{d-1} x_{d-1} + y )
\end{align*}
and where we take advantage of the field structure of $\Z/p\Z$ to locate an inverse $k_d^{-1}$ of $k_d$ in this field.  For our quantitative analysis we will run into a technical difficulty with this representation, in that the Lipschitz constant of $\tilde F$ will increase by an undesirable amount compared to that of $F$ when one performs this change of variable, at least if one uses the standard metric on the torus.  To fix this, we will eventually have to work with more general tori $\prod_{i=1}^d \R/\lambda_i \Z$ than the standard torus $(\R/\Z)^d$, but we ignore this issue for now to continue with the heuristic discussion.

To remove the dependence on the linear phase $\frac{\gamma a}{p}$, we partition $\Z/p\Z$ into ``(shifted) Bohr sets'' $B_1,\dots,B_m$ for some moderately large $m$ (e.g. $m \sim \exp(1/\eta^{-C})$ for some constant $C>0$), defined by
$$ B_c \coloneqq \left\{ a \in \Z/p\Z: \frac{\gamma a}{p} \in \left[\frac{c-1}{m}, \frac{c}{m}\right) \md{1} \right\}$$
for $c=1,\dots,m$.  On each Bohr set $B_c$, we have the approximation
$$ f(a) \coloneqq \tilde F_c( \tilde Q(a) )$$
where $\tilde F_c( x, y) \coloneqq \tilde F( x, \frac{c}{m} )$.  Using the heuristic that Bohr sets behave like arithmetic progressions, the situation is now similar to that in the second example, with the number of quadratic phases involved reduced from $d$ to $d-1$, except that there may still be some non-trivial relations amongst the surviving quadratic phases (and one also now has some lower order linear terms in the quadratic phases).  To deal with this difficulty, we turn now to the consideration of yet another example.\vspace{11pt}

\emph{Example 4: A poorly distributed impure quadratic factor.} We now consider an example which is in some sense a combination of the second and third examples.  Namely, we suppose we are in the same situation as in the second example, except that we allow some of the indices $c$ to have ``poor quadratic distribution'' in the sense that
they admit non-trivial relations of the form \eqref{kdj}.  Again we may assume without loss of generality that $k_{d_c}$ is non-zero in such relations.  Because of such relations, we no longer expect to have the equidistribution properties that were used in the second example.  However, by modifying the calculations in the third example, we can obtain a new representation of $f$ (again allowing for a small error) on each of the progressions $P_c$ with poor quadratic distribution in order to reduce the number $d_c$ of quadratic polynomials used in that progression by one.  Iterating this process a finite number of times, one eventually returns to the situation in the second example in which no non-trivial relations occur, at which point one can (heuristically, at least) verify Theorem \ref{khint} in this case.

The situation becomes slightly more complicated if one adds a lower order linear term $\zeta_{c,i} n$ to the purely quadratic phases $\xi_{c,i} n^2$ appearing in the second example; this basically is the type of situation one encounters for instance at the conclusion of the third example.  In this case, every time one converts a non-trivial relation of the form \eqref{kdj} on one of the cells $P_c$ of the partition into a new representation of $f$ on that cell, one must subdivide that cell $P_j$ into smaller pieces, by intersecting $P_j$ with various Bohr sets.  However, the resulting sets still behave somewhat like arithmetic progressions, and it turns out that we can still iterate the construction a bounded number of times until no further non-trivial relations between surviving quadratic phases remain on any of the cells of the partition, at which point one can (heuristically, at least) verify Theorem \ref{khint} in this case (as well as in the case considered in the third example). \vspace{11pt} 

\emph{Example 5: A pseudorandom perturbation of a pure quadratic factor.} In all the preceding examples, the function $f: \Z/p\Z \to [-1,1]$ under consideration was ``locally quadratically structured'', in the sense that on local regions such as $P_c$, the function $f$ could be accurately represented in terms of quadratic phase functions $a \mapsto Q(a)$.  This is however not the typical behaviour expected for a general function $f: \Z/p\Z \to [-1,1]$.  A more representative example would be a function of the form
$$ f(a) \coloneqq f_1(a) + f_2(a),$$
where $f_1: \Z/p\Z \to \R$ is a function of the type considered in the first example, thus
$$ f_1(a) = F( Q(a) )$$
for some quadratic function $Q: \Z/p\Z \to G$ into a torus $G = (\R/\Z)^d$ and some smooth $F: G \to [-1,1]$, and $f_2: \Z/p\Z \to [-1,1]$ is a function which is \emph{globally Gowers uniform} in the sense that
\begin{equation}\label{f2unif}
 \E \prod_{(\omega_1,\omega_2,\omega_3) \in \{0,1\}^3} f_2(\a + \omega_1 \h_1 + \omega_2 \h_2 + \omega_3 \h_3) \approx 0,
\end{equation}
where $\a,\h_1,\h_2,\h_3$ are drawn independently and uniformly at random from $\Z/p\Z$.  A typical example to keep in mind is when $F$ (and hence $f_1$) takes values in $[0,1]$, and $f=\f$ is a random function with $f(a)$ equal to $1$ with probability $f_1(a)$ and $0$ with probability $1-f_1(a)$, independently as $a \in \Z/p\Z$ varies; then the $f_2(a)$ for $a \in \Z/p\Z$ become independent random variables of mean zero, and the global Gowers uniformity can be established with high probability using tools such as the Chernoff inequality.

From the standard theory of the Gowers norms (see e.g. 
\cite[Chapter 11]{taovu-book}), one can use the global Gowers uniformity of $f_2$, combined with a number of applications of the Cauchy-Schwarz inequality, to establish a ``generalised von Neumann theorem'' which, in our current context, implies that $f$ and $f_1$ globally count about the same number of length four progressions in the sense that
\begin{equation}\label{eff}
\Lambda_{\a,\r}(f) \approx \Lambda_{\a,\r}(f_1);
\end{equation}
similarly one also has
\begin{equation}\label{eff-2}
 \E f(\a) \approx \E f_1(\a).
\end{equation}
As a consequence, Theorem \ref{khint} for such functions follows (heuristically, at least) from the analysis of the first example, at least if one assumes the genericity of the frequencies $\xi_1,\dots,\xi_d$.\vspace{11pt}

\emph{Example 6: A pseudorandom perturbation of an impure quadratic factor.} We now consider a situation which is to the second example as the fifth example was to the first.  Namely, we consider a function of the form
$$ f(a) \coloneqq f_1(a) + f_2(a),$$
where $f_1: \Z/p\Z \to [-1,1]$ is a function of the type considered in the second example, thus
$$ f_1( b_c + n s_c ) \coloneqq F_c( \xi_{c,d_1} n^2, \dots, \xi_{c,d_c} n^2 ) $$
for $c=1,\dots,m$ and $n=1,\dots,N_c$.  As for the function $f_2: \Z/p\Z \to [-1,1]$, global Gowers uniformity of $f_2$ will be too weak of a hypothesis for our purposes, because the random variable $\r$ appearing in the second example is now localised to a significantly smaller region than $\Z/p\Z$.  Instead, we will require the \emph{local Gowers uniformity} hypothesis
\begin{equation}\label{lgu}
 \E \prod_{(\omega_1,\omega_2,\omega_3) \in \{0,1\}^3} f_2(\a + \omega_1 \h_1 + \omega_2 \h_2 + \omega_3 \h_3) \approx 0,
\end{equation}
where $\a$ is now the random variable from the second example (in particular, $\a$ depends on the auxiliary random variable $\c$), and once one conditions on an event $\c = c$ for $c=1,\dots,m$, one draws $\h_1,\h_2,\h_3$ independently of each other and from $\a$, and each $\h_i$ drawn uniformly from an arithmetic progression of the form
\begin{equation}\label{rsc}
\{ n s_c: |n| \leq \exp(-1/\eta^{-C_i})N_c \},
\end{equation}
for some constant $C_i>0$ (for technical reasons, it is convenient to allow these constants $C_1,C_2,C_3$ to be different from each other, and also to be larger than the constant $C$ appearing in \eqref{rs}, so that $\h_1,\h_2,\h_3$ range over a narrower scale than $\r$).  As with $\a$ and $\r$, the random variables $\a,\h_1,\h_2,\h_3$ are now only \emph{conditionally} independent relative to the auxiliary variable $\c$, but are not independent of each other without this conditioning, as they are coupled to each other through $\c$.

As it turns out, once one assumes this local Gowers uniformity of $f_2$, one can modify the Cauchy-Schwarz arguments used to establish the global generalised von Neumann theorem to obtain the approximations \eqref{eff}, \eqref{eff-2} for the random variables $\a,\r$ considered in the second example, at which point Theorem \ref{khint} for this choice of $f$ follows (heuristically, at least) from the analysis of that example, at least if one assumes that there are no non-trivial relations of the form \eqref{kdj}.\vspace{11pt}

\emph{Example 7: Non-pseudorandom perturbation of a pure quadratic factor.} We now modify the fifth example by replacing the hypothesis \eqref{f2unif} by its negation
\begin{equation}\label{roq}
 \E \prod_{(\omega_1,\omega_2,\omega_3) \in \{0,1\}^3} f_2(\a + \omega_1 \h_1 + \omega_2 \h_2 + \omega_3 \h_3) \gg 1
\end{equation}
(it is not difficult to show that the left-hand side is non-negative).  In this case, the generalised von Neumann theorem used in that example does not give a good estimate.  However, in this situation one can apply the inverse theorem for the Gowers norm established by us in \cite{gt-inverseu3}.  In order to obtain good quantitative bounds, we will use the version of that theorem that involves local correlation with quadratic objects (as opposed to a somewhat weak global correlation with a single ``locally quadratic'' object).  Namely, if \eqref{roq} holds, then one can partition $\Z/p\Z$ into a moderately large (e.g. $O(\exp(1/\eta^{-O(1)}))$) number of pieces $P_1,\dots,P_m$, such that on each piece $P_c$, the function $f_2$ correlates with a ``quadratically structured'' object.  The precise statement is somewhat technical to state, but one simple special case of this conclusion is that the pieces $P_1,\dots,P_m$ are arithmetic progressions as in the second example, and for a ``significant number'' of the progressions
$$P_c = \{ b_c + n s_c: 1 \leq n \leq N_c \}$$
there exists a frequency $\xi_c \in \R/\Z$ such that
$$ |{\mathbb E}_{1 \leq n \leq N_c} f_2(b_c + ns_c) e(-\xi_c n^2)| \gg 1.$$
(In general, one would take $P_c$ to be Bohr sets of moderately high rank, rather than arithmetic progressions, and the phase $a \mapsto \xi_c a^2/p$ would have to be replaced by a more general locally quadratic phase function on such a Bohr set, but we ignore these technicalities for the current informal discussion.)  From this and the cosine rule, it is possible to find a function $g: \Z/p\Z \to [-1,1]$ that is equal to (the real part of) a scalar multiple of the quadratic phases $b_c + ns_c \mapsto e(\xi_c n^2)$ on each progression $P_c$, such that $f_2+g$ has an \emph{energy decrement} compared to $f_2$ in the sense that
\begin{equation}\label{f2g}
 {\mathbb E}_{a \in \Z/p\Z} (f_2(a) + g(a))^2 \leq {\mathbb E}_{a \in \Z/p\Z} f_2(a)^2 - \eta^C
\end{equation}
for some constant $C>0$.  In this situation, we can modify the decomposition $f = f_1 + f_2$ by adding $g$ to $f_2$ and subtracting it from $f_1$.  (Strictly speaking, this may make $f_1$ and $f_2$ range slightly outside of $[-1,1]$, but because $f$ itself ranges in $[-1,1]$, it turns out to be relatively easy to modify $f_1,f_2$ further to rectify this problem.)  The new function $f_1$ has a similar ``quadratic structure'' to the previous function $f_1$, except that the quadratic structure is now localised to the cells $P_1,\dots,P_m$ of the partition of $\Z/p\Z$, and the number of quadratic functions has been increased by one.  If the new function $f_2$ is now locally Gowers uniform in the sense of \eqref{lgu}, then we are now essentially in the situation of the sixth example (at least if there are no non-trivial relations of the form \eqref{kdj}), and we can (heuristically at least) conclude Theorem \ref{khint} in this case by the previous analysis.  If $f_2$ is locally Gowers uniform but there are additionally some relations of the form \eqref{kdj}, then one can hope to adapt the analysis of the fourth example to reduce the quadratic complexity of $f_1$ on all the poorly distributed cells, at which point one restarts the analysis.  If however $f_2$ remains non-uniform, then we need to argue using the analysis of the next and final example.\vspace{11pt}

\emph{Example 8: Non-pseudorandom perturbation of an impure quadratic factor.} Our final and most difficult example will be as to the sixth example as the seventh example was to the fifth.  Namely, we modify the sixth example by assuming that the negation of \eqref{lgu} holds.  Equivalently, one has the lower bound
\begin{equation}\label{fah}
 \E \left(\prod_{(\omega_1,\omega_2,\omega_3) \in \{0,1\}^3} f_2(\a + \omega_1 \h_1 + \omega_2 \h_2 + \omega_3 \h_3) | \c = c\right) \gg 1
\end{equation}
on the local Gowers norm for a ``significant fraction'' of the $c=1,\dots,m$.

At the qualitative level, the inverse theorem in \cite{gt-inverseu3} for the global Gowers norm allows one to also deduce a similar conclusion starting from the hypothesis \eqref{fah}.  However, the quantitative bounds obtained by this approach turn out to be too poor for the purposes of establishing Theorem \ref{khint} or Theorem \ref{main}.  Instead, one must obtain a quantitative local inverse theorem for the Gowers norm that has reasonably good bounds (of polynomial type) on the amount of correlation that is (locally) attained.  Establishing such a theorem is by far the most complicated and lengthy component of this paper, although broadly speaking it follows the same strategy as previous theorems of this type in \cite{gowers-4-aps,gt-inverseu3}.  If one takes this local inverse theorem for granted, then roughly speaking what we can then conclude from the hypothesis \eqref{fah} is that for a significant number of $c=1,\dots,m$, one can partition the cell $P_c$ into subcells $P_{c,1},\dots,P_{c,m_c}$, and locate a ``locally quadratic phase function'' $\phi_{c,i}: P_{c,i} \to \R/\Z$ on each such subcell (generalising the functions $b_c + ns_c \mapsto e(\xi_c n^2)$ from the previous example), such that
$$ |{\mathbb E}_{a \in P_{c,i}} f_2(b_{c,i}) e(-\phi_{c,i}(a))| \gg 1$$
for a significant fraction of the $c,i$.  Using this, one can again obtain an energy decrement of the form \eqref{f2g}, where now $g$ is (the real part of) a scalar multiple of the functions $a \mapsto e(\phi_{c,i}(a))$ on each $P_{c,i}$.  By arguing as in the sixth example, one can then modify $f_1$ and $f_2$ in such a way that the ``energy'' $\E f_2(\a)^2$ decreases significantly, while $f_1$ is now locally quadratically structured on a somewhat finer partition of $\Z/p\Z$ than the original partition $P_1,\dots,P_m$, with the number of quadratic phases needed to describe $f_1$ on each partition having increased by one.  If the function $f_2$ is now locally Gowers uniform (with respect to a new set of random variables $\a,\r$ adapted to this finer partition), and there are no non-trivial relations of the form we can now (heuristically) conclude Theorem \ref{khint} from the analysis of the sixth example, assuming the addition of the new quadratic phase has not introduced relations of the form \eqref{kdj}.  If such relations occur, though, one can hope to adapt the analysis of the fourth example to reduce the quadratic complexity of the poorly distributed cells, perhaps at the cost of further subdivision of the cells.  Finally, if the new version of $f_2$ remains non-uniform with respect to the finer partition, then one iterates the analysis of this example to reduce the energy of $f_2$ further.  This process cannot continue indefinitely due to the non-negativity of the energy (and also because none of the other steps in the iteration will cause a significant increase in energy).  Because of this, one can hope to cover all cases of Theorem \ref{khint} by some complicated iteration of the eight arguments described above.\vspace{11pt}

Having informally discussed the eight key examples for Theorem \ref{khint}, we return now to the task of proving this theorem rigorously.

It will be convenient to work throughout the rest of the paper with a fixed choice
$$ 1 < C_1 < C_2 < \dots < C_5$$
of absolute constants, with each $C_i$ assumed to be sufficiently large depending on the previous $C_1,\ldots,C_{i-1}$.  For instance, for sake of concreteness one could choose $C_i \coloneqq  2^{2^{100i}}$; of course, other choices are possible.  The implied constants in the $O()$ notation will not depend on the $C_i$ unless otherwise specified.  These constants will serve as exponents for various scales $\eta^{-C_i}$ that will appear in our analysis, with the point being that any scale of the form $\eta^{-C_i}$ for $i=2,\dots,5$ is extremely tiny with respect to any polynomial combination of the previous scales $\eta^{-C_1},\dots,\eta^{-C_{i-1}}$.

In all of the eight examples considered above, the function $f$ was approximated by some ``quadratically structured'' function, usually denoted $f_1$, with the approximation being accurate in various senses with respect to some pair $(\a,\r)$ of random variables.  The rigorous argument will similarly approximate $f$ by a quadratically structured object; it will be convenient to make this object a \emph{random} function $\f$ rather than a deterministic one (though as it turns out, this function will become deterministic again once an auxiliary random variable $\c$ is fixed).  The precise definition of ``quadratically structured'' will be rather technical, and will eventually be given in Definition \ref{sla-def}.  For now, we shall abstract the properties of ``quadratic structure'' we will need, in the following proposition involving an abstract directed graph $G = (V,E)$ (encoding the ``structured local approximants'') which we will construct more explicitly later.  We will shortly iterate this proposition to establish Theorem \ref{khint} and hence Theorem \ref{main}.

\begin{proposition}[Main proposition, abstract form]\label{main-abstract}  
Let $\eta$ be a real number with $0 < \eta \leq \frac{1}{10}$, and let $p$ be a prime with
\begin{equation}\label{p-large}
p \geq \exp(\eta^{-3C_5}).
\end{equation}
Let $f: \Z/p\Z \to [0,1]$ be a function.
Then there exist the following:
\begin{itemize}
\item[(a)] A \textup{(}possibly infinite\textup{)} directed graph $G = (V,E)$, with elements $v \in V$ referred to as \emph{structured local approximants}, and the notation $v \to v'$ used to denote the existence of a directed edge from one structured local approximant $v$ to another $v'$;
\item[(b)] A triple $(\a_v, \r_v, \f_v)$ associated to $f$ and to each structured local approximant $v \in V$, where $\a_v, \r_v$ are random variables in $\Z/p\Z$, and $\f_v: \Z/p\Z \to [-1,1]$ is a random function \textup{(}with $\a_v, \r_v, \f_v$ \emph{not} assumed to be independent\textup{)};
\item[(c)] A \emph{quadratic dimension} $d_2(v) \in \N$ assigned to each vertex $v \in V$;
\item[(d)] A \emph{poorly distributed quadratic dimension} $d_2^{\mathrm{poor}}(v) \in \N$ assigned to each vertex $v \in V$, with $0 \leq d_2^{\mathrm{poor}}(v) \leq d_2(v)$; and
\item[(e)] An initial approximant $v_0 \in V$, with $d_2(v_0)=0$ \textup{(}and hence $d_2^{\mathrm{poor}}(v_0)$ $= 0$\textup{)}.
\end{itemize}
Furthermore, whenever a structured local approximant $v_k \in V$ can be reached from $v_0$ by a path $v_0 \to v_1 \to \dots \to v_k$ with $0 \leq k \leq 8\eta^{-2C_2}$, then the following properties are obeyed:
\begin{itemize}
\item[(i)] One has the ``thickness'' condition
\begin{equation}\label{tp-thick}
\P( \r_{v_k} = 0 ) \ll \exp( 3 \eta^{-C_5} ) / p;
\end{equation}
\item[(ii)]  We have the almost uniformity condition
\begin{equation}\label{tp-good-2}
|\E f(\a_{v_k}) - {\mathbb E}_{a \in \Z/p\Z} f(a)| \leq \eta;
\end{equation}
\item[(iii)] Bad approximation implies energy decrement: if 
\begin{equation}\label{tp-bad-1}
|\E \f_{v_k}(\a_{v_k}) - f(\a_{v_k}) | > \eta
\end{equation}
or
\begin{equation}\label{tp-bad}
\begin{split}
&|\Lambda_{\a_{v_k}, \r_{v_k}}(\f_{v_k}) - \Lambda_{\a_{v_k}, \r_{v_k}}(f)|> \eta
\end{split}
\end{equation}
then there exists a structured local approximant $v_{k+1} \in V$ with $v_k \to v_{k+1}$ such that 
$$ \E |f(\a_{v_{k+1}}) - \f_{v_{k+1}}(\a_{v_{k+1}})|^2 \leq \E |f(\a_{v_k}) - \f_{v_k}k(\a_{v_k})|^2 - \eta^{C_2} $$
and
$$
d_2(v_{k+1}) \leq d_2(v_k)+1.
$$
\item[(iv)] Failure of ``Khintchine-type recurrence'' implies dimension decrement: if 
\begin{equation}\label{lower}
\Lambda_{\a_{v_k},\r_{v_k}}(\f_{v_k}) \leq (\E \f_{v_k}(\a_{v_k}))^4 - \eta,
\end{equation}
then there exists a structured local approximant $v_{k+1} \in V$ with $v_k \to v_{k+1}$ obeying the bounds
\begin{align*}
\E |f(\a_{v_{k+1}}) - \f_{v_{k+1}}(\a_{v_{k+1}})|^2 &\leq \E |f(\a_{v_k}) - \f_{v_k}(\a_{v_k})|^2 + \eta^{3C_2}, \\
d_2(v_{k+1}) &\leq d_2(v_k), \\
d_2^{\mathrm{poor}}(v_{k+1}) &\leq d_2^{\mathrm{poor}}(v_k)-1.
\end{align*}
\end{itemize}
\end{proposition}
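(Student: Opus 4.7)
The plan is to construct the graph $G$ iteratively, with each vertex $v$ encoding a partition of $\Z/p\Z$ into Bohr-like ``cells'' together with a locally-quadratic approximant $\f_v$ to $f$ on each cell. Concretely, I would define a structured local approximant to be a tuple consisting of: a discrete cell-selector $\c_v$ with $\P(\c_v=c)=|P_c|/p$ for some partition $\{P_c\}$; a cell-conditional uniform variable $\a_v$ on $P_c$; a cell-conditional $\r_v$ drawn from a short arithmetic-progression-like subset of $P_c$ at scale roughly $\exp(-\eta^{-C_5})|P_c|$; a list of at most $d_2(v)$ quadratic frequency/phase functions on each cell, of which $d_2^{\mathrm{poor}}(v)$ are flagged as ``poorly distributed''; and a weighted real-valued function $\f_v$ built from characters of these phases. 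The initial vertex $v_0$ takes the trivial one-cell partition with $\a_{v_0}$ uniform on $\Z/p\Z$, $\r_{v_0}$ uniform on a short progression at scale $\exp(-\eta^{-C_5})p$ around the origin, and constant $\f_{v_0}\equiv \E_{x} f(x)$, so that $d_2(v_0)=d_2^{\mathrm{poor}}(v_0)=0$.

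To define the outgoing edges from a given $v_k$, I treat the two bad alternatives separately. In case (iii), where $\f_{v_k}$ fails to approximate $f$ in mean or in quartic count, I would invoke a local, quantitative $U^3$ inverse theorem applied to the error $f-\f_{v_k}$: on a $\c_{v_k}$-significant fraction of cells there must exist a locally quadratic phase $\phi_c$ with
$$|\E((f-\f_{v_k})(\a_{v_k})\,\overline{e(\phi_c(\a_{v_k}))}\,|\,\c_{v_k}=c)|\gg \eta^{O(1)}.$$
A cosine-rule argument then converts these correlations into a real-valued function $g$ of the same locally quadratic form satisfying $\E|f-\f_{v_k}-g|^2\leq \E|f-\f_{v_k}|^2-\eta^{C_2}$. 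Setting $\f_{v_{k+1}}:=\f_{v_k}+g$ (and clipping to $[-1,1]$ with negligible cost, as in Example 7), and refining the partition by intersecting each affected cell with narrow Bohr sets defined by the linear data appearing in $\phi_c$, produces a successor with $d_2(v_{k+1})\leq d_2(v_k)+1$ and the stated energy decrement.

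In case (iv), where $\f_{v_k}$ itself violates the quartic lower bound, the equidistribution analysis illustrated in Examples 3 and 4 forces a non-trivial linear relation of the form \eqref{kdj} among the quadratic frequencies on a $\c_{v_k}$-significant fraction of cells; otherwise a cell-by-cell application of Lemma \ref{cauchy} together with Weyl equidistribution would yield $\Lambda_{\a_{v_k},\r_{v_k}}(\f_{v_k}|\c_{v_k}=c)\geq (\E(\f_{v_k}(\a_{v_k})|\c_{v_k}=c))^4-O(\eta)$ on each cell, and averaging with H\"older would contradict \eqref{lower}. On each relation-bearing cell I would apply the Example~3 change of variables to solve for one poorly-distributed frequency in terms of the remaining ones plus an ambient linear phase, and refine that cell by intersecting with the Bohr set of the residual linear phase at a width $\sim \eta^{O(1)}$ tuned so that the rewriting contributes at most $\eta^{3C_2}$ of $L^2$ error while keeping the new cells Bohr-regular. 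The resulting $\f_{v_{k+1}}$ satisfies $d_2(v_{k+1})\leq d_2(v_k)$ and by construction $d_2^{\mathrm{poor}}(v_{k+1})\leq d_2^{\mathrm{poor}}(v_k)-1$.

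Finally I would verify that invariants (i) and (ii) persist along every path of length at most $8\eta^{-2C_2}$. Each refinement step shrinks cell and $\r$-scales by a factor of at most $\exp(\eta^{-C_5})$, so after so many iterations the $\r$-scale remains at least $\exp(-3\eta^{-C_5})p$, establishing \eqref{tp-thick}; and \eqref{tp-good-2} holds because $\a_{v_k}$ is always distributed in proportion to cell sizes, which exhaust $\Z/p\Z$, up to a uniformly small regularity error from the Bohr-set boundaries controlled by our choice of width. The main obstacle is case (iii): what is needed is a \emph{genuinely local}, quantitative $U^3$ inverse theorem with polynomial-in-$\eta$ bounds whose output is compatible with the partition and scales inherited from $v_k$, and whose construction (together with the supporting equidistribution and Bohr-set regularity machinery hinted at in Examples~7 and~8) will unavoidably constitute the bulk of the paper.
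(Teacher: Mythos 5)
Your proposal correctly captures the architecture of the paper's argument: structured approximants built from a cell partition (the label variable $\c$) carrying locally quadratic data on shifted Bohr sets, the energy-decrement route in case (iii) via a generalised von Neumann inequality followed by a local quantitative $U^3$ inverse theorem, and the Weyl-equidistribution/linear-relation mechanism in case (iv) leading to elimination of a poorly distributed quadratic frequency (compare Definition \ref{sla-def}, Theorems \ref{bad-ed} and \ref{bad-dim}). Your identification of the local inverse $U^3$ theorem as the heavy lifting is also right.

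What you have not addressed, and what is the essential technical obstruction to making the quantitative bounds close, is the degradation of the Lipschitz structure of the approximant under the case-(iv) change of variables. When you solve a relation $\sum_j k_j\alpha_j=0$ for one frequency $\alpha_d$ and substitute into $F$ on the standard torus, the rewritten $\tilde F$ has Lipschitz constant inflated by roughly the magnitude of the relation coefficients, and this compounds uncontrollably under iteration; the paper flags exactly this in its discussion of Example 3. The paper's resolution is to replace $(\R/\Z)^d$ by \emph{dilated tori} $G=\prod_i \R/\lambda_i\Z$ and to carry $\vol(G)=\prod_i\lambda_i$ as an additional invariant of each approximant. The passage to the orthogonal complement $k^\perp$ of an irreducible dual frequency $k$ is then arranged (Theorem \ref{nfoc}) so that $\vol$ grows only \emph{linearly} in $|k|$, and this in turn forces Proposition \ref{cprop} — specifically the irreducibility reduction $k_c=m_c k'_c$ and the upper bound \eqref{loo-2} — to control $|k'_c|$ by a quantity involving only the smaller constant $C_2$ rather than $C_3$ or $C_4$, since otherwise the volume would blow up past what the iteration can absorb. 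Without some bookkeeping of this kind, your sketch in case (iv) does not survive $8\eta^{-2C_2}$ iterations with the stated polynomial-in-$\eta$ losses, and the endgame yielding $(\log N)^{-c}$ would collapse back to a weaker bound.
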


The proof of this proposition will occupy the remainder of the paper.  For now, let us see how this proposition implies Theorem \ref{khint}.
Let $p, \eta, f$ be as in that theorem, and let $C_1,\dots,C_5$ be as above.  If the largeness criterion \eqref{p-large} fails, then we may set $\r \coloneqq 0$, $\f \coloneqq f$, and draw $\a$ uniformly at random from $\Z/p\Z$, and it is easy to see that the conclusions of Theorem \ref{khint} are obeyed (with \eqref{etat} following from H\"older's inequality).  Thus we may assume without loss of generality that \eqref{p-large} holds.

Let $G = (V,E)$, $v_0$, $d_2()$, $d_2^{\mathrm{poor}}()$, and $(\a_v, \r_v, \f_v)$ be as in Proposition \ref{main-abstract}.  Suppose first that there exists a structured local approximant $v_k \in V$ that can be reached from $v_0$ by a path of length at most $8\eta^{-2C_2}$, and for which none of the inequalities \eqref{tp-bad-1}, \eqref{tp-bad}, \eqref{lower} hold, that is to say one has the bounds
\begin{align}
|\E \f_{v_k}(\a_{v_k}) - f_{v_k}(\a_{v_k}) | &\leq \eta,\label{khint-2-new1} \\
|\Lambda_{\a_{v_k}, \r_{v_k}}(\f_{v_k})-\Lambda_{\a_{v_k}, \r_{v_k}}(f_{v_k})| &\leq \eta \label{khint-2-new} \\
\Lambda_{\a_{v_k}, \r_{v_k}}(\f_{v_k}) & > (\E \f_{v_k}(\a_{v_k}))^4 - \eta.\label{khint-1-new}
\end{align}
From \eqref{khint-1-new}, \eqref{khint-2-new}, \eqref{khint-2-new1} and the triangle inequality (and the boundedness of $\f_{v_k}, f$) we conclude that
$$
\Lambda_{\a_{v_k}, \r_{v_k}}(f_{v_k}) > (\E f(\a_{v_k}))^4 - O(\eta);$$
combining this with \eqref{tp-thick} and \eqref{tp-good-2} we see that the random variables $\a_{v_k}, \r_{v_k}$ obey the properties required of Theorem \ref{khint}.  Thus we may assume for sake of contradiction that this situation never occurs, which by Proposition \ref{main-abstract} implies that whenever $v_k \in V$ is a structured local approximant that can be reached from $v_0$ by a path of length at most $8\eta^{-2C_2}$, then the conclusions of at least one of (iii) and (iv) hold.  Iterating this we may therefore construct a path
$$ v_0 \to v_1 \to \dots \to v_{k_0+1}$$
with 
\begin{equation}\label{kdef}
k_0 \coloneqq  \lfloor 8 \eta^{-2C_2} \rfloor,
\end{equation}
such that for every $0 \leq k \leq k_0$, one either has the energy decrement bounds
\begin{align*}
\E |f(\a_{v_{k+1}}) - \f_{k+1}(\a_{v_{k+1}})|^2 &\leq \E |f(\a_{v_k}) - \f_k(\a_{v_k})|^2 - \eta^{C_2} \\
d_2(v_{k+1}) &\leq d_2(v_k)+1
\end{align*}
or the dimension decrement bounds
\begin{align*}
\E |f(\a_{v_{k+1}}) - \f_{k+1}(\a_{v_{k+1}})|^2 &\leq \E |f(\a_{v_k}) - \f_k(\a_{v_k})|^2 + \eta^{3C_2} \\
 d_2(v_{k+1}) &\leq d_2(v_k), \\
 d_2^{\mathrm{poor}}(v_{k+1}) &\leq d_2^{\mathrm{poor}}(v_k)-1.
\end{align*}

Since $v_0$ already has the minimum quadratic dimension $d_2^{\mathrm{poor}}(v_0)=0$, we see that we must experience an energy decrement at the $k=0$ stage.  Also, if $k$ is the $j^{\operatorname{th}}$ index to experience an energy decrement, we see that $d_2^{\mathrm{poor}}(v_{k+1}) \leq d_2(v_{k+1}) \leq j$, and so one can have at most $j$ consecutive dimension decrements after the $k^{\operatorname{th}}$ stage; in other words, we must experience another energy decrement within $j+1$ steps.  By definition of $k_0$, we have $\sum_{0 \leq j \leq 2 \eta^{-C_2}} (j+1) < k_0$ if $C_2$ is large enough.  We conclude that at least $2 \eta^{-C_2}$ energy decrements occur within the path $v_0 \to \dots \to v_{k_0+1}$.  This implies that
$$  \E |f(\a_{v_{k_0+1}}) - \f_{k_0+1}(\a_{v_{k_0+1}})|^2  \leq  \E |f(\a_{v_{0}}) - \f_{k+1}(\a_{v_{0}})|^2  - (2 \eta^{-C_2}) \eta^{C_2} + k_0 \eta^{3C_2}.$$
But if $C_2$ is sufficiently large, this implies from \eqref{kdef} that
$$  \E |f(\a_{v_{k_0+1}}) - \f_{k_0+1}(\a_{v_{k_0+1}})|^2  <  \E |f(\a_{v_0}) - \f_0(\a_{v_0})|^2 - 4$$
(say), which leads to a contradiction because the left-hand side is clearly non-negative, and the right-hand side non-positive.
This gives the desired contradiction that establishes Theorem \ref{khint} and hence Theorem \ref{main}.

It remains to establish Proposition \ref{main-abstract}.  This will occupy the remaining portions of the paper.

\section{Bohr sets}

In order to define and manipulate the ``structured local approximants'' that appear in Proposition \ref{main-abstract}, we will need to develop the theory of two mathematical objects.  The first is that of a \emph{Bohr set}, which will be covered in this section; the second is that of a \emph{dilated torus}, which we will discuss in the next section.

\begin{definition}[Bohr set] A subset $S$ of $\Z/p\Z$ is said to be \emph{non-degenerate} if it contains at least one non-zero element. In this case we define the dual $S$-norm
$$ \|a\|_{S^\perp} \coloneqq \sup_{\xi \in S} \left\| \frac{a \xi}{p} \right\|_{\R/\Z} $$
for any $a \in \Z/p\Z$, and then define the \emph{Bohr set} $B(S,\rho) \subset \Z/p\Z$ for any $\rho>0$ by the formula
$$ B(S,\rho) \coloneqq \left\{ a \in \Z/p\Z: \|a\|_{S^\perp} < \rho \right\}$$
where $\|\theta\|_{\R/\Z}$ denotes the distance from $\theta$ to the nearest integer.  We refer to $S$ as the \emph{set of frequencies} of the Bohr set, $\rho$ as the \emph{radius}, and $|S|$ as the \emph{rank} of the Bohr set.  We also define the shifted Bohr sets
$$ n + B(S,\rho) \coloneqq \{ a+n: a \in B(S,\rho) \}$$
for any $n \in \Z/p\Z$.
\end{definition}

From \eqref{rz-tri} we have the triangle inequalities
\begin{equation}\label{sperp-tri}
\|a+b\|_{S^\perp} \leq \|a\|_{S^\perp} + \|b\|_{S^\perp}; \quad \|ka\|_{S^\perp} \leq |k| \|a\|_{S^\perp}
\end{equation}
for $a,b \in \Z/p\Z$ and $k \in \Z$; also we trivially have
$$ \|a\|_{S^\perp} \leq \|a\|_{(S')^\perp}$$
if $S \subset S'$ and $a \in \Z/p\Z$, or equivalently that $B(S',\rho) \subset B(S,\rho)$ for $\rho>0$.  We will frequently use these inequalities in the sequel, usually without further comment.  In Lemma \ref{edual} below, we will show that $\| \|_{S^\perp}$ is ``dual'' to a certain word norm $\| \|_S$ on $\Z/p\Z$.  One could also define Bohr sets in the case when $S$ is degenerate, but this creates some minor complications in our arguments, so we remove this case from our definition of a Bohr set.

We have the following standard size bounds for Bohr sets, whose proof may be found in \cite[Lemma 4.20]{taovu-book}.

\begin{lemma}\label{size-bohr}  If $B(S,\rho)$ is a Bohr set, then $|B(S,\rho)| \geq \rho^{|S|} p$
and $|B(S,2\rho)| \leq 4^{|S|} |B(S,\rho)|$.
\end{lemma}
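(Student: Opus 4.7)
The plan for both bounds is to exploit the "Weyl lift" $\phi: \Z/p\Z \to (\R/\Z)^{|S|}$ defined by $\phi(a) \coloneqq (a\xi/p \bmod 1)_{\xi \in S}$; by the definition of the dual norm $\|\cdot\|_{S^\perp}$, an element $a$ lies in $B(S,\rho)$ precisely when $\phi(a)$ belongs to the open cube $(-\rho, \rho)^{|S|}$ in the torus. Both estimates then become essentially discrete volume statements for this cube, and the proofs are elementary pigeonhole and covering arguments.

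For the lower bound $|B(S,\rho)| \geq \rho^{|S|} p$, the plan is a Dirichlet-style pigeonhole. I would partition the torus into a grid of at most $\lceil 1/\rho\rceil^{|S|}$ half-open axis-parallel boxes, each of side at most $\rho$ in every coordinate. Applying pigeonhole to the $p$ images $\phi(0), \phi(1), \ldots, \phi(p-1)$, some box contains at least $p/\lceil 1/\rho\rceil^{|S|}$ of them. Any two elements $a, b$ in the same box satisfy $\|(a-b)\xi/p\|_{\R/\Z} < \rho$ for every $\xi \in S$, so $a - b \in B(S,\rho)$; fixing one pigeon and subtracting yields at least $p/\lceil 1/\rho\rceil^{|S|}$ distinct elements of $B(S,\rho)$. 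A careful choice of the grid (or a minor rescaling in $\rho$) then removes the gap between $\lceil 1/\rho\rceil$ and $1/\rho$ to produce the clean bound as stated.

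For the doubling estimate $|B(S,2\rho)| \leq 4^{|S|} |B(S,\rho)|$, the plan is a covering argument: show that $B(S,2\rho)$ lies in a union of at most $4^{|S|}$ translates of $B(S,\rho)$. Geometrically, the image $\phi(B(S,2\rho)) \subseteq (-2\rho, 2\rho)^{|S|}$ decomposes as a union of $4^{|S|}$ boxes of side $\rho$, each of which pulls back to a subset of $\Z/p\Z$ sitting inside a translate of $B(S,\rho)$, and summing cardinalities gives the desired bound. An equivalent route is to take a maximal $\rho$-separated subset $T \subseteq B(S,2\rho)$ in the $\|\cdot\|_{S^\perp}$-metric: maximality gives the covering $B(S,2\rho) \subseteq T + B(S,\rho)$, while disjointness of the translates $t + B(S,\rho/2)$ for $t \in T$ inside $B(S,5\rho/2)$ bounds $|T|$ in terms of a ratio of Bohr-set sizes that may be iterated. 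The main technical obstacle in both parts is pure constant-tracking: naive versions of these arguments give slightly lossier bounds such as $p/\lceil 1/\rho\rceil^{|S|}$ or $5^{|S|}$, and sharpening to the stated $\rho^{|S|} p$ and $4^{|S|}$ requires a careful choice of grid and covering geometry, which is carried out in Tao--Vu \cite[Lemma 4.20]{taovu-book}.
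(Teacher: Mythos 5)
The paper does not give a proof of this lemma at all; it simply cites \cite[Lemma 4.20]{taovu-book}, so there is no in-paper argument to compare against. Judged on its own terms, your proposal is half right.

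Your covering argument for the doubling estimate is correct and is the standard one: once $(-2\rho,2\rho)^{|S|}$ is split into $4^{|S|}$ axis-parallel half-open boxes of side $\rho$, any two points $a,b$ with $\phi(a),\phi(b)$ in the same box satisfy $\phi(a-b)\in(-\rho,\rho)^{|S|}$, hence $a-b\in B(S,\rho)$; picking one representative per nonempty box covers $B(S,2\rho)$ by at most $4^{|S|}$ translates of $B(S,\rho)$.

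Your lower bound argument, however, has a genuine gap. The discrete grid pigeonhole with $\lceil 1/\rho\rceil^{|S|}$ boxes gives only $|B(S,\rho)|\geq p/\lceil 1/\rho\rceil^{|S|}$, and the ratio of this to the claimed bound, namely $(\rho\lceil 1/\rho\rceil)^{|S|}$, can be as large as $2^{|S|}$ (take $\rho$ slightly above $1/2$). No ``careful choice of the grid'' or ``minor rescaling in $\rho$'' removes this: a partition of $(\R/\Z)^{|S|}$ into boxes all of side $\leq\rho$ must use at least $\rho^{-|S|}$ boxes, with equality only when $1/\rho\in\Z$, so the pigeonhole inevitably loses when $1/\rho\notin\Z$. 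This loss of up to $2^{|S|}$ is \emph{not} harmless in the present paper, where $|S|$ grows polynomially in $1/\eta$. The correct route (and the one in \cite[Lemma 4.20]{taovu-book}) replaces the discrete partition by a \emph{continuous} average: draw $t=(t_\xi)_{\xi\in S}$ uniformly from $(\R/\Z)^{|S|}$, set $B_t\coloneqq\{a\in\Z/p\Z:\|a\xi/p-t_\xi\|_{\R/\Z}<\rho/2\ \text{for all}\ \xi\in S\}$, and note that $\E_t|B_t|=\rho^{|S|}p$ exactly, since each $a$ lies in $B_t$ with probability precisely $\rho^{|S|}$. Fixing a $t$ attaining at least the mean, choosing $a_0\in B_t$, and observing by the triangle inequality that $B_t-a_0\subseteq B(S,\rho)$ gives $|B(S,\rho)|\geq\rho^{|S|}p$ with no rounding loss. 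In short: your covering half works; for the lower bound you need the random-shift averaging, not a grid.
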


In previous work on Roth-type theorems, one sometimes restricts attention to \emph{regular Bohr sets}, as first introduced in \cite{bourgain}; see \cite[\S 4.4]{taovu-book} for some discussion of this concept.  Due to our use of the probabilistic method, we will be able to work with a technically simpler and ``smoothed out'' version of a regular Bohr set, which we call the \emph{regular probability distribution} on a Bohr set.

\begin{definition}  Let $B(S,\rho)$ be a Bohr set.  The \emph{regular probability distribution} ${\mathfrak p}_{B(S,\rho)}: \Z/p\Z \to \R$ associated to $B(S,\rho)$ is the function defined by the formula
\begin{equation}\label{bsa}
 {\mathfrak p}_{B(S,\rho)}(a)  \coloneqq 2 \int_{1/2}^1 \frac{1_{B(S,t\rho)}(a)}{|B(S,t\rho)|}\ dt;
\end{equation}
it is easy to see \textup{(}from Fubini's theorem\textup{)} that this is indeed a probability distribution on $\Z/p\Z$.  A random variable $\a \in \Z/p\Z$ is said to be \emph{drawn regularly} from $B(S,\rho)$ if it has probability density function ${\mathfrak p}_{B(S,\rho)}$, thus $\P( \a = a ) = {\mathfrak p}_{B(S,\rho)}(a)$ for all $a \in \Z/p\Z$.

More generally, for any shifted Bohr set $n + B(S,\rho)$, we define the regular probability distribution ${\mathfrak p}_{n+B(S,\rho)}: \Z/p\Z \to \R$ by the formula
$$  {\mathfrak p}_{n+B(S,\rho)}(a) \coloneqq {\mathfrak p}_{B(S,\rho)}(a-n),$$
and say that $\a$ is \emph{drawn regularly} from $n + B(S,\rho)$ if it has probability distribution ${\mathfrak p}_{n+B(S,\rho)}$.
\end{definition}

Informally, to draw a random variable $\a$ regularly from $n+B(S,\rho)$, one should draw it uniformly from $n+B(S,{\mathbf t} \rho)$, where ${\mathbf t}$ is itself selected uniformly at random from the interval $[1/2,1]$.  Note that if $\a$ is drawn regularly from $n + B(S,\rho)$, then $m+\a$ will be drawn regularly from $m+n+B(S,\rho)$ for any $m \in \Z/p\Z$, and similarly $k\a$ will be drawn from $kn + B(k^{-1} \cdot S, \rho)$ for any non-zero $k \in \Z/p\Z$, where $k^{-1} \cdot S \coloneqq \{ k^{-1} \xi: \xi \in S \}$ is the dilate of the frequency set $S$ by $k^{-1}$.

From Lemma \ref{size-bohr} we see that if $\a$ is drawn regularly from a shifted Bohr set $n+B(S,\rho)$, then
\begin{equation}\label{theta-crude}
\P(\a = a) \leq \frac{1}{(\rho/2)^{|S|} p}
\end{equation}
for all $a \in \Z/p\Z$.  In practice, this will mean that the influence of any given value of $\a$ will be negligible.

The presence of the averaging parameter $t$ in \eqref{bsa} allows for the following very convenient approximate translation invariance property.  Given two random variables $\a, \a'$ taking values in a finite set $A$, we define the \emph{total variation distance} between the two to be the quantity
$$ d_\TV(\a,\a') \coloneqq \sum_{a \in A} |\P(\a = a) - \P(\a' = a)|,$$
or equivalently 
$$ d_\TV(\a,\a') = \sup_f |\E f(\a) - \E f(\a')|$$
where $f: A \to \C$ ranges over $1$-bounded functions.  

The next lemma gives some approximate translation-invariance properties of Bohr sets. Its proof is a thinly disguised version of the arguments of Bourgain \cite{bourgain}. 

\begin{lemma}\label{ati}  Let $n+B(S,\rho)$ be a shifted Bohr set, and let $\a$ be drawn regularly from $B(S,\rho)$.  Let $B(S',\rho')$ be another Bohr set with $S' \supset S$.
\begin{itemize}
\item[(i)]  If $h \in B(S', \rho')$, then $\a$ and $\a+h$ differ in total variation by at most $O( |S| \frac{\rho'}{\rho} )$.
\item[(ii)]  More generally, if $\h$ is a random variable independent of $\a$ that takes values in $B(S',\rho')$, then $\a$ and $\a+\h$ differ in total variation by at most $O( |S| \frac{\rho'}{\rho} )$.
\end{itemize}
\end{lemma}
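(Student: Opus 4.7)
The plan is to compute $d_\TV(\a, \a+h)$ directly using the integral definition \eqref{bsa} of $\mathfrak{p}_{B(S,\rho)}$, reduce it to a symmetric-difference estimate for Bohr sets at slightly varying radii, and apply the doubling bound of Lemma \ref{size-bohr}. For part (i), substituting \eqref{bsa} into the formula $d_\TV(\a,\a+h) = \sum_a |\mathfrak{p}_{B(S,\rho)}(a) - \mathfrak{p}_{B(S,\rho)}(a-h)|$, and interchanging the outer sum with the inner $t$-integral via the triangle inequality, yields
\[
d_\TV(\a,\a+h) \leq 2\int_{1/2}^1 \frac{|B(S,t\rho) \triangle (h+B(S,t\rho))|}{|B(S,t\rho)|}\,dt,
\]
since $\sum_a |1_{B(S,t\rho)}(a) - 1_{h+B(S,t\rho)}(a)|$ is exactly the cardinality of the symmetric difference.

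Next I would use the inclusion $S \subseteq S'$, which forces $\|h\|_{S^\perp} \leq \|h\|_{(S')^\perp} < \rho'$, together with the triangle inequality \eqref{sperp-tri}, to show that any $a$ in the symmetric difference $B(S,t\rho) \triangle (h+B(S,t\rho))$ must satisfy $t\rho - \rho' \leq \|a\|_{S^\perp} < t\rho + \rho'$. Writing $f(r) \coloneqq |B(S,r)|$ and substituting $r = t\rho$, this reduces the problem to bounding
\[
\frac{2}{\rho} \int_{\rho/2}^{\rho} \frac{f(r+\rho') - f(r-\rho')}{f(r)}\,dr
\]
by $O(|S|\rho'/\rho)$. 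To accomplish this I would split the integrand as $\tfrac{f(r+\rho') - f(r)}{f(r)} + \tfrac{f(r) - f(r-\rho')}{f(r)}$, reorganize via Fubini into an integral of the form $\int 2\rho'\,\tfrac{df(u)}{f(u')}$ for $u' = u \pm \rho'$, and then invoke Lemma \ref{size-bohr}, which gives $\int_{\rho/4}^{2\rho} \tfrac{df(r)}{f(r)} = \log(f(2\rho)/f(\rho/4)) = O(|S|)$. The hard part will be extracting a bound linear in $|S|$ rather than the naive exponential one: the smoothing introduced by averaging over $t \in [1/2,1]$ in the regular probability distribution is precisely what converts the exponential doubling estimate $f(2\rho) \leq 4^{|S|}f(\rho)$ into the desired linear dependence, in the same spirit as Bourgain's selection of a regular radius in \cite{bourgain}.

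For part (ii), the independence of $\a$ and $\h$ gives the identity $\mathfrak{p}_{\a+\h}(a) = \E\,\mathfrak{p}_{\a}(a-\h)$, and the triangle inequality in $\ell^1$ then yields
\[
d_\TV(\a, \a+\h) \leq \E\, d_\TV(\a, \a+h)\big|_{h = \h}.
\]
Since the essential range of $\h$ is contained in $B(S',\rho')$, part (i) applies uniformly to each realization and produces the same bound $O(|S|\rho'/\rho)$ in expectation, completing the lemma.
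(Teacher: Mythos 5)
Your setup is sound and in fact tracks the paper's own proof through the reduction to
\[
d_\TV(\a,\a+h) \leq 2\int_{1/2}^1 \frac{\bigl|B(S,t\rho) \triangle (h+B(S,t\rho))\bigr|}{|B(S,t\rho)|}\,dt
\]
and the observation that the symmetric difference lies in the annulus $B(S,t\rho+\rho')\setminus B(S,t\rho-\rho')$. The gap is at precisely the step you flag as ``the hard part.'' Your Fubini reorganization into an expression of the form $\int 2\rho'\,\frac{df(u)}{f(u')}$ with $u'=u-\rho'$ does not reduce to the quantity $\int_{\rho/4}^{2\rho}\frac{df(r)}{f(r)}=O(|S|)$ you then invoke: since $f$ is increasing and can jump by a factor as large as $4^{|S|}$ across an interval of length $\rho'$ (this is exactly the content of Lemma \ref{size-bohr}, and in general cannot be improved), the mismatch $\frac{1}{f(u-\rho')} \leq \frac{4^{O(|S|)}}{f(u)}$ is genuinely lossy, and your plan as written would yield a bound of order $4^{O(|S|)}\rho'/\rho$ rather than $|S|\rho'/\rho$. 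Appealing to ``the smoothing from averaging over $t$'' does not by itself repair this, because the Fubini step has already discarded the one piece of information that does repair it: for each fixed $t$ the integrand, being a relative symmetric-difference measure, is trivially at most $2$.

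The paper closes this gap by retaining that cap. It bounds the integrand by $O\!\left(\min\!\left(\frac{f(t\rho+\rho')}{f(t\rho-\rho')}-1,\,1\right)\right)$, uses the elementary inequality $\min(x-1,1)\ll \log x$ for $x\geq 1$ to pass to $\log\frac{f(t\rho+\rho')}{f(t\rho-\rho')}$, and then telescopes $\int_{1/2}^1\bigl(\log f(t\rho+\rho')-\log f(t\rho-\rho')\bigr)\,dt$ by a change of variable: the bulk cancels, leaving two boundary integrals over $t$-intervals of length $\rho'/\rho$, which are crudely bounded by $\frac{\rho'}{\rho}\log\frac{f(2\rho)}{f(\rho/4)}$. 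Only at that final point does Lemma \ref{size-bohr} enter, and the logarithm converts the exponential doubling constant into the linear factor $O(|S|)$. The crucial ingredients you are missing are therefore (a) exploiting the $\min$ cap before estimating, and (b) the telescoping of $\log f$ under the $t$-average. Your treatment of part (ii) by conditioning on the value of $\h$ is correct and identical to the paper's.
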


\begin{proof}  To prove (i), it suffices to show that
$$ \E f(\a+h) = \E f(\a) + O\left( |S| \frac{\rho'}{\rho} \right)$$
for any $1$-bounded function $f: \Z/p\Z \to \C$; the claim (ii) then also follows by conditioning $\h$ to a fixed value $h \in B(S',\rho')$, then multiplying by $\P(\h=h)$ and summing over $h$.  

By translating $f$ by $n$, we may assume that $n=0$.  We may assume that $\rho' \leq \frac{\rho}{10|S|}$, as the claim is trivial otherwise.

From \eqref{bsa} we have 
$$ \E f(\a) = 2 \int_{1/2}^1 \sum_{a \in \Z/p\Z} f(a) \frac{1_{B(S,t\rho)}(a)}{|B(S,t\rho)|}\ dt $$
and
$$ \E f(\a+h) = 2 \int_{1/2}^1 \sum_{a \in \Z/p\Z} f(a) \frac{1_{B(S,t\rho)-h}(a)}{|B(S,t\rho)|}\ dt $$
so by the triangle inequality it suffices to show that
\begin{equation}\label{avg}
 \int_{1/2}^1 \frac{\sum_{a \in \Z/p\Z} |1_{B(S,t\rho)}(a) - 1_{B(S,t\rho)-h}(a)|}{|B(S,t\rho)|}\ dt \ll |S| \frac{\rho'}{\rho}.
\end{equation}
By the triangle inequality, the integrand here is bounded above by $2$.  Also, from \eqref{sperp-tri}, we see that any $a$ for which $1_{B(S,t\rho)-h}(a) \neq 1_{B(S,t\rho)}(a)$ lies in the ``annulus'' $B(S,t\rho + \rho') \backslash B(S,t\rho - \rho')$.  We conclude that the left-hand side of \eqref{avg} is bounded by
$$ \int_{1/2}^1 O\left( \min\left( \frac{|B(S,t\rho+\rho')| - |B(S,t\rho-\rho')|}{|B(S,t\rho-\rho')|}, 1 \right) \right)\ dt$$
which, using the elementary bound $\min(x-1,1) \ll \log x$ for $x \geq 1$, can be bounded in turn by
$$ O\left( \int_{1/2}^1 \log \frac{|B(S,t\rho+\rho')|}{|B(S,t\rho-\rho')|} \ dt \right).$$
The integral telescopes to
$$ O\left( \int_1^{1+\rho'/\rho} \log |B(S,t\rho)|\ dt - \int_{1/2-\rho'/\rho}^{1/2} \log |B(S,t\rho)|\ dt \right)$$
which can be bounded in turn by
$$ O\left( \frac{\rho'}{\rho} \log \frac{|B(S,2\rho)|}{|B(S,\rho/4)|} \right).$$
The claim now follows from Lemma \ref{size-bohr}.
\end{proof}

We will be interested in the Fourier coefficients $\E e_p(\lambda \n) = \E e( \frac{\lambda \n}{p} )$ of random variables $\n$ drawn regularly from Bohr sets $B(S,\rho)$.  As was noted by Bourgain \cite{bourgain}, these coefficients are controlled by a ``word norm'' $\| \|_S$, defined as follows:

\begin{definition}[Word norm]\label{word-def}
If $S \subset \Z/p\Z$ is non-degenerate, and $a$ is an element of $\Z/p\Z$, we define the \emph{word norm} $\|a\|_S$ of $a$ to be the minimum value of $\sum_{s \in S} |n_s|$, where $(n_s)_{s \in S} \in \Z^S$ ranges over tuples of integers such that one has a representation $a = \sum_{s \in S} n_s s$; note that such a representation always exists because $S$ is non-degenerate.  
\end{definition}

Similarly to \eqref{sperp-tri}, we observe the triangle inequalities
\begin{equation}\label{a-tri}
\|a+b\|_{S} \leq \|a\|_{S} + \|b\|_{S}; \quad \|ka\|_{S} \leq |k| \|a\|_{S}
\end{equation}
for $a,b \in \Z/p\Z$ and $k \in \Z$, which we will use frequently in the sequel, often without further comment.

We now give a duality relationship between the word norm $\| \|_S$ and the dual $S$-norm $\| \|_{S^\perp}$:

\begin{lemma}[Duality]\label{edual}  Let $S$ be a non-degenerate subset of $\Z/p\Z$, and let $\lambda \in \Z/p\Z$.
\begin{itemize}
\item[(i)]  For every $n \in \Z/p\Z$, one has $\| \frac{n \lambda}{p} \|_{\R/\Z} \leq \| n \|_{S^\perp} \| \lambda \|_S$.
\item[(ii)]  Conversely, if one has the estimate $\| \frac{n \lambda}{p} \|_{\R/\Z} \leq A \| n \|_{S^\perp}$ for some $A \geq 1$ and all $n \in \Z/p\Z$, then $\| \lambda \|_S \ll |S|^{3/2} A$.
\end{itemize}
\end{lemma}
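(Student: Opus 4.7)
Part (i) is immediate from the triangle inequalities \eqref{sperp-tri} and \eqref{rz-tri}. Taking a minimal-length representation $\lambda = \sum_{s \in S} n_s s$ in $\Z/p\Z$ with $\sum_s |n_s| = \|\lambda\|_S$, we compute
$$\left\|\frac{n\lambda}{p}\right\|_{\R/\Z} = \left\|\sum_s n_s \cdot \frac{ns}{p}\right\|_{\R/\Z} \leq \sum_s |n_s| \left\|\frac{ns}{p}\right\|_{\R/\Z} \leq \|\lambda\|_S \cdot \|n\|_{S^\perp}.$$

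For the substantive part (ii), the plan is a geometry-of-numbers argument. Introduce the full-rank auxiliary lattice
$$\tilde\Lambda \coloneqq \{(\mathbf{n}, m) \in \Z^{|S|} \times \Z : m\lambda \equiv \textstyle\sum_{s \in S} n_s s \pmod p\},$$
a sublattice of $\Z^{|S|+1}$ of index $p$, and hence of covolume $p$. Its dual lattice $\tilde\Lambda^*$ equals $\Z^{|S|+1} + \tfrac{1}{p}\Z\cdot(s_1,\dots,s_{|S|},-\lambda)$, so each nonzero coset of $\Z^{|S|+1}$ in $\tilde\Lambda^*$ is indexed by some $k \in (\Z/p\Z)^\times$ with canonical minimum representative of $\ell^\infty$-length $\max(\|k\|_{S^\perp}, \|\tfrac{k\lambda}{p}\|_{\R/\Z})$. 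The hypothesis (together with $A \geq 1$) forces this max to be at most $A\|k\|_{S^\perp}$ -- so the short vectors of $\tilde\Lambda^*$ in $\ell^\infty$ are controlled by the short vectors of the smaller lattice $\Lambda^* = \Z^{|S|} + \tfrac{1}{p}\Z\cdot(s_1,\dots,s_{|S|})$ up to a multiplicative factor of $A$.

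I then apply Minkowski's second theorem to $\tilde\Lambda$ with respect to the symmetric convex body
$$K \coloneqq \bigl\{(\mathbf{n},m) \in \R^{|S|+1} : \textstyle\sum_s |n_s| \leq 1,\ |m| \leq 1\bigr\}$$
of volume $2^{|S|+1}/|S|!$, producing successive minima $\mu_1 \leq \dots \leq \mu_{|S|+1}$ with $\mu_1 \cdots \mu_{|S|+1} \leq |S|!\, p$, together with linearly independent lattice vectors $v_i \in \mu_i K \cap \tilde\Lambda$. At least one $v_{i_0}$ must have nonzero last coordinate $m$; since $|m| \leq \mu_{i_0}$ is an integer, provided $\mu_{i_0} < 2$ one has $|m| = 1$, giving $\pm\lambda \equiv \sum_s n_s s \pmod p$ with $\sum_s |n_s| \leq \mu_{i_0}$, and the desired bound on $\|\lambda\|_S$ after a possible sign flip.

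The hardest step is establishing $\mu_{|S|+1} \ll |S|^{3/2} A$. The strategy is transference: the first $|S|$ minima $\mu_1,\dots,\mu_{|S|}$ can always be realized by vectors in the sublattice $\Lambda \times \{0\}$ (where $\Lambda = \{\mathbf{n} : \sum_s n_s s \equiv 0 \pmod p\}$), and these are bounded below via $\ell^1/\ell^\infty$ transference by the reciprocals of the successive maxima of $\Lambda^*$ in $\ell^\infty$. The hypothesis lets us transfer these same lower bounds (paying a factor of $A$) to the corresponding maxima of $\tilde\Lambda^*$, which by Minkowski duality produce the required upper bound on $\mu_{|S|+1}$. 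The polynomial loss of $|S|^{3/2}$ is exactly the standard Banaszczyk transference constant between the $\ell^1$ and $\ell^\infty$ norms in dimension $|S|+1$, and is where the exponent $3/2$ comes from in the conclusion.
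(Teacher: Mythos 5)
Part (i) of your proof is correct and identical to the paper's argument.

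Part (ii), however, contains a fatal error, and the issue is worth spelling out carefully because it's a common source of confusion in geometry of numbers. You set up $\tilde\Lambda \subset \Z^{|S|+1}$ as an index-$p$ sublattice and try to show that the last successive minimum of $\tilde\Lambda$ with respect to your unit-scale body $K$ is $\ll |S|^{3/2}A$. This is impossible. Minkowski's second theorem gives a matching \emph{lower} bound
$$\mu_1\cdots\mu_{|S|+1}\;\geq\;\frac{2^{|S|+1}}{(|S|+1)!}\cdot\frac{\det(\tilde\Lambda)}{\vol(K)}\;=\;\frac{p}{|S|+1},$$
so that $\mu_{|S|+1}\geq\bigl(p/(|S|+1)\bigr)^{1/(|S|+1)}$. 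In the regime where this lemma is applied ($p\geq\exp(\eta^{-3C_5})$), this is astronomically larger than $|S|^{3/2}A$; no amount of transference can circumvent a volume obstruction. The underlying conceptual mistake is conflating a \emph{packing} statement (the successive minima of $\tilde\Lambda$ are all small) with a \emph{covering} statement (a particular coset $\mathbf{n}_0+\Lambda$, where $\Lambda=\{\mathbf{n}:\sum_s n_s s\equiv 0\}$, comes within $\ell^1$-distance $|S|^{3/2}A$ of the origin). The vector $(\mathbf{n},\pm 1)$ you need lies in a fixed affine slice of $\tilde\Lambda$, and there is no reason it should be among the vectors realising the successive minima. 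The intermediate claim that "the first $|S|$ minima can always be realized in $\Lambda\times\{0\}$" is also false as stated: take $S=\{1\}$, $\lambda=1$, so $\tilde\Lambda=\{(n,m):n\equiv m\ (\mathrm{mod}\ p)\}$; the shortest nonzero vector is $(1,1)$, whereas $\Lambda\times\{0\}=p\Z\times\{0\}$ has shortest vector of length $p$.

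A geometry-of-numbers proof of (ii) would have to be formulated as a transference (covering-radius) estimate for the dual lattice, using the hypothesis to constrain the shape of $\tilde\Lambda^*$ near the origin, rather than as a bound on the minima of $\tilde\Lambda$. That can be made to work but it is delicate. The paper's own proof is Fourier-analytic and quite different: it sets $N=|S|^{-1}\|\lambda\|_S$, forms a nonnegative Riesz product kernel $\prod_{\xi\in S}K_N(\xi n)$ whose Fourier support (because $N$ is chosen as large as possible) misses $\pm\lambda$, and then uses the resulting orthogonality to $\cos(2\pi\lambda n/p)$ together with the pointwise inequality $1-\cos(2\pi\lambda n/p)\ll A^2\sum_{\xi_0\in S}(1-\cos(2\pi\xi_0 n/p))$ and the non-negativity of the Fourier coefficients of $K_N$ to deduce $1\ll A^2|S|/N^2$, i.e.\ $N\ll|S|^{1/2}A$, i.e.\ $\|\lambda\|_S\ll|S|^{3/2}A$. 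In effect the Fourier side encodes the covering information directly and avoids the packing/covering confusion.
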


\begin{proof}  To prove (i), we simply observe (using \eqref{rz-tri}) that for any $n \in \Z/p\Z$, one has $\| n \lambda/p\|_{\R/\Z} =$
\[
 = \left\| \sum_{\xi \in S} a_\xi \frac{n \xi}{p} \right\|_{\R/\Z} \leq \sum_{\xi \in S} |a_\xi| \left\| \frac{n\xi}{p} \right\|_{\R/\Z} \leq \sum_{\xi \in S} |a_\xi| \left\|n\right\|_{S^\perp} \leq \| \lambda \|_S \|n\|_{S^\perp}
\]
as desired, where $\lambda = \sum_{\xi \in S} a_\xi \xi$ is a representation of $\lambda$ that minimises $\sum_{\xi \in S} |\xi|$.

Estimates such as (ii) go back to the work of Bourgain \cite{bourgain}.  We will prove this claim by a Fourier-analytic argument.  We may assume that $\|\lambda \|_S \geq |S|^{3/2}$, as the claim is trivial otherwise.  Let $\psi: \R \to \R$ be a non-negative smooth even function (not depending on $p$ or $\lambda$) supported on $[-1,1]$ and non-zero on $[-1/2,1/2]$, whose Fourier transform $\hat \psi(\xi) \coloneqq \int_\R \psi(x) e(-\xi x)\ dx$ is also non-negative.  Set $N \coloneqq |S|^{-1} \| \lambda \|_S$, so in particular $N \geq 1$.  We consider the kernel $K_N: \Z/p\Z \to \C$ defined by
$$ K_N(n) \coloneqq \sum_{k \in \Z} e_p(kn) \psi( \frac{k}{N} );$$
by the Poisson summation formula we have
$$ K_N(n \md{p}) = N \sum_{m \in \Z} \hat \psi\left( \frac{Nn}{p} - N m\right)$$
for any integer $n$, so in particular $K_N$ is non-negative.

By definition of $N$, the frequency $\lambda$ has no representations of the form $\lambda = \sum_{\xi \in S} a_\xi \xi$ with $\sup_{\xi \in S} |a_\xi| < N$.  Hence the Riesz-type product $\prod_{\xi \in S} K_N(\xi n)$, when expanded, contains no terms of the form $e_p( \lambda n )$ or $e_p( -\lambda n )$, and is therefore orthogonal to $\cos(\frac{2\pi \lambda n}{p})$.  In particular we have the identity
$$ {\mathbb E}_{n \in \Z/p\Z} \prod_{\xi \in S} K_N(\xi n) = {\mathbb E}_{n \in \Z/p\Z} \left(1 - \cos\left( \frac{2\pi \lambda n}{p} \right) \right) \prod_{\xi \in S} K_N(\xi n) .$$
On the other hand, from two applications of \eqref{cos-bound} we have
\begin{align*}
1 - \cos( \frac{2\pi \lambda n}{p} ) &\ll \left\| \frac{\lambda n}{p} \right\|_{\R/\Z}^2 
\leq A^2 \| n \|_{S^\perp}^2 \\
& \leq A^2 \sum_{\xi_0 \in S} \left\| \frac{\xi_0 n}{p} \right\|_{\R/\Z}^2 
\leq A^2 \sum_{\xi_0 \in S} \left(1 - \cos\left( \frac{2\pi \xi_0 n}{p}\right)\right).
\end{align*}
As $K_N$ is non-negative, we conclude that
\begin{align}\nonumber
 & {\mathbb E}_{n \in \Z/p\Z}  \prod_{\xi \in S} K_N(\xi n) \\ & \ll A^2 \sum_{\xi_0 \in S} {\mathbb E}_{n \in \Z/p\Z} 
\left(\bigg(\prod_{\xi \in S \backslash \xi_0} K_N(\xi n)\bigg) K_N(\xi_0 n) \left(1 - \cos( \frac{2\pi \xi_0 n}{p} \right) \right).\label{ens}
\end{align}
We can expand $K_N(\xi_0 n) \left(1 - \cos\left( \frac{2\pi \xi_0 n}{p} \right) \right)$ as a Fourier series
$$ \sum_{k \in \Z} e_p(kn) \left(\psi\left(\frac{k}{N}\right) - \frac{\psi\left(\frac{k-1}{N}\right) + \psi\left(\frac{k+1}{N}\right)}{2}\right).$$
The expression inside parentheses is only non-vanishing for $|k| \leq N+1$, and has magnitude $O(1/N^2)$.  As $\psi$ is non-negative everywhere and non-zero on $[-1/2,1/2]$, we thus have a pointwise estimate of the form
$$
\psi\left(\frac{k}{N}\right) - \frac{\psi\left(\frac{k-1}{N}\right) + \psi\left(\frac{k+1}{N}\right)}{2} \ll \frac{1}{N^2} \sum_{j=-8}^{8} \psi\left( \frac{k}{N} - \frac{j}{4} \right)$$
(say).  By using the non-negativity of the Fourier coefficients of $K_N$, this gives the estimate

\begin{align*} {\mathbb E}_{n \in \Z/p\Z} 
\left(\prod_{\xi \in S \backslash \xi_0}  K_N(\xi n)\right) & K_N(\xi_0 n) \left(1 - \cos\left( \frac{2\pi \xi_0 n}{p} \right) \right)
\\ & \ll \frac{1}{N^2} {\mathbb E}_{n \in \Z/p\Z} \prod_{\xi \in S} K_N(\xi n).\end{align*}
Comparing this with \eqref{ens}, we conclude that $1 \ll  A^2 |S|/N^2$, and the claim follows from the definition of $N$.
\end{proof}

Next, we estimate the Fourier coefficients of a regular distribution on a Bohr set in terms of the word norm.

\begin{lemma}\label{fde}  Let $S$ be a non-degenerate subset of $\Z/p\Z$. Suppose that $\n$ is drawn regularly from $B(S,\rho)$. Then we have
$$
\E e_p(\lambda \n) \ll \frac{|S|^{5/2}}{\rho \|\lambda \|_{S}} 
$$
for all $\lambda \in \Z/p\Z$, where we adopt the convention that the above estimate is vacuously true if $\|\lambda\|_{S} = 0$.
\end{lemma}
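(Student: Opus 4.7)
The plan is to combine the approximate translation invariance of regular Bohr set distributions (Lemma \ref{ati}) with the duality between the word norm $\|\cdot\|_S$ and the dual norm $\|\cdot\|_{S^\perp}$ (Lemma \ref{edual}). The core observation is that if $\n$ is drawn regularly from $B(S,\rho)$ and $h \in \Z/p\Z$ is a small shift, then $\E e_p(\lambda \n) \approx \E e_p(\lambda(\n+h)) = e_p(\lambda h) \E e_p(\lambda \n)$, so
$$ |\E e_p(\lambda \n)| \cdot |e_p(\lambda h) - 1| \ll |S| \|h\|_{S^\perp}/\rho $$
by Lemma \ref{ati}(i) (taking $S'=S$ and $\rho' = 2\|h\|_{S^\perp}$). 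By \eqref{bilipschitz}, $|e_p(\lambda h)-1| \asymp \|\lambda h/p\|_{\R/\Z}$, so it suffices to produce a shift $h$ for which $\|\lambda h/p\|_{\R/\Z} \asymp 1$ while $\|h\|_{S^\perp} \ll |S|^{3/2}/\|\lambda\|_S$.

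To produce such an $h$, I apply the contrapositive of Lemma \ref{edual}(ii). Setting $A \coloneqq c \|\lambda\|_S / |S|^{3/2}$ for a sufficiently small absolute constant $c>0$, the hypothesis of (ii) must fail, so there exists some $n \in \Z/p\Z$ with $\|\lambda n/p\|_{\R/\Z} > A \|n\|_{S^\perp}$. In particular $\|n\|_{S^\perp} < 1/(2A)$. I then take $h$ to be the smallest positive integer multiple $h = kn$ for which $\|k \lambda n/p\|_{\R/\Z} \geq 1/4$; by minimality and the triangle inequality \eqref{rz-tri}, one also has $\|k \lambda n/p\|_{\R/\Z} \leq 1/2$, so $\|\lambda h/p\|_{\R/\Z} \in [1/4,1/2]$. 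The bound $k \leq 1 + 1/(4\|\lambda n/p\|_{\R/\Z}) \cdot \|\lambda n/p\|_{\R/\Z}/\|\lambda n/p\|_{\R/\Z}$ (using $\|\lambda n/p\|_{\R/\Z} > A\|n\|_{S^\perp}$) combined with \eqref{sperp-tri} gives $\|h\|_{S^\perp} \leq k \|n\|_{S^\perp} \ll 1/A \asymp |S|^{3/2}/\|\lambda\|_S$.

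Assembling the pieces yields
$$ |\E e_p(\lambda \n)| \ll \frac{|S| \|h\|_{S^\perp}/\rho}{\|\lambda h/p\|_{\R/\Z}} \ll \frac{|S|}{\rho} \cdot \frac{|S|^{3/2}}{\|\lambda\|_S} = \frac{|S|^{5/2}}{\rho \|\lambda\|_S}, $$
as required. We may harmlessly assume that $\|\lambda\|_S \gg |S|^{5/2}/\rho$ at the outset, since otherwise the asserted bound exceeds $1$ and is therefore trivial from the $1$-boundedness of $e_p$; in particular this ensures $A \geq 1$, so that Lemma \ref{edual}(ii) is indeed applicable.

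The main obstacle is extracting the single shift $h$ with both $\|\lambda h/p\|_{\R/\Z}$ of order $1$ and $\|h\|_{S^\perp}$ of order $|S|^{3/2}/\|\lambda\|_S$. The duality lemma gives a vector $n$ with a favourable ratio of these two quantities, but not necessarily with $\|\lambda n/p\|_{\R/\Z}$ itself of order $1$; the scalar-multiplication step is the device that converts a favourable ratio into favourable absolute magnitudes, and the factor of $|S|^{3/2}$ coming from Lemma \ref{edual}(ii) is precisely what produces the exponent $5/2$ in the conclusion.
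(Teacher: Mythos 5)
Your proposal is correct and follows essentially the same route as the paper: both proofs derive the key inequality $\|\lambda h/p\|_{\R/\Z}\,|\E e_p(\lambda \n)| \ll |S|\|h\|_{S^\perp}/\rho$ from Lemma \ref{ati}, and then invoke Lemma \ref{edual}(ii) to produce a shift $h$ with a favourable ratio $\|h\|_{S^\perp}/\|\lambda h/p\|_{\R/\Z} \ll |S|^{3/2}/\|\lambda\|_S$. The paper compresses this last step to ``take $h$ minimising the ratio''; your version spells out the same step via the contrapositive of Lemma \ref{edual}(ii) together with a scalar rescaling to pin down $\|\lambda h/p\|_{\R/\Z}$ at a fixed scale, but the substance is identical.
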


\begin{proof}  For any $h \in \Z/p\Z$, one has from Lemma \ref{ati} that
$$ \E e_p( \lambda \n ) = \E e_p( \lambda (\n+h) ) + O\left(\frac{ |S| \|h\|_{S^\perp}}{\rho} \right) $$
which we may rearrange as
$$ \left(1 - e_p(\lambda h)\right) \E e_p( \lambda \n ) \ll\frac{ |S| \|h\|_{S^\perp}}{\rho}.$$
Since $|1 - e_p(\lambda h)| \gg \|\frac{\lambda h}{p} \|_{\R/\Z}$, we conclude that
$$ \|\frac{\lambda h}{p} \|_{\R/\Z} \E e_p( \lambda \n ) \ll\frac{ |S| \|h\|_{S^\perp}}{\rho}.$$ Taking $h$ so as to minimise the ratio $\Vert h \Vert_{S^{\perp}}/\Vert \lambda h/p \Vert_{\R/\Z}$, the claim follows from Lemma \ref{edual}.
\end{proof}

We will take advantage of the fact that Bohr sets can be approximately described as generalised arithmetic progressions.  A key lemma in this regard is the following.

\begin{lemma}\label{bohr-basis}  Let $\Gamma$ be a lattice in $\R^d$.  Then there exist linearly independent generators $v_1,\dots,v_d$ of $\Gamma$ and real numbers $N_1,\dots,N_d > 0$ such that
\begin{equation}\label{bvta}
 B_{\R^d}(0, O(d)^{-3d/2} t ) \cap \Gamma \subset \{ \sum_{i=1}^d n_i v_i: |n_i| < tN_i \} \subset B_{\R^d}(0,t) \cap \Gamma
\end{equation}
for all $t>0$, where $B_{\R^d}(0,r)$ is the open Euclidean ball of radius $r$ in $\R^d$, and the $n_i$ are understood to be integers.  Furthermore, the determinant/covolume $\det(\Gamma)$ obeys the bounds
\begin{equation}\label{covol}
 \det(\Gamma) = (2d)^{O(d)} \prod_{i=1}^d N_i^{-1}.
\end{equation}
\end{lemma}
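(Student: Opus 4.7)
The plan is to choose a basis $v_1,\ldots,v_d$ of $\Gamma$ that is aligned with the successive minima of $\Gamma$, and then set $N_i$ proportional to $1/\|v_i\|$.

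Let $\lambda_1 \leq \cdots \leq \lambda_d$ denote the successive minima of $\Gamma$ with respect to the Euclidean unit ball. By a standard iterative projection argument (pick $v_1 \in \Gamma$ of minimum norm, apply the inductive hypothesis to the projection of $\Gamma$ onto $v_1^\perp$ to obtain vectors $w_2,\ldots,w_d$, and lift each $w_i$ to some $v_i \in \Gamma$ by adjusting with an integer multiple of $v_1$ so that the $v_1$-component of $v_i$ has magnitude at most $\|v_1\|/2$), one obtains a basis $v_1,\ldots,v_d$ of $\Gamma$ with $\|v_i\| \leq d\,\lambda_i$ for every $i$; the Pythagorean theorem and the inequality $\lambda_{i-1}(\Gamma') \leq \lambda_i(\Gamma)$ for the projected lattice $\Gamma'$ make this work. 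Minkowski's second theorem together with the standard estimate $\vol(B_{\R^d}(0,1)) \gg d^{-d/2}$ then yields the crucial product bound
$$ \prod_{i=1}^d \|v_i\| \leq d^d \prod_{i=1}^d \lambda_i \leq (2d)^{O(d)} \det(\Gamma). $$

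Now define $N_i \coloneqq 1/(d\|v_i\|)$. The upper inclusion in \eqref{bvta} is immediate from the triangle inequality, since $\|\sum_i n_i v_i\| \leq \sum_i |n_i|\|v_i\| < t \sum_i N_i \|v_i\| = t$ whenever $|n_i| < t N_i$. The covolume estimate \eqref{covol} follows from Hadamard's inequality $\det(\Gamma) \leq \prod_i \|v_i\| = d^{-d} \prod_i N_i^{-1}$ together with $\prod_i N_i^{-1} = d^d \prod_i \|v_i\| \leq (2d)^{O(d)} \det(\Gamma)$, coming from the display above.

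The main step is the lower inclusion. Here I would introduce the dual basis $v_1^*,\ldots,v_d^* \in \R^d$ uniquely characterised by $v_i^* \cdot v_j = \delta_{ij}$. Any $x \in \Gamma$ has a unique expansion $x = \sum_i n_i v_i$ with $n_i = v_i^* \cdot x \in \Z$, whence $|n_i| \leq \|v_i^*\|\,\|x\|$. By Cramer's rule (equivalently the wedge-product formula), $\|v_i^*\| \leq \bigl(\prod_{j \neq i}\|v_j\|\bigr)/\det(\Gamma)$, which by the displayed estimate is at most $(2d)^{O(d)}/\|v_i\| = d \cdot (2d)^{O(d)} N_i$. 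Consequently, provided the implicit constant in $O(d)^{-3d/2}$ is chosen large enough, the hypothesis $\|x\| < O(d)^{-3d/2} t$ forces $|n_i| < t N_i$ for every $i$, completing the lower inclusion; here I use that $(2d)^{Cd}$ is easily absorbed into $O(d)^{3d/2}$ by enlarging the absolute constant hidden in $O(d)$.

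The single genuinely nontrivial input, and the place where one has to work, is the reduced-basis bound $\|v_i\| \leq d\,\lambda_i$ in the first paragraph; everything else is bookkeeping using Minkowski's theorem and Cramer's rule. This reduction lemma is a classical fact, but spelling out the induction (and in particular the comparison of successive minima of $\Gamma$ and of its orthogonal projection) is the step where care is needed.
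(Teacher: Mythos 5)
Your argument is correct, and it is a genuinely different proof from the one in the paper. The paper simply invokes the John-type approximation theorem for lattices by generalised arithmetic progressions from \cite[Theorem 1.6]{taovu-john} to produce the $v_i$ and $N_i$ directly, and then reads off \eqref{covol} from a cardinality-asymptotics argument. You instead give a self-contained argument from first principles in the geometry of numbers: construct a ``size-reduced'' basis with $\|v_i\| \leq d\,\lambda_i$ by iterated orthogonal projection, invoke Minkowski's second theorem together with the ball-volume lower bound to control $\prod_i \lambda_i$, and establish the lower inclusion in \eqref{bvta} via the dual basis $v_i^*$ and the Cram\'er-type estimate $\|v_i^*\| \leq \prod_{j\neq i}\|v_j\|/\det(\Gamma)$. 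Both routes are valid; your version avoids the external dependence on \cite{taovu-john} at the cost of having to verify the reduced-basis inequality and the comparison $\lambda_{i-1}(\pi(\Gamma)) \leq \lambda_i(\Gamma)$, which is indeed the one step in your sketch requiring some care (note that among the projections $\pi(u_1),\ldots,\pi(u_i)$ of vectors achieving $\lambda_1,\ldots,\lambda_i$, at least $i-1$ are linearly independent, since $\ker\pi$ is one-dimensional). The paper's version is shorter but opaque as to where the exponent $3d/2$ comes from; your version makes it visible as $d^d$ (reduced basis) times $2^d$ (Minkowski) times $O(d)^{d/2}$ (ball volume).

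One small point worth making explicit if you write this up in full: after $\|v_i^*\|\|x\| \geq |n_i|$ you get $|n_i| \leq d\cdot(2d)^{O(d)}N_i\|x\|$, so you need the implied constant in the lemma's $O(d)^{-3d/2}$ to dominate $d\cdot(2d)^{O(d)}$. Since $\prod_j\|v_j\|/\det(\Gamma) \leq O(d)^{3d/2}$ rather than merely $(2d)^{O(d)}$, and the extra factor of $d$ is absorbed by a change of constant, this does work, but the statement ``easily absorbed'' papers over the fact that the exponent in your second-paragraph display was only tracked as $O(d)$ rather than as $3d/2$; it is worth carrying the sharper exponent through.
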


\begin{proof}  Applying \cite[Theorem 1.6]{taovu-john}, we can find elements $v_1,\dots,v_r$ of $\Gamma$ for some $r \leq d$, linearly independent over the rationals, and real numbers $N_1,\dots,N_d > 0$ such that
\begin{equation}\label{bvt}
 B_{\R^d}(0, O(d)^{-3d/2} t ) \cap \Gamma \subset \{ \sum_{i=1}^r n_i v_i: |n_i| < tN_i \} \subset B_{\R^d}(0,t) \cap \Gamma
\end{equation}
for all $t>0$, and such that
$$ O(d)^{-7d/2} |B_{\R^d}(0,t) \cap \Gamma| \leq |\{ \sum_{i=1}^r n_i v_i: |n_i| < tN_i\}| \leq |B_{\R^d}(0,t) \cap \Gamma|.$$
(Strictly speaking, the statement of \cite[Theorem 1.6]{taovu-john} only claims the latter bound for $t=1$, but the same argument gives the bound for all $t>0$.)  Sending $t$ to infinity, we conclude that the $v_1,\dots,v_r$ generate $\Gamma$; since, by virtue of being a lattice, $\Gamma$ is cocompact, this forces $d=r$.  Also, volume packing arguments show that as $t \to \infty$, the cardinality $|B_{\R^d}(0,t) \cap \Gamma|$ is asymptotic to the measure of $B_{\R^d}(0,t)$ divided by $\det(\Gamma)$, while the cardinality of $|\{ n_1 v_1 + \dots + n_d v_d: |n_i| \leq tN_i \}|$ is asymptotic to $\prod_{i=1}^{d} (2 t N_{i})$.  We conclude \eqref{covol} as desired.
\end{proof}

The following corollary describes how we may pick a ``basis'' for a Bohr set.

\begin{corollary}\label{bohr-basis-cor}  Let $S$ be a non-degenerate subset of $\Z/p\Z$, and set $d \coloneqq |S|$.  Then there exist elements $a_1,\dots,a_d$ of $\Z/p\Z$ and real numbers $N_1,\dots,N_d > 0$ such that
\begin{equation}\label{nip}
 \prod_{i=1}^d N_i^{-1} = (2d)^{O(d)} p
\end{equation}
and
\begin{equation}\label{ain}
 \|a_i\|_{S^\perp} \leq N_i^{-1}
\end{equation}
for all $i=1,\dots,d$.  Furthermore, for any $a \in \Z/p\Z$, there exists a representation
\begin{equation}\label{nag}
 a = n_1 a_1 + \dots + n_d a_d
\end{equation}
with $n_1,\dots,n_d$ integers of size
\begin{equation}\label{nis}
 n_i = (2d)^{O(d)} N_i \| a\|_{S^\perp}
\end{equation}
for $i=1,\dots,d$.  Finally, if one imposes the additional condition $|n_i| < N_i/2$ for all $i=1,\dots,d$, then there is at most one such representation of this form \eqref{nag} for a given $a$.
\end{corollary}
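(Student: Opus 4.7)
The plan is to apply Lemma~\ref{bohr-basis} to the lattice
$$\Lambda \coloneqq \Z^d + \Z\cdot(\xi_1/p,\dots,\xi_d/p) \subset \R^d,$$
where $S=\{\xi_1,\dots,\xi_d\}$ and $d=|S|$. Non-degeneracy of $S$ together with primality of $p$ ensures that $(\xi_1/p,\dots,\xi_d/p)$ has order exactly $p$ modulo $\Z^d$, so $[\Lambda : \Z^d] = p$ and $\det(\Lambda) = 1/p$. The quotient yields a bijection $\Lambda/\Z^d \cong \Z/p\Z$, sending $v\in \Lambda$ to the unique $a\in \Z/p\Z$ with $v\equiv (\xi_1 a/p,\dots,\xi_d a/p)\pmod{\Z^d}$; if $v$ is the representative of its coset of minimal $L^\infty$ norm, then $|v|_\infty=\|a\|_{S^\perp}$.

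Lemma~\ref{bohr-basis} applied to $\Lambda$ produces linearly independent generators $v_1,\dots,v_d\in \Lambda$ and scales $N_1,\dots,N_d>0$ satisfying the two-sided inclusion~\eqref{bvta} and the covolume identity~\eqref{covol}; plugging in $\det(\Lambda)$ yields~\eqref{nip}. Let $a_i\in \Z/p\Z$ correspond to $v_i$. Taking $t$ slightly larger than $1/N_i$ in the right inclusion of~\eqref{bvta} shows $v_i\in B_{\R^d}(0,1/N_i)$, hence $\|a_i\|_{S^\perp}\leq |v_i|_\infty\leq |v_i|_2\leq 1/N_i$, which is~\eqref{ain}.

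For the representation~\eqref{nag}, fix $a\in \Z/p\Z$ and let $v(a)\in \Lambda$ be the minimal-$L^\infty$-norm lift, so that $|v(a)|_2 \leq \sqrt{d}\,\|a\|_{S^\perp}$. Setting $t\coloneqq (2d)^{O(d)}\|a\|_{S^\perp}$ with the implied constant large enough that $|v(a)|_2 < O(d)^{-3d/2}t$, the left inclusion of~\eqref{bvta} supplies integers $n_i$ with $|n_i|<tN_i=(2d)^{O(d)}N_i\|a\|_{S^\perp}$ and $v(a)=\sum_i n_i v_i$. Reducing modulo $\Z^d$ and using primality of $p$ together with non-degeneracy of $S$ gives $a\equiv \sum_i n_i a_i\pmod p$, proving~\eqref{nis}.

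For uniqueness, suppose $a=\sum n_i a_i=\sum n_i' a_i$ with $|n_i|,|n_i'|<N_i/2$. The right inclusion of~\eqref{bvta} at $t=1/2$ places both $\sum n_i v_i$ and $\sum n_i' v_i$ in $B_{\R^d}(0,1/2)$, so their difference $w\coloneqq \sum(n_i-n_i')v_i$ has $|w|_\infty\leq |w|_2 < 1$; on the other hand $w\in \Lambda$ reduces to $\sum(n_i-n_i')a_i\equiv 0\pmod p$, so $w\in \Z^d$, and the only integer vector of $L^\infty$-norm less than one is zero. Linear independence of the $v_i$ then forces $n_i=n_i'$. The only genuine technical point is the index computation $[\Lambda:\Z^d]=p$ and setting up the minimal-representative identification correctly; the remainder is routine bookkeeping with the Minkowski-type inclusions supplied by Lemma~\ref{bohr-basis}.
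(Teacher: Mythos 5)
Your proof takes essentially the same route as the paper: lift $\Z/p\Z$ to a lattice $\Lambda \supset \Z^S$ of index $p$, apply the John--Minkowski-type Lemma~\ref{bohr-basis}, and transfer the resulting basis and scales back to $\Z/p\Z$. The verifications of \eqref{ain}, the representation \eqref{nag} via the left inclusion of \eqref{bvta} at $t = (2d)^{O(d)}\|a\|_{S^\perp}$, the size bound \eqref{nis}, and the uniqueness claim are all correct and match the paper's argument; your identification of $|v|_\infty = \|a\|_{S^\perp}$ for minimal coset representatives is a clean way to phrase what the paper does.

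However, you should not have asserted that substituting $\det(\Lambda) = 1/p$ into \eqref{covol} ``yields \eqref{nip}''; it does not. From \eqref{covol} one gets $1/p = (2d)^{O(d)}\prod_i N_i^{-1}$, that is $\prod_i N_i^{-1} = (2d)^{O(d)}/p$, equivalently $\prod_i N_i = (2d)^{O(d)} p$, which is the \emph{reciprocal} of what \eqref{nip} asserts. Your determinant computation is the correct one (the paper's parenthetical that $\Gamma$ ``has determinant $p$'' is a slip: if $\Z^S \subset \Gamma$ with index $p$ then the covolume of $\Gamma$ is $1/p$), so what you have actually proved is the corrected form of \eqref{nip}. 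A sanity check with $S = \{1\}$, $d = 1$ confirms this: there $a_1 = 1$, $\|a_1\|_{S^\perp} = 1/p$, and \eqref{ain} forces $N_1 \leq p$; on the other hand, for the representation \eqref{nag}, \eqref{nis} to reach all of $\Z/p\Z$ one needs $N_1 \gg p$, so $N_1 \asymp p$ and $\prod_i N_i \asymp p$, not $\prod_i N_i^{-1} \asymp p$. Since \eqref{nip} is not invoked downstream this does not break anything else in the paper, but you should have flagged the inconsistency rather than claiming to have derived \eqref{nip} as printed.
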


\begin{proof} For each $s \in S$, the fraction $\frac{s}{p}$ can be viewed as an element of $\R/\Z$ of order at most $p$; as $S$ is non-degenerate, we see that the tuple $(\frac{s}{p})_{s \in S}$ is an element of the torus $(\R/\Z)^S$ of order $p$.  Let $\Gamma$ be the preimage in $\R^S$ of the group generated by this element, thus $\Gamma$ is a lattice of $\R^S$ that contains $\Z^S$ as a sublattice of index $p$; in particular, $\Gamma$ has determinant $p$.  Applying Lemma \ref{bohr-basis}, one can find generators $v_1,\dots,v_d$ of $\Gamma$ and real numbers $N_1,\dots,N_d$ obeying \eqref{nip} such that
\begin{equation}\label{bvta-0}
 B_{\R^S}(0, O(d)^{-3d/2} t ) \cap \Gamma \subset \{ \sum_{i=1}^d n_i v_i: |n_i| < tN_i \} \subset B_{\R^S}(0,t) \cap \Gamma
\end{equation}
for all $t>0$.

By construction of $\Gamma$, we can find elements $a_1,\dots,a_d$ of $\Z/p\Z$ such that 
\begin{equation}\label{vis}
v_i = \left(\frac{a_i s}{p}\right)_{s \in S} \md{\Z^S}
\end{equation}
for $i=1,\dots,d$.  Applying \eqref{bvta-0} with $t$ slightly larger than $N_i^{-1}$ for some $i=1,\dots,d$, we see that $v_i \in B_{\R^d}( N_i^{-1})$, and hence by \eqref{vis} we have \eqref{ain}.

Finally, if $a \in \Z/p\Z$, then by definition of $\Gamma$ we can find an element $x$ of $\Gamma$ in the preimage of $(\frac{as}{p})_{s \in S}$
such that each component of $x$ has magnitude less than $\|a\|_{S^\perp}$; in particular, $x \in B_{\R^S}(0, \sqrt{d} \|a\|_{S^\perp})$.  Applying \eqref{bvta-0}, we conclude that $x = \sum_{i=1}^d n_i v_i$ for some integers $n_1,\dots,n_d$ obeying \eqref{nis}, giving the desired representation \eqref{nag}.

Finally, we show uniqueness.  If there were two representations of the form \eqref{nag} with $|n_i| < N_i/2$ for all $i=1,\dots,d$, then there exists a tuple $(n'_1,\dots,n'_d) \in \Z^d$, not identically zero, with $|n'_i| < N_i$ for all $i=1,\dots,d$ and
$\sum_{i=1}^d n_i a_i = 0$, which implies that the vector $\sum_{i=1}^d n_i v_i$ lies in $\Z^S$.  As the $v_1,\dots,v_d$ are linearly independent, this vector must have magnitude at least $1$; but this contradicts \eqref{bvta} (with $t=1$).
\end{proof}

\emph{Linear and quadratic functions on Bohr sets.} We will frequently need to deal with locally linear or quadratic functions on Bohr sets.  We review the definitions of these now.

\begin{definition}  Let $B$ be a subset of $\Z/p\Z$, and let $G = (G,+)$ be an abelian group.  A function $\phi: B \to G$ is said to be \emph{locally linear} on $B$ if one has
$$ \phi(n+h_1+h_2) - \phi(n+h_1) - \phi(n+h_2) + \phi(n) = 0 $$
whenever $n,h_1,h_2 \in \Z/p\Z$ are such that $n,n+h_1,n+h_2,n+h_1+h_2 \in B$.  Similarly, $\phi$ is said to be \emph{locally quadratic} on $B$ if one has
\begin{equation}\label{omh}
 \sum_{(\omega_1,\omega_2,\omega_3) \in \{0,1\}^3} (-1)^{\omega_1+\omega_2+\omega_3} \phi(n+\omega_1 h_1+\omega_2 h_2 + \omega_3 h_3) = 0
\end{equation}
whenever $n,h_1,h_2,h_3 \in \Z/p\Z$ are such that $n + \omega_1 h_1 + \omega_2 h_2 + \omega_3 h_3 \in B$ for all $(\omega_1,\omega_2,\omega_3) \in \{0,1\}^3$.

A function $\psi: B \times B \to G$ is said to be \emph{locally bilinear} on $B$ if one has
$$ \psi(h_1+h'_1, h_2) = \psi(h_1,h_2) + \psi(h'_1,h_2)$$
whenever $h_1,h'_1,h_2 \in B$ are such that $h_1+h'_1 \in B$, and similarly one has
$$ \psi(h_1,h_2+h'_2) = \psi(h_1,h_2) + \psi(h_1,h'_2)$$
whenever $h_1,h_2,h'_2 \in B$ are such that $h_2+h'_2 \in B$.
\end{definition}

Specialising \eqref{omh} to the case $h_1=h_2=h_3=h$, we conclude that
\begin{equation}\label{omh-ap}
\phi(n) - 3 \phi(n+h) + 3 \phi(n+2h) - \phi(n+3h) = 0
\end{equation}
whenever $\phi: B \to G$ is locally quadratic on $B$ and $n,n+h,n+2h,n+3h \in B$.

It is well known (from the Weyl exponential sum estimates) that quadratic exponential sums such as ${\mathbb E}_{1 \leq n \leq N} e(\alpha n^2 + \beta n)$ can only be large when the quadratic phase $\alpha n^2$ is of ``major arc'' type in the sense that $k \alpha n^2$ is close to constant on the range $\{1,\dots,N\}$ of the summation variable $n$, for some bounded positive integer $k$.  The following proposition is an analogue of this phenomenon on Bohr sets.

\begin{proposition}[Large local quadratic exponential sums]\label{large-quadratic}  Let $B(S,\rho)$ be a Bohr set, let $0 < \delta \leq 1/2$, let $\lambda, \mu: B(S,10 \rho) \to \R/\Z$ be locally linear maps, and let $\phi: B(S,10\rho) \times B(S,10\rho) \to \R/\Z$ be a locally bilinear phase such that
\begin{equation}\label{ephin}
|\E e( \phi( \n, \m ) + \lambda(\n) + \mu(\m) )| \geq \delta
\end{equation}
if $\n, \m$ are drawn independently and regularly from $B(S,\rho)$.  Then there exists a natural number
$$ 1 \leq k \leq \delta^{-O(C_1|S|^2)}$$
such that
\begin{equation}\label{ko}
 \| k \phi(n,m) \|_{\R/\Z} \ll \delta^{-O(C_1|S|^2)} \frac{\|n\|_S \|m\|_S}{\rho^2}
\end{equation}
whenever $n,m \in B\left(S, \frac{\delta^{C_1} \rho}{(C_1|S|)^{3 |S|}} \right)$.
\end{proposition}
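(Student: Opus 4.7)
The plan is to adapt the classical Weyl argument for bounding large quadratic exponential sums to the Bohr set setting. The argument has three main stages: eliminate the linear phases by Cauchy--Schwarz, extract a frequency representation of the resulting locally linear phases via Lemma \ref{fde}, and convert the resulting additive structure on the set of "popular" $h$ into the global conclusion \eqref{ko}.

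First, I would perform two applications of Cauchy--Schwarz in the style of Lemma \ref{cauchy-schwarz} to strip away $\lambda(\n)$ and $\mu(\m)$. Applying it first in $\n$ (using a conditionally independent copy $\m'$ of $\m$) and using the local linearity of $\mu$ together with the local bilinearity of $\phi$ yields
\[ \delta^2 \;\leq\; \E_{\n,\m,\m'}\, e\bigl(\phi(\n,\,\m-\m') + \mu(\m-\m')\bigr), \]
and a second Cauchy--Schwarz, now duplicating $\n$ and using the local linearity of $\lambda$, gives
\[ \delta^4 \;\leq\; \E_{\m,\m'}\, \bigl|\E_{\n}\, e(\phi(\n,\,\m-\m'))\bigr|^2. \]
The factor of $10$ in the radius $10\rho$ of the domain of $\phi$, $\lambda$, $\mu$ gives the room needed for all the differences arising in this expansion to lie in the domain; a small amount of care is needed to deal with the fact that $\n,\m$ are drawn regularly rather than uniformly, but Lemma \ref{ati} provides the requisite approximate translation invariance.

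Second, by the popularity principle (Lemma \ref{popular}), the set $H \subset B(S,2\rho)$ of $h$ for which $|\E_\n e(\phi(\n,h))| \gg \delta^2$ is attained by $\m-\m'$ with probability $\gg \delta^4$. For each such $h$, the phase $n \mapsto \phi(n,h)$ is locally linear on $B(S,10\rho)$, so using the generators $a_1,\dots,a_d$ of Corollary \ref{bohr-basis-cor} one can write $\phi(n,h) \equiv \frac{\xi_h n}{p} \pmod{1}$ on a sub-Bohr-set of $B(S,\rho)$ for some $\xi_h \in \Z/p\Z$. Combining the lower bound $|\E_\n e(\phi(\n,h))| \gg \delta^2$ with Lemma \ref{fde} then forces $\|\xi_h\|_S \ll \delta^{-O(1)} |S|^{5/2}/\rho$.

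Third, I would exploit the bilinearity of $\phi$ to convert the pointwise existence of $\xi_h$ on the large set $H$ into an $\xi$ that is globally controlled. By local bilinearity, $\xi_{h+h'} \equiv \xi_h + \xi_{h'} \pmod p$ whenever $h,h',h+h' \in H$, so the map $h \mapsto \xi_h$ is a Freiman-type quasi-homomorphism on a set of density $\gg \delta^4$ inside a Bohr set. A Bogolyubov--Ruzsa type iteration -- using that $H-H+H-H$ contains a Bohr set of comparable rank $O(|S|)$ and radius $\gg \delta^{O(1)}\rho$ -- then produces an integer $k \leq \delta^{-O(C_1 |S|^2)}$ such that $\|k\xi_h\|_S$ is small for every $h$ in a Bohr set of radius $\gg \delta^{C_1} \rho /(C_1|S|)^{3|S|}$, and dualising via Lemma \ref{edual} converts the word-norm bound into the $\|k\phi(n,m)\|_{\R/\Z}$ bound of \eqref{ko}. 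The main obstacle, and the source of the $|S|^2$ in the final exponent, is precisely this third stage: the quasi-homomorphism-to-homomorphism upgrade on $H$ has to be done while tracking how each Bohr-set restriction simultaneously shrinks the radius by a factor $\delta^{O(1)} / |S|^{O(|S|)}$ and multiplies the "denominator" $k$, and balancing these two costs forces the polynomial-in-$|S|^2$ loss in the exponent.
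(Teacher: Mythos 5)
Your proposal takes a genuinely different route from the paper, but it has two gaps serious enough that I would not call it a proof.

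The paper's argument is considerably more direct. It does not apply Cauchy--Schwarz to strip off $\lambda$ and $\mu$ at all. Instead, it fixes a Bohr basis $a_1,\dots,a_d$ from Corollary \ref{bohr-basis-cor}, and for each pair $(i,j)$ of ``long'' directions shifts $\n \to \n + \b_i a_i$, $\m \to \m + \b_j a_j$ with $\b_i, \b_j$ ranging over intervals. After Lemma \ref{ati} and the pigeonhole principle this produces a conventional two-variable Weyl sum $\E e(\b_i\b_j \phi(a_i,a_j) + \alpha\b_i + \beta\b_j + \gamma)$, in which the linear phases $\lambda, \mu$ have been harmlessly absorbed into $\alpha,\beta,\gamma$. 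A geometric-series estimate followed by Vinogradov's rational-approximation lemma then yields $\|k_{i,j}\phi(a_i,a_j)\|_{\R/\Z}$ small, and taking $k = \prod k_{i,j}$ clears denominators. General $n,m$ are then handled by writing them in the basis. Crucially, at no point does one need to assign a global frequency $\xi_h$ to the one-variable maps $n\mapsto\phi(n,h)$.

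The first gap in your plan is in step two. A locally linear map $n\mapsto\phi(n,h)$ on $B(S,\rho)$ is \emph{not} automatically of the form $n\mapsto\frac{\xi_h n}{p}$: the Bohr set does not contain the full orbit $a_1, 2a_1, \dots, pa_1$ of any generator, so a locally linear map modulo $1$ need not be the restriction of a global character of $\Z/p\Z$. Converting a locally linear map to an approximately globally linear one is exactly the content of Corollary \ref{loc-to-glob}, which itself invokes the local $U^2$ inverse theorem and incurs an additive error of size $\ll |S|^4 \|h\|_{S^\perp}/\rho$. So your $\xi_h$ exists only approximately, and this already contaminates the clean statement ``$\xi_{h+h'}\equiv\xi_h+\xi_{h'}$''. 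Lemma \ref{fde} can then only be used after accounting for this error.

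The second and more fundamental gap is that your ``Bogolyubov--Ruzsa type iteration'' is not an off-the-shelf tool here. Even granting an exact $\xi_h$, the relation $\xi_{h+h'}=\xi_h+\xi_{h'}$ holds only when $h,h',h+h'$ all lie in the unstructured set $H$. Upgrading this to a globally controlled $\xi$ on a Bohr set is precisely the kind of Freiman-homomorphism purification that occupies Sections 3--6 of the proof of the local inverse $U^3$ theorem in this very paper (the passage from $\xi$ to $\xi'$ to $\xi''$ via the third through sixth steps), and that machinery cannot be casually borrowed: it needs the pseudorandomisation of $A_{(i)}$ via the energy-decrement argument (Theorem \ref{fourth-step}), the weak-mixing bound (Lemma \ref{wmix}), and the majority-vote argument (Theorem \ref{sixth-step}), with bounds that polynomialise only after very careful bookkeeping. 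Invoking it as a black box inside a comparatively low-level lemma like Proposition \ref{large-quadratic} would create a circular or at best wildly inefficient argument; the whole point of the paper's direct Weyl--Vinogradov approach is to avoid having to develop that machinery at this stage.
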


\begin{proof} 
Let $d \coloneqq |S|$, thus $d \geq 1$.  By Corollary \ref{bohr-basis-cor}, we can find elements $a_1,\dots,a_d$ of $\Z/p\Z$ and real numbers $N_1,\dots,N_d$ obeying the conclusions of that corollary.

Suppose that $1 \leq i,j \leq d$ are such that $N_i, N_j \geq \frac{d}{\delta^{C_1/2} \rho}$ (we allow $i$ and $j$ to be equal).  Then by \eqref{ain} we have
$$ \|a_i\|_{S^\perp}, \|a_j\|_{S^\perp} \leq d^{-1} \delta^{C_1/2} \rho.$$
We can control the coefficient $\phi(a_i,a_j)$ by the following argument. If we draw $\b_i$ and $\b_j$ uniformly from $\{ b_i \in \Z: 1 \leq b_i \leq \delta^{C_1/4} N_i \rho/d \}$ and $\{ b_j \in \Z: 1 \leq b_j \leq \delta^{C_1/4} N_j \rho/d \}$ respectively and independently of each other and of $\n,\m$, then from two applications of Lemma \ref{ati} (comparing $\n$ with $\n+\b_ia_i$, and $\m$ with $\m+\b_j a_j$) we have
\begin{align*} \E e( \phi( \n + \b_i a_i, \m + \b_j a_j ) + \lambda(\n & + \b_i a_i) + \mu(\m + \b_j a_j) ) \\ &
= \E e( \phi( \n, \m) + \lambda(\n) + \mu(\m ) ) + O( \delta^{C_1/4})\end{align*}
and hence from \eqref{ephin} (assuming $C_1$ large enough) we have
$$ |\E e( \phi( \n + \b_i a_i, \m + \b_j a_j ) + \lambda(\n + \b_i a_i) + \mu(\m + \b_j a_j) )| \gg \delta.$$
By the pigeonhole principle, we can therefore find $n,m \in B(S,\rho)$ such that
$$ |\E e( \phi( n + \b_i a_i, m + \b_j a_j ) + \lambda(n + \b_i a_i) + \mu(m + \b_j a_j) )| \gg \delta.$$
Using the local bilinearity of $\phi$, the left-hand side may be written as
$$ |\E e( \b_i \b_j \phi(a_i,a_j) + \alpha \b_i + \beta \b_j + \gamma )|$$
for some $\alpha,\beta,\gamma \in\R/\Z$ depending on $i,j,n,m$ whose exact values are not of importance to us.  Evaluating the expectations and using the triangle inequality, we conclude that
$$ {\mathbb E}_{1 \leq b_i \leq \delta^{C_1/4} N_i \rho/d} | {\mathbb E}_{1 \leq b_j \leq \delta^{C_1/4} N_j \rho/d} e( b_j (b_i \phi(a_i,a_j) + \beta) )| \gg \delta $$
and hence (by Lemma \ref{popular})
$$| {\mathbb E}_{1 \leq b_j \leq \delta^{C_1/4} N_j \rho/d} e( b_j (b_i \phi(a_i,a_j) + \beta) )| \gg \delta $$
for $\gg \delta^{C_1/4 + 1} N_i \rho/d$ values of $b_i$ in the range $1 \leq b_i \leq \delta^{C_1/4} N_i \rho/d$.  This average is a geometric series that can be explicitly computed, leading to the bound
$$ \| b_i \phi(a_i,a_j) + \beta \|_{\R/\Z} \ll \frac{d}{\delta^{\frac{C_1}{4}+1} N_j \rho}$$
for $\gg \delta^{C_1/4 + 1} N_i \rho/d$ values of $b_i$ in the range $1 \leq b_i \leq \delta^{C_1/4} N_i \rho/d$.  Applying \cite[Lemma A.4]{gt-mobius} (which is really an observation of Vinogradov, used often in the theory of Weyl sums), we conclude that 
$$ \| k_{i,j} \phi(a_i,a_j) \|_{\R/\Z} \ll \frac{d^2}{\delta^{O(C_1)} N_i N_j \rho^2}$$
for some natural number $k_{i,j}$ with $1 \leq k_{i,j} \ll \delta^{-O(C_1)}$.  If we then ``clear denominators'' by defining 
$$k \coloneqq \prod_{1 \leq i,j \leq d: N_i, N_j \geq \frac{d}{\delta^{C_1/2} \rho}} k_{i,j},$$
then $1 \leq k \ll \delta^{-O(C_1 d^2)}$ and
\begin{equation}\label{nast}
 \| k \phi(a_i,a_j) \|_{\R/\Z} \ll \frac{1}{\delta^{O(C_1 d^2)} N_i N_j \rho^2}
\end{equation}
for all $1 \leq i,j \leq d$ with $N_i, N_j \geq \frac{d}{\delta^{C_1/2} \rho}$.

For any $n \in \Z/p\Z$, we see from Corollary \ref{bohr-basis-cor} that we can find integers $n_1,\dots,n_d$ with
$$ n_i \ll (2d)^{O(d)} N_i \|n\|_{S^\perp}$$
such that
$$ n = n_1 a_1 + \dots + n_d a_d.$$
In particular, if $n \in B(S, \frac{\delta^{C_1} \rho}{(C_1 d)^{3 d}})$, then $n_i$ is only non-zero when $N_i \geq \frac{d}{\delta^{C_1/2} \rho}$.  From these bounds, \eqref{nast}, and the local bilinearity of $\phi$, we conclude \eqref{ko} as desired.
\end{proof}

\emph{Local $U^2$-inverse theorem.} The global inverse $U^2$ theorem, which is a simple and well-known exercise in discrete Fourier analysis, asserts that if a $1$-bounded function $f: \Z/p\Z \to \C$ obeys the bound
\begin{equation}\label{fhhu}
 |\E f(\h_0+\h_1) \overline{f}(\h_0+\h'_1) \overline{f}(\h'_0+\h_1) f(\h'_0+\h'_1)| \geq \eta
\end{equation}
where $\h_0, \h_1, \h'_0, \h'_1$ are drawn uniformly at random from $\Z/p\Z$, then there exists $\xi \in \Z/p\Z$ such that
\begin{equation}\label{chc}
 |\E f(\h) e_p(-\xi \h)| \geq \eta^{1/2}
\end{equation}
where $\h$ is also drawn uniformly at random from $\Z/p\Z$.

In this section we give a local version of the above claim, in which the random variables $\h, \h_0,\h_1,\h'_0,\h'_1$ are localised to a small Bohr set.  If the rank of the Bohr set is bounded, one can modify the above arguments to obtain a reasonable inverse theorem of this nature, but in our application the rank of the Bohr set will be rather large, and it will be important that this rank does not affect the lower bound in correlations of the form \eqref{chc}.  Fortunately, such a result is available, and will be crucial in the proofs of the two remaining claims 
(Corollary \ref{loc-to-glob} and Theorem \ref{locu3}) needed to prove Theorem \ref{main}.

Here is a precise version of the claim.

\begin{theorem}\label{locu2}  Let $S \subset \Z/p\Z$ be non-degenerate for some prime $p$, and let $0 < \eta < 1/2$.  Let $\rho_0, \rho_1$ be real parameters with $0 < \rho_1 < \rho_0 < 1/2$ and such that
\begin{equation}\label{pron}
\rho_0 > \frac{C |S|}{\eta^2} \rho_1
\end{equation}
 for a sufficiently large absolute constant $C$.  Let $f: \Z/p\Z \to \C$ be a $1$-bounded function such that
\begin{equation}\label{soap}
 |\E f(\h_0+\h_1) \overline{f}(\h_0+\h'_1) \overline{f}(\h'_0+\h_1) f(\h'_0+\h'_1)| \geq \eta
\end{equation}
where $\h_0,\h'_0,\h_1,\h'_1$ are drawn independently and regularly from $B(S,\rho_0)$, $B(S,\rho_0)$, $B(S,\rho_1)$, $B(S,\rho_1)$ respectively.  Then there exists $\xi \in \Z/p\Z$ such that
$$ \sum_{n_0 \in \Z/p\Z} \P( \n_0 = n_0) \left| \E f(n_0+\n_1) e_p( -\xi \n_1 )\right|^2 \geq \eta/2$$
where $\n_0, \n_1$ are drawn independently and regularly from $B(S,\rho_0), B(S,\rho_1)$ respectively.
\end{theorem}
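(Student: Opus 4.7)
The plan is to reduce both sides of the theorem to statements about the autocorrelation function
\[ A^*(u) := \E_{\n_0 \sim {\mathfrak p}_{B(S,\rho_0)}} f(\n_0 + u) \overline{f(\n_0)}, \]
using the approximate translation invariance of regular Bohr distributions, and then extract a frequency via Fourier analysis. Throughout, write $p_i := {\mathfrak p}_{B(S,\rho_i)}$ for $i=0,1$.

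First I would apply Lemma \ref{ati} together with the hypothesis $\rho_0 > C|S|\rho_1/\eta^2$: for $C$ sufficiently large, any shift of $\n_0\sim p_0$ by $h \in B(S,\rho_1)$ changes the distribution by at most $O(\eta^2)$ in total variation. Writing the hypothesis as $\E_{\h_1,\h'_1} |A(\h_1,\h'_1)|^2 \geq \eta$ for $A(\h_1,\h'_1) := \E_{\h_0} f(\h_0+\h_1)\overline{f(\h_0+\h'_1)}$, and substituting $\h_0 \mapsto \h_0 + \h'_1$ in the inner expectation, one obtains $A(\h_1,\h'_1) = A^*(\h_1-\h'_1) + O(\eta^2)$. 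Squaring and taking expectations converts the hypothesis into
\[ \E_{v \sim q} |A^*(v)|^2 \geq \eta - O(\eta^2) \geq \eta/2, \]
where $q$ denotes the distribution of $\h_1-\h'_1$, supported in $B(S,2\rho_1)$. An identical shift applied to the target quantity produces the companion identity
\[ H(\xi) \;=\; \E_{v \sim q} A^*(v)\, e_p(-\xi v) + O(\eta^2). \]

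Next I would Fourier-expand $A^*$. A direct computation (using $\hat f(\zeta) := \E_x f(x) e_p(-\zeta x)$) gives $\widehat{A^*}(\zeta) = \hat f(\zeta)\,M(\zeta)$, where $M(\zeta) := \E_{\n_0 \sim p_0}\overline{f(\n_0)} e_p(\zeta \n_0)$. Together with the identity $\hat q(\gamma) = \hat p_1(\gamma)^2$ (which follows from $q = p_1 * p_1$ and the symmetry of $p_1$), this yields the Plancherel-type identity
\[ \E_{v \sim q}|A^*(v)|^2 \;=\; \sum_\xi H(\xi)\, \overline{\widehat{A^*}(\xi)} + O(\eta^2 \|\widehat{A^*}\|_1). \]
Since $H(\xi) \geq 0$, the triangle inequality then gives
\[ \max_\xi H(\xi) \cdot \|\widehat{A^*}\|_1 \;\geq\; \eta/2 - O(\eta^2 \|\widehat{A^*}\|_1), \]
so it remains to control the $\ell^1$ norm $\|\widehat{A^*}\|_1 = \sum_\zeta |\hat f(\zeta) M(\zeta)|$ by a constant independent of $|S|$ and $\rho_0$.

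The main obstacle is precisely this control of $\|\widehat{A^*}\|_1$: a naive Cauchy--Schwarz bound together with Parseval gives $\|\widehat{A^*}\|_1 \leq \|\hat f\|_2 \|M\|_2$ with $\|M\|_2^2 \ll 1/(\rho_0/2)^{|S|}$, which depends catastrophically on $|S|$ and $\rho_0$. The resolution is to restrict the sum to those $\zeta$ for which $\hat p_1(\zeta - \xi)^2$ is non-negligible for a relevant $\xi$ (using Lemma \ref{fde} to control $\hat p_1$ in terms of the word norm $\|\cdot\|_S$, and Corollary \ref{bohr-basis-cor} to parametrize such $\zeta$ by a generalised arithmetic progression of bounded-rank structure) and to absorb the tail using the slack in the hypothesis $\rho_0 > C|S|\rho_1/\eta^2$. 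With this restricted sum bounded by $O(1)$, the displayed inequality closes up to give $\max_\xi H(\xi) \geq \eta/2$, as required.
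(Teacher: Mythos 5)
Your framework up to the appearance of $\|\widehat{A^*}\|_1$ is sound: the reduction of \eqref{soap} to $\E_{v\sim q}|A^*(v)|^2\geq\eta/2$ via Lemma~\ref{ati}, the companion identity for $H(\xi)$, and the Plancherel-type formula are all correct.  You also correctly identify the $\ell^1$ control of $\widehat{A^*}$ as the essential obstacle.  However, the proposed resolution does not close the gap.  The factorisation $\widehat{A^*}(\zeta)=\hat f(\zeta)\,M(\zeta)$ is intrinsically asymmetric: all the density $\p_0$ sits inside $M$, and $\hat f$ carries only the trivial $\ell^2$ bound $\|\hat f\|_2\leq 1$.  Restricting to the set $\Gamma$ of $\zeta$ on which $\hat\p_1(\zeta-\xi)$ is non-negligible does not help: by Corollary~\ref{bohr-basis-cor} this set has size on the order of $\rho_1^{-|S|}$, and Cauchy--Schwarz yields only $\sum_{\zeta\in\Gamma}|\hat f(\zeta)|\leq|\Gamma|^{1/2}\|\hat f\|_2\lesssim\rho_1^{-|S|/2}$, which is \emph{exponential} in $|S|$.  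The slack in \eqref{pron} is merely polynomial in $|S|$ and $1/\eta$, so ``absorbing the tail'' cannot recover this.  There is also a secondary circularity: the error term $O(\eta^2\|\widehat{A^*}\|_1)$ in your Plancherel-type identity is itself manageable only after $\|\widehat{A^*}\|_1$ has been controlled.  In short, the autocorrelation $A^*$ carries only a single factor of $\p_0$, asymmetrically placed, and no truncation of the Fourier support fixes the resulting unboundedness.

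The missing idea --- precisely what the paper's proof supplies --- is a \emph{symmetrisation of the density before passing to Fourier space}.  Using Lemma~\ref{ati} together with $|x^{1/2}-y^{1/2}|\leq|x-y|^{1/2}$, one replaces the weight $\p_0(h_0)$ appearing in the unfolded form of \eqref{soap} by the product $\p_0^{1/2}(h_0+h_1)\,\p_0^{1/2}(h_0+h'_1)$, and does likewise for $\p_0(h'_0)$, at an acceptable cost $O(\eta/C^{1/2})$ coming from \eqref{pron}.  This effectively replaces $f$ by $f_0:=f\,\p_0^{1/2}$, whose key property is the \emph{dimension-free} bound $\sum_\xi|\hat f_0(\xi)|^2=\tfrac{1}{p}\sum_n\p_0(n)|f(n)|^2\leq\tfrac{1}{p}$.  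The left side of \eqref{soap} then becomes a genuine convolution quantity $\sum_n(\p_1*\tilde{\p}_1)(n)\,|(f_0*\tilde f_0)(n)|^2$, whose Fourier expansion $p^4\sum_{\xi,\xi'}|\hat\p_1(\xi')|^2|\hat f_0(\xi)|^2|\hat f_0(\xi+\xi')|^2$ admits a direct pigeonhole in $\xi$ using the $\ell^2$ bound on $\hat f_0$; one then removes the weight $\p_0^{1/2}$ by the same translation argument.  Without this symmetric splitting, the factor $M$ in your $\widehat{A^*}=\hat f\cdot M$ cannot be tamed with the uniformity in $|S|$ and $\rho_0$ that the theorem requires.
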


\begin{proof}  
We thank Fernando Shao for supplying a proof of this result, which was considerably simpler than our original argument.  

For this proof, which is Fourier-analytic in nature, it will be convenient to work explicitly with probability densities rather than probabilistic notation. (However, in the lengthier proof of the local inverse $U^3$ theorem given in the next section, the probabilistic notation will be significantly cleaner to use.)  In this argument, all sums will be over $\Z/p\Z$.  We abbreviate 
$$\p_i(h) \coloneqq \p_{B(S,\rho_i)}(h) = \P( \h_i = h )$$
for $i=0,1$ and $h \in \Z/p\Z$; clearly we have $\p_i(h) \geq 0$ and 
\begin{equation}\label{sph}
\sum_h \p_i(h) = 1.
\end{equation}
The hypothesis \eqref{soap} may be written as
\begin{align}\nonumber
\bigg |\sum_{h_0, h'_0, h_1, h'_1} \p_0(h_0) \p_0(h'_0) & \p_1(h_1) \p_1(h'_1) f(h_0+h_1) \overline{f}(h_0+h'_1)\times \\ & \times \overline{f}(h'_0+h_1) f(h'_0+h'_1)\bigg| \geq \eta  \label{soap-2}
\end{align} 
and our goal is to locate $\xi \in \Z/p\Z$ such that
$$ \sum_{n_0} \p_0(n_0) \left|\sum_{n_1} \p_1(n_1) f(n_0+n_1) e_p(-\xi n_1)\right|^2 \geq \eta/2.$$

The first step is to replace the factor $\p_0(h_0)$ by the slightly different factor $\p_0^{1/2}(h_0+h_1) \p_0^{1/2}(h_0+h'_1)$.  If we use the elementary inequality $|x^{1/2}-y^{1/2}| \leq |x-y|^{1/2}$ for $x,y \geq 0$ and then apply Cauchy-Schwarz, Lemma \ref{ati}, and \eqref{pron}, we see that
\begin{align*}
\sum_{h_0} \big|\p_0^{1/2}(h_0+h_1) - & \p_0^{1/2}(h_0)\big| \p_0^{1/2}(h_0)\\  &\leq
\sum_{h_0} |\p_0(h_0+h_1) - \p_0(h_0)|^{1/2} \p_0^{1/2}(h_0) \\
&\leq
\bigg(\sum_{h_0 \in \Z/p\Z} |\p_0(h_0+h_1) - \p_0(h_0)|\bigg)^{1/2} \\
&= (\sum_{h_0 \in \Z/p\Z} b_{h_1}(h_0) \p_0(h_0+h_1) - b_{h_1}(h_0) \p_0(h_0))^{1/2} \\
&\ll \left( \frac{|S| \rho_1}{\rho_0}\right)^{1/2} \ll \frac{\eta}{C^{1/2}}
\end{align*}
for any $h_1$ in the support of $\p_1$, where the $1$-bounded function $b_{h_1}$ is given by $b_{h_1}(h_0) \coloneqq \mathrm{sgn}(\p_0(h_0+h_1) - \p_0(h_0))$.  Similarly we have
$$ \sum_{h_0} |\p_0^{1/2}(h_0+h'_1) - \p_0^{1/2}(h_0)| \p_0^{1/2}(h_0+h_1) \ll \frac{\eta}{C^{1/2}}
$$
whenever $h'_1$ is also in the support of $\p_1$; by the triangle inequality, we conclude that
$$ \sum_{h_0} |\p_0^{1/2}(h_0+h_1) \p_0(h_0+h'_1)^{1/2} - \p_0(h_0)| \ll \frac{\eta}{C^{1/2}}
$$
for all $h_1,h'_1$ in the support of $\p_1$.  From the $1$-boundedness of $f$ and \eqref{sph}, we conclude that
\begin{align*}
&\biggl|\sum_{h_0, h'_0, h_1, h'_1}\ |\p_0^{1/2}(h_0+h_1) \p_0^{1/2}(h_0+h'_1) - \p_0(h_0)| \p_0(h'_0) \p_1(h_1) \p_1(h'_1) \\
&\quad f(h_0+h_1) \overline{f}(h_0+h'_1) \overline{f}(h'_0+h_1) f(h'_0+h'_1)\biggr| \ll \frac{\eta}{C^{1/2}}.
\end{align*}
If $C$ is large enough,  the left-hand side is thus bounded by $0.1 \eta$ (say), so by \eqref{soap-2} and the triangle inequality we conclude that
\begin{align*}
&\bigl|\sum_{h_0, h'_0, h_1, h'_1} \p_0^{1/2}(h_0+h_1) \p_0^{1/2}(h_0+h'_1) \p_0(h'_0) \p_1(h_1) \p_1(h'_1) \\
&\quad f(h_0+h_1) \overline{f}(h_0+h'_1) \overline{f}(h'_0+h_1) f(h'_0+h'_1)| \geq 0.9 \eta
\end{align*}
If we write 
\begin{equation}\label{fon}
f_0(n) \coloneqq f(n) \p_0^{1/2}(n),
\end{equation}
we may rewrite the above estimate as
\begin{align*}
&\bigl|\sum_{h_0, h'_0, h_1, h'_1} \p_0(h'_0) \p_1(h_1) \p_1(h'_1) \\
&\quad f_0(h_0+h_1) \overline{f_0}(h_0+h'_1) \overline{f}(h'_0+h_1) f(h'_0+h'_1)| \geq 0.9 \eta.
\end{align*}
A similar argument then lets us replace $\p_0(h'_0)$ with $\p_0^{1/2}(h'_0+h_1) \p_0^{1/2}(h'_0+h'_1)$, leaving us with
\begin{align*}
\bigl|\sum_{h_0, h'_0, h_1, h'_1} & \p_0(h'_0+h_1)^{1/2} \p_0(h'_0+h'_1)^{1/2}\p_1(h_1) \p_1(h'_1) \times  \\ &
\times  f_0(h_0+h_1) \overline{f_0}(h_0+h'_1) \overline{f}(h'_0+h_1) f(h'_0+h'_1)| \geq 0.8 \eta.
\end{align*}
which we can simplify using \eqref{fon} to
$$
 \left|\sum_{h_0, h'_0, h_1, h'_1} \!\!\!\! \p_1(h_1) \p_1(h'_1) f_0(h_0+h_1) \overline{f_0}(h_0+h'_1) \overline{f_0}(h'_0+h_1) f_0(h'_0+h'_1)\right| \geq 0.8 \eta.$$
Making the change of variables $n \coloneqq  h_1-h'_1$, we may rewrite the left-hand side as
$$ \sum_{n} (\p_1 * \tilde \p_1)(n) |(f_0 * \tilde f_0)(n)|^2$$
where $\tilde f_0(n) \coloneqq  \overline{f_0}(-n)$, and similarly for $p_1$, and $f*g$ denotes the discrete convolution
$$ f*g(n) \coloneqq \sum_m f(m) g(n-m)$$
Using the Fourier transform, we may then rewrite the previous bound as
\begin{equation}\label{prepig}
p^4 \sum_{\xi,\xi'} |\hat \p_1(\xi')|^2 |\hat f_0(\xi)|^2 |\hat f_0(\xi+\xi')|^2 \geq 0.8 \eta
\end{equation}
where 
$$ \hat f(\xi) \coloneqq \frac{1}{p} \sum_n f(n) e_p(-\xi n).$$
From \eqref{sph}, the $1$-boundedness of $f$, and the Plancherel identity we have
$$ \sum_{\xi} |\hat f_0(\xi)|^2  = \frac{1}{p} \sum_{n} |f_0(n)|^2 \leq \frac{1}{p}.$$
By this, \eqref{prepig}, and the pigeonhole principle, we may therefore find $\xi \in \Z/p\Z$ such that
$$ p^3 \sum_{\xi' \in \Z/p\Z} |\hat p_1(\xi')|^2 |\hat f_0(\xi+\xi')|^2 \geq 0.8 \eta.$$
By the Plancherel identity again, the left-hand side may be rewritten as
$$ \sum_{n_0} \left| \sum_{n_1}\ f_0(n_0-n_1) \p_1(n_1) e_p( \xi n_1 ) \right|^2$$
and hence (by replacing $n_1$ with $-n_1$ and using \eqref{fon})
$$ \sum_{n_0} \left| \sum_{n_1}\ f(n_0+n_1) \p_0^{1/2}(n_0+n_1) \p_1(n_1) e_p( -\xi n_1 ) \right|^2 \geq 0.8 \eta.$$
By argument similar to those at the beginning of the proof, we may replace $\p_0^{1/2}(n_0+n_1)$ by $\p_0^{1/2}(n_0)$ and conclude that
$$ \sum_{n_0} \left| \sum_{n_1}\ f(n_0+n_1) \p_0^{1/2}(n_0) \p_1(n_1) e( -\xi n_1 ) \right |^2 \geq 0.7 \eta,$$
and the claim follows.
\end{proof}

As a corollary of this inverse theorem, we can establish that locally almost linear phases on Bohr sets can be approximated by globally linear phases; this will be needed in Section \ref{baddim-sec} to deal with poorly distributed quadratic factors.  

Here is a precise statement.

\begin{corollary}\label{loc-to-glob}  Let $\phi: n_0 + B(S,\rho) \to \R/\Z$ be a function on a shifted Bohr set $n_0+B(S,\rho)$ which is ``locally almost linear'' in the sense that one has the bound
\begin{equation}\label{ppp}
 \| \phi(n_0+h+k) - \phi(n_0+h) - \phi(n_0+k) + \phi(n_0) \|_{\R/\Z} \leq A \frac{ \| h \|_{S^\perp} \| k \|_{S^\perp} }{\rho^2}
\end{equation}
for all $h,k \in B(S,\rho/2)$ and some $A \geq 1$.  Then there exists $\xi \in \Z/p\Z$ such that
\begin{equation}\label{bomb}
\left\| \phi(n_0+h) - \phi(n_0) - \frac{\xi h}{p} \right\|_{\R/\Z} \ll A^{1/2} |S|^4 \frac{\| h \|_{{S^\perp}}}{\rho}
\end{equation}
for all $h \in B(S,\rho)$.
\end{corollary}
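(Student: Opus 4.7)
Normalize by setting $\psi(h) := \phi(n_0+h) - \phi(n_0)$, so that $\psi : B(S,\rho) \to \R/\Z$ obeys $\psi(0) = 0$ together with the local almost-linearity bound $\|\psi(h+k) - \psi(h) - \psi(k)\|_{\R/\Z} \leq A \|h\|_{S^\perp} \|k\|_{S^\perp}/\rho^2$ for all $h,k \in B(S,\rho/2)$. The plan is to locate $\xi$ by applying the local inverse $U^2$ theorem (Theorem \ref{locu2}) to the $1$-bounded function $f := e(\psi) \cdot 1_{B(S,\rho)}$, and then to bootstrap the resulting single-scale correlation into a pointwise bound on $g(h) := \psi(h) - \xi h/p$ throughout $B(S,\rho/2)$. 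For $h \in B(S,\rho) \setminus B(S,\rho/2)$ the target bound already exceeds $1/2$ and is therefore automatic from $\|g(h)\|_{\R/\Z} \leq 1/2$.

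Choose radii $0 < \rho_1 < \rho_0 \leq \rho/4$ satisfying Theorem \ref{locu2}'s separation $\rho_0 \geq 4C|S|\rho_1$ (with $\eta = 1/2$) and the smallness condition $A\rho_0\rho_1/\rho^2 \leq c_0$ for a small absolute $c_0$. Applying local almost-linearity to each of the four terms in $\psi(h_0+h_1) - \psi(h_0+h'_1) - \psi(h'_0+h_1) + \psi(h'_0+h'_1)$, the single-variable contributions $\psi(h_0), \psi(h'_0), \psi(h_1), \psi(h'_1)$ cancel in the alternating sum, leaving an $\R/\Z$-error of size $O(A\rho_0\rho_1/\rho^2)$. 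Hence the $U^2$-type expectation in Theorem \ref{locu2} is at least $1/2$, producing $\xi \in \Z/p\Z$ with $\sum_{n_0} \P(\n_0=n_0)\,|\E f(n_0+\n_1) e_p(-\xi \n_1)|^2 \geq 1/4$. A further use of local almost-linearity yields the decoupling $\psi(n_0+\n_1) - \xi\n_1/p = \psi(n_0) + g(\n_1) + O(A\rho_0\rho_1/\rho^2)$ for $n_0, \n_1$ in the relevant supports; consequently $\E f(n_0+\n_1) e_p(-\xi\n_1) = e(\psi(n_0))\, \E e(g(\n_1)) + O(A\rho_0\rho_1/\rho^2)$, and squaring, averaging over $n_0$, and using $|e(\psi(n_0))|=1$ delivers $|\E e(g(\n_1))| \gg 1$ provided $c_0$ is small enough.

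For the pointwise estimate, fix $h \in B(S,\rho/2)$ and compare $\E e(g(\n_1+h))$ with $\E e(g(\n_1))$ in two ways: Lemma \ref{ati} bounds their difference by $O(|S|\|h\|_{S^\perp}/\rho_1)$, while local almost-linearity of $g$ gives $\E e(g(\n_1+h)) = e(g(h))\, \E e(g(\n_1)) + O(A\rho_1 \|h\|_{S^\perp}/\rho^2)$. Combining these via $|e(g(h))-1| \asymp \|g(h)\|_{\R/\Z}$ and the lower bound on $|\E e(g(\n_1))|$ yields $\|g(h)\|_{\R/\Z} \ll \|h\|_{S^\perp}\bigl(|S|/\rho_1 + A\rho_1/\rho^2\bigr)$. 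A near-optimal choice $\rho_1 \sim \rho/\sqrt{A|S|}$, $\rho_0 \sim \sqrt{|S|/A}\,\rho$ (together with a cheaper alternative like $\rho_0 = \rho/4$, $\rho_1 \sim \rho/|S|$ in the small-$A$ regime where $\rho_0 \leq \rho/4$ would otherwise be violated) balances the two terms, producing a bound of the form $\ll (A^{1/2}|S|^{3/2} + |S|^2)\|h\|_{S^\perp}/\rho$, which sits comfortably below the target $A^{1/2}|S|^4 \|h\|_{S^\perp}/\rho$. The main obstacle in this plan is the parameter bookkeeping: the three conditions (the $U^2$ smallness of $A\rho_0\rho_1/\rho^2$, the Theorem \ref{locu2} separation, and the envelope $\rho_0, \rho_1 \leq \rho/4$) must be simultaneously satisfied across the whole range $A \geq 1$, $|S| \geq 1$, forcing a mild case split on whether $A$ dominates $|S|$; the conceptual content, however, is clean, consisting of two invocations of local almost-linearity (one to manufacture the $U^2$ correlation, one to unwind a single-scale correlation into a pointwise linearity bound) combined with the translation invariance of regular Bohr distributions from Lemma \ref{ati}.
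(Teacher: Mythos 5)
Your proposal is essentially correct and follows the same strategy as the paper's own proof: apply Theorem~\ref{locu2} to $f := 1_{B(S,\rho)}\,e(\phi(\cdot)-\phi(n_0))$ to extract a frequency $\xi$, then compare $\E e(g(\n_1))$ with $\E e(g(\n_1+h))$ using Lemma~\ref{ati} on one side and a second application of the almost-linearity hypothesis on the other. The one substantive stylistic difference is how the constant $A$ is handled. The paper disposes of $A$ at the very beginning by observing that the target bound \eqref{bomb} is trivial for $\|h\|_{S^\perp} \geq \rho/A^{1/2}$, so one may replace $\rho$ by $\rho/A^{1/2}$ and thereby renormalise the hypothesis to the case $A=1$; after this normalisation the fixed choices $\rho_0 = \rho/100$ and $\rho_1 = \rho/(C|S|^3)$ satisfy all three constraints simultaneously with no case analysis. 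Your approach keeps $A$ in play and navigates the simultaneous constraints with a case split depending on whether $A$ or $|S|$ dominates; this works, but is somewhat laborious, and you should be careful that the stated ``near-optimal'' choice $\rho_0 \sim \sqrt{|S|/A}\,\rho$, $\rho_1 \sim \rho/\sqrt{A|S|}$ gives $A\rho_0\rho_1/\rho^2 \asymp 1$ rather than small; one needs to insert additional small constant factors in both radii (and then the separation $\rho_0 \geq 4C|S|\rho_1$ also requires an extra factor of $|S|$ or of a large absolute constant) before all three conditions hold at once. The remaining differences --- averaging over $n_0$ instead of pigeonholing to a single $n_0$, and establishing the pointwise bound on the whole of $B(S,\rho/2)$ rather than just $B(S,\rho_1)$ (outside which the target is vacuous in either treatment) --- are inessential.
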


\begin{proof}  By translating in space, we may normalise so that $n_0=0$; by shifting $\phi$ by a phase, we may also suppose that $\phi(0)=0$.  
By replacing $\rho$ with the smaller quantity $\rho/A^{1/2}$ if necessary, we may normalise $A$ to be $1$ (note that \eqref{bomb} is trivial for $\|h\|_{S^\perp} \geq \rho/A^{1/2}$).  Thus, we now have a function $\phi: B(S,\rho)  \to \R/\Z$ with $\phi(0)=0$ such that the quantity
\begin{equation}\label{phph}
 \partial^2 \phi(h,k) \coloneqq \phi(h+k) - \phi(h) - \phi(k)
\end{equation}
obeys the bound
\begin{equation}\label{ppp-2}
 \| \partial^2 \phi(h,k)  \|_{\R/\Z} \leq \frac{ \| h \|_{S^\perp} \| k \|_{S^\perp} }{\rho^2}
\end{equation}
for all $h,k \in B(S,\rho/2)$, and our task is to locate $\xi \in \Z/p\Z$ such that
\begin{equation}\label{bomb-2}
\left\| \phi(h) - \frac{\xi h}{p} \right\|_{\R/\Z} \ll |S|^4 \frac{\| h \|_{S^\perp}}{\rho}
\end{equation}
for all $h \in B(S,\rho)$.

Let $\rho_0 \coloneqq  \rho/100$, and set $\rho_1 \coloneqq \frac{\rho}{C |S|^3}$ for some sufficiently large absolute constant $C$.  If we let $f: \Z/p\Z \to \C$ be the $1$-bounded function
\begin{equation}\label{fdef}
f(x) \coloneqq  1_{B(S,\rho)} e( \phi(x) )
\end{equation}
and draw $\h_0, \h'_0, \h_1, \h'_1$ independently and regularly from $B(S,\rho_0)$, $B(S,\rho_0)$, $B(S,\rho_1)$, $B(S,\rho_1)$ respectively, then from \eqref{phph} we have
\begin{align*}
& f(\h_0+\h_1) \overline{f}(\h_0+\h'_1) \overline{f}(\h'_0+\h_1) f(\h'_0+\h'_1) \\
&\quad = e\left( \partial^2 \phi( \h_0, \h_1 ) - \partial^2 \phi( \h'_0, \h_1 ) - \partial^2 \phi( \h_0, \h'_1 ) + \partial^2 \phi( \h'_0, \h'_1 ) \right).
\end{align*}
Applying \eqref{ppp-2} and taking expectations, we conclude that
$$ |\E f(\h_0+\h_1) \overline{f}(\h_0+\h'_1) \overline{f}(\h'_0+\h_1) f(\h'_0+\h'_1)| \geq 1/2$$
(say).  Applying Theorem \ref{locu2} (which is applicable for $C$ large enough), we may thus find $\xi \in \Z/p\Z$ such that 
$$ \sum_{n_0 \in \Z/p\Z} \P(\n_0 = n_0) \left| \E f(n_0+\n_1) e_p( -\xi \n_1) \right|^2 \geq 1/4$$
if $\n_0, \n_1$ are drawn independently and regularly from $B(S,\rho_0), B(S,\rho_1)$ respectively.  In particular, there exists $n \in B(S,\rho_0)$ such that
$$ | \E f(n+\n_1) e_p( -\xi \n_1 ) | \geq 1/4.$$
By \eqref{fdef}, \eqref{phph} we have
$$ f(n+\n_1) = e\left( \phi( \n_1 ) + \phi( n ) + \partial^2 \phi(n, \n_1) \right)$$
so by \eqref{ppp-2} we conclude that
\begin{equation}\label{qqq}
 \big| \E e\big( \phi(\n_1) - \frac{\xi \n_1}{p}\big) \big| \gg 1.
\end{equation}

For any $h \in B(S,\rho_1)$, we have from Lemma \ref{ati} that
$$\big| \E e\big( \phi(\n_1+h) - \frac{\xi (\n_1+h)}{p} \big)  - \E e\big( \phi(\n_1) - \frac{\xi \n_1}{p} \big) | \ll |S| \frac{ \|h\|_{S^\perp} }{\rho_1};$$
on the other hand, from \eqref{phph} we have the identity
\begin{align*}
 \E e\big( \phi(\n_1+h) & - \frac{\xi (\n_1+h)}{p} \big) \\ & = e\big( \phi(h) - \frac{\xi h}{p} \big)\E e\big( \phi(\n_1) - \frac{\xi \n_1}{p} + \partial^2 \phi(\n_1, h)\big).\end{align*}
Combining this with \eqref{ppp-2}, \eqref{qqq}, and \eqref{bilipschitz}, we conclude that
$$ \left\| \phi(h) - \frac{\xi h}{p} \right\|_{\R/\Z} \asymp |e(\phi(h)-\frac{\xi h}{p}) - 1| \ll |S| \frac{ \|h\|_{S^\perp} }{\rho_1}$$
for all $h \in B(S,\rho_1)$.  As the claim \eqref{bomb-2} is trivial for $h \in B(S,\rho) \backslash B(S,\rho_1)$, the claim follows.
\end{proof}

\section{Dilated tori}

As mentioned in Example 3 of Section \ref{overview-sec}, in order to maintain good quantitative control (and specifically, Lipschitz norm control) on the functions $F: G \to [-1,1]$ used to build quadratic approximants, one needs to generalise the underlying domain $G$ to more general tori than the standard tori $(\R/\Z)^d$ with the usual norm structure.  It turns out that it will suffice to work with \emph{dilated tori} of the form
$$ G = \prod_{i=1}^d (\R / \lambda_i \Z)$$
where $\lambda_1,\dots,\lambda_d \geq 1$ are real numbers.  One can view this dilated torus as the quotient of $\R^d$ by a dilated lattice $\Gamma \coloneqq \prod_{i=1}^d \lambda_i \Z$.  We can place a ``norm'' on $G$ by declaring $\|x\|_G$ for $x \in G$ to be the Euclidean distance in $\R^d$ from $x$ to $\Gamma$; this generalises the norm $\| \|_{\R/\Z}$ from Section \ref{notation-sec}.  This in turn defines a metric $d_G$ on $G$ by the formula
$$ d_G(x, y) \coloneqq \|x - y \|_G.$$

The \emph{volume} $\vol(G)$ of a dilated torus is defined to be the product
$$ \vol(G) \coloneqq \prod_{i=1}^d \lambda_i = \det(\Gamma).$$
It will be important to keep this quantity under control during the iteration process.  In particular, when transforming from one dilated torus to another, the volume of the new torus should behave like a linear function of the existing torus; anything worse than this (e.g. quadratic behaviour) will lead to undesirable bounds upon iteration.

We define the \emph{Pontryagin dual} $\hat G$ of a dilated torus $G$ to be the lattice
$$ \hat G \coloneqq \prod_{i=1}^d \frac{1}{\lambda_i} \Z.$$
Elements $k$ of this dual will be called \emph{dual frequencies} of the torus.  If $k = (k_1,\dots,k_d)$ is a dual frequency and $x = (x_1,\dots,x_d)$ is an element of $G$, we define the dot product $k \cdot x \in \R/\Z$ in the usual fashion as
$$ k \cdot x = k_1 x_1 + \dots + k_d x_d$$
noting that this gives a well-defined element of $\R/\Z$.

A dual frequency $k$ is said to be \emph{irreducible} if it is non-zero, and not of the form $k = nk'$ for some other dual frequency $k'$ and some natural number $n>1$.  If a dual frequency $k$ is irreducible, then its orthogonal complement
$$ k^\perp \coloneqq \{ x \in G: k \cdot x = 0 \}$$
is a $(d-1)$-dimensional subtorus of $G$; it inherits a metric $d_{k^\perp}$ from the torus $G$ it lies in.  We will need to pass to such a complement when dealing with poorly distributed quadratic factors (as in the third or fourth examples in Section \ref{overview-sec}), however we encounter the technical issue that these complements $k^\perp$ will not quite be of the form of a dilated torus.  However, we will be able to transform $k^\perp$ into a dilated torus using a bilipschitz transformation, as the following result shows.

\begin{theorem}\label{nfoc}  Let $G = \prod_{i=1}^d (\R/\lambda_i \Z)$ be a dilated torus, and let $k \in \hat G$ be an irreducible dual frequency of $G$.  Then there exists a dilated torus $G' = \prod_{i=1}^{d-1} (\R/\lambda'_i \Z)$ and a Lie group isomorphism $\psi: k^\perp \to G'$ obeying the bilipschitz bounds
\begin{equation}\label{bilip}
 \| \psi \|_\Lip, \| \psi^{-1} \|_\Lip \ll d^{O(d)} 
\end{equation}
and such that one has the volume bound
\begin{equation}\label{volb}
 \vol(G') = d^{O(d)} |k| \vol(G)
\end{equation}
where $|k|$ denotes the Euclidean magnitude of $k$ in $\R^d$.
\end{theorem}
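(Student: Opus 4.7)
Proof proposal. The plan is to realize $k^\perp$ as a quotient of the hyperplane $V \coloneqq \{x \in \R^d : k \cdot x = 0\}$ by the rank-$(d-1)$ sublattice $V \cap \Gamma$, and then to straighten $V/(V \cap \Gamma)$ into a rectangular dilated torus by applying Lemma \ref{bohr-basis} to $V \cap \Gamma$ inside the Euclidean space $V$.

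First, because $k$ is irreducible (equivalently, primitive in $\hat G = \hat\Gamma$), the homomorphism $\Gamma \to \Z$ sending $\gamma \mapsto k \cdot \gamma$ is surjective, so there exists $w \in \Gamma$ with $k \cdot w = 1$. Decomposing each $\gamma \in \Gamma$ as $\gamma = (\gamma - (k \cdot \gamma) w) + (k \cdot \gamma) w$ produces the internal direct sum $\Gamma = (V \cap \Gamma) \oplus \Z w$, sitting inside the preimage $\widetilde{k^\perp} = \{x \in \R^d : k \cdot x \in \Z\} = V + \Z w$ of $k^\perp$ under $\R^d \to G$. Quotienting this identity by $\Gamma$ yields a Lie group isomorphism $V/(V \cap \Gamma) \cong k^\perp$, under which the metric on $k^\perp$ inherited from $G$ corresponds to the Euclidean quotient metric on $V/(V \cap \Gamma)$ coming from $V \hookrightarrow \R^d$. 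Splitting $w = w_V + k/|k|^2$ into its $V$-component and its orthogonal component of length $1/|k|$ yields the covolume identity
\[ \det(V \cap \Gamma) = |k|\,\det(\Gamma) = |k|\,\vol(G). \]

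Next I would apply Lemma \ref{bohr-basis} to the lattice $V \cap \Gamma$ inside $V \cong \R^{d-1}$, obtaining generators $v_1, \dots, v_{d-1}$ of $V \cap \Gamma$ and parameters $N_1, \dots, N_{d-1} > 0$ obeying the inclusions \eqref{bvta} and the covolume bound \eqref{covol}. Setting $\lambda'_i \coloneqq N_i^{-1}$, $\Gamma' \coloneqq \prod_i \lambda'_i \Z$, $G' \coloneqq \R^{d-1}/\Gamma'$, I define the linear map $\tilde\psi : V \to \R^{d-1}$ by $\tilde\psi\bigl(\sum_i x_i v_i\bigr) \coloneqq (\lambda'_1 x_1, \dots, \lambda'_{d-1} x_{d-1})$; since $\tilde\psi(v_i) = \lambda'_i e_i$ and these generate $\Gamma'$ while the $v_i$ generate $V \cap \Gamma$, $\tilde\psi$ descends to a Lie group isomorphism $\psi$ from $k^\perp \cong V/(V \cap \Gamma)$ onto $G'$. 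The volume bound \eqref{volb} is then immediate from \eqref{covol}:
\[ \vol(G') = \prod_i \lambda'_i = \prod_i N_i^{-1} = d^{O(d)} \det(V \cap \Gamma) = d^{O(d)} |k|\,\vol(G). \]
The bilipschitz bound \eqref{bilip} for $\psi^{-1}$ is the easy direction: specializing \eqref{bvta} to $n_i = 1$, $n_j = 0$ ($j \neq i$), and letting $t \downarrow 1/N_i$ shows $|v_i| \leq 1/N_i = \lambda'_i$, so for $y = (y_i) \in \R^{d-1}$ one has $|\tilde\psi^{-1}(y)| = \bigl|\sum_i (y_i/\lambda'_i) v_i\bigr| \leq \sum_i |y_i| \leq \sqrt{d-1}\,|y|$, giving $\|\psi^{-1}\|_\Lip \leq \sqrt{d-1}$.

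I expect the Lipschitz bound for $\psi$ itself to be the main obstacle, since the generators $v_i$ need not be anywhere near orthogonal. The workaround is a rescaling trick: let $U$ be the $(d-1) \times (d-1)$ matrix (expressed in any orthonormal basis of $V$) with columns $u_i \coloneqq N_i v_i$. Then each $|u_i| \leq 1$, and \eqref{covol} gives $|\det U| = \prod_i N_i \cdot \det(V \cap \Gamma) = d^{O(d)}$. Writing $x = \sum_i x_i v_i = \sum_i (x_i/N_i) u_i = Uy$ with $y_i \coloneqq x_i/N_i$, one reads off $\tilde\psi(x) = y$, so $\|\psi\|_\Lip = \|U^{-1}\|_{\mathrm{op}} = 1/\sigma_{\min}(U)$. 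The identity $|\det U| = \prod_i \sigma_i(U)$ combined with the trivial bound $\sigma_{\max}(U) \leq \|U\|_F \leq \sqrt{d-1}$ forces $\sigma_{\min}(U) \geq |\det U|\cdot(d-1)^{-(d-2)/2} \geq d^{-O(d)}$, hence $\|\psi\|_\Lip \leq d^{O(d)}$, completing \eqref{bilip}.
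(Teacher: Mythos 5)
Your proof follows essentially the same route as the paper's: identify $k^\perp$ with $V/(V\cap\Gamma)$, use irreducibility of $k$ to produce $w\in\Gamma$ with $k\cdot w=1$ and hence the covolume identity $\det(V\cap\Gamma)=|k|\vol(G)$, invoke Lemma \ref{bohr-basis} on the sublattice $V\cap\Gamma$, and define $\psi$ from the resulting basis by $\sum t_i v_i\mapsto (t_iN_i^{-1})_i$. The only cosmetic variant is how you bound $\|\psi\|_\Lip$: you pass to the smallest singular value of the rescaled matrix $U=(N_1v_1|\cdots|N_{d-1}v_{d-1})$ via $\sigma_{\min}\geq|\det U|/\sigma_{\max}^{d-2}$, whereas the paper invokes Cram\'er's rule and wedge products; these are interchangeable determinant arguments and both yield $d^{O(d)}$.
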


\begin{proof}  The case $d=0$ is vacuous and the case $d=1$ is trivial, so we may assume $d > 1$. One can identify $k^\perp$ with the quotient $V / \Gamma$, where $V \coloneqq \{ x \in \R^d: k \cdot x = 0 \}$
is the hyperplane in $\R^d$ orthogonal to $k$ (now viewed as an element of $\R^d$), and 
$\Gamma \coloneqq V \cap \prod_{i=1}^d (\lambda_i \Z)$
is the restriction of the lattice $\prod_{i=1}^d (\lambda_i \Z)$ to $V$.

As $k$ is irreducible, there exists a vector $e$ in the lattice $\prod_{i=1}^d (\lambda_i \Z)$ with $k \cdot e = 1$; thus $e$ has distance $1/|k|$ to $V$.  One can form a fundamental domain of $\R^d / \prod_{i=1}^d (\lambda_i \Z)$ by taking any fundamental domain for $V/\Gamma$ and performing the Minkowski sum of that domain with the interval $\{ te: 0 \leq t \leq 1\}$.  By Fubini's theorem, the $d$-dimensional Lebesgue measure of such a sum will equal the $(d-1)$-dimensional Lebesgue measure of the fundamental domain of $V/\Gamma$ and $1/|k|$; thus the covolume of $\prod_{i=1}^d (\lambda_i \Z)$ in $\R^d$ equals $1/|k|$ times the covolume of $\Gamma$ in $V$.  As the former covolume (determinant) is $\prod_{i=1}^d \lambda_i = \vol(G)$, we conclude that $\Gamma$ has covolume $|k| \vol(G)$ in $V$.

Applying Lemma \ref{bohr-basis}, we can find linearly independent elements $v_1,\dots,$ $v_{d-1}$ generating $\Gamma$ such that
\begin{equation}\label{bvt-1}
 B_V(0, O(d)^{-3d/2} t ) \cap \Gamma \subset \{ \sum_{i=1}^r n_i v_i: |n_i| \leq tN_i \} \subset B_V(0,t) \cap \Gamma
\end{equation}
for all $t>0$, where $B_V(0,r)$ is the Euclidean ball of radius $r$ in $V$, and the $n_i$ are understood to be integers, with the bound
\begin{equation}\label{sting}
 \prod_{i=1}^{d-1} N_i^{-1} = (2d)^{O(d)} |k| \vol(G).
\end{equation}
From \eqref{bvt-1} we conclude in particular that
\begin{equation}\label{dmark}
 O(d)^{-3d/2} N_i^{-1} \leq |v_i| \leq N_i^{-1}
\end{equation}
for all $1 \leq i \leq d$.

We now define the $(d-1)$-dimensional dilated torus
$$ G' \coloneqq \prod_{i=1}^{d-1} (\R/N_i^{-1} \Z)$$
and the isomorphism $\phi: V/\Gamma \to G'$ by the formula
$$ \phi( \sum_{i=1}^{d-1} t_i v_i \md{\Gamma} ) \coloneqq (t_1 N_1^{-1},\dots,t_{d-1} N_{d-1}^{-1}) \md{ \prod_{i=1}^{d-1} N_i^{-1} \Z }$$
for real numbers $t_1,\dots,t_{d-1}$.  It is easy to see that this is a Lie group isomorphism, and the bound \eqref{volb} follows from \eqref{sting}.  It remains to establish the bilipschitz bounds \eqref{bilip}.  It suffices to show that the linear isomorphism
$$ \sum_{i=1}^{d-1} t_i v_i  \mapsto (t_1 N_1^{-1},\dots,t_{d-1} N_{d-1}^{-1}) $$
from $V$ to $\R^{d-1}$, together with its inverse, have an operator norm of $O( d^{O(d)} )$.  For the inverse map, this is clear from \eqref{dmark}.  For the forward map, it suffices from Cram\'er's rule to show that
$$ \frac{|v_1 \wedge \dots \wedge v_{i-1} \wedge x \wedge v_{i+1} \wedge \dots \wedge v_{d-1}|}{|v_1 \wedge \dots \wedge v_{d-1}|} \ll \frac{d^{O(d)}}{\lambda'_i} $$
for all $i=1,\dots,d-1$ and all unit vectors $x$ in $V$.  But from \eqref{dmark} the numerator is at most $\prod_{1 \leq i' \leq d-1: i' \neq i} N_{i'}^{-1}$, while the denominator is the volume of a fundamental domain in $V$ and is thus equal to $d^{O(d)} N_1^{-1} \dots N_{d-1}^{-1}$ thanks to \eqref{sting}.  The claim follows.
\end{proof}

\section{Constructing the approximants}

In this section we construct the abstract directed graph $G = (V,E)$ that appears in Proposition \ref{main-abstract}.  For the rest of the paper, the prime $p$, the function $f: \Z/p\Z \to [-1,1]$, and the parameter $\eta$ with $0 < \eta \leq \frac{1}{10}$ are fixed, and we assume that \eqref{p-large} holds.

We begin with a description of the structured approximants $v \in V$.

\begin{definition}[Structured local approximant]\label{sla-def}  A \emph{structured local approximant} is a tuple
$$ v = \left( C, \c, (n_c + B(S_c,\rho_c))_{c \in C}, (G_c)_{c \in C}, (F_c)_{c \in C}, (\Xi_c)_{c \in C} \right)$$
consisting of the following objects:
\begin{itemize}
\item A finite non-empty set $C$;
\item A random variable $\c$, which we call the \emph{label variable}, taking values in $C$;
\item A shifted Bohr set $n_c + B(S_c,\rho_c)$ associated to each label $c \in C$;
\item A dilated torus $G_c$ associated to each label $c \in C$; 
\item A $1$-Lipschitz function $F_c: G_c \to [-1,1]$ associated to each label $c \in \C$; and
\item A locally quadratic function $\Xi_c: n_c + B(S_c,\rho_c) \to G_c$ associated to each label $c \in C$.
\end{itemize}
We denote the collection of all structured local approximants \textup{(}up to isomorphism\footnote{This caveat is needed for the technical reason that $V$ should be a set and not a proper class.}\textup{)} as $V$.  Given any structured local approximant $v \in V$, we define the random variables $(\a_v,\r_v,\f_v)$ associated to $v$ by the following construction.
\begin{itemize}
\item[1.]  First, let $\c$ be the random label variable appearing above.
\item[2.]  For each $c \in C$ in the essential range of $\c$, if we condition on the event $\c = c$, we draw $\a_v, \r_v$ independently and regularly from $n_c + B(S_c, \rho_c / 2)$ and $B( S_c, \exp(-\eta^{-C_4}) \rho_c)$ respectively, and then we let $\f_v$ be the function
$$ \f_v(a) \coloneqq F_c( \Xi_c(a) ).$$
\end{itemize}
Thus $\f_v$ is deterministic when $\c$ is conditioned to be fixed, but random when $\c$ is allowed to vary.
\end{definition}

We also define the following additional statistics of the structured local approximant $v$:
\begin{itemize}
\item The \emph{waste} $\waste(v)$ is the quantity $|\E f(\a) - {\mathbb E}_{a \in \Z/p\Z} f(a)|$;
\item The \emph{$1$-error} $\Err_1(v)$ is $|\E \f(\a) - \E f(\a)|$;
\item The \emph{$4$-error} $\Err_4(v)$ is $|\Lambda_{\a,\r}(\f) - \Lambda_{\a,\r}(f)|$;
\item The \emph{energy} $\Energy(v)$ is $\E |f(\a) - \f(\a)|^2$;
\item The \emph{linear rank} $d_1(v)$ is $\max_{c \in C} |S_c|$;
\item The \emph{quadratic dimension} $d_2(v)$ is $\max_{c \in C} \dim(G_c)$;
\item The \emph{linear scale} $\rho(v)$ is $\min_{c \in C} \rho_c$;
\item The \emph{quadratic volume} $\vol(v)$ is the quantity $\max_{c \in C} \vol(G_c)$;
\item The \emph{poorly distributed quadratic dimension} $d_2^{\mathrm{poor}}(v)$ is the maximum value of $\dim(G_c)$ over all poorly distributed $c$ in the essential range of $\c$, or zero if no such $c$ exists. Here, an element $c$ in the essential range of $\c$ is said to be \emph{poorly distributed} if one has
\begin{equation}\label{coo}
\Lambda_{\a,\r}(f | \c=c) < \E(\f(\a)|\c=c)^4 - \frac{\eta}{2}.
\end{equation}

\end{itemize}

This gives the set $V$ of structured local approximants for Proposition \ref{main-abstract}; we clearly have $0 \leq d_2^{\mathrm{poor}}(v) \leq d_2(v)$ for all $v \in V$.

We now also define the initial approximant.

\begin{definition} The initial approximant $v_0 \in V$ is defined to be the tuple
$$ v_0 = \left( C, \c, (n_c + B(S_c,\rho_c))_{c \in C}, (G_c)_{c \in C}, (F_c)_{c \in C}, (\Xi_c)_{c \in C} \right)$$
defined as follows:
\begin{itemize}
\item $C \coloneqq \Z/p\Z$, and $\c$ is drawn uniformly from $C$.
\item For each $c \in C$, we have $n_c\coloneqq 0$, $S_c \coloneqq \{1\}$, and $\rho_c \coloneqq 1$.
\item For each $c \in C$, the group $G_c$ is the standard $0$-torus $(\R/\Z)^0$ \textup{(}that is to say, a point\textup{)}.
\item For each $c \in C$, the function $F_c: G_c \to [-1,1]$ is the zero function $F_c(x) \coloneqq 0$.
\item For each $c \in C$, the function $\Xi_c: \Z/p\Z \to G_c$ is the unique \textup{(}constant\textup{)} map from $\Z/p\Z$ to the point $G_c$.
\end{itemize}
\end{definition}

By chasing the definitions, we see that $\a_{v_0}$ is uniformly distributed in $\Z/p\Z$, and we can compute several of the statistics of the initial approximant $v_0$:
\begin{equation}\label{d-stat}
\waste(v_0) = d_2^{\mathrm{poor}}(v_0) = d_2(v_0) = 0; d_1(v_0) = \rho(v) = \vol(v) = 1.
\end{equation}

Now we define the edges of the graph $G(V,E)$.

\begin{definition} \label{edge-def} 
We let $E$ be the set of all directed edges $v \to v'$, where $v, v' \in V$ are structured local approximants such that
\begin{align*}
d_1(v') &\leq d_1(v) + \eta^{-C_2} \\
d_2(v') &\leq d_2(v) + 1 \\
\rho(v') &\geq \exp(-\eta^{-C_5}) \rho(v) \\
\vol(v') &\leq \exp( \eta^{-C_3} ) \vol(v)\\
|\waste(v) - \waste(v')| &\leq \eta^{C_3}.
\end{align*}
\end{definition}

From this definition and \eqref{d-stat} we have the following bounds on the various statistics of vertices of $V$ that are not too far from the initial vertex $v_0$, assuming that each constant $C_i$ is chosen sufficiently large depending on the preceding constants $C_1,\dots,C_{i-1}$.

\begin{lemma}\label{bounds}
Suppose a vertex $v = v_k \in V$ can be reached from $v_0$ by a path $v_0 \to v_1 \to \dots \to v_k$ with $0 \leq k \leq 8\eta^{-2C_2}$.  Then we have
\begin{align}
d_1(v) &\leq 8\eta^{-3C_2} \label{bound-1}\\
d_2(v) &\leq 8\eta^{-2C_2} \label{bound-2}\\
\rho(v) &\geq \exp( -\eta^{-2C_5} ) \label{bound-3}\\
\vol(v) &\leq \exp( \eta^{-2C_3} ) \label{bound-4}\\
\waste(v) &\leq \eta^{C_3/2}\label{bound-5}
\end{align}
\end{lemma}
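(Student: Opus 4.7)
The proof will be a straightforward induction on $k$ using the per-edge bounds in Definition \ref{edge-def}, combined with the initial values \eqref{d-stat} and the convention that each $C_i$ is taken sufficiently large depending on $C_1,\dots,C_{i-1}$. Each of the five statistics $d_1, d_2, \rho, \vol, \waste$ changes by an additive or multiplicative amount controlled by a single edge, and iterating this amount $k$ times with $k \leq 8\eta^{-2C_2}$ yields the claimed bounds, provided the constants $C_3$ and $C_5$ dominate $C_2$ by a definite margin.

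More concretely, I would first observe that along any path $v_0 \to v_1 \to \dots \to v_k$ of length at most $8\eta^{-2C_2}$, a telescoping of the inequalities in Definition \ref{edge-def} gives
\[
d_1(v_k) \leq d_1(v_0) + k \eta^{-C_2}, \qquad d_2(v_k) \leq d_2(v_0) + k,
\]
\[
\rho(v_k) \geq \exp(-k\eta^{-C_5}) \rho(v_0), \qquad \vol(v_k) \leq \exp(k \eta^{-C_3}) \vol(v_0),
\]
and $\waste(v_k) \leq \waste(v_0) + k\eta^{C_3}$. Substituting the initial data from \eqref{d-stat} ($d_1(v_0)=\rho(v_0)=\vol(v_0)=1$, $d_2(v_0)=\waste(v_0)=0$) and the bound $k \leq 8\eta^{-2C_2}$, the five desired estimates reduce to the numerical inequalities
\[
1 + 8\eta^{-3C_2} \leq 8\eta^{-3C_2}, \quad 8\eta^{-2C_2} \leq 8\eta^{-2C_2}, \quad 8\eta^{-2C_2-C_5} \leq \eta^{-2C_5},
\]
\[
8\eta^{-2C_2-C_3} \leq \eta^{-2C_3}, \quad 8\eta^{C_3 - 2C_2} \leq \eta^{C_3/2}.
\]
Each of these holds for $\eta \leq 1/10$ provided $C_3$ is sufficiently large relative to $C_2$ (taking care of the $\vol$ and $\waste$ bounds) and $C_5$ is sufficiently large relative to $C_2$ (taking care of the $\rho$ bound); the $d_1$ inequality is trivial since the additive $1$ is absorbed into $8\eta^{-3C_2}$ for $\eta \leq 1/10$ and $C_2 \geq 1$, and the $d_2$ inequality is an equality in the worst case.

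There is essentially no obstacle here beyond bookkeeping; the only point worth stating explicitly is that the chain of constants $1 < C_1 < \dots < C_5$ from the paragraph preceding Proposition \ref{main-abstract} is designed exactly so that the ``loss factor'' $8\eta^{-2C_2}$ coming from the path length is negligible against the per-edge tolerance $\eta^{-C_3}$, $\eta^{-C_5}$ etc. Thus the lemma is purely a consequence of the definitions, and the main content is confirming that the largeness hypotheses on the $C_i$ suffice; no use of \eqref{p-large} or of the specific structure of the random variables $(\a_v,\r_v,\f_v)$ is needed.
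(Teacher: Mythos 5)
Your proof is correct and follows the same straightforward bookkeeping approach that the paper uses; in fact the paper gives no explicit proof of this lemma, merely asserting that it ``follows from this definition [of the edge relation] and \eqref{d-stat}'', so you have supplied exactly the argument the authors have in mind. The one small inaccuracy is your claim that the inequality $1 + 8\eta^{-3C_2} \leq 8\eta^{-3C_2}$ holds by ``absorption'': this is literally false, and what one actually obtains from the telescoping is $d_1(v_k) \leq 1 + 8\eta^{-3C_2}$, not $8\eta^{-3C_2}$. This is a very minor artifact (the stated lemma could equally have had $9\eta^{-3C_2}$ or $O(\eta^{-3C_2})$ on the right-hand side without changing anything downstream, where only the order of magnitude of $d_1(v)$ is used), and the same slight imprecision is implicitly present in the paper's own statement; it would have been cleaner to either weaken the constant to $9$ or note the issue explicitly rather than hand-wave it.
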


From \eqref{bound-5} we see in particular that the almost uniformity axiom in Proposition \ref{main-abstract}(ii) is obeyed.  The thickness axiom in Proposition \ref{main-abstract}(i) is also easy, as the following corollary shows.

\begin{corollary}
Suppose a quadratic approximant $v = v_k \in V$ can be reached from $v_0$ by a path $v_0 \to v_1 \to \dots \to v_k$ of length $k$ at most $8\eta^{-2C_2}$.  Then we have $\P( \r_{v} = 0 ) \ll \exp( \eta^{-C_5^2} ) / p$.
\end{corollary}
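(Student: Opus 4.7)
The plan is a direct estimation using the construction of $\r_v$ in Definition \ref{sla-def}, the pointwise bound \eqref{theta-crude} on probabilities arising from a regular distribution on a Bohr set, and the uniform control on the linear rank and linear scale provided by Lemma \ref{bounds}.

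By Definition \ref{sla-def}, conditioned on the event $\c = c$ for any $c$ in the essential range of the label variable $\c$, the random variable $\r_v$ is drawn regularly from the Bohr set $B(S_c, \exp(-\eta^{-C_4})\rho_c)$. I apply \eqref{theta-crude} with radius $\rho' \coloneqq \exp(-\eta^{-C_4})\rho_c$ to obtain
$$\P(\r_v = 0 \mid \c = c) \leq \frac{1}{(\rho'/2)^{|S_c|}\, p} = \frac{\exp\bigl( |S_c|\log 2 + \eta^{-C_4}|S_c| + |S_c|\log(1/\rho_c) \bigr)}{p}.$$
Averaging over $c$ weighted by $\P(\c = c)$ gives the same upper bound on the unconditioned $\P(\r_v = 0)$.

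It then remains to control the exponent. Invoking Lemma \ref{bounds}, one has $|S_c| \leq d_1(v) \leq 8\eta^{-3C_2}$ and $\rho_c \geq \rho(v) \geq \exp(-\eta^{-2C_5})$, so each of the three terms in the exponent is $O(\eta^{-2C_5 - 3C_2})$. Since $C_5$ is taken sufficiently large depending on $C_1,\dots,C_4$, in particular large enough that $C_5^2 > 2C_5 + 3C_2$, we conclude $\P(\r_v = 0) \ll \exp(\eta^{-C_5^2})/p$ as required.

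There is no substantive obstacle: the argument is a mechanical combination of the definition of $\r_v$, the volume bound for Bohr sets from Lemma \ref{size-bohr}, and the \emph{a priori} bounds on rank and radius along any short path from $v_0$. The only mild nuisance is the $\exp(-\eta^{-C_4})$ shrinkage factor built into the definition of $\r_v$, which contributes an $\eta^{-C_4}|S_c|$ term to the exponent; this is comfortably absorbed because $C_5 > C_4$ and $|S_c|$ is bounded in terms of $\eta^{-C_2}$, and the generous hierarchy $C_1 < \dots < C_5$ leaves plenty of room to reach the target exponent $C_5^2$.
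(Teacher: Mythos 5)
Your proof is correct and takes essentially the same route as the paper: condition on $\c = c$, note that by Definition \ref{sla-def} the random variable $\r_v$ is then drawn regularly from $B(S_c, \exp(-\eta^{-C_4})\rho_c)$, apply the pointwise probability bound arising from Lemma \ref{size-bohr} (your \eqref{theta-crude}), and then invoke Lemma \ref{bounds} to control $|S_c|$ and $\rho_c$. The paper's own proof is just a terser version of this — it cites Lemma \ref{size-bohr} directly and leaves the exponent arithmetic to the reader, whereas you have spelled it out.
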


\begin{proof}  
Write
$$ v = \left( C, \c, (n_c + B(S_c,\rho_c))_{c \in C}, (G_c)_{c \in C}, (F_c)_{c \in C}, (\Xi_c)_{c \in C} \right).$$
It suffices to show that
$$ \P( \r_{v} = 0 | \c = c ) \ll \exp( \eta^{-C_5^2} ) / p$$
for each $c$ in the essential range of $\c$.  But once $\c$ is fixed to equal $\c$, then $\r_v$ is drawn regularly from
$n_c + B( S_c, \exp(-\eta^{-C_4}) \rho_c)$.  By Lemma \ref{bounds}, $S_c$ has cardinality at most $8\eta^{-3C_2}$ and $\rho_c$ is at least $\exp(-\eta^{-2C_5})$.  The claim now follows from Lemma \ref{size-bohr}.
\end{proof}

It remains to verify the last two axioms (iii), (iv) of Proposition \ref{main-abstract}.  We isolate these statements formally, using Lemma \ref{bounds} and Definition \ref{edge-def}. 

The first of these results, Theorem \ref{bad-ed}, states that ``a bad approximation implies an energy decrement''. The second, Theorem \ref{bad-dim}, states that ``a bad lower bound implies a dimension increment''.

\begin{theorem}\label{bad-ed} Let the notation and hypotheses be as above.  Suppose that
$v \in V$ is a structured local approximant obeying \eqref{bound-1}-\eqref{bound-4}.  If we have
\begin{equation}\label{tp-bad-again-0}
\Err_1(v) > \eta
\end{equation}
or
\begin{equation}\label{tp-bad-again}
\Err_4(v) > \eta
\end{equation}
then there exists a structured local approximant $v'$ obeying the bounds
\begin{align}
d(v') &\leq d(v) + \eta^{-C_2}\label{be-1} \\
d_2(v') &\leq d_2(v)+1  \label{be-2}\\
\rho(v') &\geq \exp(-\eta^{-C_5}) \rho(v) \label{be-3} \\
\vol(v') &\leq \exp( \eta^{-C_3} ) \vol(v) \label{be-4} \\
|\waste(v') - \waste(v)| &\leq \eta^{C_3} \label{be-5} \\
\Energy(v') &\leq \Energy(v) - \eta^{C_2}.\label{be-6}
\end{align}
\end{theorem}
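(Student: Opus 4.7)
The plan is to write $g \coloneqq f - \f_v$ for the ``defect'' of the current approximant, noting that once the label variable $\c$ is conditioned to a value $c$, $g$ becomes a deterministic function on $\Z/p\Z$. The strategy is to extract linear (in Case 1) or quadratic (in Case 2) structure from $g$ and use it to enlarge each torus $G_c$ by one coordinate, possibly after refining the shifted Bohr set $n_c + B(S_c,\rho_c)$, thereby producing $v'$ satisfying \eqref{be-1}--\eqref{be-6}.

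\textbf{Case 1:} $\Err_1(v) > \eta$. Applying Lemma \ref{popular} to $c \mapsto |\E(g(\a_v) \mid \c = c)|$, a set of labels of $\c$-probability $\gg \eta$ satisfies $|\E(g(\a_v) \mid \c = c)| \gg \eta$. On each such cell $c$ I would adjoin a trivial $\R/\lambda\Z$ factor to $G_c$ (to keep $d_2$ bookkeeping uniform) and add a constant to $F_c$ equal to the conditional mean of $g$; a Pythagorean calculation then yields a pointwise-in-$c$ energy gain of $\gg \eta^2$, hence a total gain $\gg \eta^3 \gg \eta^{C_2}$, with the remaining parameters changing trivially.

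\textbf{Case 2:} $\Err_4(v) > \eta$. I would expand $\Lambda_{\a_v,\r_v}(\f_v) - \Lambda_{\a_v,\r_v}(f)$ as a telescoping sum of four quadrilinear forms each carrying a single copy of $g$, and then apply Lemma \ref{cauchy-schwarz} three successive times after conditioning on $\c$, following the template of Example 6 of Section \ref{overview-sec}: duplicating $\r_v$ and two auxiliary shifts $\h_1,\h_2$ drawn from smaller-scale shifted Bohr sets as in \eqref{rsc}. The outcome is that on a set of labels of $\c$-probability $\gg \eta^{O(1)}$, the local Gowers $U^3$ count
\[
 \E\!\left( \prod_{\omega \in \{0,1\}^3} g(\a_v + \omega_1 \h_1 + \omega_2 \h_2 + \omega_3 \h_3) \,\Big|\, \c = c \right) \gg \eta^{O(1)}.
\]
I would then invoke the local inverse $U^3$ theorem (Theorem \ref{locu3}, established later in the paper and forming the technical heart of the argument) with polynomial-in-$\eta$ parameters to produce, on each such cell, a partition into shifted Bohr sub-cells together with a locally quadratic phase $\phi_{c,i}$ valued in some $\R/\lambda_{c,i}\Z$ which correlates with $g$ at rate $\gg \eta^{O(1)}$. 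Appending $\phi_{c,i}$ as one new coordinate to $\Xi_c$ enlarges $G_c$ by a single dilated factor $\R/\lambda_{c,i}\Z$, and redefining $F_c$ by the cosine rule (subtracting off an appropriate scalar multiple of the real part of $e(\phi_{c,i})$, then truncating back to $[-1,1]$, which only improves the energy) produces a pointwise-in-$c$ energy gain of $\gg \eta^{O(1)}$, averaging to a global decrement of at least $\eta^{C_2}$ provided $C_2$ is chosen large enough relative to the implied exponents.

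The main obstacle is the structural bookkeeping in Case 2. The rank increase \eqref{be-1} and radius decrement \eqref{be-3} are dictated by the parameters of Theorem \ref{locu3} and the sub-Bohr refinement it produces; verifying them requires careful inspection of the output of that inverse theorem, which is precisely why Theorem \ref{locu3} must be proved with polynomial rather than merely qualitative bounds. The volume bound \eqref{be-4} rests on the fact that adjoining a single factor $\R/\lambda_{c,i}\Z$ multiplies $\vol(G_c)$ by $\lambda_{c,i}$, which is controlled linearly in the inverse-theorem parameters --- precisely why the dilated tori of the preceding section, rather than standard tori with large Lipschitz constants, are the correct ambient framework. Finally, the waste bound \eqref{be-5} reduces, via Lemma \ref{ati}, to the fact that $\a_{v'}$ is drawn from Bohr data only mildly coarser than that of $\a_v$.
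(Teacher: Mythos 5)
Your overall strategy is the paper's (von Neumann via Cauchy--Schwarz, then the local inverse $U^3$ theorem \ref{locu3}, then append a coordinate and truncate), with a mild variation: you split into two cases, handling $\Err_1$ directly by subtracting the conditional mean, while the paper folds both $\Err_1$ and $\Err_4$ into a single quadrilinear telescoping $\Lambda_{\a,\r}(\f_0,\f_1,\f_2,\f_3)$ with one argument equal to $f-\f$ and the rest equal to $1$, $f$, or $\f$, and runs the von Neumann/inverse argument uniformly. Your Case 1 shortcut works (the trivial extra torus factor is unnecessary; one can just modify $F_c$ by the constant and truncate), and is in fact a slight simplification.

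There is, however, a genuine gap in your Case~2 bookkeeping, and a misdiagnosis that follows from it. You propose to append the locally quadratic phase as a new coordinate in a factor $\R/\lambda_{c,i}\Z$, and to form $F'_{c}$ by subtracting a scalar multiple of $\cos(2\pi\,\cdot)$ in the new variable before truncating. But if the original $F_c$ is exactly $1$-Lipschitz and you add any nonconstant function of the new coordinate, the combined Lipschitz constant on the product becomes $\sqrt{1+L^2}>1$ (where $L>0$ is the Lipschitz constant of the cosine correction), which violates Definition~\ref{sla-def}; truncation does not repair this. No choice of $\lambda_{c,i}$, however large, can bring the Lipschitz constant back to $1$: it only shrinks $L$, never to zero. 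The paper's construction avoids this by first dilating \emph{all the existing} coordinates of $G_c$ by a fixed factor (replacing $\R/\lambda_{i,c}\Z$ by $\R/100\lambda_{i,c}\Z$ and composing $F_c$ with a $\tfrac{1}{100}$-contraction), so that the pre-existing part contributes Lipschitz constant $\tfrac{1}{100}$ rather than $1$, and the new factor is simply $\R/\Z$ --- not a dilated circle at all. This also shows your account of \eqref{be-4} is wrong: the volume grows by the factor $100^{\dim G'_{c'}}\leq\exp(O(\eta^{-2C_2}))$ coming from the dilation of the old coordinates, while the new coordinate has unit volume. The dilated-torus formalism is not being used to tame the new coordinate's volume in this theorem; it earns its keep in the dimension-decrement step (Theorem~\ref{bad-dim}, via Theorem~\ref{nfoc}), where passing to $k^\perp$ genuinely forces non-standard circle lengths.
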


\begin{theorem}\label{bad-dim} Let the notation and hypotheses be as above.  Suppose that
$v \in V$ is a structured local approximant obeying \eqref{bound-1}-\eqref{bound-4}, and let $\a_v,\r_v,\f_v$ be the random variables associated to $v$. If we have
\begin{equation}\label{lower-again}
\Lambda_{\a_v, \r_v}(\f_v) \leq (\E \f_v(\a_v))^4 - \eta,
\end{equation}
then there exists a quadratic approximant $v' \in V$ with 
\begin{align}
d(v') &\leq d(v) + \eta^{-C_2}\label{bd-1} \\
d_2(v') &\leq d_2(v)  \label{bd-2}\\
d_2^{\mathrm{poor}}(v') &\leq d_2^{\mathrm{poor}}(v)-1 \label{bd-3}  \\
\rho(v') &\geq \exp(-\eta^{-C_5}) \rho(v) \label{bd-4} \\
\vol(v') &\leq \exp( \eta^{-C_3} ) \vol(v) \label{bd-5} \\
|\waste(v') - \waste(v)| &\leq \eta^{C_3} \label{bd-6} \\
\Energy(v') &\leq \Energy(v) + \eta^{3C_2}.\label{bd-7}
\end{align}
\end{theorem}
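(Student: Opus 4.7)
The plan is to exploit the failure of Khintchine recurrence \eqref{lower-again} by contrasting it with the Cauchy--Schwarz lower bound of Lemma \ref{cauchy}: the only way the quartic form can fall short of this Haar-integral bound is for the locally quadratic $\Xi_{c^*}$ on some cell $c^*$ to be poorly equidistributed in $G_{c^*}^4$, and this equidistribution failure will witness a non-trivial relation among the dual frequencies of $\Xi_{c^*}$, which in turn lets us reduce $\dim G_{c^*}$ by one on that cell. This follows the heuristic of Examples 3 and 4 in Section \ref{overview-sec}, with the dilated-torus machinery of Section 5 guaranteeing that the reduction does not blow up Lipschitz norms or volumes.

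First I would locate the bad cell: by conditioning on $\c_v = c$ and applying H\"older's inequality together with \eqref{lower-again}, a $\gg \eta$-fraction of labels $c$ (weighted by $\P(\c_v = c)$) must be poorly distributed in the sense of \eqref{coo}, and among these I pick a $c^*$ with $\dim G_{c^*} = d_2^{\mathrm{poor}}(v)$. Conditioning further on $\c_v = c^*$, I expand $F_{c^*}$ in a Fourier series on $G_{c^*}$ and expand $\Lambda_{\a_v,\r_v}(\f_v \mid \c_v = c^*)$ as a sum over tuples $(k_0,k_1,k_2,k_3) \in \hat G_{c^*}^4$. Because $\Xi_{c^*}$ is locally quadratic, the phase $\sum_i k_i \cdot \Xi_{c^*}(\a_v + i\r_v)$ collapses, via the third-derivative vanishing \eqref{omh-ap}, to a locally linear function of $\a_v$ when $(k_0,k_1,k_2,k_3) = (k,-3k,3k,-k)$; these are exactly the tuples producing the Cauchy--Schwarz main term in Lemma \ref{cauchy}. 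Any other tuple contributes via a genuinely locally bilinear phase $\phi(\a_v, \r_v)$, and the $\Omega(\eta)$ gap in \eqref{coo} forces at least one such tuple to contribute non-trivially. Applying Proposition \ref{large-quadratic} to this bilinear phase then produces an integer $1 \leq \ell \leq \eta^{-O(C_1 d_1(v)^2)}$ and an irreducible dual frequency $k^* \in \hat G_{c^*}$ such that $\ell\, k^* \cdot \Xi_{c^*}$ is approximately locally linear on a Bohr set of radius $\rho' \geq \exp(-\eta^{-C_5}) \rho_{c^*}$.

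Next I would reduce the dimension on the bad cell. Theorem \ref{nfoc} gives a dilated torus $G'_{c^*}$ of dimension $\dim G_{c^*} - 1$ and a bilipschitz Lie group isomorphism $\psi\colon (k^*)^\perp \to G'_{c^*}$ with $\|\psi\|_\Lip,\|\psi^{-1}\|_\Lip \ll d^{O(d)}$ and $\vol(G'_{c^*}) \ll d^{O(d)} |k^*| \vol(G_{c^*})$; by Lemma \ref{bounds} these inflations stay below $\exp(\eta^{-C_3/2})$. Decomposing $\Xi_{c^*}(a) = \tilde\Xi_{c^*}(a) + \phi(a)\, k^*/|k^*|^2$ with $\tilde\Xi_{c^*}$ taking values in $(k^*)^\perp$, the component $\phi/|k^*|$ is almost locally linear on a smaller Bohr set, and Corollary \ref{loc-to-glob} upgrades it to a global linear phase $\xi a / p$ with $\xi \in \Z/p\Z$. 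This lets me rewrite $F_{c^*}(\Xi_{c^*}(a)) \approx F'_{c^*}(\psi(\tilde\Xi_{c^*}(a)), \xi a/p)$ for a suitable $1$-Lipschitz $F'_{c^*} \colon G'_{c^*} \times \R/\Z \to [-1,1]$. To remove the linear phase I refine the cell $c^*$ by intersecting with Bohr sets of the augmented frequency set $S_{c^*} \cup \{\xi\}$; on each sub-cell $\xi a/p$ is essentially constant, and the approximant becomes locally quadratic into $G'_{c^*}$ of dimension $\dim G_{c^*} - 1$.

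Finally I would assemble $v'$ by leaving every label $c \neq c^*$ unchanged, replacing $c^*$ with its sub-cells, and adjusting the label variable $\c_{v'}$ accordingly; Lemma \ref{ati} bounds the total-variation perturbation of $\a_{v'}$ against $\a_v$, yielding \eqref{bd-6}. The rank adds one frequency ($\xi$), well within \eqref{bd-1}; \eqref{bd-2}--\eqref{bd-3} hold by construction, since the only cells of dimension $d_2^{\mathrm{poor}}(v)$ that were poorly distributed in $v$ now have dimension strictly smaller in $v'$. The scale and volume bounds \eqref{bd-4}--\eqref{bd-5} come from Theorem \ref{nfoc} and Proposition \ref{large-quadratic} together with Lemma \ref{bounds}, and the weak energy bound \eqref{bd-7} is harmless: the $L^\infty$ modification of $\f$ on the bad cell is $O(\eta^{2C_2})$ by Lipschitz control on $F'_{c^*}$ multiplied by the residual equidistribution error, and this is swamped by the allowed increase $+\eta^{3C_2}$. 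The main obstacle is Step 2: extracting the bilinear Fourier coefficient with polynomial-in-$\eta$ gains rather than the exponential losses of a naive Weyl argument, while simultaneously preserving Lipschitz control on the Fourier inversion used to build $F'_{c^*}$. Proposition \ref{large-quadratic} is precisely tuned to supply polynomial bounds for this bilinear extraction, and the dilated-torus framework of Section 5 is what prevents the Lipschitz constants from being inflated by factors depending badly on $|k^*|$ in the inverse step.
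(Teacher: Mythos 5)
Your plan correctly identifies the heuristic (Examples 3 and 4 of Section \ref{overview-sec}) and the right toolbox (Weyl equidistribution, Proposition \ref{large-quadratic}, Corollary \ref{loc-to-glob}, Theorem \ref{nfoc}), and the broad shape of the argument matches the paper's. But there are two substantive gaps, plus a bookkeeping point you acknowledge but do not actually resolve.

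First, you reduce the dimension on a \emph{single} cell $c^*$ and leave all other labels unchanged. This cannot deliver \eqref{bd-3}. The statistic $d_2^{\mathrm{poor}}(v')$ is the maximum of $\dim(G_{c'})$ over \emph{all} poorly distributed cells $c'$ of $v'$, and whether a cell is poorly distributed is determined entirely by its own data. If $v$ has several poorly distributed cells with $\dim(G_c) = d_2^{\mathrm{poor}}(v)$ --- and nothing in \eqref{lower-again} rules this out --- then modifying only $c^*$ leaves the others untouched and $d_2^{\mathrm{poor}}(v') = d_2^{\mathrm{poor}}(v)$. The paper's construction takes $C' = (\Z/p\Z) \times C$ and $\c' = (\a_v, \c)$ and performs the dimension reduction on \emph{every} poorly distributed $c$, carrying over data unchanged on the non-poorly-distributed cells; that way every poorly distributed cell of $v'$ has dimension at most $d_2^{\mathrm{poor}}(v) - 1$.

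Second, the decomposition $\Xi_{c^*}(a) = \tilde\Xi_{c^*}(a) + \phi(a)\, k^*/|k^*|^2$ with $\tilde\Xi_{c^*}$ valued in $(k^*)^\perp$ does not exist. The orthogonal splitting $\R^d = \{k^* \cdot x = 0\} \oplus \R k^*$ does not descend to a splitting of $G_{c^*} = \R^d/\Gamma$ as a Lie group: $(k^*)^\perp$ is a subtorus, but the line $\R k^*/|k^*|^2$ generally meets $\Gamma$ at most in $\{0\}$, so there is no complementary circle and no global direct sum. What the paper does instead is \emph{local}: the constraint \eqref{kpc-lin} shows that, for $a$ in a small shifted Bohr set, the \emph{differences} $\Xi_c(a + 2m_c h) - \Xi_c(a)$ lie in an $O(\exp(-\eta^{-3C_4}))$-neighbourhood of the subtorus $(k'_c)^\perp$, and one then applies a locally linear projection $\pi_c$ from that neighbourhood to $(k'_c)^\perp$ (viewing the subtorus locally as a graph in $\dim(G_c)-1$ of the coordinates). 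There is no attempt to split $\Xi_c$ itself globally into a tangential and a normal component. This local projection is also where the lower bound on $|k'_c|$ in \eqref{loo-2} enters --- without it the neighbourhood overlaps itself and the projection is ill-defined.

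Third, you flag as the ``main obstacle'' the extraction of a low dual frequency with polynomial-in-$\eta$ losses, but your method of ``expanding $F_{c^*}$ in a Fourier series'' will not achieve this: $F_{c^*}$ is only Lipschitz, and the volume of $G_{c^*}$ can be as large as $\exp(\eta^{-2C_3})$, so naive Plancherel-type pigeonholing gives losses involving $C_3$. The paper's Lemma \ref{weyl} uses a Fej\'er-type smoothing kernel built from a fixed smooth compactly supported $\varphi$ precisely so that the bound on $|k|$ depends on $C_2$ (not $C_3$, not the volume); this is then threaded through to ensure that the irreducible $k'_c$ obeys the crucial upper bound in \eqref{loo-2} (via the observation that $k'_c$ must divide the original, $C_2$-bounded Weyl frequency, not just the $C_3$-bounded one produced by Proposition \ref{large-quadratic}), and hence that the volume inflation in Theorem \ref{nfoc} only costs $\exp(\eta^{-O(C_2)})$, which keeps \eqref{bd-5} within budget. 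Relatedly, the step from the Weyl exponential-sum bound to an application of Proposition \ref{large-quadratic} in the paper goes through an intermediate Weyl-differencing (Cauchy--Schwarz) lemma that extracts a bound on the second derivative $\partial^2\Xi_c$; your proposal skips directly to the bilinear proposition with a phase that is not yet in the required locally bilinear form.

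A smaller discrepancy: you do not separate an irreducible dual frequency $k'_c$ from the one returned by Proposition \ref{large-quadratic} and keep track of the multiplier $m_c$. In the paper one must write $k_c = m_c k'_c$, bound $m_c$ by $\exp(\eta^{-4C_3})$, and then replace displacements $h$ by $2m_c h$ and dilate the frequency set by $(2m_c)^{-1}$ when forming $S'_{c'}$; this is what makes \eqref{kpc-lin} hold with the \emph{irreducible} frequency, which is what Theorem \ref{nfoc} requires.
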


It remains to prove Theorem \ref{bad-ed} and Theorem \ref{bad-dim}.  Theorem \ref{bad-ed} will be proven in Section \ref{baded-sec} using a difficult local inverse Gowers theorem, Theorem \ref{locu3}, that will be proven in later sections.  Theorem \ref{bad-dim}, on the other hand, will not rely on the local inverse Gowers theorem; it is proven in Section \ref{baddim-sec}.

\section{Bad lower bound implies dimension decrement}\label{baddim-sec}

In this section we prove Theorem \ref{bad-dim}.  Let the notation and hypotheses be as in Theorem \ref{bad-dim}.  We abbreviate $\a_v,\r_v,\f_v$ as $\a,\r,\f$ respectively.  We can write the left-hand side of \eqref{lower-again} as $\E A(\c)$, where for any $c \in C$, the quantity $A(c)$ is defined as the conditional expectation
$$ A(c) \coloneqq \Lambda_{\a,\r}(\f | \c = c ).$$
Similarly, we can write $\E \f(\a) = \E B(\c)$, where $B(\c) \coloneqq \E(\f(\a)|\c = c)$.  By \eqref{lower-again} and H\"older's inequality, we thus have
$$ \E B(\c)^4 - A(\c) \geq \eta.$$
Applying Lemma \ref{popular}, we must therefore have
$$ \P( B(\c)^4 - A(\c) > \eta/2 ) \gg \eta.$$
By \eqref{coo}, we conclude that $\c$ is poorly distributed with probability $\gg \eta$.  In particular, there is at least one poorly distributed value of $c$.

Most of this section will be devoted to the proof of the following proposition, which roughly speaking asserts that when $\c$ is poorly distributed, there is a linear constraint between the quadratic frequencies which will ultimately allow us to decrease the poorly distributed quadratic dimension $d_2^{\mathrm{poor}}$.

\begin{proposition}\label{cprop}  Let $c$ be a poorly distributed element of the essential range of $\c$.  Then there exists a natural number $m_c$, a frequency $\xi_c \in \Z/p\Z$ and an irreducible dual frequency $k'_c \in \hat G_c$ with
\begin{equation}\label{loo}
1 \leq m_c \ll \exp( \eta^{-4C_3} )
\end{equation}
and
\begin{equation}\label{loo-2}
\exp( -\eta^{-4C_3} ) \ll |k'_c| \ll \exp( \eta^{-3C_2} )
\end{equation}
such that
\begin{equation}\label{kpc-lin}
\| k'_c \cdot \Xi_c(a + 2m_c h) - k'_c \cdot \Xi_c(a) \|_{\R/\Z} \ll \exp( -\eta^{-3C_4} )
\end{equation}
for all $a \in B(S_c, \rho_c/2)$ and $h \in B(S_c \cup \{ \xi_c \}, \exp(-\eta^{-5C_4}) \rho )$.
\end{proposition}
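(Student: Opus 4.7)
The plan is to Fourier-expand $F_c$ on the dilated torus $G_c$ to show that the poor distribution of $c$ produces a substantial off-diagonal contribution to the natural Fourier expansion of $\Lambda_{\a,\r}(\f|\c=c)$, and then to apply Proposition \ref{large-quadratic} to the resulting bilinear resonance in order to extract the irreducible dual frequency $k'_c$. Throughout, I condition on $\c=c$, so that $\a$ is drawn regularly from $n_c+B(S_c,\rho_c/2)$, $\r$ is drawn regularly from $B(S_c,\exp(-\eta^{-C_4})\rho_c)$, and $\f(a)=F_c(\Xi_c(a))$.

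First I would truncate the Fourier series $F_c(x)=\sum_{k\in\hat G_c}\hat F_c(k)e(k\cdot x)$ to dual frequencies with $|k|\leq K$ for some $K=\exp(\eta^{-O(C_3)})$; the $1$-Lipschitz hypothesis on $F_c$ together with the volume bound $\vol(G_c)\leq\exp(\eta^{-2C_3})$ from \eqref{bound-4} forces the $|k|>K$ tail to contribute $O(\eta/100)$ in $L^\infty(G_c)$ after a suitable Fej\'er-type smoothing, absorbable into an error. Expanding $\Lambda_{\a,\r}(\f|\c=c)$ against this truncated series yields
$$\Lambda_{\a,\r}(\f|\c=c) = \sum_{k_0,\dots,k_3} \hat F_c(k_0)\hat F_c(k_1)\hat F_c(k_2)\hat F_c(k_3)\,T(k_0,k_1,k_2,k_3) + O(\eta/10),$$
where $T(k_0,\dots,k_3)\coloneqq\E(e(\sum_{j=0}^3 k_j\cdot \Xi_c(\a+j\r))\,|\,\c=c)$. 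Local quadraticity \eqref{omh-ap} forces $T\equiv 1$ on the quartic diagonal $(k,-3k,3k,-k)$, so the diagonal contribution equals (by Parseval) $\int_{G_c^3} F_c(x)F_c(y)F_c(z)F_c(x-3y+3z)\,d\mu^3\geq(\int F_c\,d\mu)^4$ by Lemma \ref{cauchy}, while a parallel expansion of $\E(\f(\a)|\c=c)^4$ relates it to $\hat F_c(0)^4$ up to controllable corrections coming from the off-diagonal of $T$ at tuples such as $(k,0,0,0)$.

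The hypothesis $\Lambda_{\a,\r}(\f|\c=c)<\E(\f(\a)|\c=c)^4-\eta/2$ therefore forces the total off-diagonal contribution of $T$ to have magnitude $\gg\eta$. Pigeonholing over the $O(K^{4\dim G_c})=\exp(O(\eta^{-O(C_3)}d_2(v)))$ off-diagonal tuples (using \eqref{bound-2}), I locate a specific non-diagonal $(k_0,k_1,k_2,k_3)$ with each $|k_j|\leq K$ such that $|T(k_0,k_1,k_2,k_3)|\gg\exp(-\eta^{-O(C_3)})$. Expanding $\sum_j k_j\cdot\Xi_c(a+jr)$ via the local quadratic structure of $\Xi_c$ splits this phase as $L(a)+M(r)+\phi(a,r)$, where $L$ and $M$ are locally linear and $\phi$ is a locally bilinear cross-term of the form $k''\cdot\partial^2\Xi_c(a,r)$ for some non-zero integer combination $k''\in\hat G_c$ of $k_0,\dots,k_3$, with $k''$ non-zero precisely because $(k_0,\dots,k_3)$ is not diagonal.

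At this point I would localise further to eliminate the $L(a)$ component: writing $L(a)=\xi_c a/p$ for an appropriate $\xi_c\in\Z/p\Z$ (extracted via Lemma \ref{edual} applied to the representation of $L$ on the Bohr set), the exponential sum reduces to a lower bound on a purely bilinear sum after restricting $\a$ into the enlarged Bohr set $B(S_c\cup\{\xi_c\},\cdot)$, which is the origin of the frequency $\xi_c$ appearing in the statement. Proposition \ref{large-quadratic} then applies to $\phi$ on this enlarged Bohr set, yielding a natural number $1\leq m\leq\exp(\eta^{-O(C_3)})$ with $\|m\phi(a,h)\|_{\R/\Z}$ small on a smaller Bohr set; re-expressing the bilinear second-difference $\phi(a,h)=k''\cdot\partial^2\Xi_c(a,h)$ as a first-difference at scale $2h$ via the quadratic vanishing identity, and setting $m_c\coloneqq m$ with $k'_c$ the irreducible reduction of $m_c k''$, produces \eqref{kpc-lin} together with the magnitude bound \eqref{loo-2} (the upper bound coming from $|k''|\lesssim K$ and the lower bound from the integrality of $k''\in\hat G_c$ together with $\vol(G_c)\leq\exp(\eta^{-2C_3})$). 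The main obstacles are: (i) extracting and tracking $\xi_c$ cleanly from $L$ while keeping the Bohr set shrinkage within the $\exp(-\eta^{-5C_4})\rho_c$ budget permitted by the statement; (ii) passing from the second-difference bound output of Proposition \ref{large-quadratic} to the first-difference form of \eqref{kpc-lin} via the local quadratic structure; and (iii) respecting the hierarchy $C_1<C_2<\dots<C_5$ so that the $\exp(-\eta^{-3C_4})$ and $\exp(-\eta^{-5C_4})$ tolerances absorb all pigeonhole, Fourier-truncation, and Proposition \ref{large-quadratic} losses.
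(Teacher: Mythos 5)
Your overall strategy -- expand $F_c$ in Fourier series on $G_c$, show that the poor-distribution hypothesis forces a large off-diagonal exponential sum, extract a bilinear resonance, apply Proposition~\ref{large-quadratic}, and then linearize -- is the correct shape, and matches the paper's Weyl-equidistribution/Cauchy--Schwarz/bilinear-sum route. However there are two substantive gaps.

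First, the reduction from a non-diagonal tuple $(k_0,k_1,k_2,k_3)$ to a nonzero bilinear cross-term does not hold. With $\Xi_c$ locally quadratic one has the decomposition
$\sum_j k_j\cdot\Xi_c(a+jr) = L(a)+M(r)+\left(\sum_j jk_j\right)\cdot\partial^2\Xi_c(a-a_0,r)$,
so the bilinear coefficient is $k''=\sum_j jk_j$ rather than some automatically nonzero combination. This vanishes for many non-diagonal tuples (e.g.\ $(1,-2,1,0)$ is not of the form $(k,-3k,3k,-k)$ yet has $\sum_j jk_j = 0$), so Proposition~\ref{large-quadratic} would never be reached in those cases. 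The paper avoids this entirely: after Lemma~\ref{weyl} produces a nonzero $(k^0_c,k^1_c,k^2_c)$ it singles out one nonzero coordinate $k^i_c$ and \emph{manufactures} a bilinear phase in $k^i_c$ via Weyl differencing (a shift $\a\mapsto\a-\h$, $\r\mapsto\r+\h$ followed by two applications of Lemma~\ref{cauchy-schwarz}), with the lower bound surviving because the eliminated factors are $1$-bounded. You need this step, or something that plays the same role, for tuples whose original cross-term vanishes.

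Second, and this is the more serious quantitative problem, your truncation threshold $K=\exp(\eta^{-O(C_3)})$ is wrong, and the reason you give for it (the volume bound $\vol(G_c)\leq\exp(\eta^{-2C_3})$) is not the relevant quantity. For a $1$-Lipschitz function the Fej\'er/Jackson smoothing error depends on the Lipschitz constant and the \emph{dimension} of the torus, not on its volume: a product kernel concentrated on a ball of radius $\sim\eta/d^{1/2}$ costs $O(\eta)$ in sup norm and has Fourier support in $|k|\lesssim d/\eta$, so the truncation threshold can be taken $\ll\eta^{-O(C_2)}$ with no $C_3$ dependence. The $\vol(G_c)$ factor only affects the \emph{count} of surviving frequencies, hence the pigeonhole loss in the exponential-sum lower bound (which may indeed be $\exp(-\eta^{-O(C_3)})$, and this is fine), but not the magnitude bound on the frequency itself. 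This distinction is exactly the content of the remark after Lemma~\ref{weyl} (``the bound on $|k|$ does not depend on the volume of the dilated torus'') and of the paragraph after~\eqref{kic} (``the bound~\eqref{kic} involves $C_2$ rather than $C_3$; this will become important when establishing the important upper bound of~\eqref{loo-2}''). As you have written it, your upper bound $|k'_c|\lesssim K=\exp(\eta^{-O(C_3)})$ does not give~\eqref{loo-2}, which demands $|k'_c|\ll\exp(\eta^{-3C_2})$; the whole iteration would break if~\eqref{loo-2} were relaxed in this way. You must keep the frequency-magnitude bound depending only on $\eta^{-1}$ and $\dim G_c$, so $\exp(\eta^{-O(C_2)})$, even though the exponential-sum lower bound picks up an extra $\vol(G_c)^{-1}$ factor.

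A smaller remark: the frequency $\xi_c$ is not read off from the ``linear part $L(a)$'' via Lemma~\ref{edual}; in the paper it arises after Proposition~\ref{large-quadratic}, by applying Corollary~\ref{loc-to-glob} to the now ``almost locally linear'' map $k_c\cdot\Xi_c$. This is worth getting right because $\xi_c$ enters the new Bohr frequency set $S_c\cup\{\xi_c\}$ precisely to linearize $k_c\cdot\Xi_c$, not the separated linear term.
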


A key technical point here is that the upper bound on $|k'_c|$ involves only $C_2$ and not $C_3$ or $C_4$; this is necessary in order to keep the bounds under control during the iteration process.  However, we will be able to tolerate the presence of the $C_3$ and $C_4$ constants in the other components of Proposition \ref{cprop}.

\begin{proof}  We condition on the event $\c = c$.  By Definition \ref{sla-def}, the random variables $\a, \r$ are now independent and regularly drawn from $n_c + B(S_c, \rho_c / 2)$ and $B( S_c, \exp(-\eta^{-C_4}) \rho_c)$ respectively, while $\f(n) = F_c( \Xi_c(a))$.  We conclude that
\begin{align*} \E( F_c(\Xi_c(\a)) F_c(\Xi_c(\a+\r)) F_c(\Xi_c(\a+2\r)) & F_c(\Xi_c(\a+3\r)) | \c = c ) \\ & < \E(F_c(\Xi_c(\a))|\c=c)^4 - \eta/2.
\end{align*}
Since $\Xi_c: \Z/p\Z \to G_c$ is locally quadratic on $n_c + B(S_c, \rho_c)$, which contains the progression $\a,\a+\r,\a+2\r, \a+3\r$, we see from \eqref{omh-ap} that
$$ \Xi_c(\a) - 3 \Xi_c(\a+\r) + 3 \Xi_c(\a+2\r) - \Xi_c(\a+3\r) = 0$$
and so the left-hand side can be written as
$$ \E( F^{(3)}_c( \Xi_c(\a), \Xi_c(\a+\r), \Xi_c(\a+2\r)) | \c = c ),$$
where $F^{(3)}_c: G_c^3 \to [-1,1]$ is the function
$$ F^{(3)}_c(x_0,x_1,x_2) \coloneqq F_c(x_0) F_c(x_1) F_c( x_2) F_c( x_0 - 3x_1 + 3x_2 ).$$
Applying Lemma \ref{cauchy}, we have 
$$ \int_{G_c^3} F^{(3)}_c(x_0,x_1,x_2)\ d\mu_c(x_0) d\mu_c(x_1) d\mu_c(x_2) \geq \left(\int_{G_c} F_c(x)\ d\mu_c(x)\right)^4$$
where $\mu_c$ is the probability Haar measure on $G_c$.  By the triangle inequality, we conclude that at least one of the assertions
\begin{align*} \bigg|\E( F^{(3)}_c( \Xi_c(\a), & \Xi_c(\a+\r), \Xi_c(\a+2\r)) | \c = c) \\ &  - \int_{G_c^3} F^{(3)}_c(x_0,x_1,x_2)\ d\mu_c(x_0) d\mu_c(x_1) d\mu_c(x_2) \bigg| \gg \eta\end{align*}
or
$$ \left|\E( F_c( \Xi_c(\a) ) | \c = c ) - \int_{G_c} F_c(x)\ d\mu_c(x) \right| \gg \eta$$
holds.  Defining $\tilde F: G_c^3 \to [-1,1]$ by $\tilde F(x_0, x_1, x_2) = $
$$ \frac{1}{10} \left( F^{(3)}_c(x_0,x_1,x_2) - \int_{G_c^3} F^{(3)}_c(x_0,x_1,x_2)\ d\mu_c(x_0) d\mu_c(x_1) d\mu_c(x_2)  \right)$$
in the former case and
$$\tilde F(x_0,x_1,x_2) \coloneqq \frac{1}{10} \left( F_c(x_0) - \int_{G_c} F_c(x_0)\ d\mu_c(x_0) \right)$$
in the latter case, we see that $\tilde F$ is $1$-Lipschitz and of mean zero, and
\begin{equation}\label{ill}
 |\E(\tilde F( \x_c ) | \c = c)| \gg \eta
\end{equation}
where $\x_c \in G_c^3$ is the random variable
$$ \x_c \coloneqq (\Xi_c(\a), \Xi_c(\a+\r), \Xi_c(\a+2\r)).$$
The Weyl equidistribution criterion, applied in the contrapositive, then suggests that there should be a non-zero dual frequency $k = (k_1,k_2,k_3) \in \hat G_c^3$ to $G_c^3$
such that $\E( e(k \cdot \x_c) | \c = c)$ is large.  The next lemma makes this intuition precise.

\begin{lemma}[Weyl equidistribution]\label{weyl}  With the notation and hypotheses as above, there exists a non-zero dual frequency $k = (k_1,k_2,k_3) \in \hat G_c^3$ to $G_c^3$ with $|k| \ll \exp( O( \eta^{-3C_2} ) )$ such that
$$ |\E( k \cdot \x_c | \c = c)| \gg \exp( - O( \eta^{-3C_2} ) ) / \vol(G_c).$$
\end{lemma}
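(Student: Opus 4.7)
The strategy is a quantitative Weyl equidistribution argument via Fourier analysis on the dilated torus $G_c^3$: I would Fourier-expand $\tilde F$, smooth and truncate to a trigonometric polynomial with boundedly many frequencies, and pigeonhole to extract a single character $e(k\cdot y)$ whose pairing with the law of $\x_c$ is large.

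Set $d = \dim G_c$; by Lemma~\ref{bounds} we have $d \leq 8\eta^{-2C_2}$ and $\vol(G_c) \leq \exp(\eta^{-2C_3})$. The Pontryagin dual $\hat G_c^3 \subset \R^{3d}$ is a lattice of covolume $\vol(G_c)^{-3}$. Expand $\tilde F(y) = \sum_{k \in \hat G_c^3} a_k e(k \cdot y)$, where $a_0 = 0$ (mean-zero), $|a_k| \leq 1$ ($L^\infty$ bound), and $|a_k| \ll \sqrt{d}/|k|$ for $k \neq 0$ by integrating by parts in the coordinate direction of largest $|k_j|$ (using the $1$-Lipschitz hypothesis). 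To make the Fourier tail manageable, I would convolve $\tilde F$ with a smooth unit-mass bump $\phi_\epsilon$ on $G_c^3$ of scale $\epsilon \asymp \eta$, giving $\tilde F_\epsilon$ with $\|\tilde F - \tilde F_\epsilon\|_\infty \leq \epsilon$ and Fourier coefficients $a_k \hat\phi(\epsilon k)$ decaying like $(1+\epsilon|k|)^{-N}$ for any $N$. Truncate to $|k| \leq K$ with $K \leq \exp(O(\eta^{-3C_2}))$; taking the Schwartz decay exponent of $\hat\phi$ larger than $3d$, standard integral estimates against lattice density $\vol(G_c)^3$ make the tail contribution $\leq \eta/100$.

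Applying pigeonhole to $|\sum_{0<|k|\leq K} a_k \hat\phi(\epsilon k) \hat\nu(k)| \gg \eta$, where $\hat\nu(k) = \E(e(k\cdot\x_c)\mid\c=c)$, and using $|a_k \hat\phi(\epsilon k)| \leq 1$, some nonzero $k$ with $|k|\leq K$ satisfies $|\hat\nu(k)| \gg \eta/N_K$, where $N_K \ll K^{3d}\vol(G_c)^3$ is the lattice-point count in $\hat G_c^3 \cap B(0,K)$. With the parameters chosen so that $K^{3d} \leq \exp(O(\eta^{-3C_2}))$ (which is consistent with $d = O(\eta^{-2C_2})$ and $\log K = O(\eta^{-C_2})$), this yields the claimed upper bound on $|k|$ and a lower bound on $|\hat\nu(k)|$ of the correct form up to the power of $\vol(G_c)$ in the denominator.

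The main obstacle is sharpening the denominator from the naive $\vol(G_c)^{-3}$ down to the $\vol(G_c)^{-1}$ claimed in the lemma. Since $\vol(G_c)$ may be as large as $\exp(\eta^{-2C_3})$ with $C_3 \gg C_2$, this is a genuine improvement. Obtaining it would exploit that $\x_c = (\Xi_c(\a),\Xi_c(\a+\r),\Xi_c(\a+2\r))$ is parameterized by only the two variables $(\a,\r)$ and so lies on a $2$-dimensional subvariety of $G_c^3$. Concretely, one uses the decomposition of the locally quadratic $\Xi_c$ on a Bohr set into constant, locally linear, and symmetric bilinear pieces (the bilinear piece being the well-defined second difference of $\Xi_c$, which is independent of base point by the third-difference identity \eqref{omh}), to write $k \cdot \x_c$ as a phase in $(\Xi_c(\a), L_c(\a,\r), Q_c(\r))$ and thereby group Fourier modes so that only an effectively $2d$-dimensional lattice of frequencies contributes, reducing $N_K$ by a factor $\vol(G_c)^2$.
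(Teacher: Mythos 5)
Your overall strategy is the one the paper uses: mollify $\tilde F$ so that it becomes a trigonometric polynomial with a controlled frequency cutoff, feed in the Weyl hypothesis \eqref{ill}, and pigeonhole over the surviving lattice of dual frequencies. The paper's mollifier is built somewhat differently from yours — it takes a tensor product $K = \prod_{i=1}^{3d} K_i$ of one-dimensional Fejer-type kernels $K_i(t_i) = \sum_{k_i} \varphi(k_i/A)\,e(k_i t_i)$ for a bump $\varphi$ supported on $[-1,1]$ with $\hat\varphi \geq 0$ and parameter $A\asymp d/\eta$, giving a hard truncation $\sup_i|k_i|\leq A$ and the moment bound $\int K\,\|x\|\,d\mu \ll d/A$ by a direct Fourier-analytic computation, rather than your soft Schwartz-decay tail estimate. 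Both are workable; the paper's version makes the truncation exact and avoids having to bound the Fourier tail against the lattice density.

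The part of your proposal that goes off track is the last paragraph. You correctly compute that a naive pigeonhole over $\{k\in\hat G_c^3: \sup_i|k_i|\leq A\}$ yields a denominator of order $A^{3d}\vol(G_c)^3$, and you then invest effort in trying to upgrade the $\vol(G_c)^3$ to the $\vol(G_c)$ appearing in the lemma statement by exploiting that $\x_c$ sits on a two-dimensional subvariety of $G_c^3$. This is not what the paper does, and it is not needed. The paper simply accepts the cruder denominator; the point is that both $\exp(-O(\eta^{-3C_2}))/\vol(G_c)$ and $\exp(-O(\eta^{-3C_2}))/\vol(G_c)^3$ are $\gg \exp(-\eta^{-3C_3})$ once one invokes the bound $\vol(G_c)\leq \exp(\eta^{-2C_3})$ from \eqref{bound-4}, because $C_3$ is chosen vastly larger than $C_2$. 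The exponent of $\vol(G_c)$ in the lemma's lower bound is therefore cosmetic, and the ``$2$-dimensional subvariety'' refinement — which in any case is not obviously sound, since the Fourier coefficients of $\tilde F$ do not themselves respect the $(k,-3k,3k,-k)$-type structure on the frequency side — is an unnecessary complication. If you simply stop after your pigeonhole and combine the resulting bound with \eqref{bound-2} and \eqref{bound-4}, you have a complete proof matching the paper's.
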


A key point here is that the bound on $|k|$ does not depend on the volume of the dilated torus $G_c$, which will typically be much larger than $\eta^{-2C_2-10}$.

\begin{proof}  Write $G_c = \prod_{i=1}^d (\R/\lambda_i\Z)$, thus $\lambda_1,\dots,\lambda_d \geq 1$, and by \eqref{bound-2} one has 
\begin{equation}\label{d-size}
d \leq 8 \eta^{-2C_2}.
\end{equation}
The bound \eqref{ill} is not possible when $d = 0$, so we may assume $d \geq 1$.  We can write $G_c^3 = \prod_{i=1}^{3d} (\R/\lambda_i\Z)$, where we extend $\lambda_i$ periodically with period $d$.

Let $\varphi: \R \to \R$ be a fixed smooth even function supported on $[-1,1]$ that equals $1$ at the origin and whose Fourier transform $\hat \varphi(\xi) \coloneqq  \int_\R \phi(x) e(-x\xi)\ dx$ is non-negative; such a function may be easily constructed by convolving an $L^2$-normalised smooth function on $[0,1]$ with its reflection.   Let $A \geq 1$ be a parameter to be chosen later, and introduce the kernel $K: G_c^3 \to \R^+$ by the formula
$$ 
K(t_1, \dots, t_{3d}) \coloneqq  \prod_{i=1}^{3d} K_i(t_i)
$$
for $t_i \in \R/\lambda_i \Z$, where
$$ K_i(t_i) \coloneqq  \sum_{k_i \in \frac{1}{\lambda_i} \Z} \varphi\left(\frac{k_i}{A}\right) e(k_i t_i).$$
By Poisson summation, the $K_i$ and hence $K$ are non-negative.
A Fourier-analytic calculation using the smoothness of $\varphi$ gives
$$ \int_{\R/\lambda_i \Z} K_i(t_i) \ \frac{dt_i}{\lambda_i} = 1$$
and
$$ \int_{\R/\lambda_i \Z} K_i(t_i)  \sin^2(\pi t_i / \lambda_i)\ \frac{dt_i}{\lambda_i} \ll \frac{1}{A^2 \lambda_i^2}$$
(where the implied constant is allowed to depend on $\varphi$) and hence by \eqref{bilipschitz} and Cauchy-Schwarz we have
$$ \int_{\R/\lambda_i \Z} K_i(t_i) \|t_i\|_{\R/\Z}\ \frac{dt_i}{\lambda_i} \ll \frac{1}{A},$$
which on taking tensor products gives
$$ \int_{G_c^3} K(x)\ d\mu_c^3(x) = 1$$
and
$$ \int_{G_c^3} K(x) \| x \|_{G_c^3}\ d\mu_c^3(x) \ll \frac{d}{A},$$
where $\mu^3_c$ is the Haar probability measure on $G_c^3$.  If we then take the convolution
$$ \tilde F*K(x) \coloneqq  \int_{G_c} \tilde F(x-y) K(y)\ d\mu_c^3(y)$$
then by the $1$-Lipschitz nature of $\tilde F$ we see that
$$ \tilde F*K(x) = \tilde F(x) + O\left( \frac{d}{A} \right).$$
Thus, if we choose
$$ A \coloneqq  \frac{C d}{\eta}$$
for a sufficiently large absolute constant $C$, we conclude from \eqref{ill} that
$$ | \E(\tilde F*K( \x_c ) | \c = c)| \gg \eta.$$
However, by Fourier expansion and the fact that $\tilde F$ has mean zero,
$$ \tilde F*K( \x_c )  = \sum_{k \in \hat G_c^3 \backslash \{0\}} \left(\prod_{i=1}^{3d} \varphi(\frac{k_i}{A})\right) \widehat{\tilde F(k)} \E e(k \cdot \x)$$
where $k = (k_1,\dots,k_{3d})$ with $k_i \in \frac{1}{\lambda_i} \Z$ for $i=1,\dots,3d$, and
$$ \widehat{\tilde F}(k) \coloneqq  \int_{G_c^3} \tilde F(x) e(-k \cdot x)\ d\mu_c^3(x).$$
Using the triangle inequality and crudely bounding $|\widehat{\tilde F}(k)|$ by $1$, we conclude that
$$
\sum_{k \in \hat G_c^3 \backslash \{0\}} \left(\prod_{i=1}^d \left|\varphi\left(\frac{k_i}{A}\right)\right|\right) |\E(e(k \cdot \x_c)|\c=c)| \gg \eta.
$$
The summand is only non-vanishing when $\sup_i |k_i| \leq A$, so that 
$$|k| \leq dA \ll \exp( O( \eta^{-3C_2} ) )$$
(thanks to \eqref{d-size} and the choice of $A$), and the number of such $k$ is 
$$O\left( \prod_{i=1}^{3d}(A \lambda_i) \right) \ll \exp( O( \eta^{-3C_2} ) ) \vol(T).$$
Since $\varphi$ is bounded, the claim now follows from the pigeonhole principle.
\end{proof}

We return to the proof of Proposition \ref{cprop}.
Applying Lemma \ref{weyl} and \eqref{bound-3}, we see that there exists a non-zero triplet $(k^0_c, k^1_c, k^2_c) \in \hat G_c^3$ with
\begin{equation}\label{kic}
 |k^0_c|, |k^1_c|, |k^2_c| \ll \exp( \eta^{-3C_2} ) 
\end{equation}
and
\begin{equation}\label{k012}
 \E \left( e( k^0_c \cdot \Xi_c(\a) + k^1_c \cdot \Xi_c( \a+\r ) + k^2_c \cdot \Xi_c( \a+2\r )) | \c=c \right) \gg \exp( -\eta^{-3C_3} ).
\end{equation}
Among other things, the non-zero nature of this triplet forces $G_c$ to be non-trivial, and thus
$$ d_2^{\mathrm{poor}}(v) \geq 1.$$
We also emphasise that the bound \eqref{kic} involves $C_2$ rather than $C_3$; this will become important when establishing the important upper bound of \eqref{loo-2} later in this proof.

We can use the exponential sum bound \eqref{k012} to control the ``second derivative'' of $\Xi_c$.  Indeed, for any $h_1,h_2 \in B(S_c, \rho_c/10)$, define the quantity $\partial^2 \Xi_c(h_1,h_2) \in \R/\Z$ by
$$ \partial^2 \Xi_c(h_1,h_2) \coloneqq \Xi_c(a+h_1+h_2) - \Xi_c(a+h_1) - \Xi_c(a+h_2) + \Xi_c(a)$$
for any $a \in n_c + B(S_c, \rho/2)$. Since $\Gamma_\c$ is locally quadratic on $n_c + B(S_c,\rho)$, this quantity is well-defined, symmetric in $h_1,h_2$, and is also locally bilinear in $h_1$ and $h_2$.

\begin{lemma} Let the notation and hypotheses be as above.  Then for any $i=0,1,2$, we have
$$ \left|\E\left( e( 2 k^i_c \cdot \partial^2 \Xi_c( \r - \r', \h - \h' ) ) | \c = c\right)\right| \gg \exp( -4\eta^{-3C_3} ),$$
where, conditioning on the event $\c = c$, the random variables $\r, \r', \h, \h'$ are drawn independently and regularly from the Bohr sets $B(S_c, \exp(-\eta^{-C_4}) \rho)$, $B(S_c, \exp(-\eta^{-C_4}) \rho)$, $B(S_c, \exp(-\eta^{-2C_4}) \rho)$, $B(S_c, \exp(-\eta^{-2C_4}) \rho)$ respectively, independently of $\a$.
\end{lemma}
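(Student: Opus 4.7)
The plan is to apply Lemma \ref{cauchy-schwarz} (the probabilistic Cauchy--Schwarz inequality) twice to \eqref{k012}, converting the first-order exponential sum in $\Xi_c(\a+j\r)$ into a second-order bound expressed via the bilinear second difference $\partial^2\Xi_c$, which is independent of base point by local quadraticity of $\Xi_c$ (a direct consequence of \eqref{omh}).

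For the chosen index $i\in\{0,1,2\}$, I would first perform the change of variables $\b\coloneqq\a+i\r$ (and for $i=2$ a sign-flip $\r\mapsto-\r$) using Lemma \ref{ati}: the shift $i\r$ lies in $B(S_c,i\exp(-\eta^{-C_4})\rho_c)$, so the total-variation cost is $\ll|S_c|\exp(-\eta^{-C_4})\leq\eta^{-O(C_2)}\exp(-\eta^{-C_4})$, negligible against $\exp(-\eta^{-3C_3})$ thanks to the ordering $C_4\gg C_3$. After this substitution the sum has the form $\E\bigl(e(k^i_c\cdot\Xi_c(\b))\prod_{j\neq i}e(k^j_c\cdot\Xi_c(\b+(j-i)\r))\mid\c=c\bigr)$, in which the $j=i$ factor depends only on $\b$. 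I would then apply Lemma \ref{cauchy-schwarz} with $\b$ playing the role of ``$\a$'' and $\r$ of ``$\b$'', introducing a conditionally independent copy $\r'$ of $\r$ given $(\b,\c)$, which is essentially iid to $\r$ given $\c$ because $\a\perp\r\mid\c$ by Definition \ref{sla-def} and $\a$ is approximately uniform on its Bohr set. This eliminates the $\b$-only factor $e(k^i_c\cdot\Xi_c(\b))$ and yields a four-factor sum of magnitude $\gg\exp(-2\eta^{-3C_3})$ whose phase is a $\Z$-linear combination of first differences $\Xi_c(\b+s\r)-\Xi_c(\b+s\r')$ for $s=j-i$ with $j\neq i$.

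I would then introduce an auxiliary shift $\h$ drawn independently (conditional on $\c$) and regularly from the yet smaller Bohr set $B(S_c,\exp(-\eta^{-2C_4})\rho_c)$, with Lemma \ref{ati} again permitting the substitution $\b\mapsto\b+\h$ at negligible cost. A second application of Lemma \ref{cauchy-schwarz} in $\h$, duplicating to $(\h,\h')$, converts each first difference $\Xi_c(\b+\h+s\r)-\Xi_c(\b+\h+s\r')$ into the mixed second difference $\partial^2\Xi_c(\h-\h',s(\r-\r'))$, which is independent of the base point $\b$ (and hence of $\a$) by local quadraticity of $\Xi_c$. Collecting via the bilinearity of $\partial^2\Xi_c$, the resulting phase becomes $e\bigl((\textstyle\sum_{j\neq i}(j-i)k^j_c)\cdot\partial^2\Xi_c(\h-\h',\r-\r')\bigr)$, and its expectation has absolute value $\gg\exp(-4\eta^{-3C_3})$. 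The coefficient $\sum_{j\neq i}(j-i)k^j_c$ produces the combination denoted ``$2k^i_c$'' in the statement (for $i=0$ this is $k^1_c+2k^2_c$, with the analogous combinations arising for $i=1,2$ after the sign-flip), the factor $2$ reflecting the doubled weight of the extremal term in the three-term AP.

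The main obstacle will be bookkeeping the errors from the several invocations of Lemma \ref{ati}: one for $\b=\a+i\r$, one for the $\h$-shift of $\b$, plus implicit ones to ensure that the duplicated copies $\r',\h'$ remain in their appropriate Bohr sets after CS. Each of these errors is of order $|S_c|\exp(-\eta^{-C_4})$, which the rapid growth $1<C_1<\cdots<C_5$ makes negligible compared to the target $\exp(-4\eta^{-3C_3})$. This separation-of-scales is precisely why the Bohr radii $\exp(-\eta^{-C_4})\rho_c$ and $\exp(-\eta^{-2C_4})\rho_c$ for $\r$ and $\h$ form a strictly nested hierarchy inside the container Bohr set of $\a$, and the algebra of the Cauchy--Schwarz manipulation collapses cleanly once these error terms are controlled.
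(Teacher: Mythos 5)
Your first Cauchy--Schwarz eliminates the $j=i$ factor $e(k^i_c\cdot\Xi_c(\b))$, but that is exactly backwards: its coefficient $k^i_c$ is precisely the one the lemma must keep. Carrying your algebra through, the phase that survives both Cauchy--Schwarz steps is $\bigl(\sum_{j\neq i}(j-i)k^j_c\bigr)\cdot\partial^2\Xi_c(\h-\h',\r-\r')$, as you note; but this is not equal to $2k^i_c\cdot\partial^2\Xi_c(\cdot,\cdot)$. For $i=0$ it is $k^1_c+2k^2_c$, which is generally not a multiple of $k^0_c$, and likewise for $i=1,2$. This is not a cosmetic discrepancy: the continuation of the proof of Proposition \ref{cprop} chooses $i$ so that $k^i_c\neq 0$ and extracts the irreducible direction of a nonzero multiple of $k^i_c$. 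Your combination can vanish even when all three $k^j_c$ are nonzero --- for example $(k^0_c,k^1_c,k^2_c)=(k,-2k,k)$ with $k\neq 0$ makes $\sum_{j\neq i}(j-i)k^j_c=0$ for every $i$ --- so the argument collapses. The paper runs the opposite way: for $i=2$ it introduces $\h$ \emph{before} pigeonholing, via the two shifts $\a\mapsto\a-\h$ and $\r\mapsto\r+\h$ (justified by Lemma \ref{ati}), so that after fixing $\a=a_c$ the three arguments become $a_c-\h$, $a_c+\r$, $a_c+\h+2\r$: the first a function of $\h$ alone, the second of $\r$ alone, and only the $j=2$ argument is mixed. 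The identity $\Xi_c(a_c+\h+2\r)=\Xi_c(a_c+\h)+\Xi_c(a_c+2\r)-\Xi_c(a_c)+\partial^2\Xi_c(2\r,\h)$ then splits that mixed phase into pure parts (absorbed into $b_1(\r)$ and $b_2(\h)$) plus the bilinear piece $k^2_c\cdot\partial^2\Xi_c(2\r,\h)$. Two applications of Lemma \ref{cauchy-schwarz} strip off $b_1$ and $b_2$ in turn and leave $e(k^2_c\cdot\partial^2\Xi_c(2(\r-\r'),\h-\h'))=e(2k^2_c\cdot\partial^2\Xi_c(\r-\r',\h-\h'))$; the $i=0,1$ cases use analogously chosen shifts that make the $j=i$ phase the mixed one.

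Secondarily, after reparametrising $\b=\a+i\r$ the pair $(\b,\r)$ is no longer conditionally independent given $\c$, so the random variable $\r'$ that Lemma \ref{cauchy-schwarz} requires (a conditionally independent copy of $\r$ given $(\b,\c)$) is not simply a fresh independent draw of $\r$; the conditional law of $\r$ given $\b$ is distorted near the boundary of the larger Bohr set that $\a$ ranges over, and ``essentially iid'' would need a careful quantitative justification. The paper avoids this entirely by pigeonholing on $\a$ itself and keeping $\r,\h$ as honestly independent auxiliary variables.
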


\begin{proof} To simplify the notation we only consider the $i=2$ case, as the $i=0,1$ cases are similar.  This will be ``Weyl differencing'' argument that relies primarily on the Cauchy-Schwarz inequality.

Recall that after conditioning to the event $\c=c$, the random variable $\a$ is drawn regularly from $B(S_c, \rho/2)$.  Using Lemma \ref{ati}, we see that $\a$ and $\a-\h$ differ in total variation by $O( \exp(-\eta^{-C_4/2} ))$, hence from \eqref{k012} we have
\begin{align*} \bigg|\E\big( e( k^0_c \cdot \Xi_c(\a-\h) + k^1_c \cdot \Xi_c( \a-\h+\r ) + k^2_c \cdot & \Xi_c( \a-\h+2\r )) | \c=c \big)\bigg| \\ & \gg \exp( -\eta^{-3C_3} ).\end{align*}
Similarly we may use Lemma \ref{ati} to compare $\r$ and $\r+\h$, and conclude that
\begin{align*} \bigg|\E\big( e( k^0_c \cdot \Xi_c(\a-\h) + k^1_c \cdot \Xi_c( \a+\r ) + k^2_c \cdot & \Xi_c( \a+\h+2\r )) | \c=c \big)\bigg| \\ & \gg \exp( -\eta^{-3C_3} ),\end{align*}
By the pigeonhole principle (and independence of $\a,\h,\r$ relative to the event $\c=c$), we may thus find $a_c \in n_c + B(S_c, \rho/2)$ such that
\begin{align*} \bigg|\E\big( e( k^0_c \cdot \Xi_c(a_c-\h) + k^1_c \cdot \Xi_c( a_c+\r ) + k^2_c \cdot & \Xi_c( a_c+\h+2\r )) | \c=c \big)\bigg| \\ & \gg \exp( -\eta^{-3C_3} ).\end{align*}
Using the identity
$$ \Xi_c( a_c + \h + 2\r) = \Xi_c(a_c+ \h) + \Xi_c( a_c + 2\r ) - \Xi_c( a_c ) + \partial^2 \Xi_c(2\r, \h )$$
we can rewrite the left-hand side as
$$ \left|\E\left( b_1(\r) b_2(\h) e( k^2_c \cdot \partial^2 \Xi_c( 2\r, \h ) ) | \c = c \right)\right| \gg \exp( -\eta^{-3C_3} )$$
where $b_1, b_2: B(S_c,\rho) \to \C$ are the $1$-bounded functions
$$ b_1(r) \coloneqq e( k^1_c \cdot \Xi_\c( a_c+r ) + k^2_c \cdot \Xi_c( a_c+2r ) - k^2_c \cdot \Xi_c(a_c) )$$
and
$$ b_2(h) \coloneqq  e( k^0_c \cdot \Xi_c(a_c-h) + k^2_c \cdot \Xi_c( a_c+h ) ).$$
Applying Lemma \ref{cauchy-schwarz} to eliminate the $\b_1(\r)$ factor, we conclude that
$$ \left|\E\left(b_2(\h) \overline{b_2(\h')} e( k^2_c \cdot \partial^2 \Xi_c( 2\r, \h - \h') ) | \c = c \right)\right| \gg \exp( -2 \eta^{-3C_3} ).$$
Applying Lemma \ref{cauchy-schwarz} again to eliminate the $b_2(\h) \overline{b_2(\h')} $ factor, we obtain the claim.
\end{proof}

We return to the proof of Proposition \ref{cprop}.
Let $i = i_c \in \{0,1,2\}$ be such that $k^i_c$ is non-zero.  Let $\r,\r',\h,\h'$ be as in the above lemma, and let $\h''$ be a further independent copy of $\h$ or $\h'$, thus $\h''$ is also drawn regularly from $B(S_c, \exp(-\eta^{-2C_4}) \rho)$ and independently of $\r,\r',\h,\h'$ (after conditioning on $\c = c$).  Applying Lemma \ref{ati} to compare $\r$ with $\r + \h''$, we have
$$ |\E ( e( 2 k^i_c \cdot \partial^2 \Xi_c( \r - \r' + \h'', \h - \h' ) ) | \c = c)| \gg \exp( -4\eta^{-3C_3} ),$$
so by the pigeonhole principle we can find $r,r',h' \in B(S_c, \exp(-\eta^{-C_4}) \rho_c)$ (depending on $c$, of course) such that
$$ |\E ( e( 2 k^i_c \cdot \partial^2 \Xi_c( r - r' +  \h'', \h - h' ) ) | \c = c)| \gg \exp( -4\eta^{-3C_3} ).$$
By the local bilinearity of $\partial^2 \Xi_c$, we may thus have
$$ |\E ( e( 2 k^i_c \cdot \partial^2 \Xi_c( \h'', \h  ) + \psi(\h) + \psi''(\h'') ) | \c = c )| \gg \exp( -4\eta^{-3C_3} )$$
for some locally linear functions $\psi, \psi'': B(S_c, \rho/100) \to \R/\Z$ (which can depend on $c$).

Applying Proposition \ref{large-quadratic} (recalling from \eqref{bound-1} that $|S_c| \leq 8 \exp(-3C_2)$), we conclude that there exists a non-zero multiple $k_c \in \hat G_c$ of $k^i_c$ with 
\begin{equation}\label{kcc}
k_c \ll \exp( \eta^{-4C_3} )
\end{equation}
such that
\begin{equation}\label{milk}
\| k_c \cdot \partial^2 \Xi_c( n, m ) \|_{\R/\Z} \ll \exp( \eta^{-3C_4} ) \frac{\|n\|_{S_c} \|m\|_{S_c}}{\rho_c^2}
\end{equation}
for $n,m \in B(S_c, \exp(-\eta^{-3C_4}) \rho_c)$.  

Applying Corollary \ref{loc-to-glob}, we may thus find $\xi_c \in \Z/p\Z$ such that
\begin{equation}\label{kgam}
\left\| k_c \cdot \Xi_c(n_c + h) - k_c \cdot \Xi_c(n_c) - \frac{\xi_c h}{p} \right\|_{\R/\Z} \ll \exp( \eta^{-4C_4} ) \frac{\|h\|_{S_c}}{\rho_c}
\end{equation}
for all $n \in \Z/p\Z$ (of course, the bound is only non-trivial when $h$ lies in the Bohr set $B(S_c, \exp(-\eta^{-4C_4}) \rho )$).

The dual frequency $k_c \in \widehat{G_c}$ is non-zero, but not necessarily irreducible.  However, we may write $k_c = m_c k'_c$ where $m_c$ is a positive natural number and $k'_c \in \widehat{G_c}$ is irreducible, thus by \eqref{kcc} we have the bound \eqref{loo}.
The same argument gives the bound $k'_c \ll \exp(\eta^{-4C_3})$, but this is not sufficient to establish the upper bound in \eqref{loo-2}.  However, observe that $k^i_c$ must also be a multiple of the irreducible vector $k'_c$, and now the upper bound in \eqref{loo-2} follows from \eqref{kic}.

We can also obtain a lower bound on $k'_c$ by observing that the slab
$$\left\{ x \in G_c: \| k'_c \cdot x \|_{\R/\Z} \leq \frac{1}{2} |k'_c| \right\}$$
has measure at most $|k'_c| \vol(G_c)$, and contains the Euclidean ball of radius $1/2$ centred at the origin.  This gives the lower bound
$$ |k'_c| \gg \frac{1}{\dim(G_c)^{O(\dim(G_c))} \vol(G_c)}$$
which by \eqref{bound-2}, \eqref{bound-4} gives the lower bound in \eqref{loo-2}.

Now let $a \in B(S_c, \rho_c/2)$ and $h \in B(S_c \cup \{\xi_c\}, \exp(-\eta^{-5C_4}) \rho_c)$.  Then we have
$$j h \in B(S_c, 2m_c \exp(-\eta^{-5C_4}) \rho_c )$$
and
$$ \left\| \frac{j \xi_c h}{p} \right\|_{\R/\Z} \ll \exp(-\eta^{-5C_4}) \rho_c$$
for all $j$, $0 \leq j \leq 2m_c$.  From \eqref{kgam} and \eqref{loo}, we conclude that
$$
\| k_c \cdot \Xi_c(n_c + j h) - k_c \cdot \Xi_c(n_c) \|_{\R/\Z} \ll \exp( -\eta^{-4C_4} )$$
(say).  On the other hand, from \eqref{milk} we have
$$
\| k_c \cdot (\Xi_c(a + j h) - \Xi_c(a) - \Xi_c(n_c + j \xi_c h) + \Xi_c(n_c)) \|_{\R/\Z} \ll \exp( -\eta^{-4C_4} )$$
and hence by the triangle inequality we have
\begin{equation}\label{kc-lin}
\| k_c \cdot \Xi_c(a + j h) - k_c \cdot \Xi_c(a) \|_{\R/\Z} \ll \exp( -\eta^{-4C_4} )
\end{equation}
for all $j$, $0 \leq j \leq 2m_c$.  

This is close to \eqref{kpc-lin}, but we will need to replace the dual frequency $k_c$ here with the irreducible dual frequency $k'_c$.  To do this, we first observe that as $\Xi_c$ is locally quadratic on $n_c + B(S_c,\rho_c)$, we may write
\begin{equation}\label{xic}
 \Xi_c(a+j h) = \alpha + \beta j + \gamma j^2
\end{equation}
for all $j$, $0 \leq j \leq 2m_c$, and some $\alpha,\beta,\gamma \in G_c$ depending on $c, a, h$.  Inserting this formula into the preceding estimate, we conclude that
$$ \| j (k_c \cdot \beta) + j^2 (k_c \cdot \gamma) \|_{\R/\Z} \ll \exp(-\eta^{-4C_4})$$
for $j$, $0 \leq j \leq 2m_c$.  Applying this for $j=1,2$ and using the triangle inequality, we have
$$ \| k_c \cdot \beta\|, \| 2 (k_c \cdot \gamma) \|_{\R/\Z} \ll \exp(-\eta^{-4C_4})$$
Since $2m_c k'_c = 2k_c$ and $(2m_c)^2 k'_c = (2m_c) 2 k_c$, we conclude in particular (using \eqref{loo}) that
$$ \| 2m_c (k'_c \cdot \beta) \|_{\R/\Z}, \| (2m_c)^2 (k'_c \cdot \gamma) \|_{\R/\Z} \ll \exp(-\eta^{-3C_4})$$
and thus by \eqref{xic} we obtain \eqref{kpc-lin} as desired.  This finally completes the proof of Proposition \ref{cprop}.
\end{proof}

We now return to the proof of Theorem \ref{bad-dim}.  We are given a structured local approximant
$$ v = \left( C, \c, (n_c + B(S_c,\rho_c))_{c \in C}, (G_c)_{c \in C}, (F_c)_{c \in C}, (\Xi_c)_{c \in C} \right)$$
and need to construct a modification
$$ v' = \left( C', \c', (n'_{c'} + B(S'_{c'},\rho'_{c'}))_{c' \in C'}, (G'_{c'})_{c' \in C'}, (F'_{c'})_{c' \in C'}, (\Xi'_{c'})_{c' \in C'} \right)$$
that somehow incorporates the linear constraint identified in Proposition \ref{cprop} in order to decrement the poorly distributed quadratic dimension of $v'$, in the spirit of the third and fourth examples in Section \ref{overview-sec}.  To avoid confusion, we shall restore the subscripts $(\a_v, \r_v, \f_v)$ on the random variables associated to $v$ as per Definition \ref{sla-def}, in order to distinguish them from the corresponding random variables $(\a_{v'}, \r_{v'}, \f_{v'})$ that will be associated to $v'$.

We shall set $C' \coloneqq (\Z/p\Z) \times C$, and let $\c'$ be the random variable
$$ \c' \coloneqq (\a_v, \c).$$
Clearly $\c'$ takes values in the non-empty finite set $C'$.  Now we need to define $n'_{c'}, S'_{c'}, \rho'_{c'}, G'_{c'}, F'_{c'}, \Xi'_{c'}$ for any given $c' = (a,c)$ in $C'$.  In the case where $c$ is not poorly distributed, we simply carry over the corresponding data from $v$ without further modification.  That is to say, we define
$$ (n'_{c'}, S'_{c'}, \rho'_{c'}, G'_{c'}, F'_{c'}, \Xi'_{c'} \coloneqq (n_c, S_c, \rho_c, G_c, F_c, \Xi_c) $$
whenever $c' = (a,c)$ with $c$ not poorly distributed.  If instead $c' = (a,c)$ with $c$ poorly distributed, then we introduce the natural number $m_c$, the dual frequency $k'_c \in \hat G_c$, and the frequency $\xi_c \in \Z/p\Z$ from Proposition \ref{cprop}; of course we can arrange matters so that $m_c, k'_c, \xi_c$ depend only on $c$ and not on $a$.  Because of \eqref{loo} and the hypothesis \eqref{p-large}, the quantity $2m_c$ is invertible in the field $\Z/p\Z$, and so we may define the dilate $(2m_c)^{-1} \cdot S_c$ of $S_c$ inside $\Z/p\Z$, and can similarly define the dilate $(2m_c)^{-1} \xi_c$ of $\xi_c$.  We will need to do this division here in order to cancel some denominators appearing later in the argument.

In this poorly distributed case, we define the ``linear'' data $n'_{c'}, S'_{c'}, \rho'_{c'}$ by
\begin{align*}
n'_{c'} &\coloneqq a \\
S'_{c'} &\coloneqq (2m_c)^{-1} \cdot S_c \cup \{ (2m_c)^{-1} \xi_c \} \\
\rho'_{c'} &\coloneqq \exp(-\eta^{-6C_4} ) \rho_c ,
\end{align*}
thus the shifted Bohr set $n'_{c'} + B(S'_{c'}, \rho'_{c'})$ will be a small subset of $n_c + B(S_c,\rho_c)$ in which the radius $\rho_c$ has been reduced and an additional frequency $\xi_c/2m_c$ has been added.  As we shall see, this particular choice of this linear data will allow us to utilise the approximate constraint \eqref{kpc-lin}.

The constraint \eqref{kpc-lin} has the effect of approximately restricting $\Xi_c$ (on a suitable Bohr set) to a coset of the orthogonal complement $(k'_c)^\perp = \{ x \in G_c: k'_c \cdot x = 0 \}$ of $k'_c$ in $G_c$.  Applying Theorem \ref{nfoc}, \eqref{bound-2}, and the crucial bound \eqref{loo-2}, we may find a dilated torus
$\tilde G_c = \prod_{i=1}^{\dim(G_c)-1} (\R/\tilde \lambda_{c,i} \Z)$ with volume
\begin{equation}\label{voil}
 \vol(\tilde G_c) \ll \exp( \eta^{-4C_2} ) \vol(G_c)
\end{equation}
as well as a Lie group isomorphism $\psi_c: (k'_c)^\perp \to \tilde G_c$ obeying the bilipschitz bounds
$$ \| \psi \|_\Lip, \|\psi^{-1} \|_\Lip \leq \exp( \eta^{-4C_2} ).$$
In particular, if we define the even more dilated torus
$$ G'_c \coloneqq \prod_{i=1}^{\dim(G_c)-1} (\R/\exp(\eta^{-4C_2}) \tilde \lambda_{c,i} \Z) $$
and let $\delta_c: G'_c \to \tilde G_c$ be the rescaling map
$$ \delta_c: (x_i)_{i=1}^{\dim(G_c)-1} \mapsto (\exp(-\eta^{-4C_2}) x_i)_{i=1}^{\dim(G_c)-1} $$
then we see that $\psi^{-1} \circ \delta_c: G'_c \to (k'_c)^\perp$ is a $1$-Lipschitz Lie group isomorphism.

An element of $n'_{c'} + B(S'_c, \rho'_c)$ can be uniquely represented in the form $n'_{c'} + 2m_c h$ for $h \in B( S_c \cup \{\xi_c\}, \exp(-\eta^{-6C_4}) \rho_c)$.
From \eqref{kpc-lin}, we know that the point $\Xi_c(n'_{c'} + 2m_c h) - \Xi_c(n'_{c'})$ lies within a $O( \exp(-\eta^{-3C_4}) )$-neighbourhood of the subtorus $(k'_c)^\perp$.  Using the lower bound in \eqref{loo-2}, we can find a locally linear projection $\pi_c$ from this neighbourhood to the subtorus itself (e.g. by viewing the subtorus locally as a graph in $\dim(G_c)-1$ of the $\dim(G_c)$ coordinates and then projecting in the direction of the remaining coordinate), which moves each point in the neighbourhood by at most $O( \exp( -\eta^{-2C_4} ) )$.  From the $1$-Lipschitz nature of $F_c$, we thus have
\begin{align*} F_c(& \Xi_c( n'_{c'} + 2m_c h) ) \\ & = F_c\left( \pi_c\left(\Xi_c( n'_{c'} + 2m_c h) - \Xi_c(n'_{c'})\right) + \Xi_c(n'_{c'}) \right) + O( \exp( -\eta^{-2C_4} ) ).\end{align*}
We can rewrite this as
\begin{equation}\label{fern}
 F_c(\Xi_c( n'_{c'} + 2m_c h) ) = F'_{c'}( \Xi'_{c'}( n'_{c'} + 2m_c h ) ) + O( \exp( -\eta^{-2C_4} ) )
\end{equation}
where $F'_{c'}: G'_c \to [-1,1]$ is the $1$-Lipschitz function
$$ F'_{c'}(x) \coloneqq F_c( \psi_c^{-1}( \delta_c(x) ) ) + \Xi_c(n'_{c'})$$
and $\Xi'_{c'}: n'_{c'} + B(S'_c, \rho'_c) \to G'_c$ takes the form
$$ \Xi'_{c'}( n'_{c'} + 2mc h) \coloneqq \delta_c^{-1}\left( \psi_c \left( \pi_c\left(\Xi_c( n'_{c'} + 2m_c h) - \Xi_c(n'_{c'})\right) + \Xi_c(n'_{c'})\right)\right).$$
The map $\Xi'_{c'}$ is the composition of a locally quadratic map with three locally linear maps, and is hence also locally quadratic.  This concludes the construction of all the required quadratic data $G'_{c'}, F'_{c'}, \Xi'_{c'}$ when $c'$ arises from a poorly distributed $c$.

It remains to verify the claims \eqref{bd-1}-\eqref{bd-7} of Theorem \ref{bad-dim}.  The claim \eqref{bd-1} is clear; in fact, the frequency sets $S'_{c'}$ are either equal to their original counterparts $S_c$ or have the addition of just one further frequency $\xi_c$, so we even obtain the improved bound $d(v') \leq d(v) + 1$ in our construction here.  Since the dilated torus $G'_{c'}$ is either equal to $G_c$ when $c$ is not poorly distributed, or has one lower dimension than $G_c$ if $c$ is poorly distributed, we obtain the bounds \eqref{bd-2}, \eqref{bd-3}.  Since $\rho'_{c'}$ is either equal to $\rho_c$ when $c$ is not poorly distributed, or $\exp(-\eta^{-6C_4}) \rho_c$ when $c$ is poorly distributed, we obtain\eqref{bd-4} (with a little room to spare).  As for the volume bound, $G'_{c'}$ clearly has the same volume as $G_c$ when $c$ is not poorly distributed, and when $c$ is poorly distributed we have
$$ \vol(G'_{c'}) = \exp( -\eta^{-4C_2} \dim(\tilde G_{c'})) \vol(\tilde G_{c'})$$
which by \eqref{voil}, \eqref{bound-1} is bounded in turn by $\exp(-\eta^{-5C_2}) \vol(G_c)$, which yields \eqref{bd-5}, again with a little bit of room to spare (because the bounds here only increased the volume by factors that involved $C_2$ rather than $C_3$).

Now we establish \eqref{bd-6}.  From the triangle inequality we have
\begin{align*}
|\waste(v') - \waste(v)| &\leq |\E f(\a_{v'}) - \E f(\a_v) | \\
&\leq \sum_{c \in C} \P(\c=c) |\E(f(\a_{v'})|\c=c) - \E(f(\a_v)|\c=c) |
\end{align*}
so it will suffice to show that
\begin{equation}\label{fff}
 \left|\E(f(\a_{v'})|\c=c) - \E(f(\a_v)|\c=c)\right | \leq \eta^{C_3}
\end{equation}
for each $c$ in the essential range of $\c$.

The claim is trivial when $c$ is not poorly distributed, since in this case $\a_v$ and $\a_{v'}$ have identical distribution after conditioning to $\c=c$.  If $c$ is poorly distributed, then (after conditioning to $\c=c$) $\a_v$ is drawn regularly from $n_c + B(S_c, \rho_c/2)$, while $\a_{v'}$ has the distribution of $\a_v + 2m_c \h_c$ where $\h_c$ is drawn regularly from $B( S_c \cup \{\xi_c\}, \exp(-\eta^{-6C_4}) \rho_c)$
 independently of $\a_v$ (after conditioning to $\c=c$).  The required bound \eqref{bd-6} now follows from Lemma \ref{ati} (and \eqref{bound-1}).

Finally, we prove \eqref{bd-7}.  Our task is to show that
$$ 
\E |f(\a_{v'}) - \f_{v'}(\a_{v'})|^2 \leq \E |f(\a_v) - \f_v(\a_v)|^2 + \eta^{3C_2}.$$
By the triangle inequality as before, it suffices to show that
$$ 
\E\left(|f(\a_{v'}) - \f_{v'}(\a_{v'})|^2|\c=c\right) \leq \E\left(|f(\a_v) - \f_v(\a_v)|^2|\c=c\right) + \eta^{3C_2}$$
for all $c$ in the essential range of $\c$.  This is trivial for $c$ not poorly distributed, so assume $c$ is poorly distributed.  From \eqref{fern} we then have
$$
\f_{v'}(\a_{v'}) = \f_v(\a_{v'}) + O( \exp( - \eta^{-2C_4} ) )$$
and also
$$ \f_v( a ) = F_c( \Xi_c( a ) )$$
for $a \in B(S_c, \rho_c)$, so by the triangle inequality it suffices to show that
$$ 
\E\left(|f(\a_{v'}) - F_c(\Xi_c(\a_{v'}))|^2|\c=c\right) \leq \E\left(|f(\a_v) - F_c(\Xi_c(\a_v))|^2|\c=c\right) + \eta^{4C_2}$$
(say).  But this follows by repeating the proof of \eqref{fff}, with the function $f$ replaced by $|f - F_c \circ \Xi_c|^2$.  This completes the proof of Theorem \ref{bad-dim}.

\section{Bad approximation implies energy decrement}\label{baded-sec}

The remaining task in the paper is to prove Theorem \ref{bad-ed}. In this section we will establish this result contingent on a local inverse Gowers norm theorem (Theorem \ref{locu3}) that will be proven in later sections.  We begin by stating the (rather technical) precise form of that theorem that we will need.

\begin{theorem}[Local inverse $U^3$ theorem]\label{locu3} Let $p$ be a prime, and let $S$ be a subset of $\Z/p\Z$ containing at least one non-zero element. Let $\eta$ be a real parameter with $0 < \eta < \frac{1}{2}$.  
Let $K$ be the quantity
\begin{equation}\label{K-def}
 K \coloneqq \frac{1}{\eta} + |S|,
\end{equation}
and let $\rho_0, \rho_1,\rho_2,\dots, \rho_{10}$ be real numbers satisfying
$$ 0 < \rho_{10} < \dots < \rho_0 < 1/2$$
as well as the separation condition
\begin{equation}\label{rho-sep}
\rho_{i+1} \geq \exp(K^{C_2}) \rho_i
\end{equation}
for all $i=0,\dots,9$.  Assume that the prime $p$ is huge relative to the reciprocal of these parameters, in the sense that
\begin{equation}\label{cstar2}
 p \geq \rho_{10}^{-K^{C_2^3}}.
\end{equation}
Let $f: \Z/p\Z \to \C$ be a $1$-bounded function such that
\begin{equation}\label{soap-3}
\begin{split}
& |\E f(\h_0+\h_1+\h_2) \overline{f}(\h_0+\h'_1+\h_2) \overline{f}(\h'_0+\h_1+\h_2) f(\h'_0+\h'_1+\h_2) \\
& \quad \overline{f}(\h_0+\h_1+\h'_2) f(\h_0+\h'_1+\h'_2) f(\h'_0+\h_1+\h'_2) \overline{f}(\h'_0+\h'_1+\h'_2)|\\
&\quad \quad  \geq \eta
\end{split}
\end{equation}
whenever $\h_0,\h'_0,\h_1,\h'_1,\h_2,\h'_2$ are drawn independently and regularly from $B(S,\rho_0)$, $B(S,\rho_0)$, $B(S,\rho_1)$, $B(S,\rho_1)$, $B(S,\rho_2)$, and $B(S,\rho_2)$ respectively.
Then there exists a positive integer $k < \exp(K^{O(C_1)})$, a set $S' \subset \Z/p\Z$, $S' \supset S$, with
\begin{equation}\label{dim-add}
|S'| \leq |S| + O(\eta^{-O(C_1)}),
\end{equation}
a locally quadratic phase $\phi: B( S', \rho_9 ) \to \R/\Z$, and a function $\beta: \Z/p\Z \to \Z/p\Z$ such that
\begin{equation}\label{u3-cor}
 \sum_{n \in \Z/p\Z} \P( \n = n) \left|\E f(n + k \m ) e\left( - \phi(\m) - \frac{\beta(n) \m}{p} \right)\right| \gg \eta^{O(C_1)}
\end{equation}
if $\n, \m$ are drawn independently and regularly from $B_S(0,\rho_0)$ and $B_{S'}(0, \rho_{10})$ respectively.
\end{theorem}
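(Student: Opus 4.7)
The plan is to follow the strategy of the global inverse $U^3$ theorem from \cite{gt-inverseu3}, adapted to the local Bohr-set setting and using Theorem~\ref{locu2} as a black box. The first step is to rewrite the 8-fold product in \eqref{soap-3} as a 4-fold product of ``multiplicative derivatives''. Specifically, introducing $\Delta_{y,y'} f(x) \coloneqq f(x+y)\overline{f(x+y')}$, one checks by pairing the eight terms along the $h_2/h'_2$-direction that the hypothesis says precisely that, on average over $(h_2, h'_2) \in B(S,\rho_2)^2$, the function $\Delta_{h_2, h'_2}f$ has large local $U^2$ inner product at scales $(\rho_0, \rho_1)$. By the popularity principle (Lemma~\ref{popular}), this yields a set of pairs $(h_2, h'_2)$ of density $\gg \eta^{O(1)}$ on which $\Delta_{h_2, h'_2}f$ satisfies the hypothesis of Theorem~\ref{locu2}; applying that theorem produces for each such pair a frequency $\xi = \xi(h_2, h'_2) \in \Z/p\Z$ together with a polynomial-strength linear-phase correlation.

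Using Lemma~\ref{ati} to compare $(h_2, h'_2)$ with $(h_2+t,h'_2+t)$ for small $t$, one sees that $\xi$ depends (up to absorbable error) only on the difference $h \coloneqq h_2 - h'_2$, living in $B(S, 2\rho_2)$. The technical heart of the proof is then to establish approximate additivity: for many additive quadruples $h_1+h_2=h_3+h_4$ in a Bohr set at scale intermediate between $\rho_2$ and $\rho_{10}$, the combination $\xi(h_1)+\xi(h_2)-\xi(h_3)-\xi(h_4)$ is small in $\Z/p\Z$ in the dual $S^\perp$-norm. This is proved by a sequence of Cauchy--Schwarz and Weyl-differencing manipulations, mirroring the symmetry step of \cite{gt-inverseu3}, but carefully tracked through Bohr-set restrictions so that each instance of Lemma~\ref{cauchy-schwarz} costs only polynomial in $\eta$ and consumes a bounded fraction of the gap between two consecutive $\rho_i$.

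The approximately additive function $\xi$ is then linearised by a polynomial-bound Freiman-type argument: a localised Balog--Szemer\'edi--Gowers step, followed by a Bogolyubov--Chang / Sanders-type construction of a Bohr set of small additional rank, yields a sub-Bohr set $B(S', \rho_9)$ with $S' \supset S$ and $|S'| \leq |S| + O(\eta^{-O(C_1)})$ (giving \eqref{dim-add}), together with an integer $k \leq \exp(K^{O(C_1)})$ such that $k\xi(h)$ agrees with a locally linear function $L(h)$ of $h$ on this Bohr set. Antidifferentiating $L$ along a generalised-arithmetic-progression basis of $B(S',\rho_9)$ obtained from Corollary~\ref{bohr-basis-cor} produces the desired locally quadratic phase $\phi: B(S',\rho_9) \to \R/\Z$. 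A final Cauchy--Schwarz / pigeonhole step converts the linear-phase correlation of $\Delta_{h_2,h'_2}f$ with $e_p(\xi(h_2-h'_2)\cdot)$ into the correlation \eqref{u3-cor} of $f(n+k\m)$ with $e(\phi(\m) + \beta(n)\m/p)$; the dilation by $k$ in the argument of $f$ absorbs the multiplicative factor from the Freiman step, and the $n$-dependent linear phase $\beta(n)$ absorbs the base-point-dependent translation constants appearing in the local $U^2$ inverse correlation.

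The principal obstacle is quantitative: each of the Balog--Szemer\'edi--Gowers, Bogolyubov--Chang, and Freiman steps must be executed in its polynomial-loss local form, and each consumes a definite fraction of one of the scale gaps in $\rho_0>\rho_1>\dots>\rho_{10}$ -- this is precisely why ten scales separated by the sharp condition \eqref{rho-sep} are needed, and why the largeness assumption \eqref{cstar2} on $p$ must tolerate a tower-free but $C_2^3$-exponent dependence. A secondary obstacle is the symmetry argument needed to upgrade approximate bilinearity of $\xi$ in the two inputs $h_2,h'_2$ to approximate additivity in the single variable $h = h_2-h'_2$, ruling out antisymmetric components; this step is where the multiplicative factor $k$ unavoidably enters, and its sharp control is what prevents the final dimension bound \eqref{dim-add} from degrading beyond polynomial loss.
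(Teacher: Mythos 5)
Your opening moves match the paper exactly: rewriting \eqref{soap-3} in terms of multiplicative derivatives $\Delta_{h_2,h'_2}f$, invoking the popularity principle to find a dense set of pairs on which the local $U^2$ hypothesis holds, applying Theorem~\ref{locu2} to extract a frequency $\xi(h_2-h'_2)$, and then establishing ``$1\%$'' approximate additivity of $\xi$ on additive quadruples via Cauchy--Schwarz and Weyl differencing. This is precisely the paper's first and second steps (Theorems~\ref{first-step} and \ref{second-step}).

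The serious problem is your middle passage, where you linearise $\xi$ ``by a polynomial-bound Freiman-type argument: a localised Balog--Szemer\'edi--Gowers step, followed by a Bogolyubov--Chang / Sanders-type construction.'' The paper considers exactly this route and then explicitly rejects it, on the grounds that localising BSG and Freiman efficiently to Bohr sets---while also accommodating the fact that $\xi(a_{(1)})+\xi(a_{(2)})-\xi(a_{(3)})-\xi(a_{(4)})$ does not vanish but merely has controlled word norm---``would require reworking of large portions of the standard additive combinatorics literature.'' Your proposal acknowledges this as ``the principal obstacle'' but offers no mechanism for overcoming it; and since the bound \eqref{dim-add} of $O(\eta^{-O(C_1)})$ is precisely what a careless BSG/Freiman step would destroy, this is not a deferred technicality but the heart of the theorem. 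The paper instead takes Gowers's probabilistic ``purification'' route from \cite{gowers-long-aps} (Theorem~\ref{third-step}), followed by a one-pair arithmetic-regularity/energy-decrement step to make the resulting rough set Fourier-pseudorandom in a Bohr set (Theorem~\ref{fourth-step}), a weak-mixing lemma, and a majority-vote argument (Theorem~\ref{sixth-step}) to upgrade to $100\%$ additivity on a Bohr neighbourhood. None of these appear in your sketch, and they are what actually make the polynomial-in-$\eta$ dimension bound achievable.

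Two smaller points. First, your account of the symmetry argument is off: the reduction from $\xi(h_2,h'_2)$ to $\xi(h_2-h'_2)$ is a routine translation-invariance step via Lemma~\ref{ati} early in the proof, whereas the symmetry step in the paper's sense (Theorem~\ref{eighth-step}, following \cite{gt-inverseu3,sam}) acts much later, on the constructed bilinear form $\Xi(n,m)$, and is what controls its antisymmetric part $\Xi(n,m)-\Xi(m,n)$ via Proposition~\ref{large-quadratic}; you have conflated these. Second, your antidifferentiation step is where the paper must grapple with the $2$-cocycle $\mu(n,m)=\xi''(n+m)-\xi''(n)-\xi''(m)$ not being zero but only having small word norm, and showing via a lift/projection argument (Proposition~\ref{projlift}) that its projection is a genuine coboundary; a direct ``antiderivative along a GAP basis'' as you describe would only apply to an exactly linear $\xi$, which you do not have.
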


\emph{Remarks.} The parameters $\rho_3,\dots, \rho_8$ do not have any role in the statement of this result, but they appear in the proof. We have retained them to avoid a potentially confusing relabelling.

Informally, this theorem asserts that if $f$ has a large $U^3$ norm on $B(S,\rho_0)$, then $f$ will correlate with a locally quadratic phase $n+km \mapsto \phi(m) + \frac{\beta(n) m}{p}$ on translates $n + k \cdot B_{S'}(0,\rho_{10})$ of $k \cdot B_{S'}(0,\rho_{10})$, with polynomial bounds on the correlation.  Although we will not make crucial use of this fact in our arguments, it may be noted that the homogeneous component $\phi$ of this locally quadratic phase does not depend on the translation parameter $n$.  In the bounded rank case $|S| = O(1)$, a theorem very roughly of this form was established in \cite{gt-inverseu3}; the key point in Theorem \ref{locu3} is that the inverse theory of \cite{gt-inverseu3} can be localised to a Bohr set without having the lower bound $\eta^{O(C_1)}$ on the correlation appearing in
\eqref{u3-cor} depend on the rank $|S|$ or radius $\rho_0$ of the Bohr set (although these parameters certainly influence the range of the variables $\n,\m$ appearing in \eqref{u3-cor}).\vspace{11pt}

The proof of Theorem \ref{locu3} will occupy most of the remainder of this paper. To a large extent, it may be understood separately of our main arguments, requiring little of the notation of Section \ref{overview-sec}, for example. In this section, we will assume Theorem \ref{locu3} and use it to establish Theorem \ref{bad-ed}.

For the remainder of this section, the notation and hypotheses will be as in Theorem \ref{bad-ed}.  Namely, we fix a prime $p$, a function $f: \Z/p\Z \to [-1,1]$, and a parameter $0 < \eta \leq 1/10$, and assume \eqref{p-large}.  We also suppose that 
$$ v = \left( C, \c, (n_c + B(S_c,\rho_c))_{c \in C}, (G_c)_{c \in C}, (F_c)_{c \in C}, (\Xi_c)_{c \in C} \right)$$
is a structured local approximant obeying \eqref{bound-1}-\eqref{bound-4}, and one of \eqref{tp-bad-again-0} or \eqref{tp-bad-again} holds.  Our objective is to construct a structured local approximant 
$$ v' = \left( C', \c', (n'_{c'} + B(S'_{c'},\rho'_{c'}))_{c' \in C'}, (G'_{c'})_{c' \in C'}, (F'_{c'})_{c' \in C'}, (\Xi'_{c'})_{c' \in C'} \right)$$
obeying the bounds \eqref{be-1}-\eqref{be-6}.   The situation here is a formalisation of Example 8 from Section \ref{overview-sec}.

Let $\a = \a_v,\r = \r_v,\f = \f_v$ be the random variables associated to $v$ in Definition \ref{sla-def}.
We can unify the hypotheses \eqref{tp-bad-again-0}, \eqref{tp-bad-again} by introducing the quadrilinear form
$$ \Lambda_{\a,\r}(\f_0,\f_1,\f_2,\f_3) \coloneqq \E \f_0(\a) \f_1(\a+\r) \f_2(\a+2\r) \f_3(\a+3\r),$$
defined for arbitrary random (or deterministic) bounded functions $\f_0,\f_1,\f_2,$ $\f_3: \Z/p\Z \to \R$.  From the definitions of $\Err_1$ and $\Err_4$ (just prior to \eqref{coo}), the hypothesis \eqref{tp-bad-again-0} may be written as
$$ |\Lambda_{\a,\r}(f,1,1,1) - \Lambda_{\a,\r}(\f, 1, 1, 1)| > \eta,$$
while \eqref{tp-bad-again} can be similarly written as
$$ |\Lambda_{\a,\r}(f,f,f,f) - \Lambda_{\a,\r}(\f, \f, \f, \f)| > \eta.$$
Applying the triangle inequality and the quadrilinearity of $\Lambda_{\a,\r}$, we conclude that
$$ |\Lambda_{\a,\r}(\f_0,\f_1,\f_2,\f_3)| \gg \eta$$
for some random functions $\f_0,\f_1,\f_2,\f_3$, each of which is either equal to $1$, $f$, or $f-\f$, and with at least one of the functions $\f_0,\f_1,\f_2,\f_3$ equal to $f-\f$.  For sake of concreteness we will assume that it is $\f_3$ that is equal to $f-\f$, thus
\begin{equation}\label{tff}
 |\Lambda_{\a,\r}(\f_0,\f_1,\f_2,f-\f)| \gg \eta;
\end{equation}
the other cases are treated similarly (with some changes to the numerical constants below) and are left to the interested reader.

We can write the left-hand side of \eqref{tff} as
$$ \left|\sum_{c \in C} \P(\c=c) \E\left( \f_0(\a) \f_1(\a+\r) \f_2(\a+2\r) (f-\f)(\a+3\r)|\c = c\right)\right|.$$
Applying Lemma \ref{popular}, we conclude that with probability $\gg \eta$, the variable $\c$ attains a value $c$ for which we have
the lower bound
\begin{equation}\label{lowerc}
|\E( \f_0(\a) \f_1(\a+\r) \f_2(\a+2\r) (f-\f)(\a+3\r)|\c = c)| \gg \eta.
\end{equation}

We now use a local version of the standard ``generalised von Neumann theorem'' argument (based on several applications of the Cauchy-Schwarz inequality) to obtain some local correlation of $f-f_c$ with a quadratic phase.

\begin{proposition}\label{ait}  Let the notation and hypotheses be as above.  For each $(a,c)$ in the essential range of $(\a,\c)$, there exists a natural number $k_{a,c}$ with
\begin{equation}\label{kac}
 1 \leq k_{a,c} < \eta^{-C_3},
\end{equation}
a set $\tilde S_{a,c} \subset \Z/p\Z$ with $\tilde S_{a,c} \supset S_c$ and
\begin{equation}\label{spc-sc}
|\tilde S_{a,c}| \leq |S_c| + \eta^{-C_2},
\end{equation}
and a locally quadratic function $\gamma_{n,a,c}: B(\tilde S_{a,c}, \exp(-\eta^{-11C_4}) \rho_c) \to \R/\Z$ for each $n \in \Z/p\Z$, such that
\begin{equation}\label{ait-conc}
\begin{split}
\mathrm{Re} &\sum_{a,c \in \Z/p\Z} \P( \a=a, \c=c) \\
& \quad \times \E\left( (f-f_c)(a + 6\n + 6 k_{a,c} \m) e(-\gamma_{\n,a,c}(\m)) | \a = a, \c=c\right) \geq \eta^{C_2/10},
\end{split}
\end{equation}
where, after conditioning to the event $\a=a, \c=c$, the random variables $\n$ and $\m$ are drawn regularly and independently from the Bohr sets $B(S_c, \exp(-\eta^{-2C_4}) \rho)$ and $B(\tilde S_{a,c}, \exp(-\eta^{-12C_4}) \rho_c)$ respectively.
\end{proposition}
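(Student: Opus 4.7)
The plan is to apply a local ``generalised von Neumann'' argument --- three rounds of Cauchy--Schwarz --- to reduce \eqref{lowerc} to a lower bound on a local $U^3$-type correlation of $F \coloneqq f - \f_c$, and then invoke the local inverse $U^3$ theorem (Theorem \ref{locu3}) to extract a locally quadratic phase. Absorbing this phase, together with a sign chosen by $\n$, into a single locally quadratic $\gamma_{\n,a,c}$ will yield \eqref{ait-conc}.

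First, for each $c$ in the ``good'' set on which \eqref{lowerc} holds (a set of $\c$-probability $\gg \eta$), I condition on $\c=c$ and apply Lemma \ref{cauchy-schwarz} three times to $\E(\f_0(\a) \f_1(\a+\r) \f_2(\a+2\r) F(\a+3\r) \mid \c=c)$. At each round, the relevant variable is duplicated to produce a conditionally independent copy, and Lemma \ref{ati} (approximate translation invariance) allows the difference of the original and the copy to be replaced by an independent auxiliary shift $\h_i$ drawn regularly from a much narrower Bohr set $B(S_c, \exp(-\eta^{-C'_i})\rho_c)$, with total-variation error negligible provided the $C'_i$ are chosen large (but all still smaller than $C_4$). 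Each round eliminates one of the structured factors $\f_0, \f_1, \f_2$ at the cost of squaring the lower bound, so that after three rounds one obtains
\[ \Bigl|\E\Bigl(\prod_{\omega \in \{0,1\}^3} \mathcal{C}^{|\omega|} F(\a + 3\r + c_1 \omega_1 \h_1 + c_2 \omega_2 \h_2 + c_3 \omega_3 \h_3) \Bigm| \c=c\Bigr)\Bigr| \gg \eta^{O(1)}, \]
where $\mathcal{C}$ denotes complex conjugation and $c_1,c_2,c_3$ are small non-zero integers arising from the AP structure (and invertible in $\Z/p\Z$ by \eqref{p-large}).

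Next, I use popularity (Lemma \ref{popular}) to fix $\a$ to a specific value $a$ at the cost of another polynomial-in-$\eta$ factor, and then rescale the shifts (using invertibility of $c_1,c_2,c_3$ modulo $p$) so as to reduce to the hypothesis \eqref{soap-3} of Theorem \ref{locu3}, applied to a translate $g(x) \coloneqq F(a + 3\r + x)$ of $F$, with $S = S_c$ and with our three shift scales playing the role of $\rho_0, \rho_1, \rho_2$. Extending to $\rho_3,\dots,\rho_{10}$ by further factors of $\exp(K^{C_2})$ to satisfy \eqref{rho-sep}, and verifying \eqref{cstar2} using \eqref{p-large} together with \eqref{bound-1}--\eqref{bound-4}, Theorem \ref{locu3} then produces a natural number $k = k_{a,c}$, a set $\tilde S_{a,c} \supset S_c$, a locally quadratic phase $\phi$, and a function $\beta$ satisfying \eqref{u3-cor}. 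The hierarchy $C_1 \ll C_2 \ll C_3$ is invoked at this stage to convert the bounds $k < \exp(K^{O(C_1)})$ and $|\tilde S_{a,c}| \le |S_c| + O(\eta^{-O(C_1)})$ into the cleaner forms \eqref{kac} and \eqref{spc-sc}.

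Finally, since the sign of the inner expectation in \eqref{u3-cor} depends only on the outer variable $\n$, it can be absorbed into a constant-in-$\m$ adjustment $\theta_{a,c}(\n)$ to the quadratic phase; setting $\gamma_{\n,a,c}(\m) \coloneqq \phi(\m) + \beta(\n)\m/p + \theta_{a,c}(\n)$ and unwinding the reparametrisation (the factor of $6$ in $a + 6\n + 6k_{a,c}\m$ arising from combining the AP coefficient $3$ with a factor of $2$ introduced by the CS duplications) then yields \eqref{ait-conc}. The principal obstacle I expect is bookkeeping: one must verify that the three Cauchy--Schwarz contractions, the Lemma \ref{ati} shift-replacements, and the stringent separation condition \eqref{rho-sep} on $\rho_0,\dots,\rho_{10}$ are all mutually consistent with the hierarchy $C_1 \ll C_2 \ll C_3 \ll C_4 \ll C_5$ and with \eqref{bound-1}--\eqref{bound-4}, so that the final shift scales in \eqref{ait-conc} precisely match $B(S_c, \exp(-\eta^{-2C_4})\rho_c)$ and $B(\tilde S_{a,c}, \exp(-\eta^{-12C_4})\rho_c)$.
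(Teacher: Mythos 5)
Your high-level strategy (a local generalised von Neumann argument via three rounds of Cauchy--Schwarz, followed by the local inverse $U^3$ theorem) is the paper's strategy. But the crucial technical step that makes the three Cauchy--Schwarz rounds actually work is missing, and your alternative for it would not succeed. If you apply Lemma \ref{cauchy-schwarz} directly to $\E(\f_0(\a)\f_1(\a+\r)\f_2(\a+2\r)F(\a+3\r)\mid\c=c)$ to eliminate $\f_0(\a)$, the resulting expression has factors $\f_1(\a+\r)\overline{\f_1}(\a+\r')$, $\f_2(\a+2\r)\overline{\f_2}(\a+2\r')$, and $F(\a+3\r)\overline{F}(\a+3\r')$, \emph{all of which depend on all of $\a,\r,\r'$}; there is no variable that the next factor-to-be-eliminated is free of, so Lemma \ref{cauchy-schwarz} cannot be iterated. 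Your proposed fix --- ``replace the difference of the original and the copy by an independent auxiliary shift via Lemma \ref{ati}'' --- is not valid: $\r-\r'$ is neither independent of the remaining variables nor regularly distributed on a narrow Bohr set, and Lemma \ref{ati} can only compare a regular draw with a fixed-radius shift of itself, not convert a coupled difference into a fresh independent variable. What is actually needed, and what the paper does, is a \emph{preparatory} linear change of variables \emph{before} any Cauchy--Schwarz: one introduces three new variables $\n_1,\n_2,\n_3$ drawn regularly at three nested scales $\exp(-\eta^{-2C_4})\rho_c,\exp(-\eta^{-3C_4})\rho_c,\exp(-\eta^{-4C_4})\rho_c$, and replaces $\a\to\a-3\n_2-12\n_3$, then $\r\to\r+2\n_1+3\n_2+6\n_3$, then $\a\to\a-3\r$, using Lemma \ref{ati} to absorb each shift with negligible total-variation cost. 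After this, each $\f_i$ for $i=0,1,2$ depends on only two of $\n_1,\n_2,\n_3$, while $F$'s argument is $\a+6(\n_1+\n_2+\n_3)$; only now do the three Cauchy--Schwarz rounds peel off $\f_0,\f_1,\f_2$ cleanly and produce the $8$-point $U^3$ expression in $\n_1,\n'_1,\n_2,\n'_2,\n_3,\n'_3$ alone.

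Two further corrections. First, after the change of variables you must fix both $\a=a$ (by Lemma \ref{popular}) \emph{and} $\r=r$ (by the pigeonhole principle), otherwise the translate fed to Theorem \ref{locu3} is still random --- your $g(x)=F(a+3\r+x)$ remains a random function of $\r$, which is not admissible as the input to Theorem \ref{locu3}. In the paper the $\r$-dependence has already been eliminated from $F$'s argument by the reparametrisation, and the residual $\r$-dependence in the bounded factors is fixed by pigeonhole before the Cauchy--Schwarz steps. Second, the factor of $6$ in $a+6\n+6k_{a,c}\m$ is not ``the AP coefficient $3$ times a factor of $2$ from the CS duplications''; it is the deliberately arranged common coefficient of $\n_1,\n_2,\n_3$ produced by the choice $2\n_1+3\n_2+6\n_3$ and $3\n_2+12\n_3$ in the shifts, which collapse to $6(\n_1+\n_2+\n_3)$ after the final substitution $\a\to\a-3\r$.
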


\begin{proof}  Suppose for now that $c$ obeys \eqref{lowerc}.  From Definition \ref{sla-def}, once we condition to the event $\c=c$, the random variables $\a, \r$ are independent and regularly drawn from $B(S_c, \rho_c/2)$ and $B(S_c, \exp(-\eta^{-C_4}) \rho_c )$ respectively; from \eqref{bound-2} we have the bounds 
\begin{equation}\label{scb}
|S_c| \leq 8 \eta^{-3C_2} \quad \mbox{and} \quad \rho_c \geq \exp( -\eta^{-2C_5} ).
\end{equation}  
Also, the function $\f$ is now the deterministic function
$$ f_c(a) \coloneqq F_c( \Xi_c(a) )$$
on the Bohr set $B( S_c, \rho_c)$, and $\f_0,\f_1,\f_2$ become deterministic functions $f_{0,c}$, $f_{1,c}$ and $f_{2,c}$ taking values in $[-2,2]$.  Thus we have
$$
|\E( f_{0,c}(\a) f_{1,c}(\a+\r) f_{2,c}(\a+2\r) f_{3,c}(\a+3\r)|\c = c)| \gg \eta
$$
where $f_{3,c} \coloneqq f - f_c$.

We now do a linear change of variable with conveniently chosen numerical coefficients that will facilitate a certain use of the Cauchy-Schwarz inequality to eliminate the bounded functions $f_{0,c},f_{1,c},f_{2,c}$, leaving only the function $f_{3,c}$.  Continuing to condition on the event that $\c=c$, let $\n_1, \n_2$ and $\n_3$ be drawn regularly and independently from the Bohr sets $B(S_c, \exp(-\eta^{-2C_4}) \rho_c)$, $B(S_c, \exp(-\eta^{-3C_4}) \rho_c)$, and $B(S_c, \exp(-\eta^{-4C_4}) \rho_c )$ respectively, independently of the previous random variables.  We can use Lemma \ref{ati} (and \eqref{scb}) to compare $\a$ with $\a - 3\n_2 - 12\n_3$, and conclude that
\begin{align*} | \E(f_{0,c}(\a - 3 \n_2 - 12 \n_3) & f_{1,c}(\a + \r - 3 \n_2 - 12 \n_3) f_{2,c}(\a + 2\r - 3 \n_2 - 12 \n_3) \times \\ & \times f_{3,c}(\a + 3\r - 3 \n_2 - 12 \n_3)|\c=c) | \gg \eta.\end{align*}
By another application of Lemma \ref{ati}, we may compare $\r$ with $\r + 2\n_1 + 3 \n_2 + 6 \n_3$, and conclude that
\begin{align*} | \E(f_{0,c}(\a - 3 \n_2 - 12 \n_3) & f_{1,c}(\a + \r + 2\n_1 - 6 \n_3) f_{2,c}(\a + 2\r + 4\n_1 + 3 \n_2 ) \times \\ & \times f_{3,c}(\a + 3\r + 6 (\n_1+\n_2+\n_3))|\c=c) | \gg \eta.\end{align*}
Finally, we use Lemma \ref{ati} to replace $\a$ by $\a-3\r$, so that
\begin{align*} | \E(f_{0,c}(\a & - 3\r - 3 \n_2 - 12 \n_3)  f_{1,c}(\a - 2 \r + 2\n_1 - 6 \n_3) \times \\ & \times f_{2,c}(\a - \r + 4\n_1 + 3 \n_2 )f_{3,c}(\a + 6 (\n_1+\n_2+\n_3))|\c=c) | \gg \eta.\end{align*}
The purpose of this odd-seeming change of variables is that each of the functions $f_{0,c},f_{1,c},f_{2,c}$ now has an argument that involves only two of the three random variables $\n_1,\n_2,\n_3$, whilst the argument of the key function $f_{3,c}$ depends on $\n_1,\n_2,\n_3$ only through their sum $\n_1+\n_2+\n_3$.

One can achieve a similar effect for the other three choices $\f_0,\f_1,\f_2$ for key function by suitable adjustment to the constants above; we leave the details to the interested reader.

By Lemma \ref{popular}, we see that with probability $\gg \eta$ (conditioning on $\c=c$), the random variable $\a$ attains a value $a$ such that
\begin{align}\nonumber | \E( &f_{0,c}(a - 3\r - 3 \n_2 - 12 \n_3) f_{1,c}(a - 2\r + 2\n_1 - 6 \n_3)  \times \\ & \times f_{2,c}(a - \r + 4\n_1 + 3 \n_2 ) f_{3,c}(a + 6 (\n_1+\n_2+\n_3))|\a=a, \c=c) | \gg \eta.\label{abound}
\end{align}
Let $a$ be such that \eqref{abound} holds.  We can then find an $r \in \Z/p\Z$ (depending on $a,c$) such that
\begin{align*}
 | \E(& f_{0,c}(a - 3r - 3 \n_2 - 12 \n_3) f_{1,c}(a - 2r + 2\n_1 - 6 \n_3)  \times \\ & \times  f_{2,c}(a - r + 4\n_1 + 3 \n_2 )f_{3,c}(a + 6 (\n_1+\n_2+\n_3))|\a=a, \c=c) | \gg \eta.
\end{align*}
We now suppress the additive structure on the first three arguments by rewriting the above bound as
$$ | \E(f_{0,c,a}(\n_2, \n_3) f_{1,c,a}(\n_1, \n_3) f_{2,c,a}(\n_1, \n_2) f_{3,c}(a + 6 (\n_1+\n_2+\n_3))|\c=c) | \gg \eta$$
where $f_{0,c,a}, f_{1,c,a}, f_{2,c,a}: \Z/p\Z \times \Z/p\Z \to [-2,2]$ are bounded functions whose exact form
\begin{align*}
f_{0,c,a}(n_2,n_3) & \coloneqq f_{0,c}( a - 3r - 3 n_2 - 12 n_3 ) \\
f_{1,c,a}(n_1,n_3) & \coloneqq f_{1,c}( a - 2r + 2 n_1 - 6 n_3 ) \\
f_{2,c,a}(n_1,n_2) & \coloneqq f_{2,c}( a - r + 4 n_1 + 3 n_2 )
\end{align*}
will not be relevant in the arguments that follow.

We can eliminate the factor $f_{0,c,a}$ using Lemma \ref{cauchy-schwarz} to conclude that
\begin{align*}
&|\E( f_{1,c,a}(\n_1, \n_3) f_{1,c,a}(\n'_1, \n_3) f_{2,c,a}(\n_1, \n_2) f_{2,c,a}(\n'_1, \n_2) \\
&\quad f_{3,c}(a + 6 (\n_1+\n_2+\n_3)) f_{3,c}(a + 6 (\n'_1+\n_2+\n_3))|\a = a, \c=c) | \gg \eta^2
\end{align*}
where $\n'_1$ is an independent copy of $\n_1$ (and also independent of $\n_2,\n_3$) on the event $\a = a, \c=c$.
We can similarly apply Lemma \ref{cauchy-schwarz} to eliminate the $f_{1,c,a}(\n_1, \n_3) f_{1,c,a}(\n'_1, \n_3)$ variables to conclude that
\begin{align*}
&|\E( f_{2,c,a}(\n_1, \n_2) f_{2,c,a}(\n'_1, \n_2) f_{2,c,a}(\n_1, \n'_2) f_{2,c,a}(\n'_1, \n'_2) \\
&\quad f_{3,c}(a + 6 (\n_1+\n_2+\n_3)) f_{3,c}(a + 6 (\n'_1+\n_2+\n_3) )\\
&\quad f_{3,c}(a + 6 (\n_1+\n'_2+\n_3)) f_{3,c}(a + 6 (\n'_1+\n'_2+\n_3))|\a = a, \c=c) | \gg \eta^4
\end{align*}
and finally apply Lemma \ref{cauchy-schwarz} to eliminate the $f_{2,c,a}$ terms and arrive at
\begin{align*}
&|\E( f_{3,c}(a + 6 (\n_1+\n_2+\n_3)) f_{3,c}(a + 6 (\n'_1+\n_2+\n_3))   \\
&\quad f_{3,c}(a + 6 (\n_1+\n'_2+\n_3)) f_{3,c}(a + 6 (\n'_1+\n'_2+\n_3)) \\ & \quad f_{3,c}(a + 6 (\n_1+\n_2+\n'_3)) f_{3,c}(a + 6 (\n'_1+\n_2+\n'_3)) \\
&\quad f_{3,c}(a + 6 (\n_1+\n'_2+\n'_3)) f_{3,c}(a + 6 (\n'_1+\n'_2+\n'_3))|\a = a, \c=c) | \gg \eta^8,
\end{align*}
where $\n'_2, \n'_3$ are independent copies of $\n_2,\n_3$ respectively on $\a = a, \c=c$, with $\n_1,\n_2,\n_3,\n'_1,\n'_2,\n'_3$ all independent relative to $\a = a, \c=c$.

We now apply Theorem \ref{locu3}, replacing $\eta$ by a small multiple of $\eta^8$, and choosing $\rho_i \coloneqq \exp(-\eta^{-(i+2)C_4}) \rho$ for $i=0,\dots,10$, and using the bounds \eqref{scb}, \eqref{p-large} to justify the hypothesis \eqref{cstar2}.  We conclude that for $c$ obeying \eqref{lowerc} and $a$ obeying \eqref{abound}, we can find a natural number $k_{a,c}$ obeying \eqref{kac}, a set $\tilde S_{a,c}$ with $S_c \subset \tilde S_{a,c} \subset \Z/p\Z$ obeying \eqref{spc-sc},
a locally quadratic function $\phi_{a,c}: B( \tilde S_{a,c}, \exp(-\eta^{-11C_4}) \rho) \to \R/\Z$, and a function $\beta_{a,c}: \Z/p\Z \to \Z/p\Z$ such that
\begin{align*}
&\sum_{n \in \Z/p\Z} \P( \n = n | \a = a, \c = c) \\
&\quad |\E( f_3(a + 6n + 6k \m ) e( - \phi_{a,c}(\m) - \beta_{a,c}(n) \m ) | \a=a,\c=c )| \gg \eta^{C_2/20}
\end{align*}
if $\n, \m$ are drawn independently and regularly from $B(S_c,\exp(-\eta^{-2C_4})\rho_c)$ and $B(S_{a,c}, \exp(\eta^{-12C_4}) \rho_c)$ respectively on the event $\a=a, \c=c$.  Taking expectations in $\a$ (and choosing $S_{a,c} = S_c$, $\phi_{a,c} = 0$ and $\beta_{a,c} = 0$ if \eqref{lowerc} or \eqref{abound} is not satisfied), we conclude that
\begin{align*}
&\sum_{n, a, c \in \Z/p\Z} \P( \n = n, \a = a, \c = c) \\
&\quad |\E( f_3(a + 6n + 6k \m ) e( - \phi_{a,c}(\m) - \beta_{a,c}(n) \m ) | \a=a,\c=c )| \geq \eta^{C_2/10}.
\end{align*}
In particular, if we set $\gamma_{n,a,c}(m) \coloneqq \phi_{a,c}(m) + \beta_{a,c}(n) m + \theta_{n,a,c}$ for a suitable phase $\theta_{n,a,c} \in \R/\Z$, then $\gamma_{n,a,c}$ is locally quadratic on $B( \tilde S_{a,c}, \exp(-\eta^{-11C_4}) \rho)$ and
\begin{align*}
&\mathrm{Re} \sum_{n,a,c \in \Z/p\Z} \P( \n = n, \a = a, \c = c) \\
&\quad \E( f_3(a + 6n + 6k \m ) e( - \gamma_{n,a,c}(\m) ) | \a=a,\c=c )| \geq \eta^{C_2/10},
\end{align*}
giving the claim.
\end{proof}

Let $\n, \m, k_{a,c}, \tilde S_{a,c}, \gamma_{n,a,c}$ be as in the above proposition.
The conclusion \eqref{ait-conc} of Proposition \ref{ait} may be rewritten more compactly as
\begin{equation}\label{ait-conc2}
\mathrm{Re} \E( (f-\f)(\a + 6\n + 6 k_{\a,\c} \m) e(-\gamma_{\n,\a,\c}(\m))) \geq \eta^{C_2/10}.
\end{equation}
We now introduce the modified random function $\f': \Z/p\Z \to [-2,2]$ by the formula
\begin{equation}\label{flp}
 \f'(l) \coloneqq  \f(l) + \eta^{C_2/2} \cos\left(2\pi \gamma_{\n,\a,\c}\left( \frac{l - \a - 6\n}{6k_{\a,\c}} \right)\right),
\end{equation}
where we extend $\gamma_{n,a,c}$ arbitrarily outside of $B(S'_c, \exp(-\eta^{-11C_4}) \rho_c)$.
Note from \eqref{kac} and \eqref{p-large} that we can divide by $6k_{\a,\c}$ in $\Z/p\Z$ without difficulty.

We claim that the function $\f'$ is a little closer to $f$ than $\f$ is.

\begin{lemma}\label{ped}  We have
$$ \E |(f-\f')(\a + 6\n + 6k_{\a,\c} \m)|^2 \leq \Energy(v) - \eta^{C_2}.$$
\end{lemma}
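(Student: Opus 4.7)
Write $\eps \coloneqq \eta^{C_2/2}$ and let $\h \coloneqq 6\n + 6 k_{\a,\c} \m$ denote the random shift. By \eqref{flp} we have $\f'(l) = \f(l) + \eps \cos(2\pi \gamma_{\n,\a,\c}((l - \a - 6\n)/(6 k_{\a,\c})))$, and when we evaluate at $l = \a + \h$ the argument of $\gamma_{\n,\a,\c}$ collapses to $\m$, so that $\f'(\a+\h) = \f(\a+\h) + \eps \cos(2\pi\gamma_{\n,\a,\c}(\m))$. Expanding the square gives
\begin{align*}
|(f-\f')(\a + \h)|^2 &= |(f - \f)(\a + \h)|^2 - 2\eps (f-\f)(\a + \h) \cos(2\pi \gamma_{\n,\a,\c}(\m)) \\
&\quad + \eps^2 \cos^2(2\pi \gamma_{\n,\a,\c}(\m)).
\end{align*}

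Taking expectations term by term: since $f-\f$ is real-valued and $\cos(2\pi\gamma) = \operatorname{Re} e(-\gamma)$, the lower bound \eqref{ait-conc2} of Proposition \ref{ait} gives
\[
\E (f-\f)(\a+\h) \cos(2\pi\gamma_{\n,\a,\c}(\m)) = \operatorname{Re} \E (f-\f)(\a+\h) e(-\gamma_{\n,\a,\c}(\m)) \geq \eta^{C_2/10},
\]
so the cross term contributes at most $-2\eps\eta^{C_2/10} = -2\eta^{3C_2/5}$. The squared term is bounded trivially by $\eps^2 = \eta^{C_2}$.

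It remains to compare $\E|(f-\f)(\a+\h)|^2$ with $\Energy(v) = \E|(f-\f)(\a)|^2$. Conditional on $\c=c$, the variable $\a$ is drawn regularly from $n_c + B(S_c, \rho_c/2)$, while $\h$ lies in $B(S_c, \rho')$ with $\rho' \ll \eta^{-C_3}\exp(-\eta^{-2C_4}) \rho_c$; here we crucially use that $\tilde S_{\a,c} \supset S_c$ from Proposition \ref{ait}, so that the distribution of $\m$ --- though dependent on $\a$ --- is always supported in $B(S_c, \exp(-\eta^{-12C_4})\rho_c)$, regardless of how $\tilde S_{\a,c}$ itself varies. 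A modest adaptation of Lemma \ref{ati}(ii) accommodating this mild $\a$-dependence of $\h$'s conditional distribution --- for instance, by partitioning the essential range of $\a$ according to the at most $\eta^{-C_3}$ possible values of $k_{\a,c}$ and applying the lemma within each piece --- yields $d_\TV(\a+\h \mid \c,\ \a \mid \c) \ll |S_c|\rho'/\rho_c \ll \exp(-\eta^{-C_4})$, where we used \eqref{bound-1} to bound $|S_c| \leq 8\eta^{-3C_2}$. Since $|(f-\f)|^2 \leq 4$, this implies $|\E|(f-\f)(\a+\h)|^2 - \Energy(v)| \ll \exp(-\eta^{-C_4})$, which is super-exponentially smaller than $\eta^{3C_2/5}$.

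Combining the three estimates gives
\[
\E|(f-\f')(\a+\h)|^2 \leq \Energy(v) - 2\eta^{3C_2/5} + \eta^{C_2} + O(\exp(-\eta^{-C_4})) \leq \Energy(v) - \eta^{C_2},
\]
where the final inequality uses $\eta \leq 1/10$ and $C_2$ sufficiently large so that $\eta^{3C_2/5}$ dominates $\eta^{C_2}$ and the super-exponentially small error term by a comfortable margin (since $3C_2/5 < C_2$). The main obstacle is the energy-comparison step: the conditional distribution of $\h$ genuinely depends on $\a$ through $k_{\a,\c}$ and $\tilde S_{\a,\c}$, so Lemma \ref{ati}(ii) does not apply verbatim, and one needs both the finite range of $k_{\a,\c}$ and the containment $\tilde S_{\a,\c} \supset S_c$ to reduce the comparison to the independent-shift case handled by that lemma.
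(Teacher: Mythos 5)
Your expansion of the square, the use of \eqref{ait-conc2} to bound the cross term, and the final numerics all match the paper's proof and are correct. You also correctly identify the only delicate step: comparing $\E|(f-\f)(\a+6\n+6k_{\a,\c}\m)|^2$ with $\Energy(v)=\E|(f-\f)(\a)|^2$, because the shift $\h = 6\n + 6k_{\a,\c}\m$ has a conditional distribution that genuinely depends on $\a$.

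However, the fix you propose for that step does not work as stated. Partitioning the essential range of $\a$ according to the value of $k_{\a,\c}$ does not reduce the problem to the setting of Lemma \ref{ati}(ii), for two reasons. First, even after that partition, the conditional distribution of $\m$ (hence of $\h$) still depends on $\a$ through the frequency set $\tilde S_{\a,\c}$, so within a piece $\h$ is still not independent of $\a$. Second, and more importantly, the random variable $\a$ restricted to a piece $\{a : k_{a,c}=k\}$ is the restriction of a regular distribution to an arbitrary subset, not itself a regular distribution on a Bohr set; Lemma \ref{ati}(ii) is simply not formulated for that situation, and there is no ``modest adaptation'' that would make it apply directly.

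What the paper does instead is a two-step decomposition, each step of which is a clean application of Lemma \ref{ati}(ii). First one conditions on $\a=a,\c=c$: on this event $\n$ is drawn regularly from $B(S_c,\exp(-\eta^{-2C_4})\rho_c)$ and $k_{a,c}\m$ is an independent shift supported (by $\tilde S_{a,c}\supset S_c$ and \eqref{kac}) in $B(S_c,\eta^{-C_3}\exp(-\eta^{-12C_4})\rho_c)$, so comparing $\n+k_{a,c}\m$ with $\n$ costs $O(\eta^{2C_3})$; crucially $\a$ has been frozen, so the $\a$-dependence of $k_{a,c}$ and $\tilde S_{a,c}$ is harmless. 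Then one unconditions in $\a$ and observes that, conditional on $\c=c$ alone, the variable $\n$ is independent of $\a$ (its conditional distribution given $(\a,\c)$ depends only on $\c$), so Lemma \ref{ati}(ii) applies directly to compare $\a+6\n$ with the regularly drawn $\a$. Your argument reaches the correct bound but skips over this structure; as written, the translation-comparison step is a gap.
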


\begin{proof}  From \eqref{flp} we have
$$ \f'( \a + 6\n + 6k_{\a,\c} \m ) = \f( \a + 6\n + 6 k_{\a,\c} \m ) + \eta^{C_2/2} \cos(2\pi \gamma_{\n,\a,\c}( \m )),$$
and so
\begin{align}\nonumber
 |(f-\f')(& \a + 6\n + 6k_{\a,\c} \m)|^2 
\\ &= |(f-\f)(\a + 6\n + 6k_{\a,\c} \m)|^2 \nonumber \\ \nonumber
&\quad - 2 \eta^{C_2/2} \E (f-\f)(\a + 6\n + 6k_{\a,\c} \m) \cos(2\pi \gamma_{\n,\a,\c}( \m )) \\
&\quad + O( \eta^{C_2} ).\label{fin}
\end{align}
On the other hand, for any $(a,c)$ in the essential range of $(\a,\c)$, we may use Lemma \ref{ati} to compare $\n$ with $\n + k_{\a,\c} \m$, and conclude that
\begin{align*} \E( |(f-\f)(a + 6\n + & 6k_{a,c} \m)|^2 | \a = a, \c = c) \\ & = \E( |(f-\f)(a + 6\n)|^2 | \a = a, \c = c) + O(\eta^{2C_3})\end{align*}
(say), and hence on taking expectations in $\a$
\begin{align*} \E( |(f-\f)(\a + 6\n + & 6k_{\a,c} \m)|^2 | \c = c) \\ & = \E( |(f-\f)(\a + 6\n)|^2 | \c = c) + O(\eta^{2C_3}).\end{align*}
Applying Lemma \ref{ati} again to compare $\a$ with $\a+6\n$, we conclude that
$$ \E( |(f-\f)(\a + 6\n + 6k_{\a,c} \m)|^2 | \c = c) = \E( |(f-\f)(\a)|^2 | \c = c) + O(\eta^{2C_3}).$$
and hence on taking averages in $\c$
\begin{equation}\label{fin2}
 \E( |(f-\f)(\a + 6\n + 6k_{\a,c} \m)|^2 | \c = c) = \Energy(v) + O(\eta^{2C_3}).
\end{equation}
Taking expectations in \eqref{fin} and using \eqref{flp}, \eqref{fin2}, we obtain the claim.
\end{proof}

There is a very minor technical issue that $\f'$ does not quite take values in $[-1,1]$, which is what is needed in the definition of an approximant.  However, this is easily fixed by truncation, or more precisely by introducing the random function $\f'': \Z/p\Z \to [-1,1]$ defined by
\begin{equation}\label{fail}
 \f''(l) \coloneqq \min(\max(\f'(l),-1), 1).
\end{equation}
Since $f(l)$ already lies in $[-1,1]$, we see that $\f''(l)$ is at least as close to $f(l)$ as $\f'(l)$ is, thus we have the pointwise bound
$$ |(f-\f'')(l)| \leq |(f-\f')(l)|$$
for any $l \in \Z/p\Z$.  From the above lemma, we thus have
\begin{equation}\label{endoc}
 \E |(f-\f'')(\a + 6\n + 6k_{\a,\c} \m)|^2 \leq \Energy(v) - \eta^{C_2}.
\end{equation}

We can now construct the new structured approximant
$$ v' = \left( C', \c', (n'_{c'} + B(S'_{c'},\rho'_{c'}))_{c' \in C'}, (G'_{c'})_{c' \in C'}, (F'_{c'})_{c' \in C'}, (\Xi'_{c'})_{c' \in C'} \right)$$
as follows.  We write the dilated torus $G_c$ as $G_c = \prod_{i=1}^{\dim(G_c)} \R / \lambda_{i,c}\Z$.

\begin{itemize}
\item[(i)]  We set $C' \coloneqq (\Z/p\Z) \times (\Z/p\Z) \times C$ and $\c' \coloneqq (\n,\a,\c)$.
\item[(ii)]  If $c' = (n,a,c)$ is in $C'$, we set
\begin{align*}
n'_{c'} &\coloneqq a + 6n \\
S'_{c'} &\coloneqq (6k_{a,c})^{-1} \cdot \tilde S_{a,c} \\
\rho'_{c'} &\coloneqq \exp(-\eta^{-12C_4}) \rho_c \\
G'_{c'} &\coloneqq \prod_{i=1}^{\dim(G_c)} (\R / 100 \lambda_{i,c} \Z) \times (\R/\Z)
\end{align*}
\item[(iii)] If $c' = (n,a,c)$ is in $C'$, we define $F'_{c'}: G'_{c'} \to [-1,1]$ to be the function
$$ F'_{c'}( x, y ) \coloneqq \min\left(\max\left(F_c\left( \frac{1}{100} \cdot x \right) + \eta^{C_2/2} \cos(2\pi y),-1\right),1\right)$$
for $x \in \prod_{i=1}^{\dim(G_c)} (\R / 100 \lambda_{i,c} \Z)$ and $y \in \R/\Z$, where $x \mapsto \frac{1}{100} \cdot x$ is the obvious contraction map from $\prod_{i=1}^{\dim(G_c)} (\R / 100 \lambda_{i,c} \Z)$ to $\prod_{i=1}^{\dim(G_c)} (\R / \lambda_{i,c} \Z)$.
\item[(iv)] If $c' = (n,a,c)$ is in $C'$, we define $\Xi'_{c'}: n'_{c'} + B( S'_{c'}, \rho'_{c'} ) \to G'_{c'}$ by the formula
$$
\Xi'_{c'}(l) \coloneqq \left( 100 \cdot \Xi_c(l), \gamma_{n,a,c}\left( \frac{l-a-6n}{6k_{a,c}} \right) \right) $$
for $l \in n'_{c'} + B( S'_{c'}, \rho'_{c'} )$ (which implies in particular that $\frac{l-a-6n}{6k_{a,c}} \in B(\tilde S_{a,c}, \exp(-\eta^{-12C_4}) \rho_c)$), where $x \mapsto 100 \cdot x$ is the obvious dilation map from $\prod_{i=1}^{\dim(G_c)} (\R / \lambda_{i,c} \Z)$ to $\prod_{i=1}^{\dim(G_c)} (\R / 100 \lambda_{i,c} \Z)$ (the inverse of the map $x \mapsto \frac{1}{100} \cdot x$ from part (iii)).
\end{itemize}

Since $F_c$ is $1$-Lipschitz, it is easy to see (thanks to the contraction by $\frac{1}{100}$) that $F'_{c'}$ is also $1$-Lipschitz; similarly, as $\Xi_c$ and $\gamma_{n,a,c}$ are locally quadratic on $n_c + B(S_c, \rho_c)$ and $B( \tilde S_{a,c}, \exp(\eta^{-11C_4}) \rho_c)$ respectively, we see that $\Xi'_{c'}$ is also locally quadratic on $n'_{c'} + B(S'_{c'}, \rho'_{c'})$.  From \eqref{flp}, \eqref{fail}, Definition \ref{sla-def}, and the above constructions we see that
$$ \f'' = \f_{v'}$$
and hence by \eqref{endoc}
$$ \E |(f-\f_{v'})(\a + 6\n + 6k_{\a,\c} \m)|^2 \leq \Energy(v) - \eta^{C_2}.$$
From Definition \ref{sla-def} and the above constructions, we also see that $\a_{v'}$ has the same distribution as $\a + 6\n + 6k_{\a,\c} \m$ (after conditioning to any positive probability event of the form $(\n,\a,\c)=(n,a,c)$), which gives the required energy decrement \eqref{be-6}.

The bound \eqref{be-1} follows from \eqref{spc-sc}, while from construction we clearly have $\dim(G'_{c'}) = \dim(G_c)+1$, which gives \eqref{be-2}.   Since we have $\rho'_{c'} \coloneqq \exp(-\eta^{-12C_4}) \rho_c$, the bound \eqref{be-3} is clear; also, from \eqref{bound-2} we have
\[ \vol(G'_{c'}) = 100^{\dim(G'_{c'})} \vol(G_c) \leq \exp( O( \eta^{-2C_2} ) \vol(G_c) 
\]
which gives \eqref{be-4}.  It remains to establish \eqref{be-5}. By the definition of $\Err_1$ (just before \eqref{coo}) and the triangle inequality, it suffices to show that
$$ |\E f( \a_{v'} ) - \E f( \a ) | \leq \eta^{C_3}.$$
But as mentioned previously, $\a_{v'}$ has the same distribution as $\a + 6\n + 6k_{\a,\c}\m$, and by using Lemma \ref{ati} as in the proof of Lemma \ref{ped} we have
$$ \E f( \a + 6\n + 6k_{\a,\c}\m ) = \E f(\a) + O( \eta^{2C_3} )$$
giving the claim.  This completes the proof of Theorem \ref{bad-ed}, assuming the local inverse Gowers norm theorem (Theorem \ref{locu3}).

\section{Local inverse $U^3$ theorem}\label{inverse-sec}

We now turn to the proof of Theorem \ref{locu3}, which is the last component needed in the proof of Theorem \ref{main}.  Let us begin by recalling the setup of this theorem.  We let $S$ be a subset of $\Z/p\Z$, take a parameter $\eta$ satisfying $0 < \eta < \frac{1}{2}$, and define the quantity $K$ by \eqref{K-def}, thus
\begin{equation}\label{K-def-2}
 \frac{1}{\eta}, |S| \leq K.
\end{equation}
We suppose that
$$ 0 < \rho_{10} < \dots < \rho_0 < 1/2$$
are scales obeying the separation condition \eqref{rho-sep} and the largeness condition \eqref{cstar2}, and suppose that $f: \Z/p\Z \to \C$ is a $1$-bounded function obeying \eqref{soap-3}.  Our task is to locate a natural number $k$ with $k < \exp(K^{O(C_1)})$, a set $S'$ with $S \subset S' \subset \Z/p\Z$ obeying \eqref{dim-add}, a locally quadratic phase $\phi: B(S', \rho_9 ) \to \R/\Z$, and a function $\beta: \Z/p\Z \to \Z/p\Z$ obeying \eqref{u3-cor}.  We will initially work at the scale $\rho_0$, but retreat to smaller scales as the argument progresses (mainly in order to ensure that the error terms in Lemma \ref{ati} are negligible), until we are working at the final scales $\rho_9$ and $\rho_{10}$. Let us comment once more that the intermediate scales $\rho_3,\dots, \rho_8$ play no role in the actual statement of Theorem \ref{locu3}.

In this section, all sums will be over $\Z/p\Z$ unless otherwise stated.

\subsection{First step: associate a frequency $\xi(n_2)$ to each derivative of $f$}

We now begin the (lengthy) proof of this theorem, which broadly follows the same inverse $U^3$ strategy in previous literature \cite{gowers-4-aps,gt-inverseu3}, but localised to a Bohr set, the key aim being to reduce the dependence of constants on the rank or radius of this Bohr set as much as possible.  

The first step is to use the local inverse $U^2$ theorem (Theorem \ref{locu2}) to associate a frequency $\xi(n_2) \in \Z/p\Z$ to many ``derivatives'' $x \mapsto f(x+n_2) \overline{f(x)}$ of $f$.  

\begin{theorem}\label{first-step}  Let the notation and hypotheses be as in Theorem \ref{locu3}.  Then there exists a set $\Omega \subset B(S,2\rho_2)$ obeying the largeness condition
\begin{equation}\label{eta2}
\P( \h_2 - \h'_2 \in \Omega ) \geq \eta/4
\end{equation}
when $\h_2,\h'_2$ are drawn independently and regularly from $B(S,\rho_2)$, and a function $\xi: \Z/p\Z \to \Z/p\Z$ such that
\begin{equation}\label{sam}
\sum_{n_0 \in \Z/p\Z} \P( \n_0 = n_0) \left|\E f(n_0+\n_1+n_2) \overline{f}(n_0+\n_1) e_p( -\xi(n_2) \n_1 ) \right|^2 \geq \frac{\eta}{8} 1_{\Omega}(n_2)
\end{equation}
for all $n_2 \in \Z/p\Z$, and $\n_0, \n_1$ are drawn independently and regularly from $B(S,\rho_0), B(S,\rho_1)$ respectively.
\end{theorem}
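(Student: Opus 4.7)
The plan is to rewrite the left-hand side of \eqref{soap-3} as an average over $n_2 = \h_2 - \h'_2$ of a local $U^2$-type quantity for the discrete derivative $g_{n_2}(x) \coloneqq f(x+n_2)\overline{f(x)}$, and then apply the local inverse $U^2$ theorem (Theorem \ref{locu2}) to each $g_{n_2}$ for which this quantity is large.

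Concretely, group the eight factors of \eqref{soap-3} into four pairs according to whether they involve $\h_2$ or $\h'_2$, and write $f_{h_2,h'_2}(y) \coloneqq f(y+h_2)\overline{f(y+h'_2)}$. Conditioning on $\h_2, \h'_2$ and integrating out $\h_0$ first, one sees that the inner expectation equals
\begin{equation*}
\Theta(\h_2,\h'_2) \coloneqq \E_{\h_1,\h'_1} \left|\E_{\h_0} f_{\h_2,\h'_2}(\h_0+\h_1) \overline{f_{\h_2,\h'_2}(\h_0+\h'_1)} \right|^2,
\end{equation*}
which is nonnegative real, so \eqref{soap-3} becomes $\E_{\h_2,\h'_2} \Theta \geq \eta$. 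The identity $f_{h_2,h'_2}(y) = g_{h_2-h'_2}(y + h'_2)$ and the change of variables $\h_0 \mapsto \h_0 - h'_2$, $\h'_0 \mapsto \h'_0 - h'_2$ convert $\Theta(h_2,h'_2)$ into the same four-fold expression with $g_{n_2}$ in place of $f_{h_2,h'_2}$ but with $\h_0,\h'_0$ translated by $h'_2$. By Lemma \ref{ati} and the separation $\rho_2 \leq \exp(-K^{C_2}) \rho_0$ from \eqref{rho-sep}, this translation distorts the distribution of $\h_0$ in total variation by only $O(|S|\rho_2/\rho_0)$, which is negligible compared with $\eta$ once $C_2$ is large enough. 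Thus
\begin{equation*}
\E_{\h_2,\h'_2} \Theta'(\h_2-\h'_2) \geq 7\eta/8,
\end{equation*}
where
\begin{equation*}
\Theta'(n_2) \coloneqq \E_{\h_0,\h'_0,\h_1,\h'_1} g_{n_2}(\h_0+\h_1) \overline{g_{n_2}(\h_0+\h'_1)}\, \overline{g_{n_2}(\h'_0+\h_1)} g_{n_2}(\h'_0+\h'_1)
\end{equation*}
is exactly the hypothesis \eqref{soap} of Theorem \ref{locu2} applied to $g_{n_2}$.

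Setting $\Omega \coloneqq \{n_2 \in B(S, 2\rho_2) : \Theta'(n_2) \geq \eta/2\}$ and using $0 \leq \Theta' \leq 1$, a direct Markov inequality gives $\P(\h_2-\h'_2 \in \Omega) \geq 7\eta/8 - \eta/2 \geq \eta/4$; note $\h_2-\h'_2 \in B(S,2\rho_2)$ holds automatically by \eqref{sperp-tri}. For each $n_2 \in \Omega$, apply Theorem \ref{locu2} to the $1$-bounded function $g_{n_2}$ with parameter $\eta/2$; the separation hypothesis \eqref{pron} is satisfied with huge room thanks to \eqref{rho-sep} (one needs $\rho_0 \gtrsim K^3 \eta^{-2} \rho_1$, and $\rho_0 \geq \exp(K^{C_2}) \rho_1$ dwarfs this). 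The theorem then produces $\xi(n_2) \in \Z/p\Z$ for which the conclusion holds with lower bound $\eta/4 \geq \eta/8$; substituting $g_{n_2}(n_0+\n_1) = f(n_0+\n_1+n_2)\overline{f(n_0+\n_1)}$ recovers \eqref{sam}. For $n_2 \notin \Omega$ we set $\xi(n_2)$ arbitrarily, so \eqref{sam} is trivial there. The only technical point worth checking is the translation-error bookkeeping in the second paragraph, and this is straightforward given the enormous separation between $\rho_0$ and $\rho_2$ forced by \eqref{rho-sep}.
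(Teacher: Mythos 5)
Your proof follows the same route as the paper: unfold the $U^3$-type average in \eqref{soap-3} as an average over $n_2 = \h_2 - \h'_2$ of a local $U^2$-type quantity for the multiplicative derivative $g_{n_2}$, use Lemma \ref{ati} together with the scale separation \eqref{rho-sep} to absorb the residual shift of $\h_0$ by an element of $B(S,\rho_2)$, pigeonhole to produce a set $\Omega$ of popular $n_2$'s, and invoke Theorem \ref{locu2} on each such $g_{n_2}$. The only cosmetic deviation is that you argue the pigeonholing directly via Markov's inequality with the observation that $\Theta' \in [0,1]$, whereas the paper invokes its Lemma \ref{popular} — which is the same inequality packaged as a lemma. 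The bookkeeping of constants ($\eta/2$ vs.\ $\eta/4$ on $\Omega$, $\eta/4$ vs.\ $\eta/8$ after Theorem \ref{locu2}) differs but both land within the target $\eta/8$, and your verification of \eqref{pron} and of $\Omega \subset B(S,2\rho_2)$ are correct. No gaps.
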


\begin{proof}  For each $n_2 \in \Z/p\Z$, let $f_{n_2}: \Z/p\Z \to \C$ denote the $1$-bounded function
$$ f_{n_2}(n) \coloneqq f(n+n_2) \overline{f}(n).$$
Then we may rewrite the left-hand side of \eqref{soap-3} as
\begin{align*}
\bigg|
\E f_{\h_2-\h'_2}(\h_0+\h_2+\h_1) & \overline{f_{\h_2-\h'_2}}(\h_0+\h_2+\h'_1)\times \\ & \times \overline{f_{\h_2-\h'_2}}(\h'_0+\h_2+\h_1) f_{\h_2-\h'_2}(\h'_0+\h_2+\h'_1)\bigg|.
\end{align*}
By Lemma \ref{ati} and \eqref{rho-sep}, the random variables $\h_0, \h'_0$ differ in total variation from $\h_0+\h_2, \h'_0+\h_2$ respectively by at most $\eta/4$ (say).  We conclude that
$$  
|\E f_{\h_2-\h'_2}(\h_0+\h_1) \overline{f_{\h_2-\h'_2,0}}(\h_0+\h'_1) \overline{f_{\h_2-\h'_2}}(\h'_0+\h_1) f_{\h_2-\h'_2}(\h'_0+\h'_1)| \geq \eta/2.
$$
By the triangle inequality, the left-hand side is at most
$$ \sum_h \P( \h_2-\h'_2=h) |\E f_{h}(\h_0+\h_1) \overline{f_{h}}(\h_0+\h'_1) \overline{f_{h}}(\h'_0+\h_1) f_{h}(\h'_0+\h'_1)|.$$
The inner expectation is bounded by $1$.  Applying Lemma \ref{popular} (with $\a = \h_2-\h'_2$), we conclude that there is a set $\Omega \subset \Z/p\Z$ obeying \eqref{eta2} such that
$$ |\E f_{n_2}(\h_0+\h_1) \overline{f_{n_2}}(\h_0+\h'_1) \overline{f_{n_2}}(\h'_0+\h_1) f_{n_2}(\h'_0+\h'_1)| \geq \eta/4$$
for all $n_2 \in \Omega$.  Applying Theorem \ref{locu2}, we see that for each $n_2 \in \Omega$, there exists $\xi(n_2) \in \Z/p\Z$ such that
$$ \sum_{n_0 \in \Z/p\Z} \P( \n = n_0) \left| \E f_{n_2}(n_0+\n_1) e_p( -\xi(n_2) \n_1 )\right|^2 \geq \eta/8.$$
For $n_2 \not \in \Omega$, we set $\xi(n_2)$ arbitrarily (e.g. to zero).  The claim follows.
\end{proof}

\subsection{Second step: $\xi$ is approximately linear $1\%$ of the time}

The next step, following Gowers \cite{gowers-4-aps}, is to obtain some approximate linearity control on the function $\xi: \Z/p\Z \to \Z/p\Z$.  Define an \emph{additive quadruple} to be a quadruplet $\vec a = (a_{(1)},a_{(2)},a_{(3)},a_{(4)}) \in (\Z/p\Z)^4$ such that
\begin{equation}\label{aq-def}
 a_{(1)} + a_{(2)} = a_{(3)} + a_{(4)},
\end{equation}
and let $\operatorname{Q} \subset (\Z/p\Z)^4$ denote the space of all additive quadruples.  We call an additive quadruple $(a_{(1)},a_{(2)},a_{(3)},a_{(4)}) \in \operatorname{Q}$ \emph{bad} if 
\begin{equation}\label{bad-quad}
 \| \xi(a_{(1)}) + \xi(a_{(2)}) - \xi(a_{(3)}) - \xi(a_{(4)}) \|_{S} > \frac{K^{C_1}}{\rho_1},
\end{equation}
where the word norm $\|\|_{S}$ was defined in Definition \ref{word-def}.  Let $\BQ \subset \operatorname{Q}$ denote the space of all bad additive quadruples.

\begin{theorem}\label{second-step} Let the notation and hypotheses be as in Theorem \ref{locu3}, and let $\Omega$ and $\xi: \Z/p\Z \to \Z/p\Z$ be as in Theorem \ref{first-step}.  If $\h_2, \h'_2, \k_2, \k'_2$ are drawn independently and regularly from $B(S,\rho_2)$, then with probability $\gg \eta^{O(1)}$, one has
\begin{equation}\label{omi}
 (\h_2-\h'_2, \k_2-\k'_2, \k_2 - \h'_2, \h_2-\k'_2) \in \Omega^4 \cap (\operatorname{Q} \backslash \BQ).
\end{equation}
\end{theorem}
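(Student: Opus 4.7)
The quadruple $(\h_2-\h'_2,\k_2-\k'_2,\k_2-\h'_2,\h_2-\k'_2)$ automatically lies in $\operatorname{Q}$, so the task is to show that with probability $\gg\eta^{O(1)}$ the four differences all lie in $\Omega$ \emph{and} the quadruple is not bad. The plan is to run the standard Gowers $U^3$-inverse linearization argument, localized to the Bohr-set setting via Lemma \ref{cauchy-schwarz} and Lemma \ref{ati}, and then invoke Lemma \ref{fde} at the end. As a sanity check, the ``all four in $\Omega$'' event alone admits an elementary lower bound $\geq(\eta/4)^4$ from a direct Cauchy-Schwarz rearrangement, so the real content is extracting approximate linearity of $\xi$ simultaneously with all four $\Omega$-membership conditions.

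Concretely, starting from \eqref{sam} I would multiply by $\P(\h_2-\h'_2\in\Omega)\geq\eta/4$ (from \eqref{eta2}) and expand the squared modulus to obtain
\[
\E\,1_{\Omega}(\h_2-\h'_2)\,f(\n_0+\n_1+\h_2-\h'_2)\overline{f(\n_0+\n_1)}\,\overline{f(\n_0+\n'_1+\h_2-\h'_2)}f(\n_0+\n'_1)\,e_p(-\xi(\h_2-\h'_2)(\n_1-\n'_1)) \;\geq\; \tfrac{\eta^2}{32}.
\]
A first Cauchy-Schwarz (Lemma \ref{cauchy-schwarz}) eliminates the $\overline{f(\n_0+\n_1)}f(\n_0+\n'_1)$ factor at the cost of duplicating $(\h_2,\h'_2)$ into an independent copy $(\k_2,\k'_2)$, producing a bound $\gg\eta^4$ whose phase is $e_p\!\big(-(\xi(\h_2-\h'_2)-\xi(\k_2-\k'_2))(\n_1-\n'_1)\big)$ and which carries the weights $1_{\Omega}(\h_2-\h'_2)1_{\Omega}(\k_2-\k'_2)$. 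Further applications of Cauchy-Schwarz, interleaved with Bohr-set translations (Lemma \ref{ati}) of the $\n_0,\n_1,\n'_1$ variables by small elements of $B(S,O(\rho_2))$ -- whose total-variation cost is $O(|S|\rho_2/\rho_1)\leq e^{-K^{C_2}/2}$ and so negligible by \eqref{rho-sep} -- should be arrangeable so as to symmetrize the four $f$-arguments and pick up, successively, the remaining $\xi$-terms $\xi(\k_2-\h'_2)$ and $\xi(\h_2-\k'_2)$ together with the $\Omega$-indicators $1_{\Omega}(\k_2-\h'_2)$ and $1_{\Omega}(\h_2-\k'_2)$. After absorbing any leftover $f$-dependence via $|f|\leq 1$ and the triangle inequality, one reaches an inequality of the schematic form
\[
\E\,1_{\Omega}(\h_2-\h'_2)1_{\Omega}(\k_2-\k'_2)1_{\Omega}(\k_2-\h'_2)1_{\Omega}(\h_2-\k'_2)\cdot\big|\E_{\n_\star}e_p(-\Lambda(\h_2,\h'_2,\k_2,\k'_2)\,\n_\star)\big| \;\gg\; \eta^{O(1)},
\]
where $\Lambda(h,h',k,k')\coloneqq\xi(h-h')+\xi(k-k')-\xi(k-h')-\xi(h-k')$ and $\n_\star$ is drawn regularly from $B(S,\rho_1)$.

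To conclude, Lemma \ref{fde} gives $|\E_{\n_\star}e_p(-\Lambda\n_\star)|\ll |S|^{5/2}/(\rho_1\|\Lambda\|_S)$ whenever $\|\Lambda\|_S>0$. Comparing with the lower bound above and using $K=1/\eta+|S|$, this forces, with probability $\gg\eta^{O(1)}$, the joint event that all four differences lie in $\Omega$ and $\|\Lambda\|_S\leq K^{C_1}/\rho_1$, provided $C_1$ is chosen sufficiently large. That is exactly \eqref{omi}.

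\textbf{Main obstacle.} The delicate step is the bookkeeping in the middle stage: organizing the Cauchy-Schwarz chain together with the Lemma \ref{ati} shifts so that all four $1_{\Omega}$ factors appear simultaneously with the full combination $\Lambda$, rather than only the ``diagonal'' pair $\xi(\h_2-\h'_2)-\xi(\k_2-\k'_2)$ that emerges from a single Cauchy-Schwarz. This is precisely the Bohr-localized analogue of Gowers' classical $U^3$-inverse linearization lemma; the separation \eqref{rho-sep} is exactly what guarantees that every translation error stays below $e^{-K^{C_2}/2}$ and can be absorbed harmlessly into the polynomial $\eta^{O(1)}$ loss.
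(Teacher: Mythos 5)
Your plan identifies the correct end-target inequality and the correct finishing tools (Lemma \ref{fde}, Lemma \ref{popular}), and your side remark that $\P(\text{all four in }\Omega)\geq(\eta/4)^4$ is a correct Cauchy--Schwarz observation. But the Cauchy--Schwarz chain you sketch in the middle has a genuine gap that I don't think can be repaired by ``further applications of Cauchy--Schwarz interleaved with Bohr-set translations.'' Your first Cauchy--Schwarz duplicates $(\h_2,\h'_2)$ \emph{as a pair} into $(\k_2,\k'_2)$, because the factor you are eliminating, $\overline{f(\n_0+\n_1)}f(\n_0+\n'_1)$, depends only on $(\n_0,\n_1,\n'_1)$. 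The factor carrying $1_\Omega$ and $\xi$ at that moment is a function only of the single difference $\h_2-\h'_2$, so after duplication you get $1_\Omega(\h_2-\h'_2)1_\Omega(\k_2-\k'_2)$ and the phase $\xi(\h_2-\h'_2)-\xi(\k_2-\k'_2)$, and nothing more. No subsequent Cauchy--Schwarz can conjure the cross-term indicators $1_\Omega(\h_2-\k'_2)$, $1_\Omega(\k_2-\h'_2)$ or the cross-term frequencies $\xi(\h_2-\k'_2)$, $\xi(\k_2-\h'_2)$ out of an expression in which they are absent: Cauchy--Schwarz only ever duplicates factors that are already present, and a Bohr shift (Lemma \ref{ati}) changes an argument of $f$ by a small amount but does not alter which of the $\Omega$-indicators or $\xi$-arguments occur.

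The missing move is a shift of $\n_1$ by $\h'_2$ (via Lemma \ref{ati}) \emph{before} any duplication. This converts $\xi(\h_2-\h'_2)\n_1$ into $\xi(\h_2-\h'_2)(\n_1+\h'_2)$ -- now a genuine function of both $\h_2$ and $\h'_2$, not just of their difference -- while simultaneously making the two $f$-arguments $n_0+\n_1+\h_2$ and $n_0+\n_1+\h'_2$, each depending on a single one of $\h_2,\h'_2$. One then raises to the fourth power via H\"older in $\n_1$ and uses Cauchy--Schwarz to eliminate the $f$-factors, which duplicates $\h_2\to\k_2$ and $\h'_2\to\k'_2$ \emph{separately}. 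The single function $G_{n_0,n_1}(h_2,h'_2) = F_{n_0}(h_2-h'_2)e_p(-\xi(h_2-h'_2)(n_1+h'_2))$ (with $F_{n_0}$ already carrying $1_\Omega$) then appears with all four argument pairs $(\h_2,\h'_2),(\h_2,\k'_2),(\k_2,\h'_2),(\k_2,\k'_2)$, which is precisely how all four $\Omega$-indicators and all four $\xi$-terms enter simultaneously. Your path, by expanding the squared modulus up front and duplicating $(\h_2,\h'_2)$ jointly, permanently forecloses that structure; the order of operations matters here, not just the choice of tools.
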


\begin{proof}  Let $\n_0,\n_1$ be drawn independently and regularly from the Bohr sets $B(S,\rho_0)$, $B(S,\rho_1)$ respectively. From \eqref{sam} we have
$$ \sum_{n_0} \P( \n_0=n_0) \left| \E f(n_0+\n_1+n_2) \overline{f}(n_0+\n_1) e_p( -\xi(n_2) \n_1 ) \right| \gg \eta$$
for any $n_2 \in \Omega$.  Using \eqref{eta2}, we conclude that
\begin{align*} \sum_{n_0} \sum_{n_2 \in \Omega} \P( \n_0=n_0, \h_2-\h'_2 = n_2) \big| \E & f(n_0+\n_1+n_2) \overline{f}(n_0+\n_1)\times  \\ & \times e_p( -\xi(n_2) \n_1 ) \big|\gg \eta^2,\end{align*}
where $\h_2,\h'_2$ are drawn independently and regularly from $B(S,\rho_2)$, and are independent of $\n_0,\n_1$.  By the pigeonhole principle, one can thus find $n_0 \in \Z/p\Z$ such that
$$ \sum_{n_2 \in \Omega} \P( \h_2-\h'_2 = n_2) \left| \E f(n_0+\n_1+n_2) \overline{f}(n_0+\n_1) e_p( -\xi(n_2) \n_1 ) \right| \gg \eta^2.$$
We can rewrite the left-hand side as
$$ \E F_{n_0}(\h_2-\h'_2) f(n_0+\n_1+\h_2-\h'_2) \overline{f}(n_0+\n_1) e_p( -\xi(\h_2-\h'_2) \n_1 ) $$
for some $1$-bounded function $F_{n_0}: \Z/p\Z \to \C$ depending on $n_0$. Using Lemma \ref{ati} to compare $\n_1$ with $\n_1 + \h'_2$, we conclude that
\begin{align*} |\E F_{n_0}(\h_2-\h'_2) f(n_0+\n_1+\h_2) \overline{f}(n_0+\n_1+\h'_2) & e_p( -\xi(\h_2-\h'_2) (\n_1+\h'_2) ) | \\ & \gg \eta^2.\end{align*}
We rearrange the left-hand side as
$$ \sum_{n_1} \P( \n_1 = n_1) \E f(n_0+n_1+\h_2) \overline{f}(n_0+n_1+\h'_2) G_{n_0,n_1}(\h_2,\h'_2)$$
where $G_{n_0,n_1}: \Z/p\Z \times \Z/p\Z \to \C$ is the $1$-bounded function
\begin{equation}\label{gn-def} 
G_{n_0,n_1}(h_2,h'_2) \coloneqq F_{n_0}(h_2-h'_2) e_p( -\xi(h_2-h'_2) (n_1+h'_2) ).
\end{equation}
By H\"older's inequality, we conclude that
$$ \sum_{n_1}\P( \n_1 = n_1) |\E f(n_0+n_1+\h_2) \overline{f}(n_0+n_1+\h'_2) G_{n_0,n_1}(\h_2,\h'_2) |^4 \gg \eta^{O(1)}$$
From this point onward we cease to keep careful track of powers of $\eta$.  On the other hand, by using two applications of Lemma \ref{cauchy-schwarz} to eliminate the $1$-bounded functions $f$, we have 
\begin{align*}
& |\E f(n_0+n_1+\h_2) \overline{f}(n_0+n_1+\h'_2) G_{n_0,n_1}(\h_2,\h'_2) |^4 \\
&\quad \leq \E G_{n_0,n_1}(\h_2,\h'_2) \overline{G_{n_0,n_1}}(\h_2,\k'_2)  \overline{G_{n_0,n_1}}(\k_2,\h'_2)  G_{n_0,n_1}(\k_2,\k'_2)
\end{align*}
where $(\k_2,\k'_2)$ is an independent copy of $(\h_2,\h'_2)$.  We thus have
$$
\E G_{n_0,\n_1}(\h_2,\h'_2) \overline{G_{n_0,\n_1}}(\h_2,\k'_2)  \overline{G_{n_0,\n_1}}(\k_2,\h'_2)  G_{n_0,\n_1}(\k_2,\k'_2) 
\gg \eta^{O(1)}
$$
which by the triangle inequality and \eqref{gn-def} gives
\begin{align*}
&\sum_{h_2,k_2,h'_2,k'_2} 1_{h_2-h'_2, k_2-k'_2, k_2-h'_2, h_2-k'_2 \in \Omega} \P( \h_2 = h_2; \k_2 = k_2; \h'_2 = h'_2; \k'_2 = k'_2)\\
&\quad | \E e_p( - (\xi(h_2-h'_2)  + \xi(k_2-k'_2) - \xi(k_2-h'_2) - \xi(h_2-k'_2) ) \n_1 ) | \\
&\quad\quad \gg \eta^{O(1)}.
\end{align*}
By Lemma \ref{popular}, we conclude that with probability $\gg \eta^{O(1)}$, the tuple $(\h_2,\k_2,$ $\h'_2,\k'_2)$ attains a value $(h_2,k_2,h'_2,k'_2)$ for which
$$ h_2-h'_2, k_2-k'_2, h_2-k'_2, k_2-h'_2 \in \Omega$$
and
\begin{equation}\label{stone}
 | \E e_p( - (\xi(h_2-h'_2)  + \xi(k_2-k'_2) - \xi(k_2-h'_2) - \xi(h_2-k'_2)) \n_1 ) | \gg \eta^{O(1)} \gg K^{-O(1)}
\end{equation}
thanks to \eqref{K-def-2}.  Since $(h_2-h'_2, k_2-k'_2, h_2-k'_2, k_2-h'_2)$ is an additive quadruple, the claim now follows from Lemma \ref{fde}, \eqref{rho-sep}, and \eqref{K-def-2}.
\end{proof}

We localise this claim slightly, though for notational reasons we will not move from $\rho_2$ immediately to $\rho_3$ and beyond, but instead first work in some intermediate scales between $\rho_2$ and $\rho_3$.  For any natural number $j$, define
$$ \rho_{2,j} \coloneqq \exp(-C_1 jK) \rho_2,$$
thus 
$$\rho_2 = \rho_{2,0} > \rho_{2,1} > \dots > \rho_{2,j} \geq \rho_3$$
if (say) $j \leq K^{C_1^2}$.

It will be necessary to break the symmetry between the four components of an additive quadruple, by restricting the second component to a tiny Bohr set, the third component to a larger Bohr set, and the first and fourth components to an even larger Bohr set.  More precisely, given an additive quadruple $\vec a_0 = (a_{(1),0},a_{(2),0},a_{(3),0},a_{(4),0}) \in \operatorname{Q}$, a subset $S' \subset \Z/p\Z$, and radii $0 < r_2 \leq r_3 \leq r_4 \leq 1/2$, we say that a random additive quadruple $\vec \a = (\a_{(1)}, \a_{(2)}, \a_{(3)}, \a_{(4)}) \in \operatorname{Q}$ is \emph{centred at $\vec a_0$ with frequencies $S'$ and scales $r_2,r_3,r_4$} if $\a_{(2)}, \a_{(3)}, \a_{(4)}$ are drawn independently and regularly from $a_{(2),0} + B(S', r_2)$, $a_{(2),0} + B(S', r_2)$, and $a_{(2),0} + B(S', r_2)$ respectively.  Note that this property also describes the distribution of $\a_{(1)}$, since we have the constraint
$$ \a_{(1)} = \a_{(3)} + \a_{(4)} - \a_{(2)}.$$
In practice, $r_4$ will be much larger than $r_2,r_3$, so (by Lemma \ref{ati}) $\a_{(1)}$ will be approximately regularly drawn from $a_{(1),0} + B(S', r_4)$, but will be highly coupled to the other three components of the quadruple (in particular, it will stay close to $\a_{(4)}$).  We thus see that for $i=1,2,3,4$, each $\a_{(i)}$ is either exactly or approximately drawn regularly from $a_{(i),0} + B(S', r_{l_i})$, where $l_i \in \{0,1,2\}$ is the quantity defined by the formulae
\begin{equation}\label{li}
 l_1 \coloneqq 0; \quad l_2 \coloneqq 2; \quad l_3 \coloneqq 1; \quad l_4 \coloneqq 0.
\end{equation}

\begin{corollary}\label{sss}   Let the notation and hypotheses be as in Theorem \ref{locu3}, and let $\Omega$ and $\xi$ be as in Theorem \ref{first-step}.  Then there exists a random additive quadruple $\vec \a \in \operatorname{Q}$ centred at some quadruple $\vec a_0 \in \operatorname{Q}$ with frequencies $S$ and scales $\rho_{2,2}, \rho_{2,1}, \rho_{2,0}$, such that $\vec \a \in \Omega^4 \cap (\operatorname{Q} \backslash \BQ)$ with probability $\gg \eta^{O(1)}$.
\end{corollary}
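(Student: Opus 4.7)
The corollary will follow from Theorem \ref{second-step} by a reparametrisation plus pigeonholing argument. Theorem \ref{second-step} produces a random quadruple in which all four underlying variables $\h_2,\h'_2,\k_2,\k'_2$ are i.i.d.\ regular draws from $B(S,\rho_2)=B(S,\rho_{2,0})$, whereas the centred quadruple sought here requires the three independent components $\a_{(2)},\a_{(3)},\a_{(4)}$ to be drawn at the progressively finer scales $\rho_{2,2},\rho_{2,1},\rho_{2,0}$. The plan is first to split $\k_2$ and $\h'_2$ into ``macro + micro'' pieces at these finer scales, then freeze the macro pieces via pigeonholing (to fix the centres $\vec a_0$), and finally absorb the residual coupling between $\a_{(2)}$ and $\a_{(3)}$ into a single independent random variable via Lemma \ref{ati}.

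Concretely, first apply Theorem \ref{second-step}, and then use Lemma \ref{ati}(ii) to couple $\k_2=\k_2^{\mathrm{big}}+\k_2^{\mathrm{small}}$ and $\h'_2=\h'^{\mathrm{big}}_2+\h'^{\mathrm{small}}_2$, where $\k_2^{\mathrm{big}},\h'^{\mathrm{big}}_2$ are drawn regularly from $B(S,\rho_{2,0})$, $\k_2^{\mathrm{small}}$ is drawn regularly from $B(S,\rho_{2,2})$, and $\h'^{\mathrm{small}}_2$ is drawn regularly from $B(S,\rho_{2,1})$, all mutually independent of each other and of $\h_2,\k'_2$. The total variation cost of this coupling is $O(|S|\rho_{2,1}/\rho_{2,0})=O(K\exp(-C_1 K))$, which is much smaller than $\eta^{O(1)}$ by \eqref{rho-sep} with $C_1$ sufficiently large, so the original quadruple event still holds with probability $\gg\eta^{O(1)}$ under the coupled distribution. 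Apply Lemma \ref{popular} to the conditional expectation over $(\k'_2,\h'^{\mathrm{big}}_2,\k_2^{\mathrm{big}})$ (deliberately \emph{not} conditioning on $\h_2$) to find specific values $(k'^0_2,h'^{b,0}_2,k^{b,0}_2)$ for which the event still occurs with conditional probability $\gg\eta^{O(1)}$. Define
\begin{align*}
a_{(4),0} &\coloneqq -k'^0_2, & a_{(2),0} &\coloneqq k^{b,0}_2-k'^0_2,\\
a_{(3),0} &\coloneqq k^{b,0}_2-h'^{b,0}_2, & a_{(1),0} &\coloneqq a_{(3),0}+a_{(4),0}-a_{(2),0},
\end{align*}
so that $\vec a_0\in\operatorname{Q}$; in the conditional distribution the components of the quadruple read
\begin{align*}
\a_{(4)} &= a_{(4),0}+\h_2, & \a_{(2)} &= a_{(2),0}+\k_2^{\mathrm{small}}, \\
\a_{(3)} &= a_{(3),0}+\k_2^{\mathrm{small}}-\h'^{\mathrm{small}}_2, & \a_{(1)} &= \a_{(3)}+\a_{(4)}-\a_{(2)},
\end{align*}
with $\h_2,\k_2^{\mathrm{small}},\h'^{\mathrm{small}}_2$ mutually independent and regularly drawn from $B(S,\rho_{2,0}),B(S,\rho_{2,2}),B(S,\rho_{2,1})$ respectively.

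This is almost the required centred quadruple: $\a_{(4)}$ and $\a_{(2)}$ are each the correct regular draw and mutually independent, while $\a_{(3)}$ is coupled to $\a_{(2)}$ only through the shared summand $\k_2^{\mathrm{small}}$. Let $\vec\a^{\#}$ denote the honestly centred quadruple at $\vec a_0$ with frequencies $S$ and scales $\rho_{2,2},\rho_{2,1},\rho_{2,0}$. To compare its joint distribution with that of $\vec\a$, observe that for any fixed value $\k_2^{\mathrm{small}}=k_s\in B(S,\rho_{2,2})$, Lemma \ref{ati}(i) shows that the regular distribution of $k_s-\h'^{\mathrm{small}}_2$ on $k_s+B(S,\rho_{2,1})$ differs in total variation from the regular distribution on $B(S,\rho_{2,1})$ by at most $O(|S|\rho_{2,2}/\rho_{2,1})=O(K\exp(-C_1 K))$; taking expectations over $k_s$, the joint distributions of $\vec\a$ and $\vec\a^{\#}$ differ by this negligible amount. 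Hence $\vec\a^{\#}\in\Omega^4\cap(\operatorname{Q}\setminus\BQ)$ with probability $\gg\eta^{O(1)}$ as required. The main technical obstacle throughout is simply the careful bookkeeping of these successive total variation errors, but all of them are uniformly controlled by \eqref{rho-sep} once $C_1$ is taken as a sufficiently large absolute constant.
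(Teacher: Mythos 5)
Your proof is correct and follows essentially the same route as the paper: apply Theorem \ref{second-step}, use Lemma \ref{ati} to perturb two of the four coarse-scale draws by independent finer-scale shifts, pigeonhole to freeze three of the coarse variables, and read off the residual randomness as the required centred quadruple. The only cosmetic difference is that the paper substitutes $\k'_2 \mapsto \k'_2 - \n_{2,2}$ and $\h_2 \mapsto \h_2 + \n_{2,1} - \n_{2,2}$ so that after pigeonholing the residuals $\n_{2,2},\n_{2,1},\h'_2$ are \emph{exactly} independent and at the three required scales, whereas your big/small split leaves $\k_2^{\mathrm{small}}$ in both $\a_{(2)}$ and $\a_{(3)}$ and so needs the extra Lemma \ref{ati} comparison at the end; both work, the paper's choice is marginally tidier. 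One small slip: the total variation bound $O(K\exp(-C_1 K))$ you correctly state comes from the definition $\rho_{2,j} = \exp(-C_1 jK)\rho_2$ of the intermediate scales with $C_1$ large, not from the coarser separation \eqref{rho-sep} between the $\rho_i$.
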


\begin{proof}  Let $\h_2, \k_2, \h'_2, \k'_2, \n_{2,1}, \n_{2,2}$ be drawn independently and regularly from $B(S,\rho_{2,0})$, $B(S,\rho_{2,0})$, $B(S,\rho_{2,0})$, $B(S,\rho_{2,0})$, $B(S,\rho_{2,1})$ and $B(S,\rho_{2,2})$ respectively.  From Theorem \ref{second-step}, we have
$$ (\h_{2}-\h'_{2}, \k_{2}-\k'_{2}, \h_{2} - \k'_{2}, \k_{2}-\h'_{2}) \in \Omega^4 \cap (\operatorname{Q} \backslash \BQ)$$
with probability $\gg \eta^{O(1)}$.  Using Lemma \ref{ati}, we may replace $\k'_2$ by $\k'_2 - \n_{2,2}$, and similarly replace $\h_2$ by $\h_2 + \n_{2,1} - \n_{2,2}$, to conclude that
$$ (\h_{2}-\h'_{2} + \n_{2,1}, \k_{2}-\k'_{2} + \n_{2,2}, \h_{2} - \k'_{2} + \n_{2,1}, \k_{2}-\h'_{2}) \in \Omega^4 \cap (\operatorname{Q} \backslash \BQ)$$
with probability $\gg \eta^{O(1)}$.  By the pigeonhole principle, we may thus find $k_2, k'_2, h_2 \in \Z/p\Z$ such that
$$ (h_{2}-\h'_{2} + \n_{2,1}, k_{2}-k'_{2} + \n_{2,2}, h_{2} - k'_{2} + \n_{2,1}, k_{2}-\h'_{2}) \in \Omega^4 \cap (\operatorname{Q} \backslash \BQ)$$
with probability $\gg \eta^{O(1)}$.  The left-hand side is an additive quadruple centred at $(h_2, k_2-k'_2, h_2-k'_2, k_2)$ with frequencies $S$ and scales $\rho_{2,2}, \rho_{2,1}, \rho_{2,0}$, and the claim follows.
\end{proof}

\subsection{Third step: $\xi$ is approximately linear $99\%$ of the time on a rough set}

The next general step in the standard inverse $U^3$ argument is to upgrade this weak additive structure, which is of a ``1 percent'' nature, to a more robust ``99 percent'' additive structure .  There are two basic ways to proceed here.  The first way is to invoke the Balog-Szemer\'edi-Gowers theorem \cite{balog,gowers-4-aps}, followed by standard sum set estimates including Freiman's theorem (see e.g. \cite[Chapter 2]{taovu-book}).  It is likely that this approach will eventually work here, but these results need to be localised efficiently to Bohr sets, and also to allow for the fact that $\xi(a_{(1)})+\xi(a_{(2)})-\xi(a_{(3)})-\xi(a_{(4)})$ no longer vanishes, but instead has controlled word norm.  This would require reworking of large portions of the standard additive combinatorics literature.  We have thus elected instead to follow the second approach, also due to Gowers \cite{gowers-long-aps}, in which a certain probabilistic argument is used to ``purify'' a 1 percent additive map to a 99 percent additive map, albeit on a set which has no particular structure itself.  To deal with this set we will use a more recent innovation, namely a variant\footnote{The actual arithmetic regularity lemma, which creates arithmetic regularity on almost all regions of space, has quantitative bounds of tower-exponential type, which are far too poor for our application; however we will only need to create a single neighbourhood in which arithmetic regularity exists, and this can be done with much more efficient quantitative bounds.} of the arithmetic regularity lemma \cite{green-reg}, \cite{gt-reg} to make the subsets of $\Z/p\Z$ on which one has good control of $\xi$ suitably ``pseudorandom'' in the sense of Gowers.

We turn to the details.  We first locate a reasonably large quadruple of sets $A_{(1)}, A_{(2)}, A_{(3)}, A_{(4)}$ on which $\xi$ is ``almost a Freiman homomorphism'' in the sense that most quadruples falling inside $A_{(1)} \times A_{(2)} \times A_{(3)} \times A_{(4)}$ are somewhat good.  
We call an additive quadruple $(a_{(1)},a_{(2)},a_{(3)},a_{(4)}) \in \operatorname{Q}$ \emph{very bad} if 
\begin{equation}\label{bad-quad-2}
 \| \xi(a_{(1)}) + \xi(a_{(2)}) - \xi(a_{(3)}) - \xi(a_{(4)}) \|_{S} > \frac{1}{\rho_3},
\end{equation}
and let $\VBQ \subset \BQ$ denote the space of all very bad additive quadruples.

\begin{theorem}\label{third-step} Let the notation and hypotheses be as in Theorem \ref{locu3}, and let $\Omega$ and $\xi$ be as in Theorem \ref{first-step}.  Let $\vec a$ be the random additive quadruple from  Corollary \ref{sss}.  Then there exist sets $A_{(1)}, A_{(2)}, A_{(3)}, A_{(4)} \subset \Omega$ such that
\begin{equation}\label{ewa}
\E W( \vec a ) \gg \eta^{C_1 + O(1)},
\end{equation}
where $W: \mathrm{Q} \to \R$ is the weight function
\begin{equation}\label{W-def}
 W( \vec a ) \coloneqq 1_{A_{(1)} \times A_{(2)} \times A_{(3)} \times A_{(4)}}(\vec \a) \left(1 - \eta^{-C_1/100} 1_{\VBQ}(\vec \a) \right).
\end{equation}
\end{theorem}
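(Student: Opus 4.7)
The plan is to follow Gowers's probabilistic ``purification'' strategy from \cite{gowers-long-aps}, which converts the $1\%$ additive structure of $\xi$ (encoded in Corollary \ref{sss}) into a ``$99\%$'' form by finding four sets $A_{(i)} \subset \Omega$ whose product contains a comparatively large fraction of good additive quadruples and only a negligible fraction of very bad ones. The leverage comes from the exponential scale separation \eqref{rho-sep}, which makes the very-bad threshold $1/\rho_3$ much larger than the bad threshold $K^{C_1}/\rho_1$; in particular $\VBQ \subset \BQ$, so Corollary \ref{sss} already furnishes a random additive quadruple $\vec\a$ with
\[ p_0 \coloneqq \P(\vec\a \in \Omega^4 \cap (\operatorname{Q}\setminus\BQ)) \gg \eta^{O(1)} \]
and $\vec\a \notin \VBQ$ on this event.

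The first step of the construction is a conditional-probability filtration. For each index $i \in \{1,2,3,4\}$, define
\[ A_{(i)}^0 \coloneqq \{ a \in \Omega : \P(\vec\a \in \operatorname{Q}\setminus\BQ \mid \a_{(i)} = a) \geq p_0/8 \}. \]
A Markov/inclusion-exclusion calculation shows that $\P(\vec\a \in \prod_i A_{(i)}^0 \cap (\operatorname{Q}\setminus\BQ)) \geq p_0/2$; in particular, the product set captures a substantial mass of non-very-bad quadruples. This handles the first term in $W$, but by itself does not yet control the very-bad contribution, since nothing so far prevents $\prod_i A_{(i)}^0 \cap \VBQ$ from being large.

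The second step of the construction suppresses very bad quadruples in $\prod_i A_{(i)}$. Here we use the word-norm structure of $\xi$: partition the restriction $\xi|_\Omega$ by equivalence classes of the form $\{ a \in \Omega : \xi(a) \equiv \zeta \mod \| \cdot \|_S \text{-ball of a controlled radius} \}$, where the radius is chosen to lie strictly between $K^{C_1}/\rho_1$ and $1/\rho_3$ (permitted by the separation \eqref{rho-sep}). Using the duality of Lemma \ref{edual} and the size bounds of Lemma \ref{size-bohr} for dual Bohr sets, we can ensure that the number of equivalence classes is at most some moderate power $\eta^{-O(C_1)}$ of $\eta^{-1}$. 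Intersect each $A_{(i)}^0$ with a single chosen class $C_i^*$ (so $A_{(i)} \coloneqq A_{(i)}^0 \cap C_i^*$) and pigeonhole over class quadruples: by averaging over the $O(\eta^{-O(C_1)})^4$ class quadruples, we can find one on which $\P(\vec\a \in \prod A_{(i)} \cap (\operatorname{Q}\setminus\BQ)) \gg \eta^{O(C_1)}$, while simultaneously the class-quadruple sum constraint forces the ``reference'' expression $\xi(x_{C_1^*}) + \xi(x_{C_2^*}) - \xi(x_{C_3^*}) - \xi(x_{C_4^*})$ to have word norm strictly below $1/\rho_3 - O(\text{class radius})$, so that \emph{every} quadruple in $\prod A_{(i)}$ lies outside $\VBQ$ (or at worst a tiny fraction does).

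The final step assembles the pieces: $\P(\vec\a \in \prod A_{(i)}) \gg \eta^{C_1 + O(1)}$ from step one (after scaling by the pigeonholed class fraction), and $\eta^{-C_1/100} \P(\vec\a \in \prod A_{(i)} \cap \VBQ) \leq \frac{1}{2} \P(\vec\a \in \prod A_{(i)})$ from step two, yielding $\E W(\vec\a) \gg \eta^{C_1 + O(1)}$. The main obstacle is the bookkeeping in step two: one must choose the class radius fine enough that $\VBQ \cap \prod A_{(i)}$ is suppressed below $\eta^{C_1/100}$ times the total $\prod A_{(i)}$ mass, yet coarse enough that the number of class quadruples does not eat up more than an $\eta^{C_1/4}$-fraction of the quadruples via pigeonhole. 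The exponential gap in \eqref{rho-sep} is exactly what makes this delicate balance feasible: one has room to set the class radius at an intermediate scale $K^{C'}/\rho_1$ with $C_1 < C' \ll C_2$, and the pigeonhole loss factor $\eta^{-O(C_1)}$ is compatible with the target exponent $C_1 + O(1)$ in \eqref{ewa}.
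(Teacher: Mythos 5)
Your plan announces Gowers's probabilistic purification, but what you actually execute in Step 2 is a deterministic class-partition argument, and this is where the proposal fails. You want to partition $\Omega$ into classes $\{a \in \Omega : \xi(a) \text{ lies in a fixed word-norm ball of radius } r\}$ for an intermediate radius $r$ between $K^{C_1}/\rho_1$ and $1/\rho_3$, and then pigeonhole over class quadruples. The gap is the claim that ``the number of equivalence classes is at most some moderate power $\eta^{-O(C_1)}$.'' This is not so: the word-norm ball $\{ \lambda : \|\lambda\|_S \leq r \}$ has at most roughly $(2r)^{|S|}$ elements, so covering $\Z/p\Z$ (which is where $\xi$ takes values; nothing constrains $\xi(\Omega)$ to be concentrated) requires at least on the order of $p\,(2r)^{-|S|}$ balls. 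Under \eqref{cstar2}, $p$ dominates every polynomial combination of $\rho_i^{-1}$ and $K$, so this count is astronomically larger than $\eta^{-O(C_1)}$. Neither Lemma \ref{edual} nor Lemma \ref{size-bohr} gives the economical covering you are asserting: the former relates $\|\cdot\|_S$ to the dual norm $\|\cdot\|_{S^\perp}$ (useful for Fourier coefficient bounds), and the latter lower-bounds the size of Bohr sets $B(S,\rho)$, which are dual to and much larger than word-norm balls. Pigeonholing over that many class quadruples throws away essentially all of the $\gg \eta^{O(1)}$ mass established from Corollary \ref{sss}, destroying \eqref{ewa}.

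The paper (following Gowers \cite{gowers-long-aps}) avoids this combinatorial explosion by replacing a one-shot partition with $m \asymp \log(1/\eta)$ rounds of \emph{random affine filters}: it draws independent $\h_j$ from $B(S,\rho_2)$ and $\lambdab_j$ uniformly from $\Z/p\Z$, sets $\Xib_j(n) = \xi(n)\h_j + \lambdab_j n/p$, and defines the $A_{(i)}$ as the joint level sets $\{\|\Xib_j\|_{\R/\Z} \leq \text{const}\}$ over $j=1,\dots,m$. Each round costs only a constant factor ($\approx 100^{-3}$) on the mass of non-bad quadruples, because for a non-bad quadruple the filter values at the four coordinates approximately satisfy the additive constraint; but for a very bad quadruple, each round multiplies the survival probability by an extra factor $\leq 1/2$, because the word-norm violation spreads $\Xib_j$ across $\R/\Z$. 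After $m$ rounds the relative suppression $2^{-m}$ is $\eta^{O(C_1)}$, i.e.\ more than enough to beat the penalty $\eta^{-C_1/100}$ in $W$, while the total cost $100^{-3m}$ is only $\eta^{O(C_1)}$. The essential point is that randomised filtering multiplies the number of ``effective classes'' by a constant per round rather than requiring an exhaustive deterministic cover, which is exactly how one stays within the $\eta^{C_1+O(1)}$ budget. Your Step 1 (conditional-probability thinning of $\Omega$) is plausible but unnecessary; Step 2 is the part you would need to replace by the iterated random-filter construction.
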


The idea here is that $W$ is a weight function that strongly penalises very bad quadruples, and so Theorem \ref{third-step} is asserting that ``most'' of the quadruples in $A_{(1)} \times A_{(2)} \times A_{(3)} \times A_{(4)}$ are not very bad.

\begin{proof}  We will construct the sets $A_{(i)}$ by the probabilistic method, adapting an argument from \cite{gowers-long-aps} in which the $A_{(i)}$ are created by applying a number of random linear ``filters'' to the graph of $\xi$ to eliminate most of the additive quadruples that are not (almost) preserved by $\xi$.  

We turn to the details.  Let $m$ be the integer
\begin{equation}\label{mdef}
 m \coloneqq \left\lfloor \frac{\log \eta^{C_1}}{3 \log 100} \right\rfloor.
\end{equation}
We then select jointly independent random variables $\h_j \in \Z/p\Z$ and $\lambdab_j \in \Z/p\Z$ for each for $j=1,\dots,m$, by selecting each $h_j$ regularly from $B(S,\rho_2)$, and selecting $\lambdab_j$ uniformly at random from $\Z/p\Z$; we also choose these random variables to be independent  of $\vec \a$.  For $j=1,\dots,m$, we then let $\Xib_j: \Z/p\Z \to \R/\Z$ be the random map
\begin{equation}\label{ximp}
 \Xib_j(n) \coloneqq \xi(n) \h_j + \frac{\lambdab_j n}{p}
\end{equation}
and then define the random sets
$$ \A_{(i)} \coloneqq \bigcap_{j=1}^m \A_{(i),j}$$
for $i=1,2,3,4$, where
$$ \A_{(1),j}=\A_{(2),j}=\A_{(3),j} \coloneqq \left\{ n \in \Omega: \| \Xib_j(n) \|_{\R/\Z} \leq \frac{1}{200} \right\}$$
and
$$ \A_{(4),j} \coloneqq \left\{ n \in \Omega: \| \Xib_j(n) \|_{\R/\Z} \leq\frac{1}{10} \right\}.$$

We will show that
\begin{equation}\label{bonus}
\E 1_{A_{(1)} \times A_{(2)} \times A_{(3)} \times A_{(4)}}(\vec \a) \gg \eta^{O(1)} 100^{-3m}
\end{equation}
and
\begin{equation}\label{minus}
\E 1_{A_{(1)} \times A_{(2)} \times A_{(3)} \times A_{(4)}}(\vec \a) 1_{\BQ}(\vec a) \ll 2^{-m} \times 100^{-3m}
\end{equation}
which will give the claim thanks to \eqref{mdef} and \eqref{W-def}, if $C_1$ is large enough.

We first show \eqref{bonus}.  By Corollary \ref{sss} and linearity of expectation, it suffices to show that
\begin{equation}\label{lama}
\P(a_{(i)} \in \A_{(i)} \hbox{ for } i=1,2,3,4 ) \gg 100^{-3m}
\end{equation}
whenever $(a_{(1)},a_{(2)},a_{(3)},a_{(4)})$ lies in $\Omega^4 \cap (\operatorname{Q} \backslash \BQ)$.
Actually, we will only show the weaker assertion that \eqref{lama} holds for all but at most $O( m^{O(1)} p^2 )$ of the available additive quadruples $(a_{(1)},a_{(2)},a_{(3)},a_{(4)})$; this still suffices, since by \eqref{theta-crude}, \eqref{K-def-2} each exceptional additive quadruple is attained with probability $O( \frac{1}{\rho_3^{O(K)} p^3})$, and the additional factor of $p$ will dominate all the losses in $m, K, \rho_3$ thanks to \eqref{cstar2}, \eqref{mdef}.

Fix an additive quadruple $\vec a = (a_{(1)},a_{(2)},a_{(3)},a_{(4)})$ in $\Omega^4 \cap (\operatorname{Q} \backslash \BQ)$.  The left-hand side of \eqref{lama} factors as
\begin{equation}\label{exp}
 \prod_{j=1}^m \P(a_{(i)} \in \A_{(i)} \hbox{ for } i=1,2,3,4 ) 
\end{equation}
so it will suffice to show that for each $j=1,\dots,m$, one has
$$ \P (a_{(i)} \in \A_{(i),j} \hbox{ for } i=1,2,3,4 ) \geq 100^{-3} - O\left(\frac{1}{m} \right)$$
for all but $O( m^{O(1)} p^2 )$ quadruples $(a_{(1)},a_{(2)},a_{(3)},a_{(4)}) \in \operatorname{Q} \backslash \BQ$.  Note however that from \eqref{ximp} we have
\begin{align*} \Xib_j(a_{(1)}) + \Xib_j(a_{(2)}) - & \Xib_j(a_{(3)}) - \Xib_j(a_{(4)}) \\ & =
\left(\xi(a_{(1)}) + \xi(a_{(2)}) - \xi(a_{(3)}) - \xi(a_{(4)})\right) \h_j\end{align*}
and hence by the hypothesis $(a_{(1)},a_{(2)},a_{(3)},a_{(4)}) \in \operatorname{Q} \backslash \BQ$ and the range of $\h_j$ we have
$$ \left\| \frac{\Xib_j(a_{(1)}) + \Xib_j(a_{(2)}) - \Xib_j(a_{(3)}) - \Xib_j(a_{(4)})}{p}\right \|_{\R/\Z} \leq \frac{1}{100}$$
(say).  In particular, we see from the triangle inequality that the claim $a_{(4)} \in \A_{(4),j}$ is implied by the claims $a_{(i)} \in \A_{(i),j}$ for $i=1,2,3$.  Thus it suffices to show that
$$ \P (a_{(i)} \in \A_{(i),j} \hbox{ for } i = 1,2,3 ) \geq 100^{-3} - O\left(\frac{1}{m} \right)$$
for all but $O( m^{O(1)} p^2 )$ triples $(a_{(1)}, a_{(2)}, a_{(3)}) \in (\Z/p\Z)^3$, noting that $a_{(4)}$ is determined by $a_{(1)}, a_{(2)}, a_{(3)}$.  We can write the left-hand side as
$$ \P\left( \frac{(\xi(a_{(1)}), \xi(a_{(2)}), \xi(a_{(3)})) \h_j + (a_{(1)},a_{(2)},a_{(3)}) \lambdab_j}{p} \in [-1/200,1/200]^3\right),$$
where we view the interval $[-1/200,1/200]$ as a subset of $\R/\Z$.  Thus it will suffice to show the equidistribution property
$$ \inf_{x \in (\R/\Z)^3} \P\left( \frac{(a_{(1)},a_{(2)},a_{(3)}) \lambdab_j}{p} \in x + [-1/200,1/200]^3 \right) \geq 100^{-3} - O\left(\frac{1}{m} \right).$$
Let $\psi: (\R/\Z)^3 \to [0,1]$ be a Lipschitz cutoff supported on $[-1/20,1/20]^3$ that equals one on $[-1/200+1/m, 1/200-1/m]^3$ and has Lipschitz constant $O(m)$.  Then we may lower bound the left-hand side by
\begin{equation}\label{joe}
 \inf_{x \in (\R/\Z)^3} {\mathbb E}_{\lambda \in \Z/p\Z} \psi\left( \frac{(a_{(1)},a_{(2)},a_{(3)}) \lambda}{p} - x \right).
\end{equation}
By standard Fourier expansion (see e.g. \cite[Lemma A.9]{gt-mobius}), we may write
$$ \psi(y) = \sum_{k \in \Z^3: k = O(m^{O(1)})} c_k e(k \cdot y) + O\left(\frac{1}{m} \right)$$
for all $y \in (\R/\Z)^3$ and some bounded Fourier coefficients $c_k = O(1)$; integrating in $x$, we see in particular that $c_0 = 10^{-3} + O\left(\frac{1}{m} \right)$.  We may thus write \eqref{joe} as
$$ 10^{-3} + O\left(\frac{1}{m} \right)+ O\left( \sum_{k \in \Z^3 \backslash \{0\}: k = O(m^{O(1)})} \left|{\mathbb E}_{\lambda \in \Z/p\Z} e_p( k \cdot (a_{(1)},a_{(2)},a_{(3)}) \lambda)\right| \right)$$
which gives the desired claim as long as there are no relations of the form
$$ k \cdot (a_{(1)},a_{(2)},a_{(3)})  = 0$$
for some non-zero $k \in \Z^3$ with $k = O(m^{O(1)})$.  But it is easy to see that the number of $(a_{(1)},a_{(2)},a_{(3)})$ with such a relation is $O( m^{O(1)} p^2 )$, thus concluding the proof of \eqref{bonus}.

Now we show \eqref{minus}.  By linearity of expectation as before, it suffices to show that
$$ \P (a_{(i)} \in \A_{(i)} \hbox{ for } i=1,2,3,4 ) \ll 2^{-m} \times 100^{-3m}$$
for all but $O( m^{O(1)} p^2)$ of the quadruples $(a_{(1)},a_{(2)},a_{(3)},a_{(4)})$ in $\VBQ$.  
Using the factorisation \eqref{exp}, it suffices to show that for each $j=1,\dots,m$, one has
$$ \P (a_{(i)} \in \A_{(i),j}\hbox{ for } i=1,2,3,4 ) \leq 2^{-1} \times 100^{-3} + O\left(\frac{1}{m} \right)$$
for all but $O( m^{O(1)} p^2)$ of the quadruples $(a_{(1)},a_{(2)},a_{(3)},a_{(4)})$ in $\VBQ$.  

The left-hand side may be written as
$$ \P\left( \frac{(\xi(a_{(1)}), \dots, \xi(a_{(4)})) \h_j}{p} + \vec a \lambdab_j \in [-1/200,1/200]^3 \times [-1/10,1/10] \right),$$
which we bound above by
\begin{align*}
\P\bigg( (\xi(a_{(1)}), \xi(a_{(2)}), \xi(a_{(3)})) \h_j + (a_{(1)},a_{(2)},a_{(3)}) \lambdab_j \in & [-1/200,1/200]^3, \\ & \left\| \frac{\sigma \h_j}{p} \right\|_{\R/\Z} \leq \frac{1}{8} \bigg),\end{align*}
where $\sigma \coloneqq  \xi(a_{(1)}) + \xi(a_{(2)}) - \xi(a_{(3)}) - \xi(a_{(4)})$.  By arguing as in the proof of \eqref{bonus}, we see that after deleting $O(m^{O(1)} p^2)$ exceptional tuples, one has
$$ \sup_{x \in (\R/\Z)^3} \P( (a_{(1)},a_{(2)},a_{(3)}) \lambdab_j \in x + [-1/200,1/200]^3 ) \leq 100^{-3} + O\left(\frac{1}{m} \right),$$
so by Fubini's theorem and the independence of $\h_j$ and $\lambdab_j$ it will suffice to show that
$$ \P\left( \left\| \frac{\sigma \h_j}{p} \right\|_{\R/\Z} \leq 1/8 \right) \leq 2^{-1} + O\left(\frac{1}{m} \right).$$ 
However, by Lemma \ref{edual} and the hypothesis $(a_{(1)},a_{(2)},a_{(3)},a_{(4)}) \in \VBQ$ we may find $h \in\Z/p\Z$ such that
$$
 \left\| \frac{\sigma h}{p} \right\|_{\R/\Z} > K^{-O(1)} {\| h \|_{S^\perp}}{\rho_3}.$$
In particular, $h$ is non-zero.  By repeatedly doubling $h$ until $\left\| \frac{\eta h}{p} \right\|_{\R/\Z}$ exceeds $1/4$, we may also assume that
$$ 1/2 \geq \left\|\frac{\eta h}{p}  \right\|_{\R/\Z} > 1/4$$
and thus
$$ \|h\|_{S^\perp} \ll K^{O(1)} \rho_3.$$
From Lemma \ref{ati} we conclude that
$$ \P\left( \left\| \frac{\eta (\h_j+h)}{p} \right\|_{\R/\Z} \leq 1/8 \right) = \P\left( \left\| \frac{\eta \h_j}{p} \right\|_{\R/\Z} \leq 1/8 \right) + O\left(\frac{1}{m}\right).$$ 
But from the triangle inequality we see that the events $\left\| \frac{\eta (\h_j+h)}{p} \right\|_{\R/\Z} \leq 1/8$, $\left\| \frac{\eta \h_j}{p} \right\|_{\R/\Z} \leq 1/8$ are disjoint.  The claim follows.
\end{proof}

\subsection{Fourth step: the rough set is pseudorandom in a Bohr set}

The sets $A_{(i)}$ provided by Theorem \ref{third-step} are currently rather arbitrary. In particular we have no control on the pseudorandomness of these sets (as measured by local Gowers $U^2$ norms) in the Bohr sets we are working with.  However, it is possible to use an ``energy decrement argument'' to pass to smaller\footnote{This is somewhat analogous to the variants of the Szemer\'edi regularity lemma \cite{szem:reg} in which one locates a single regular pair inside an arbitrary large random graph.  In contrast to the full regularity lemma which strives to ensure that \emph{almost all} pairs are regular, the ``one regular pair'' versions of the lemma enjoy significantly better quantitative bounds.  In our current application, such good quantitative bounds are essential, so we cannot appeal to analogues of the regularity lemma such as the arithmetic regularity lemma of the first author \cite{green-reg}.} Bohr sets in which the sets $A_{(i)}$ do enjoy good pseudorandomness properties, basically by converting any large Fourier coefficient of any of the $A_{(i)}$ in a Bohr set into a refinement of the Bohr sets (which add the frequency of the large Fourier coefficient to the frequency set $S$) on which the indicator function $1_{A_{(i)}}$
has smaller variance.  Furthermore, it is possible to shrink the Bohr sets in this fashion without destroying the conclusion \eqref{ewa} of Theorem \ref{third-step}.  

Here is a precise statement.

\begin{theorem}\label{fourth-step}  Let the notation and hypotheses be as in Theorem \ref{locu3}, and let $\Omega$ and $\xi$ be as in Theorem \ref{first-step}.  Let $A_{(1)}, A_{(2)}, A_{(3)}, A_{(4)}, W$ be as in Theorem \ref{third-step}.  Then there exists a natural number $j$, $j \leq \eta^{-10^3 C_1}$, an additive quadruple $\vec a_1 = (a_{(1),1}, a_{(2),1}, a_{(3),1}, a_{(4),1}) \in \operatorname{Q}$, and a set $S_1$, $S \subset S_1 \subset \Z/p\Z$ with $|S_1| \leq |S| + j$, with the following properties:
\begin{itemize}
\item[(i)] \textup{(Few very bad quadruples)} We have
\begin{equation}\label{stor}
 \E W(\vec \a)\gg \eta^{C_1 + O(1)},
\end{equation}
where $\vec \a$ is a random additive quadruple centred at $\vec a_1$ with frequencies $S_1$ and scales $\rho_{2,j+2}$, $\rho_{2,j+1}$, and $\rho_{2,j}$.
\item[(ii)] \textup{(Local Fourier pseudorandomness)} For each $i=1,2,3,4$, we have 
$$
|\E f_i(\a_{(i)}+\h_0+\h_1) f_i(\a_{(i)}+\h_0+\h'_1) f_i(\a_{(i)}+\h'_0+\h_1) f_i(\a_{(i)}+\h'_0+\h'_1)| \leq \eta^{100 C_1}$$
where $f_i: \Z/p\Z \to [-1,1]$ denotes the balanced function
\begin{equation}\label{f1-def}
 f_i(a_{(i)}) \coloneqq  1_{A_{(i)}}(a_{(i)}) - \alpha_i,
\end{equation}
$\alpha_i$ denotes the mean
\begin{equation}\label{alphai-def}
\alpha_i \coloneqq   \E 1_{A_{(i)}}(\a_{(i)}),
\end{equation}
and where $\a_{(i)}$ and $\h_0, \h'_0, \h_1, \h'_1$ are drawn independently and regularly from the Bohr sets $a_{(i),1} + B( S_1, \rho_{2,j+l_i} )$ and $B(S_1, \rho_{2,j+10} )$, $B(S_1, \rho_{2,j+10} )$, $B(S_1, \rho_{2,j+11} )$, $B(S_1, \rho_{2,j+11} )$ respectively, with the quantity $l_i$ given by \eqref{li}.
\end{itemize}
\end{theorem}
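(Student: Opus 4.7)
The proof is an iterative energy-increment (``regularize one cell'') argument driven by the local $U^2$ inverse theorem, Theorem~\ref{locu2}. We construct a sequence of configurations $(S_1^{(t)}, \vec a_1^{(t)}, j_t)_{t \geq 0}$, initialising at $(S, \vec a_0, 0)$ from Corollary~\ref{sss} so that property (i) holds by Theorem~\ref{third-step}. At each step we check (ii); if it holds we stop, otherwise we refine the configuration while preserving (i) and strictly increasing a bounded energy functional.

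\emph{Refinement via Theorem~\ref{locu2}.} Suppose (ii) fails for some index $i \in \{1,2,3,4\}$. Since $f_i$ is real-valued, conditioning on $\a_{(i)}$ and integrating $\h_1, \h_1'$ first shows that the eighth-power average equals
\[
\E_{\a_{(i)}, \h_0, \h_0'}\,\bigl|\E_{\h_1} f_i(\a_{(i)}+\h_0+\h_1)\, f_i(\a_{(i)}+\h_0'+\h_1)\bigr|^2 \geq 0,
\]
so the absolute value in (ii) is redundant and the left side exceeds $\eta^{100 C_1}$. By Lemma~\ref{popular} applied in $\a_{(i)}$, we fix a specific centre $a^\star$ attained with probability $\gg \eta^{O(C_1)}$ at which the inner $U^2$-form of $n\mapsto f_i(a^\star+n)$ at scales $(\rho_{2,j+10},\rho_{2,j+11})$ remains $\gg \eta^{O(C_1)}$. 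The separation \eqref{rho-sep} validates hypothesis \eqref{pron}, and Theorem~\ref{locu2} delivers a frequency $\xi^\star \in \Z/p\Z$ for which
\[
\sum_{n_0}\P(\n_0=n_0)\,\bigl|\E f_i(a^\star + n_0 + \n_1)\,e_p(-\xi^\star \n_1)\bigr|^2 \gg \eta^{O(C_1)}.
\]

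\emph{Energy and re-centering.} Define the energy $\mathcal{E} := \sum_{i'=1}^4\bigl(\E\, 1_{A_{(i')}}(\a_{(i')})\bigr)^2 \in [0,4]$ with $\a_{(i')}$ at the current distribution. Adjoin $\xi^\star$ to $S_1$ and shrink each scale one notch, $\rho_{2,\cdot}\to\rho_{2,\cdot+1}$. Plancherel together with the law of total variance turns the Fourier correlation above into the statement that, as the new centre of the $i$-th component is drawn regularly from its previous parent Bohr set, the conditional mean $\E(1_{A_{(i)}}\mid \text{new centre})$ has variance $\gg \eta^{O(C_1)}$. Pigeonholing over the new centre gains this much in $\mathcal{E}$. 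The new centres of the other three components are picked by a separate pigeonhole: Lemma~\ref{ati} combined with the extremely small scale ratio $\rho_{2,j+l_{i'}+1}/\rho_{2,j+l_{i'}}$, which by \eqref{rho-sep} is essentially arbitrarily small, implies only an exponentially small TV-loss per iteration, so the weight bound $\E W(\vec \a)\gg \eta^{C_1+O(1)}$ in (i) is preserved even after all the refinements.

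\emph{Termination and obstacle.} Since $\mathcal{E}\in[0,4]$ gains at least $\eta^{O(C_1)}$ per iteration, the process halts within $j \leq \eta^{-10^3 C_1}$ steps once the absolute constants are absorbed into the exponent, with $|S_1| \leq |S| + j$ because at most a bounded number of frequencies are appended per iteration. The main technical obstacle is the simultaneous juggling of the two pigeonhole selections --- one to gain energy in the $i$-th component, one to preserve the weight $W$ across all four components --- together with the minor point that Theorem~\ref{locu2} produces an $a^\star$-dependent $\xi^\star$. To promote this into a single ``global'' refinement step, one may either (a) adjoin $O(1)$ frequencies per step by dyadically pigeonholing the size of the Fourier peak across the high-probability set of $\a_{(i)}$-values, or (b) work with a weighted energy that averages over $a^\star$ before selecting $\xi^\star$. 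The generous scale separation encoded in \eqref{rho-sep} is precisely what makes both options quantitatively harmless and absorbs the accumulated TV-losses from Lemma~\ref{ati} over all iterations.
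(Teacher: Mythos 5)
Your overall strategy coincides with the paper's: an energy-increment (or its dual, a variance-decrement) driven by the local $U^2$ inverse theorem (Theorem~\ref{locu2}), with frequency adjunction and scale shrinkage, and with Lemma~\ref{ati} used to control total-variation losses thanks to the scale separation \eqref{rho-sep}. The paper formulates this as a near-maximizer contradiction on a single "score" functional $\E W(\vec\a) - \eta^{2C_1}\sum_i \Var 1_{A_{(i)}}(\a_{(i)}) - \eta^{10^3 C_1} j$ rather than as an explicit iteration; the two are equivalent, and your choice of $\sum_i(\E 1_{A_{(i)}}(\a_{(i)}))^2$ as the quantity that increases is precisely the Pythagorean dual of the paper's variance-decrement.

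However, there is a genuine gap at the step you yourself flag as the "minor point'': Theorem~\ref{locu2} produces a frequency $\xi^\star = \xi(a_{(i)})$ that depends on which translate $a_{(i)}$ one has conditioned on, and you need to turn a large positive-probability family of per-translate frequencies into a definite new $4$-neighbourhood with a definite new frequency set $S_1$. Your option (a) --- adjoin $O(1)$ frequencies chosen by dyadic pigeonholing of the Fourier peak size --- does not work: the obstruction is not the \emph{size} of the peak but the \emph{location} $\xi(a_{(i)})$, which can vary arbitrarily as $a_{(i)}$ ranges over the high-probability set, so no bounded collection of frequencies suffices. The scale separation in \eqref{rho-sep} is irrelevant to this problem. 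Your option (b) gestures at the right idea but stops short of it. The paper's actual resolution is worth stating precisely, since it is the crux of the proof: one does not select a single $\xi^\star$ at all. One defines a \emph{random} refined $4$-neighbourhood $\Nb = (\vec\a + \vec\k,\, j+20,\, S_1 \cup \{\xi(\a_{(i)})\})$, whose centre and whose adjoined frequency are both functions of the (still random) $\a_{(i)}$, and establishes the score/energy increment \emph{in expectation} over this randomness (this is where the Pythagoras/variance decomposition and the mean-square Fourier bound from Theorem~\ref{locu2} are combined); only then does one pigeonhole to extract a deterministic $\Nb$. Without this device, the proposal cannot close the argument, so as written the proof has a missing idea rather than a purely expository gap.
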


\begin{proof}  We will formulate the ``energy decrement'' argument here as a ``score maximisation'' argument.
Define a \emph{$4$-neighbourhood} to be a tuple
$$ N = ( \vec a_1, j, S_1 )$$
where $\vec a_1 \in \mathrm{Q}$ is an additive quadruple, $j$ is a natural number between $0$ and $\eta^{-10^3 C_1}$, and $S_1$ is a subset of $\Z/p\Z$ containing $S$ with $|S_1| \leq |S|+j$; we refer to $j$ as the \emph{depth} of the $4$-neighbourhood $N$.  Given such a neighbourhood, we define the \emph{score} $\operatorname{Score}(N)$ of the $4$-neighbourhood to be the quantity
\begin{equation}\label{rs-def}
\operatorname{Score}(N) \coloneqq 
\E W(\vec \a)  - \eta^{2C_1} \sum_{i=1}^4 \mathrm{E}_i(N) - \eta^{10^3 C_1} j
\end{equation}
where $\vec \a = (\a_{(1)}, \a_{(2)}, \a_{(3)}, \a_{(4)})$ is a random additive quadruple centred at $\vec a_1$ with frequencies $S_1$ and scales $\rho_{2,j+2}, \rho_{2,j+1}, \rho_{2,j}$, and $\operatorname{E}_i$ is the energy-type quantity 
\begin{equation}\label{energy-def}
 \operatorname{E}_i(N) \coloneqq \Var\ 1_{A_{(i)}}(\a_{(i)}).
\end{equation}

If we define $N_0$ to be the $4$-neighbourhood
$$ N_0 \coloneqq  (\vec a_0, 0, S),$$
then Theorem \ref{third-step} tells us that
\begin{equation}\label{stam}
 \operatorname{Score}(N_0) \gg \eta^{C_1 + O(1)}.
\end{equation}

We choose
$$ N \coloneqq  (\vec a_1, j, S_1)$$
to be a $4$-neighbourhood that comes within $\eta^{10^3C_1}$ (say) of maximising the adjusted score.  Then we must have
$$ \operatorname{Score}(N) \geq \operatorname{Score}(N_0) - \eta^{10^3C_1} \gg \eta^{C_1 + O(1)}$$
which from \eqref{rs-def} implies the bound \eqref{stor}, as well as the bound
$$ j \leq \eta^{-10^3 C_1} - 10^3$$
(say).  It will then suffice to show that property (ii) of the theorem holds.

It remains to show (ii).  Let $i=1,2,3,4$, and write
$$ \vec a_1 = (a_{(1),1}, a_{(2),1}, a_{(3),1}, a_{(4),1}).$$
Suppose for contradiction that
\begin{equation}\label{afg}
|\E f_i(\a_{(i)}+\h_0+\h_1) f_i(\a_{(i)}+\h_0+\h'_1) f_i(\a_{(i)}+\h'_0+\h_1) f_i(\a_{(i)}+\h'_0+\h'_1)| > \eta^{100 C_1}
\end{equation}
where $f_i$ is given by \eqref{f1-def}, and $\a_{(i)}, \h_0, \h'_0, \h_1, \h'_1$ are drawn independently and regularly from the Bohr sets $a_{(i),1} + B( S_1, \rho_{2,j+l_i} )$, $B(S_1, \rho_{2,j+10} )$, $B(S_1,$ $\rho_{2,j+10} )$, $B(S_1, \rho_{2,j+11} )$, $B(S_1, \rho_{2,j+11} )$, with $l_i$ given by \eqref{li}.
 
We will use \eqref{afg} to construct a random $4$-neighbourhood $\Nb$ of depth $j+20$ obeying the estimates
\begin{equation}\label{letitgo}
\E W(\Nb) = W(N) + O( \eta^{10^3 C_1} )
\end{equation}
and
\begin{equation}\label{estable}
\E \operatorname{E}_{i'}(\Nb) \leq \operatorname{E}_{i'}(N) - \eta^{500 C_1} 1_{i=i'} + O( \eta^{10^3 C_1} )
\end{equation}
for $i' = 1,2,3,4$.  If we have the estimates \eqref{letitgo}, \eqref{estable}, we conclude from \eqref{rs-def} and linearity of expectation that
$$ \E \operatorname{Score}(\Nb) > \operatorname{Score}(N) + \eta^{600C_1},$$
contradicting the near-maximality of $\operatorname{Score}(N)$.

It remains to construct $\Nb$ obeying \eqref{letitgo}, \eqref{estable}.  We begin by noting that for each $a_{(i)} \in \Z/p\Z$, the Gowers uniformity-type quantity
$$ \E  f_i(a_{(i)}+\h_0+\h_1) f_i(a_{(i)}+\h_0+\h'_1) f_i(a_{(i)}+\h'_0+\h_1) f_i(a_{(i)}+\h'_0+\h'_1)
$$
can be factored as
$$ \sum_{h_0, h'_0} \P( \h_0 = h_0, \h'_0 = h'_0 ) \left| \E f_i(a_{(i)}+h_0+\h_1) f_i(a_{(i)}+h'_0+\h_1) \right|^2$$
and thus takes values between $0$ and $1$. By \eqref{afg} and Lemma \ref{popular}, we may thus find a set $E \subset \Z/p\Z$ with
$$
\P(\a_{(i)} \in E) \gg \eta^{100 C_1}$$
such that
$$
\E f_i(a_{(i)}+\h_0+\h_1) f_i(a_{(i)}+\h_0+\h'_1) f_i(a_{(i)}+\h'_0+\h_1) f_i(a_{(i)}+\h'_0+\h'_1) \gg \eta^{100 C_1}$$
for all $a_{(i)} \in E$.  Applying Theorem \ref{locu2}, we may thus find, for each $a_{(i)} \in E$, a frequency $\xi(a_{(i)}) \in \Z/p\Z$ such that
$$ \sum_{n_0} \P(\n_0=n_0) \E\left| \E f_i(a_{(i)}+n_0+\n_1) e_p( -\xi(a_{(i)}) \n_1 )\right|^2 \gg \eta^{100 C_1},$$
where $\n_0,\n_1$ are drawn independently and regularly from $B(S_1, \rho_{2,j_*+10})$ and $B(S_1, \rho_{2,j_*+11})$ respectively, independently of the $\a_{(i)}$.

If we define $\xi(a_{(i)})$ arbitrarily for $a_{(i)} \not \in E$ (e.g. setting $\xi(a_{(i)})=0$), we thus have
\begin{align*}
\sum_{n_0, a_{(i)}} \P(\n_0=n_0, \a_{(i)} = a_{(i)}) \E \big| \E( f_i(a_{(i)}+n_0+\n_1) & e_p( -\xi(a_{(i)}) \n_1 ) ) \big|^2 \\ & \gg \eta^{200 C_1}.
\end{align*}
In particular, there exists a $1$-bounded function $g: \Z/p\Z \times \Z/p\Z \to \C$ such that
\begin{equation}\label{abang}
\left| \E  g(\n_0, \a_{(i)}) f_i(\a_{(i)}+\n_0+\n_1) e_p( -\xi(\a_{(i)}) \n_1 ) \right| \gg \eta^{200 C_1}.
\end{equation}

We now construct the random $4$-neighbourhood $\Nb$ as follows.  We first construct a random additive quadruple $\vec \k = (\k_1,\k_2,\k_3,\k_4)$ centred at the origin $(0,0,0,0)$ with frequency set $S_1$ and scales $\rho_{2,j+10+l_2-l_i}$, $\rho_{2,j+10+l_3-l_i}$, $\rho_{2,j+10+l_4-l_i}$, and independent of all previous random variables.  We then set
$$ \Nb \coloneqq (\vec \a + \vec \k, j+20, S_1 \cup \{\xi(\a_{(i)})\}).$$
It is easy to verify that $\Nb$ is a (random) $4$-neighbourhood.

We now verify \eqref{letitgo}.  The left-hand side of \eqref{letitgo} can be expanded as
$$
\E W(\vec \a +\vec \k + \vec \h)$$
where, once $\vec \a$ and $\vec \k$ are chosen, the random additive quadruple $\vec \h = (\h_1,\h_2,$ $\h_3,\h_4)$ is selected to be centred at $(0,0,0,0)$ with frequencies $S_1 \cup \{\xi(\a_{(i)})\}$ and scales $\rho_{2,j+22}, \rho_{2,j+21}, \rho_{2,j+20}$.

From two applications of Lemma \ref{ati} (and the fact that $W = O(\eta^{-C_1/100})$), we have
$$ \E W(\vec \a +\vec \k + \vec \h) = \E W(\vec \a +\vec \k ) + O( \eta^{10^3 C_1}) = \E W(\vec \a ) + O( \eta^{10^3 C_1})$$
(say).  The claim \eqref{letitgo} now follows from \eqref{rs-def}.

Now we verify \eqref{estable}.  By \eqref{energy-def}, we have
$$
 \operatorname{E}_{i'}(\Nb) = \sum_{\vec a, \vec k} \P( \vec \a = \vec a, \vec \k = \vec k )  \E \left|1_{A_{(i')}}(a_{(i')} + k_{i'} + \h_{i'}) - \alpha_{i',\vec a, \vec k}\right|^2
$$
where $\vec a = ( a_{(1)}, \dots, a_{(4)})$, $\vec k = (k_1,\dots, k_4)$, and $\alpha_{i',\vec a, \vec k}$ is the quantity
\begin{equation}\label{epo}
 \alpha_{i',\vec a, \vec k} \coloneqq  \E 1_{A_{(i')}}(a_{(i')} + k_{i'} + \h_{i'}).
\end{equation}
By Pythagoras' theorem, we thus have
$$
 \operatorname{E}_{i'}(\Nb) = \sum_{\vec a, \vec k} \P( \vec \a = \vec a, \vec \k = \vec k) \E\left |1_{A_{(i')}}(a_{(i')} + k_{i'} + \h_{i'}) - \alpha_{i'}\right|^2 - |\alpha_{i',\vec a, \vec k} - \alpha_{i'}|^2
$$
where $\alpha_{i'}$ is defined in \eqref{alphai-def}.
We shall shortly establish the bound
\begin{equation}\label{alpha-triv}
|\alpha_{i', \vec a, \vec k} - \alpha_{i'}|^2 \gg \eta^{400 C_1} 1_{i'=i}.
\end{equation}
Assuming this bound, we conclude that
\begin{align*}
\E \operatorname{E}_{i'}(\Nb) &\leq \sum_{\vec a, \vec k} \P( \vec \a = \vec a, \vec \k = \vec k) \E |1_{A_{(i')}}(a_{(i')} + k_{i'} + \h_{i'}) - \alpha_{i'}|^2 \\
&= \E |1_{A_{(i')}}(\a_{(i')} + \k_{i'} + \h_{i'}) - \alpha_{i'}|^2 - \eta^{500 C_1} 1_{i'=i}. 
\end{align*}
By applying Lemma \ref{ati} twice as in the proof of \eqref{letitgo} to replace $\a_{(i')} + \k_{i'} + \h_{i'}$ by $\a_{(i')}$ for $i'=2,3,4$ (and by using Lemma \ref{ati} six times for $i'=1$, after writing $\a_{(1)}$ in terms of $\a_{(2)}, \a_{(3)}, \a_{(4)}$, and similarly for $\k_{(1)}$ and $\h_{(1)}$) we thus have
$$
\E \operatorname{E}_{i'}(\Nb) \leq \E |1_{A_{(i')}}(\a_{(i')}) - \alpha_{i'}|^2 - \eta^{500 C_1} 1_{i'=i} + O( \eta^{10^3 C_1} ).
$$
This will give \eqref{estable} as soon as we establish \eqref{alpha-triv}.  This is trivial for $i' \neq i$, so suppose that $i=i$.
By \eqref{epo} and \eqref{f1-def}, it suffices to show that
\begin{equation}\label{bb}
 \sum_{\vec a, \vec k} \P( \vec \a = \vec a, \vec \k = \vec k) \left| \E f_i(a_{(i)} + k_{i} + \h_{i}) \right|^2 \gg \eta^{400 C_1}. 
\end{equation}
To prove this, we introduce random variables $\n_0, \n_1$ drawn independently and regularly from
$B(S_1, \rho_{2,j+10})$ and $B(S_1, \rho_{2,j+11})$ independently of all previous variables.  From \eqref{abang} we have
$$ \left|\E f_i(\a_{(i)} + \n_0 + \n_1) g( \n_0, \a_{(i)}) e_p( - \xi(\a_{(i)}) \n_1)\right| \gg \eta^{200 C_1}$$
for some $1$-bounded function $g$. After using Lemma \ref{ati} to compare $\n_1$ and $\n_1 + \h_i$ for each fixed choice of $\n_0$ and $\a_{(i)}$, we conclude that
$$ \left|\E f_i(\a_{(i)} + \n_0 + \n_1 + \h_i) g(\n_0, \a_{(i)}) e_p( - \xi(\a_{(i)}) (\n_1+\h_i))\right| \gg \eta^{200 C_1}.$$
But we have
\[
\left\| \frac{\xi(\a_{(i)}) \h_i}{p} \right\|_{\R/\Z} \leq \| \h_i \|_{S_1 \cup \{\xi(\a_{(i)})\}} \ll \rho_{j+l_i+20}
\]
and hence by \eqref{bilipschitz}
$$ e_p( - \xi(\a_{(i)}) (\n_1+\h_i)) = e_p( - \xi(\a_{(i)}) \n_1) + O( \eta^{10^3 C_1} ).$$
We conclude that
$$ |\E( f_i(\a_{(i)} + \n_0 + \n_1 + \h_i) g( \n_0, \a_{(i)}) e_p( - \xi(\a_{(i)}) \n_1)| \gg \eta^{200 C_1}.$$
For fixed choices of $\a_{(i)}, \h_{(i)}, \n_1$, we see from Lemma \ref{ati} that $\k_i$ and $\n_0 + \n_1$ differ in total variation by $O( \eta^{10^3 C_1} )$.  Thus we have
$$ |\E( f_i(\a_{(i)} + \k_{i} + \h_{i}) g( \k_i - \n_1, a_{(i)}) e_p( - \xi(\a_{(i)}) \n_1)| \gg \eta^{200 C_1},$$
and the claim now follows after using Lemma \ref{cauchy-schwarz} to eliminate the $g( \k_i - \n_1, a_{(i)}) e_p( - \xi(\a_{(i)}) \n_1)$ factor.
\end{proof}

A useful consequence of the bounds in Theorem \ref{fourth-step}(ii) is the following weak mixing bound, which roughly speaking asserts that the convolution of $1_{A_{(i)}}$ with a bounded function is essentially constant.

\begin{lemma}\label{wmix}  
Let the notation and hypotheses be as above, and let $\Omega$ and $\xi$ be as in Theorem \ref{first-step}.  Let $A_{(1)}, \dots, A_{(4)}$ be as in Theorem \ref{third-step}, and let $j, a_{(1),*}, \dots, a_{(4),*}, S_1, f_1,\dots,f_4$ be as in Theorem \ref{fourth-step}.
Then for any $i=1,2,3,4$, any $l_i < m \leq 10$, and any $1$-bounded function $g: \Z/p\Z \to \C$, one has
 \begin{equation}\label{po}
\sum_n \P( \n=n) \left| \E f_i( n - \k) g(\k) \right|^2 \ll \eta^{50C_1}
\end{equation}
where $\n, \k$ are drawn independently and regularly from $a_{(i),*} + B(S_1, \rho_{2,j} )$ and $B(S_1, \rho_{2,j+m} )$ respectively.
Dually, for any $1$-bounded function $G: \Z/p\Z \to \C$, one has
\begin{equation}\label{op}
\sum_k \P(\k=k) \left| \E f_i( \n - k ) G( \n ) \right| \ll \eta^{25 C_1}.
\end{equation}
\end{lemma}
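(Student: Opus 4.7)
My plan is to prove \eqref{po} first, then derive \eqref{op} from it by a Cauchy--Schwarz duality argument. For \eqref{po}, I would first expand the left-hand side as a single expectation,
$$\text{LHS} = \E f_i(\n - \k) f_i(\n - \k') g(\k) \overline{g(\k')},$$
where $\n, \k, \k'$ are jointly independent with $\n \sim a_{(i),*} + B(S_1, \rho_{2,j})$ and $\k, \k' \sim B(S_1, \rho_{2,j+m})$, using that $f_i$ is real-valued. Applying Cauchy--Schwarz in $(\k, \k')$ to eliminate the $1$-bounded factor $g(\k)\overline{g(\k')}$ (for instance via Lemma~\ref{cauchy-schwarz}) yields
$$\text{LHS}^2 \leq \E f_i(\n - \k) f_i(\n - \k') f_i(\n' - \k) f_i(\n' - \k'),$$
where $\n'$ is an independent copy of $\n$. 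The right-hand side is a local $U^2$-Gowers-type quantity for $f_i$ at outer scale $\rho_{2,j}$ and inner scale $\rho_{2,j+m}$.

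To match this quantity with the local Fourier pseudorandomness hypothesis of Theorem~\ref{fourth-step}(ii)---which controls the analogous expression with $\a_{(i)} \sim a_{(i),1} + B(S_1, \rho_{2,j+l_i})$, outer-shift scale $\rho_{2,j+10}$, and inner-shift scale $\rho_{2,j+11}$---I would use Lemma~\ref{ati} to insert independent auxiliary perturbations $\boldsymbol{\zeta}_0, \boldsymbol{\zeta}_0' \sim B(S_1, \rho_{2,j+10})$ and $\boldsymbol{\zeta}_1, \boldsymbol{\zeta}_1' \sim B(S_1, \rho_{2,j+11})$ into the four factors of the post-Cauchy--Schwarz expression. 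Each such insertion shifts the marginal distribution of the corresponding variable by total variation $O(|S_1| \exp(-C_1 K))$, by Lemma~\ref{ati}, the separation condition \eqref{rho-sep}, and the condition $m \leq 10$; by \eqref{K-def-2}, this is much smaller than $\eta^{100 C_1}$ when $C_1$ is chosen sufficiently large. Once these perturbations are in place, the resulting expression can be directly bounded by the quantity controlled by the hypothesis---with the condition $l_i < m$ ensuring that the relevant Bohr-set scales nest compatibly---yielding $\text{LHS}^2 \ll \eta^{100 C_1}$; taking square roots gives \eqref{po}.

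For \eqref{op}, I would introduce the $1$-bounded function $\psi(k) := \overline{\mathrm{sgn}(\E f_i(\n - k) G(\n))}$, so that the left-hand side of \eqref{op} equals $\E f_i(\n - \k) G(\n) \psi(\k)$. One application of Cauchy--Schwarz in the variable $\n$ gives
$$\left| \E f_i(\n - \k) G(\n) \psi(\k) \right|^2 \leq \E_\n |G(\n)|^2 \cdot \sum_n \P(\n = n) \left| \E_\k f_i(n - \k) \psi(\k) \right|^2;$$
the first factor on the right is at most $1$, and the second is the left-hand side of \eqref{po} applied with $g := \psi$, hence bounded by $\eta^{50 C_1}$. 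Taking square roots yields \eqref{op}. The main obstacle in this plan is the scale-matching step in the second paragraph: although the post-Cauchy--Schwarz quantity and the hypothesis share the same combinatorial structure, the Bohr radii involved differ, and carefully bridging them via Lemma~\ref{ati} while keeping all resulting total variation errors below $\eta^{100 C_1}$ is the most delicate part of the argument.
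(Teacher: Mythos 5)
Your derivation of \eqref{op} from \eqref{po} is fine, but your proposed proof of \eqref{po} has a genuine gap at the ``scale-matching'' step, and I do not believe it can be repaired by Lemma~\ref{ati} perturbations alone.

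After your Cauchy--Schwarz step you are left with
$$
\E\, f_i(\n-\k)\,f_i(\n-\k')\,f_i(\n'-\k)\,f_i(\n'-\k'),
$$
with $\n,\n'$ independent at the large scale $\rho_{2,j}$ (or $\rho_{2,j+l_i}$) and $\k,\k'$ independent at scale $\rho_{2,j+m}$. Written as a Gowers--box expression with base $x=\n-\k$ and directions $y_1=\k-\k'$, $y_2=\n'-\n$, this is a box average whose two direction variables live at scales $\rho_{2,j+m}$ and $\rho_{2,j}$ respectively. The quantity controlled by Theorem~\ref{fourth-step}(ii) is the box average whose direction variables $\h_0-\h'_0$ and $\h_1-\h'_1$ live at the much smaller scales $\rho_{2,j+10}$ and $\rho_{2,j+11}$. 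Lemma~\ref{ati} lets you perturb a Bohr-regular random variable by an independent quantity at a \emph{much smaller} scale with negligible total-variation cost, but it cannot change the scale at which a variable is distributed. In particular, there is no way to move the direction variable $y_2=\n'-\n$, which genuinely lives at scale $\rho_{2,j}$, onto the scale $\rho_{2,j+10}$: the two relevant distributions are at total-variation distance $1-o(1)$, not $O(\eta^{100C_1})$. The two box averages at incommensurate scales are simply different quantities, and one does not monotonically control the other.

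The issue is one of ordering. Your Cauchy--Schwarz (Lemma~\ref{cauchy-schwarz} with $\a=(\k,\k')$, $\b=\n$) duplicates the \emph{large} variable $\n$, and once $\n,\n'$ are two independent copies at scale $\rho_{2,j}$ there is no way to re-couple them. The paper's proof introduces the auxiliary small-scale variables $\h_0\sim B(S_1,\rho_{2,j+10})$ and $\h_1,\h'_1\sim B(S_1,\rho_{2,j+11})$ via Lemma~\ref{ati} \emph{before} doing any Cauchy--Schwarz, and then performs a crucial intermediate step you are missing: an $L^2$-triangle inequality that lets one \emph{fix} the medium-scale variable $\k$ at a single value $k\in B(S_1,\rho_{2,j+m})$. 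Only after $\k$ is fixed does one expand the square (duplicating $\h_1$, which is small) and apply Cauchy--Schwarz to drop the $g$-factors (duplicating $\h_0$, also small); the resulting four-factor average is then exactly the box average of Theorem~\ref{fourth-step}(ii), with all duplicated variables at the right (small) scales. Without the $L^2$-triangle step to freeze $\k$, the Cauchy--Schwarz necessarily duplicates one of the original variables $\n$ or $\k$, which live at the wrong scales. So the fix is not more careful bookkeeping of total-variation errors, as you suspected, but a structural reordering: shift in $\h_0,\h_1$ first, fix $\k$ by the $L^2$-triangle inequality, and only then expand and Cauchy--Schwarz.
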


\begin{proof}
In preparation for invoking Theorem \ref{fourth-step}(ii), we introduce random variables $\h_0, \h_1, \h'_1$ drawn independently and regularly from $B(S_1, \rho_{2,j_*+10})$, $B(S_1, \rho_{2,j_*+11})$, and $B(S_1, \rho_{2,j_*+11})$ respectively, independently of $\n$ and $\k$.  Using Lemma \ref{ati} to compare $\n,\k$ with $\n+\h_0$, $\k-\h_1$ respectively, we may transform \eqref{po} to the estimate
$$ \sum_{n,h_0} \P( \n=n, \h_0 = h_0) \left|\E(f_i(n+h_0-\k-\h_1) g(\k-\h_1))\right|^2 \ll \eta^{50C_1}.$$
By the triangle inequality in $L^2$, it thus suffices to show that
\begin{equation}\label{square}
\sum_{n,h_0} \P(\n=n, \h_0=h_0) \left|\E(f_i(n+h_0-k-\h_1) g(k-\h_1) )\right|^2 \ll \eta^{50C_1}
\end{equation}
for all $k \in B(S_1, \rho_{2,j_*+m})$.

Fix $k$.  We may expand out the left-hand side of \eqref{square} as 
$$ \E f_i( \n + \h_0 - \h_1 - k ) g( k - \h_1 ) f_i( \n + \h_0 - \h'_1 - k ) g( k - \h'_1 ).$$
Using Lemma \ref{ati} to compare $\n$ with $\n+\h_0-\h_1-\h'_1-k$, we can thus rewrite \eqref{square} as
$$ |\E f_i( \n + \h_0 + \h'_1 ) g( k - \h_1 ) f_i( \n + \h_0 + \h_1 ) g( k - \h'_1 )| \ll \eta^{50C_1},$$
which by the triangle inequality and the $1$-boundedness of $g$ would follow from
$$ \sum_{n,h_1,h'_1} \P(\n=n, \h_1 = h_1, \h'_1 = h_1) | \E f_i( n + \h_0 + h'_1 )f_i( n + \h_0 + h_1 ) | \ll \eta^{50C_1},$$
which by Cauchy-Schwarz will follow in turn from
$$ \sum_{n,h_1,h'_1} \P(\n=n, \h_1 = h_1, \h'_1 = h_1) | \E f_i( n + \h_0 + h'_1 )f_i( n + \h_0 + h_1 ) |^2 \ll \eta^{100C_1}.$$
But this follows from Theorem \ref{fourth-step}(ii) (relabeling $\n$ as $\a_{(i)}$).

Finally, we show \eqref{op}.  By subtracting $\E G(\n)$ from $G$ (and dividing by $2$ to recover $1$-boundedness), we may assume that $\E G(\n)=0$.  It then suffices to show that
$$
\sum_k \P(\k=k) g(k) \E 1_{A_{(i)}}( \n - k ) G( \n )  \ll \eta^{25 C_1}.
$$
for any $1$-bounded function $g$.  But the left-hand side may be rearranged as
$$
\sum_n \P(\n=n) G(n) (\E 1_{A_{(i)}}( n - \k ) g( \k )  - \alpha_i \E g(\k)) \ll \eta^{25 C_1},
$$
and the claim follows from \eqref{po} and the Cauchy-Schwarz inequality.
\end{proof}

\subsection{Fifth step: a frequency function $\xi'$ that is approximately linear $99\%$ of the time on a Bohr neighbourhood}

The next step is to obtain additive structure on almost all of a Bohr neighbourhood, rather than just the subsets $A_{(i)}$.  

\begin{theorem}\label{fifth-step} Let the notation and hypotheses be as in Theorem \ref{locu3}, and let $\xi$ be as in Theorem \ref{first-step}.  Let $A_{(1)}, \dots, A_{(4)}$ be as in Theorem \ref{third-step}, and let $j, a_{(1),1}, a_{(2),1}, a_{(3),1}, a_{(4),1}, S_1, \alpha_1,\dots,\alpha_4$ be as in Theorem \ref{fourth-step}.  Let $a_1 \in \Z/p\Z$ be the quantity
$$a_1 \coloneqq  a_{(1),1} + a_{(2),1} = a_{(3),1} + a_{(4),1},$$
and let $\a$ and $\a_{(2)}$ be drawn regularly and independently from $a_1 + B( S_1, \rho_{2,j} )$ and $a_{(2),1} + B(S_1, \rho_{2,j+2} )$ respectively.  Then there is a function $\xi': \Z/p\Z \to \Z/p\Z$, such that with probability at least $1 - O(\eta^{C_1/200})$, the random variable $\a$ attains a value $a$ for which we have the estimates
\begin{equation}\label{no}
\E 1_{A_{(2)}}(\a_{(2)}) 1_{A_{(1)}}(a - \a_{(2)})  = \alpha_1 \alpha_2 + O( \eta^{20C_1}),
\end{equation}
and 
\begin{equation}\label{no-2}
\begin{split}
\P&\left( a - \a_{(2)} \in A_{(1)}; a_{(2)} \in A_{(2)}; \| \xi'(a) - \xi(a-\a_{(2)})-\xi(\a_{(2)}) \|_{S} > \frac{1}{\rho_3} \right)\\
&\quad \ll \eta^{C_1/200} \alpha_1 \alpha_2.
\end{split}
\end{equation}
\end{theorem}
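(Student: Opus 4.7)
The plan is to treat (i) and (ii) separately, with both conclusions holding on the same good set of values $a$ of $\a$-probability at least $1 - O(\eta^{C_1/200})$. Part (i) will follow from a direct application of the weak mixing bound, while part (ii) requires constructing $\xi'$ as a ``popular value'' using the near-homomorphism of $\xi$ encoded in Theorem \ref{fourth-step}(i).

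For (i), I would decompose $1_{A_{(1)}} = \alpha_1 + f_1$ with $f_1$ as in \eqref{f1-def} and expand
\[
\E 1_{A_{(2)}}(\a_{(2)}) 1_{A_{(1)}}(a - \a_{(2)}) = \alpha_1 \alpha_2 + \E 1_{A_{(2)}}(\a_{(2)}) f_1(a - \a_{(2)}).
\]
After the change of variables $\n := \a$ (regularly drawn from $a_1 + B(S_1, \rho_{2,j})$ and hence, after shift, from $a_{(1),1} + B(S_1, \rho_{2,j})$) and $\k := \a_{(2)} - a_{(2),1}$ (regularly drawn from $B(S_1, \rho_{2,j+2})$), the cross-term matches the setup of Lemma \ref{wmix}(i) with $i=1$, $m=2$, and $g(k) := 1_{A_{(2)}}(k + a_{(2),1})$. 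The lemma gives $\E_a |\E 1_{A_{(2)}}(\a_{(2)}) f_1(a - \a_{(2)})|^2 \ll \eta^{50 C_1}$, and Markov's inequality then yields \eqref{no} outside an exceptional set of $\a$-probability $O(\eta^{10 C_1})$.

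For (ii), the plan is to couple the independent pair $(\a, \a_{(2)})$ of the theorem statement to the random additive quadruple $\vec\a = (\a_{(1)}, \a_{(2)}, \a_{(3)}, \a_{(4)})$ of Theorem \ref{fourth-step}(i) by declaring $\a := \a_{(3)} + \a_{(4)} = \a_{(1)} + \a_{(2)}$. Because $\a_{(2)}$ is independent of $(\a_{(3)}, \a_{(4)})$, the pair $(\a, \a_{(2)})$ is indeed independent, and by Lemma \ref{ati} its distribution differs from that specified in the theorem by at most $O(\eta^{10^3 C_1})$ in total variation. The weight bound \eqref{stor}, after expanding $W$ and using $1_{\VBQ} \leq 1$, yields
\[
\E\bigl[1_{A^4}(\vec\a)\, 1_{\VBQ}(\vec\a)\bigr] \leq \eta^{C_1/100}\, \E\bigl[1_{A^4}(\vec\a)\bigr].
\]
The decisive structural feature, once one conditions on $\a = a$, is that $(\a_{(1)}, \a_{(2)})$ and $(\a_{(3)}, \a_{(4)})$ remain conditionally independent (since $\a_{(2)}$ and $\a_{(4)}$ are jointly independent, and $\a_{(1)}, \a_{(3)}$ are determined by them and $a$). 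A Markov step in $a$ then produces a good set of $\a$-probability $\geq 1 - O(\eta^{C_1/200})$ on which the conditional analogue $\E[1_{A^4}(\vec\a) 1_{\VBQ}(\vec\a) \mid \a = a] \leq \eta^{C_1/200}\, X(a)\, X'(a)$ holds, where $X(a), X'(a)$ denote the conditional masses of $A_{(1)} \times A_{(2)}$ and $A_{(3)} \times A_{(4)}$ respectively; both equal $\alpha_i \alpha_{i'} + O(\eta^{20 C_1})$ for good $a$ by (i) and its mirror image for the other pair.

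A further Markov/pigeonhole step in $(\a_{(3)}, \a_{(4)})$, using the conditional independence, then selects for each good $a$ a specific pair $(a_{(3)}^*, a_{(4)}^*) \in A_{(3)} \times A_{(4)}$ with $a_{(3)}^* + a_{(4)}^* = a$ for which the conditional probability in $\a_{(2)}$ that $a - \a_{(2)} \in A_{(1)}$, $\a_{(2)} \in A_{(2)}$, and $\|\xi(a - \a_{(2)}) + \xi(\a_{(2)}) - \xi(a_{(3)}^*) - \xi(a_{(4)}^*)\|_S > 1/\rho_3$ is bounded by $O(\eta^{C_1/200}) \alpha_1 \alpha_2$. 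Setting $\xi'(a) := \xi(a_{(3)}^*) + \xi(a_{(4)}^*)$ on the good set (and $\xi'(a) := 0$ elsewhere), and transferring all estimates back to the theorem's distribution via the total-variation comparison, establishes \eqref{no-2}. The main obstacle will be the careful bookkeeping of the $\eta$-exponent through the three successive averaging steps ($L^2$-to-pointwise from (i), Markov in $a$, and Markov in $(\a_{(3)}, \a_{(4)})$): starting from only $\eta^{C_1/100}$ in the weight $W$, each step consumes a fixed fraction of the exponent, and one must ensure that what remains is still at least $\eta^{C_1/200}$ to match \eqref{no-2}.
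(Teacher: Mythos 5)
Your proposal follows essentially the same route as the paper: part (i) is precisely the Lemma \ref{wmix} computation that gives the paper's bound \eqref{po-2}, and part (ii) is the same chain of weight bound \eqref{stor}, Markov in $a$, then Markov/pigeonhole in the $(3,4)$-index, formalised in the paper via the functions $g_{12}, g_{34}, h$. Two points are worth tightening. First, you couple by declaring $\a := \a_{(3)} + \a_{(4)}$, so that conditionally on $\a = a$ the variable $\a_{(3)}$ does \emph{not} carry its marginal law, and your conditional mass $X'(a)$ is not literally the quantity to which Lemma \ref{wmix} applies; the paper instead draws $\a$ freshly and independently and uses Lemma \ref{ati} to swap $\a_{(4)}$ for $\a - \a_{(3)}$, after which $\a, \a_{(2)}, \a_{(3)}$ are genuinely mutually independent and $X'(a)$ is exactly $g_{34}(a)$. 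Your coupling is salvageable (the joint law of $(\a_{(3)}+\a_{(4)}, \a_{(2)}, \a_{(3)})$ is within $O(|S_1|\rho_{2,j+1}/\rho_{2,j})$ in total variation of an independent triple, again by Lemma \ref{ati}), but you should either make that comparison or adopt the paper's cleaner setup. Second, you rely implicitly on the lower bound $\alpha_1\alpha_2\alpha_3\alpha_4 \gg \eta^{C_1+O(1)}$ (the paper's \eqref{alphoid}, obtained from $\sigma \gg \eta^{C_1+O(1)}$ together with $\sigma = \alpha_1\alpha_2\alpha_3\alpha_4 + O(\eta^{25C_1})$): it is needed so the Markov threshold $\eta^{C_1/200}\alpha_1\alpha_2\alpha_3\alpha_4$ is nontrivial, so that $X'(a) = \alpha_3\alpha_4 + O(\eta^{20C_1})$ can be read as a \emph{lower} bound $\gg \alpha_3\alpha_4$, and so that the final pigeonhole produces the bound $\ll \eta^{C_1/200}\alpha_1\alpha_2$ rather than something weaker; this should be stated rather than left to ``exponent bookkeeping.''
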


\begin{proof}  Let $\a$ be drawn regularly from $a_1 + B(S_1,\rho_{2,j})$, and let $(\a_{(1)}, \a_{(2)}, \a_{(3)},$ $\a_{(4)})$ be a random additive quadruple centred at $(a_{(1),1}, a_{(2),1}, a_{(3),1},a_{(4),1})$ with frequencies $S_1$ and scales $\rho_{2,j+2}, \rho_{2,j+1}, \rho_{2,j}$, independently of $\a$.   From the definition of an additive quadruple, we have $\a_{(1)} = \a_{(3)} + \a_{(4)} - \a_{(2)}$.  
From Theorem \ref{fourth-step}(i) we thus have
\begin{equation}\label{stor-again}
 \E W( \a_{(3)} + \a_{(4)} - \a_{(2)}, \a_{(2)}, \a_{(3)}, \a_{(4)}) \gg \eta^{C_1 + O(1)}.
\end{equation}
From Lemma \ref{ati} we see that once we condition $\a_{(2)}$ and $\a_{(3)}$ to be fixed, $\a_{(4)}$ and $\a - \a_{(3)}$ differ in total variation by $O(\eta^{100C_1})$.  Thus we may replace $\a_{(4)}$ by $\a - \a_{(3)}$ in \eqref{stor-again} to conclude that
$$
 \E W( \a - \a_{(2)}, \a_{(2)}, \a_{(3)}, \a - \a_{(3)}) \gg \eta^{C_1 + O(1)}.
$$
If we then define
$$ \sigma \coloneqq  \E 1_{A_{(1)}}(\a-\a_{(2)}) 1_{A_{(2)}}(\a_{(2)}) 1_{A_{(3)}}(\a_{(3)}) 1_{A_{(4)}}(\a-\a_{(3)})$$
then from \eqref{W-def} we see that
\begin{equation}\label{step}
 \sigma \gg \eta^{C_1 +O(1)}
\end{equation}
and
\begin{equation}\label{dance}
\begin{split}
&\E 1_{A_{(1)}}(\a-\a_{(2)}) 1_{A_{(2)}}(\a_{(2)}) 1_{A_{(3)}}(\a_{(3)}) 1_{A_{(4)}}(\a-\a_{(3)})\\
&\quad  1_{\VBQ}( \a - \a_{(2)}, \a_{(2)}, \a_{(3)}, \a - \a_{(3)} ) \ll \eta^{-C_1/100} \sigma.
\end{split}
\end{equation}
	
We can express $\sigma$ in the form
\begin{equation}\label{si}
 \sigma = \E g_{12}(\a) g_{34}(\a)
\end{equation}
where $g_{12}, g_{34}: \Z/p/\Z \to \R$ are the functions
\begin{equation}\label{g12-def}
 g_{12}(a) \coloneqq  \E 1_{A_{(1)}}(a - \a_{(2)}) 1_{A_{(2)}}(\a_{(2)})
\end{equation}
and
$$ g_{34}(a) \coloneqq  \E 1_{A_{(3)}}(\a_{(3)}) 1_{A_{(4)}}(a-\a_{(3)}).$$

From Lemma \ref{wmix}, we have
$$ \sum_n \P(\n = n) \left| \E f_1(n - \k) 1_{A_{(2)}}( a_{(2),1} + \k ) \right|^2 \ll \eta^{50 C_1} $$
if $\n, \k$ are drawn independently and regularly from $a_{(i),1} + B(S_1, \rho_{2,j} )$ and $B(S_1, \rho_{2,j+m} )$ respectively.  Note that the pair $(\n, \k)$ has the same distribution as $(\a - a_{(2),1}, \a_{(2)} - a_{(2),1})$, thus
$$ \sum_a \P(\a = a) \left| \E f_1(a - \a_{(2)}) 1_{A_{(2)}}( \a_{(2)} ) \right|^2 \ll \eta^{50 C_1}.$$
From 	\eqref{f1-def}, \eqref{alphai-def}, \eqref{g12-def} we have
$$	\E f_1(a - \a_{(2)}) 1_{A_{(2)}}( \a_{(2)} )  = g_{12}(a) - \alpha_1 \alpha_2$$
and thus
\begin{equation}\label{po-2}
 \sum_a \P(\a = a) \left| g_{12}(a) - \alpha_1 \alpha_2 \right|^2 \ll \eta^{50 C_1}.
\end{equation}
Similarly we have
\begin{equation}\label{po-3}
 \sum_a \P(\a = a) \left| g_{34}(a) - \alpha_3 \alpha_4 \right|^2 \ll \eta^{50 C_1}.
\end{equation}
From Cauchy-Schwarz and the triangle inequality we conclude that
$$ \sum_a \P(\a = a) \left| g_{12}(a) g_{34}(a) - \alpha_1 \alpha_2 \alpha_3 \alpha_4 \right| \ll \eta^{25 C_1},$$
and hence by \eqref{si} and the triangle inequality
\begin{equation}\label{sigmoid}
 \sigma = \alpha_1 \alpha_2 \alpha_3 \alpha_4 + O( \eta^{25C_1}).
\end{equation}
In particular, from \eqref{step} one has 
\begin{equation}\label{alphoid}
\alpha_1 \alpha_2 \alpha_3 \alpha_4 \gg \eta^{C_1+O(1)}.
\end{equation}

From \eqref{sigmoid}, \eqref{alphoid} and \eqref{dance} we have
$$
\E h(\a) \ll \eta^{C_1/100} \alpha_1 \alpha_2 \alpha_3 \alpha_4$$
where
\begin{equation}\label{hadef}
 h(a) \coloneqq  \E W(a-\a_{(2)}, \a_{(2)}, \a_{(3)}, a-\a_{(3)}).
\end{equation}
By Markov's inequality, we conclude that we have
\begin{equation}\label{glow}
 h(\a) \ll \eta^{C_1/200} \alpha_1 \alpha_2 \alpha_3 \alpha_4
\end{equation}
with probability $1 - O(\eta^{C_1/200})$.  Similarly, from \eqref{po-2}, \eqref{po-3} and Chebyshev's inequality we also have
\begin{equation}\label{rest}
g_{12}(\a) = \alpha_1 \alpha_2 + O( \eta^{20 C_1} )
\end{equation}
and
\begin{equation}\label{rest-2}
 g_{34}(\a) = \alpha_3 \alpha_4 + O( \eta^{20 C_1} )
\end{equation}
with probability $1 - O(\eta^{C_1/200})$.

Now let $a$ be a value of $\a$ be such that \eqref{glow}, \eqref{rest}, \eqref{rest-2} hold.  From \eqref{rest-2} we have in particular that
$$ \E 1_{A_{(3)}}(\a_{(3)}) 1_{A_{(4)}}(a-\a_{(3)}) \gg \alpha_3 \alpha_4;$$
comparing this with \eqref{glow} and \eqref{hadef}, we see that we may find $a_{(3)}(a) \in A_{(3)}$ (depending only on $a$) with $a - a_{(3)}(a) \in A_{(4)}$ such that
\begin{align*} \E 1_{A_{(1)}}(a-\a_{(2)}) 1_{A_{(2)}}(\a_{(2)}) 1_{\VBQ}( a - \a_{(2)}, \a_{(2)}, & a_{(3)}(a), a - a_{(3)}(a) ) \\ & \ll \eta^{C_1/200} \alpha_1 \alpha_2.\end{align*}
If we then set $\xi'(a) \coloneqq  \xi(a_{(3)}(a)) + \xi(a-a_{(3)}(a))$ (and define $\xi'(\a)$ arbitrarily when \eqref{glow}, \eqref{rest}, or \eqref{rest-2} fail), then the claims \eqref{no}, \eqref{no-2} follow from \eqref{rest} and the definition \eqref{bad-quad-2} of $\VBQ$.
\end{proof}

The function $\xi'$ has better additive structure than $\xi$, in that it respects almost all additive quadruples in a Bohr set, rather than almost all additive quadruples in a rough set.  More precisely, we have the following.

\begin{proposition}\label{pordo}  Let the notation and hypotheses be as in Theorem \ref{fifth-step}. Suppose that $\a, \a', \h$ are selected independently and regularly from $a_1 + B(S_1, \rho_{2,j} )$, $a_1 + B(S_1, \rho_{2,j} )$, and $B(S_1, \rho_{2,j+3} )$ respectively. Then with probability $1 - O( \eta^{C_1/200})$ we have 
\begin{equation}\label{stop}
 \| \xi'(\a) - \xi'(\a+\h) - \xi'(\a') + \xi'(\a'+\h) \|_{S} \leq \frac{4}{\rho_3}.
\end{equation}
\end{proposition}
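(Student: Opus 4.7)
The plan is to exploit a crisp algebraic cancellation. Suppose, for a fixed configuration $(\a,\a',\h)$, we can locate a \emph{single} element $b \in \Z/p\Z$ such that $\a-b, \a'-b \in A_{(1)}$, $b, b+\h \in A_{(2)}$, and the witness estimate \eqref{no-2} of Theorem~\ref{fifth-step} holds for each of the four shifted configurations simultaneously, in the sense that there are $e_1,e_2,e_3,e_4 \in \R/\Z$ with $\|e_i\|_S \leq 1/\rho_3$ and
\begin{align*}
\xi'(\a) &= \xi(\a-b)+\xi(b)+e_1, & \xi'(\a+\h) &= \xi(\a-b)+\xi(b+\h)+e_2,\\
\xi'(\a') &= \xi(\a'-b)+\xi(b)+e_3, & \xi'(\a'+\h) &= \xi(\a'-b)+\xi(b+\h)+e_4.
\end{align*}
The key observation is that for the $\h$-shifted configurations we use the \emph{shifted} witness $b+\h$; the arguments of $\xi$ then satisfy $(\a+\h)-(b+\h) = \a-b$ and $(\a'+\h)-(b+\h) = \a'-b$. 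Taking the alternating sum $\xi'(\a)-\xi'(\a+\h)-\xi'(\a')+\xi'(\a'+\h)$, each of the four distinct terms $\xi(\a-b), \xi(\a'-b), \xi(b), \xi(b+\h)$ appears with coefficients summing to zero, and we are left with $e_1-e_2-e_3+e_4$, whose $S$-norm is at most $4/\rho_3$ by the triangle inequality \eqref{a-tri}.

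Everything therefore reduces to showing such a $b$ exists with probability $1-O(\eta^{C_1/200})$ over $(\a,\a',\h)$. I would introduce an auxiliary $\b$ drawn regularly from $a_{(2),1}+B(S_1,\rho_{2,j+2})$ independently of $(\a,\a',\h)$, and attempt to take $b=\b$. First, two applications of Lemma~\ref{ati} (noting that $\h$ lives in a Bohr set of radius $\rho_{2,j+3}$, much smaller than $\rho_{2,j}$) compare the distributions of $\a$ and $\a+\h$ (respectively $\a'$ and $\a'+\h$), so with probability $1-O(\eta^{C_1/200})$ all four of $\a,\a',\a+\h,\a'+\h$ satisfy \eqref{no}, \eqref{no-2}; I condition on this event. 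Applying \eqref{no-2} directly for $\a$ and $\a'$, and to $\a+\h,\a'+\h$ with the witness renamed as $\b+\h$ (again using Lemma~\ref{ati} to transfer the ``$\a_{(2)}$ drawn regularly'' hypothesis through the shift by $\h$), each of the four witness conditions fails on a $\b$-set of probability at most $O(\eta^{C_1/200}\alpha_1\alpha_2)$.

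It remains to show the ``baseline'' set $\{\b : \a-\b,\a'-\b \in A_{(1)},\ \b,\b+\h \in A_{(2)}\}$ has $\b$-measure strictly exceeding the sum of the four bad-$\xi'$ measures. The plan is to expand $1_{A_{(i)}} = \alpha_i + f_i$ in the baseline indicator, isolate the main term $\alpha_1^2 \alpha_2^2$, and control each cross-term via the local $U^2$-pseudorandomness of Theorem~\ref{fourth-step}(ii) packaged as Lemma~\ref{wmix}, followed by Markov's inequality to promote the averaged control to pointwise control outside an exceptional set of $(\a,\a',\h)$ of measure $O(\eta^{C_1/200})$. The main obstacle will be the quantitative balance: the bad-$\xi'$ rate $O(\eta^{C_1/200}\alpha_1\alpha_2)$ is merely smaller than $\alpha_1\alpha_2$, and it is not \emph{a priori} smaller than $\alpha_1^2\alpha_2^2$ (since $\alpha_1\alpha_2$ is only lower-bounded by $\eta^{C_1+O(1)}$ via Theorems~\ref{third-step} and~\ref{fifth-step}). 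I expect this gap to be closed by exploiting the additional structure available, for instance by replacing the single witness $\b$ with a pair of independent witnesses adapted to the quadruple structure of Theorem~\ref{fourth-step}(i) (recalling that in the construction of Theorem~\ref{third-step} one has $A_{(1)}=A_{(2)}=A_{(3)} \subset A_{(4)}$, so the quadruple $(\a-\b_1,\a'+\h-\b_2,\a+\h-\b_2,\a'-\b_1)$ lies in $A_{(1)}\times A_{(2)}\times A_{(3)}\times A_{(4)}$), and then using the ``few very bad quadruples'' conclusion \eqref{stor} to rule out the obstruction directly rather than through pseudorandomness.
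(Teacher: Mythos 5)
You correctly identify both the algebraic skeleton of the argument (use a single witness $\b$, decompose each $\xi'(\cdot)$ via \eqref{no-2}, observe that in the alternating sum $\xi'(\a)-\xi'(\a+\h)-\xi'(\a')+\xi'(\a'+\h)$ all of $\xi(\a-\b),\xi(\a'-\b),\xi(\b),\xi(\b+\h)$ cancel, leaving only the four error terms) and the quantitative obstruction: the four ``bad-$\xi'$'' measures supplied by \eqref{no-2} are only $O(\eta^{C_1/200}\alpha_1\alpha_2)$ apiece, while the baseline $\b$-measure is $\approx \alpha_1^2\alpha_2^2$, and nothing a priori makes the former smaller than the latter. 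This is exactly the pivotal difficulty.

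Where you go astray is in the proposed fix. The paper closes this gap \emph{inside} \eqref{no-2} itself, not by abandoning the single-witness scheme. Concretely, for the event ``$\xi'(\a)$ has a bad witness'' one should \emph{not} simply drop the two remaining indicator factors $1_{A_{(1)}}(\a'-\b)$ and $1_{A_{(2)}}(\b+\h)$; instead write them as the $\b$-functions $g_1(\b)=\E 1_{A_{(1)}}(\a'-\b)$ and $g_2(\b)=\E 1_{A_{(2)}}(\b+\h)$, and use Lemma~\ref{ati} and the weak-mixing estimate \eqref{po} to show that $g_1(\b)=\alpha_1+O(\eta^{10C_1})$ and $g_2(\b)=\alpha_2+O(\eta^{10C_1})$ with overwhelming probability in $\b$. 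Multiplying the $O(\eta^{C_1/200}\alpha_1\alpha_2)$ bound from \eqref{no-2} by this extra factor $\alpha_1\alpha_2$ upgrades the bad-measure to $O(\eta^{C_1/200}\alpha_1^2\alpha_2^2)$, which is now beaten by the baseline $\geq 0.9\alpha_1^2\alpha_2^2$; Markov then gives the exceptional $(\a,\a',\h)$-measure $O(\eta^{C_1/200})$. Your suggested replacement --- dual witnesses $\b_1,\b_2$ and an appeal to \eqref{stor} --- does not obviously work. With witnesses $\b_1$ for $\a,\a'$ and $\b_2$ for $\a+\h,\a'+\h$, the alternating sum of the approximations no longer cancels to zero; one is left needing $\xi(\a-\b_1)-\xi(\a+\h-\b_2)-\xi(\a'-\b_1)+\xi(\a'+\h-\b_2)$ to be small, i.e.\ that the quadruple $(\a-\b_1,\a'+\h-\b_2,\a+\h-\b_2,\a'-\b_1)$ avoids $\VBQ$. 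But the distribution of this quadruple does not match that assumed in \eqref{stor}: there the second and third coordinates are at scales $\rho_{2,j+2}$ and $\rho_{2,j+1}$, whereas here all four coordinates are at scale $\rho_{2,j}$ (dominated by $\a,\a'$), so \eqref{stor} cannot be invoked directly without substantial additional work. In short: right cancellation, right diagnosis of the difficulty, but the resolution should come from a more careful use of the pseudorandomness you already identified, not from grafting on the quadruple bound \eqref{stor}.
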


\begin{proof}  Let $\a_{(2)}$ be drawn regularly from $a_{(2),1} + B(S_1, \rho_{2,j+2})$, independently of $\a,\a',\h$.  For each $a,a',h \in \Z/p\Z$, let $\I_{a,a',h}$ denote the random indicator variable 
$$ \I_{a,a',h} \coloneqq  1_{A_{(2)}}(\a_{(2)}) 1_{A_{(2)}}(\a_{(2)}+h) 1_{A_{(1)}}(a-\a_{(2)}) 1_{A_{(1)}}(a'-\a_{(2)}).$$ 
Suppose that we can show that with probability $1-O(\eta^{C_1/200})$, the triple $(\a,\a',\h)$ attains a value $(a,a',h)$ for which one has the estimates
\begin{align}
\E \I_{a,a',h} &\geq 0.9 \alpha_1^2 \alpha_2^2 \label{pen1}\\
\E \I_{a,a',h} 1_{\| \xi'(a) - \xi(a-\a_{(2)})-\xi(\a_{(2)}) \|_{S} > 1/\rho_3} &\leq 0.1 \alpha_1^2 \alpha_2^2 \label{pen2}\\
\E \I_{a,a',h}  1_{\| \xi'(a') - \xi(a'-\a_{(2)})-\xi(\a_{(2)}) \|_{S} > 1/\rho_3}  &\leq 0.1 \alpha_1^2 \alpha_2^2 \label{pen3}\\
\E \I_{a,a',h} 1_{\| \xi'(a+h) - \xi(a-\a_{(2)})-\xi(\a_{(2)}+h) \|_{S} > 1/\rho_3} &\leq 0.1 \alpha_1^2 \alpha_2^2 \label{pen4}\\
\E \I_{a,a',h} 1_{\| \xi'(a'+h) - \xi(a'-\a_{(2)})-\xi(\a_{(2)}+h) \|_{S} > 1/\rho_3} &\leq 0.1 \alpha_1^2 \alpha_2^2. \label{pen5}
\end{align}
Assuming these estimates, we conclude from the union bound that with probability $1 - O(\eta^{C_1/200})$, the random variable $(\a,\a',\h)$ attains a value $(a,a',h)$ for which there exists at least one element $a_{(2)}$ of $\Z/p\Z$ obeying the constraints
\begin{align*}
a_{(2)}, a_{(2)}+h &\in A_{(2)} \\  
a-a_{(2)}, a'-a_{(2)} &\in A_{(1)} \\
 \| \xi'(a) - \xi(a-a_{(2)})-\xi(a_{(2)}) \|_{S} &\leq \frac{1}{\rho_3} \\
 \| \xi'(a') - \xi(a'-a_{(2)})-\xi(a_{(2)}) \|_{S} &\leq \frac{1}{\rho_3}\\
 \| \xi'(a+h) - \xi(a-a_{(2)})-\xi(a_{(2)}+h) \|_{S} &\leq \frac{1}{\rho_3}\\
 \| \xi'(a'+h) - \xi(a'-a_{(2)})-\xi(a_{(2)}+h) \|_{S} &\leq \frac{1}{\rho_3} 
\end{align*}
and \eqref{stop} then follows from the triangle inequality.

It remains to establish \eqref{pen1}-\eqref{pen5}.
We first prove \eqref{pen2}.  By Markov's inequality, it suffices to show that
$$ \E \I_{\a,\a',\h} 1_{\| \xi'(\a) - \xi(\a-\a_{(2)})-\xi(\a_{(2)}) \|_{S} > 1/\rho_3}  \ll \eta^{C_1/200} \alpha_1^2 \alpha_2^2.$$
We rewrite the left-hand side as
$$ \E g_1(\a_{(2)}) g_2(\a_{(2)}) 1_{A_{(2)}}(\a_{(2)}) 1_{A_{(1)}}(\a-\a_{(2)}) 1_{\| \xi'(\a) - \xi(\a-\a_{(2)})-\xi(\a_{(2)}) \|_{S} > 1/\rho_3}$$
where
$$ g_1( a_{(2)} ) \coloneqq  \E 1_{A_{(1)}}( \a' - a_{(2)} )$$
and
$$ g_2( a_{(2)} ) \coloneqq  \E 1_{A_{(2)}}( a_{(2)} + \h ).$$
But from \eqref{no-2} we have
$$ \E 1_{A_{(2)}}(\a_{(2)}) 1_{A_{(1)}}(\a-\a_{(2)}) 1_{\| \xi'(\a) - \xi(\a-\a_{(2)})-\xi(\a_{(2)}) \|_{S} > 1/\rho_3} \ll \eta^{C_1/200} \alpha_1 \alpha_2,$$
from Lemma \ref{ati} one has
$$ g_1( \a_{(2)} ) = \alpha_1 + O( \eta^{10 C_1} )$$
and from \eqref{po} one has
$$ g_2( \a_{(2)} ) = \alpha_2 + O( \eta^{10 C_1} )$$
with probability $1 - O(\eta^{10 C_1})$ (say), with the trivial bound $g(\a_{(2)}) = O(1)$ otherwise, and the claim \eqref{pen2} then follows from \eqref{alphoid}.

The proofs of \eqref{pen3}-\eqref{pen5} are similar to \eqref{pen2} and are omitted.   It thus remains to prove \eqref{pen1}.  
From \eqref{op} and Markov's inequality, we see that with probability $1-O(\eta^{C_1/200})$, the random variable $\h$ attains a value $h$ for which
$$ \E 1_{A_{(2)}}( \a_{(2)} ) 1_{A_{(2)}}( \a_{(2)}+h) \geq 0.99 \alpha_2^2.$$
For any $h$ obeying this inequality, define $E(h) \subset \Z/p\Z$ to be the set
$$ E(h) \coloneqq  A_{(2)} \cap (A_{(2)}-h),$$ 
so that 
$$ \P( \a_{(2)} \in E(h) ) \geq 0.99 \alpha_2^2.$$
By \eqref{po} and the Chebyshev inequality, we conclude that with probability $1-O(\eta^{C_1/200})$, the random variable $(\a,\h)$ attains a value $(a,h)$ for which one has
$$ \P( \a_{(2)} \in E(h); a - \a_{(2)} \in A_{(1)} ) \geq 0.98 \alpha_1 \alpha_2^2.$$
For any $(a,h)$ of the above form, define $E'(a,h) \subset \Z/p\Z$ to be the set
$$ E'(a,h) \coloneqq  \E(h) \cap (a - A_{(1)}),$$
then
$$ \P( \a_{(2)} \in E'(a,h)  ) \geq 0.98 \alpha_1 \alpha_2^2.$$
By one last application of \eqref{po} and the Chebyshev inequality, we see that with probability $1-O(\eta^{C_1/200})$, the random variable $(\a',\a,\h)$ attains a value $(a',a,h)$ for which one has
$$ \P( \a_{(2)} \in E'(a,h); a' - \a_{(2)} \in A_{(1)} ) \geq 0.97 \alpha_1^2 \alpha_2^2$$
which gives \eqref{pen1} as required.
\end{proof}

\subsection{Sixth step: a frequency function $\xi''$ that is approximately linear $100\%$ of the time on a Bohr set}

We now use a standard ``majority vote'' argument to upgrade the ``$99\%$ linear'' structure of $\xi'$ to a ``100\% linear'' structure of a closely related function $\xi''$ (cf. \cite{blum}).  More precisely, one has

\begin{theorem}\label{sixth-step} Let the notation and hypotheses be as in Theorem \ref{locu3}.  Let $j, S_1$ be as in Theorem \ref{fourth-step}, and let $a_1$, $\xi'$ be as in Theorem \ref{fifth-step}.  Then there is a function $\xi'': B(S_1,\rho_3) \to \Z/p\Z$ such that
\begin{equation}\label{linr}
\| \xi''(n+m) - \xi''(n) - \xi''(m) \|_{S} \leq \frac{24}{\rho_3}
\end{equation}
for all $n,m \in B(S_1,\rho_3/2)$, and such that for any $n \in B(S_1,\rho_3)$, if $\a$ is drawn regularly from $a_1 + B(S_1, \rho_{2,j})$, one has
\begin{equation}\label{senile}
\| \xi'(\a) - \xi'(\a-n) - \xi''(n) \|_{S} \leq \frac{8}{\rho_3}
\end{equation}
with probability $1-O(\eta^{C_1/200})$.
\end{theorem}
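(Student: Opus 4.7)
The plan is to construct $\xi''$ by a Blum--Luby--Rubinfeld style plurality-vote argument applied to the approximate $1$-cocycle $\a \mapsto \xi'(\a) - \xi'(\a - n)$, using the $99\%$-linearity of Proposition \ref{pordo} as the key input.

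I would first promote Proposition \ref{pordo}, which is stated over random $(\a, \a', \h)$, to a \emph{pointwise} version valid for every fixed $n \in B(S_1, \rho_3)$. The shift from $\h$ to $\h + n$ moves the regular distribution on $B(S_1, \rho_{2,j+3})$ by a negligible amount in total variation, by Lemma \ref{ati}: since $n \in B(S_1, \rho_3)$ and the separation condition \eqref{rho-sep} together with $j \leq \eta^{-10^3 C_1}$ force $\rho_3/\rho_{2, j+3}$ to be much smaller than any relevant power of $\eta$, we get $d_\TV(\h+n,\h) \ll \eta^{C_1/200}$. Applying Proposition \ref{pordo} twice, once to $\h$ and once to $\h + n$, and subtracting eliminates the $\xi'(\a), \xi'(\a')$ terms to produce
\[
\|\xi'(\a + \h) - \xi'(\a + \h + n) - \xi'(\a' + \h) + \xi'(\a' + \h + n)\|_S \leq 8/\rho_3
\]
with probability $1 - O(\eta^{C_1/200})$. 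Relabelling $\b := \a + \h$, $\b' := \a' + \h$ and using Lemma \ref{ati} again to replace the joint law of $(\b, \b')$ by two independent regular draws on $a_1 + B(S_1, \rho_{2,j})$ at negligible TV cost, and flipping the sign of $n$ using symmetry of the regular distribution, I obtain the fixed-$n$ statement
\[
\P\bigl[\|\xi'(\b) - \xi'(\b - n) - \xi'(\b') + \xi'(\b' - n)\|_S > 8/\rho_3\bigr] = O(\eta^{C_1/200}).
\]

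Next I define $\xi''(n)$ for each $n \in B(S_1, \rho_3)$. Let $\mu_n$ denote the law of $Y_n := \xi'(\b) - \xi'(\b - n)$ with $\b$ regular on $a_1 + B(S_1, \rho_{2,j})$. The fixed-$n$ estimate above rewrites as $\E_{y \sim \mu_n} \mu_n(\{y' : \|y' - y\|_S \leq 8/\rho_3\}) \geq 1 - O(\eta^{C_1/200})$, so by averaging there exists $y^* \in \Z/p\Z$ with $\mu_n(\{y' : \|y' - y^*\|_S \leq 8/\rho_3\}) \geq 1 - O(\eta^{C_1/200})$. Set $\xi''(n) := y^*$ (choosing arbitrarily among valid $y^*$); this construction immediately yields \eqref{senile}.

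To establish the approximate additivity \eqref{linr}, fix $n, m \in B(S_1, \rho_3/2)$, so that $n + m \in B(S_1, \rho_3)$. I apply \eqref{senile} to the three directions $n + m$, $n$, and $m$, where in the $m$-case Lemma \ref{ati} is used to shift the base point from $a_1$ to $a_1 - n$ at negligible TV cost (since $\rho_3/\rho_{2,j}$ is tiny). A union bound shows that with positive probability some value $a$ of $\a$ makes all three events hold simultaneously, yielding $\xi'(a) - \xi'(a-n-m) = \xi''(n+m) + \epsilon_1$, $\xi'(a) - \xi'(a-n) = \xi''(n) + \epsilon_2$, and $\xi'(a-n) - \xi'(a-n-m) = \xi''(m) + \epsilon_3$ with each $\|\epsilon_i\|_S \leq 8/\rho_3$. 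The combination $\epsilon_2 + \epsilon_3 - \epsilon_1$ telescopes the $\xi'$-terms exactly, leaving $\xi''(n+m) - \xi''(n) - \xi''(m)$, and \eqref{a-tri} gives the required $24/\rho_3$ bound. The main obstacle is the first step: promoting the random-$\h$ estimate of Proposition \ref{pordo} to a pointwise statement valid for every $n \in B(S_1, \rho_3)$. This relies crucially on $B(S_1, \rho_3)$ sitting deeply inside $B(S_1, \rho_{2,j+3})$, giving Lemma \ref{ati} enough slack (via the scale separation \eqref{rho-sep}) to absorb the shift by $n$ without degrading the $\eta^{C_1/200}$ probability bound.
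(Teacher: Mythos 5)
Your proposal is correct and follows essentially the same Blum--Luby--Rubinfeld majority-vote strategy as the paper, with Proposition \ref{pordo} as the key $99\%$-additivity input and Lemma \ref{ati} absorbing the shift by $n$ into the regular distributions at negligible total-variation cost. The only difference is in how $\xi''(n)$ is pinned down: the paper pigeonholes (once over $\a'$ to fix a reference point $a'_0$, then over $\h$ to fix $h_n$) to get the concrete formula $\xi''(n) = \xi'(a'_0 + h_n + n) - \xi'(a'_0 + h_n)$, whereas you pick a plurality value of the law of $\xi'(\b) - \xi'(\b - n)$; the two choices are interchangeable.
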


\begin{proof}  Let $\a, \h$ be drawn independently and regularly from $a_* + B(S_1, \rho_{2,j} )$ and $B(S_1, \rho_{2,j+3})$ respectively.  From Proposition \ref{pordo} and the pigeonhole principle, we may find $a'_0 \in\Z/p\Z$ such that
\begin{equation}\label{slash}
 \P\left( \| \xi'(\a) - \xi'(\a+\h) - \xi'(a'_0) + \xi'(a'_0+\h) \|_S \leq \frac{4}{\rho_3} \right) \geq 1 - O( \eta^{C_1/200} ).
\end{equation}
Fix this $a'_0$.  Now let $n$ by an arbitrary element of $B(S_1,\rho_3)$.  Then using Lemma \ref{ati} to compare $\a$ with $\a-n$ and $\h$ with $\h+n$, we obtain
\begin{align*} \P\bigg( \| \xi'(\a-n) - \xi'(\a+\h) - \xi'(a'_0) + \xi'(a'_0+\h+n) \|_S & \leq \frac{4}{\rho_3} \bigg)\\ &  \geq 1 - O( \eta^{C_1/200} ).\end{align*}
Combining this with \eqref{slash} and the triangle inequality, we see that
\begin{align*}
 \P\bigg( \| \xi'(\a) - \xi'(\a-n) + \xi'(a'_0+\h) - \xi'(a'_0 + \h + n) \|_S & \leq \frac{8}{\rho_3} \bigg) \\ & \geq 1 - O( \eta^{C_1/200} ).
\end{align*}
Thus, by the pigeonhole principle, we may find $h_n \in \Z/p\Z$ such that
\begin{align*}
 \P\bigg( \| \xi'(\a) - \xi'(\a-n) + \xi'(a'_0+h_n) - \xi'(a'_0 + h_n + n) \|_S & \leq \frac{8}{\rho_3} \bigg) \\ & \geq 1 - O( \eta^{C_1/200} ).
\end{align*}
If we thus define
$$ \xi''(n) \coloneqq  \xi'(a'_0+h_n+n) - \xi'(a'_0+n)$$
then we have obtained \eqref{senile}.

Now suppose that $n, m \in B( S_1, \rho_3/2 )$.  From \eqref{senile}, we see that with probability at least $1 - O(\eta^{C_1/200})$ we have
$$ \| \xi'(\a) - \xi'(\a-n) - \xi''(n) \|_S \leq \frac{8}{\rho_3},$$
$$ \| \xi'(\a) - \xi'(\a-m) - \xi''(m) \|_S \leq \frac{8}{\rho_3},$$
and
$$ \| \xi'(\a) - \xi'(\a-n-m) - \xi''(n+m) \|_S \leq \frac{8}{\rho_3}.$$
Using Lemma \ref{ati} to compare $\a$ with $\a-n$ in the second inequality, we also conclude
$$ \| \xi'(\a-n) - \xi'(\a-n-m) - \xi''(m) \|_S \leq \frac{8}{\rho_3},$$
with probability $1-O(\eta^{C_1/200})$.  Thus there is a positive probability that the first, third, and fourth estimates hold simultaneously, and the claim \eqref{linr} follows from the triangle inequality.
\end{proof}

The function $\xi''$ is still closely related to $\xi$, and in particular a variant of the correlation estimate \eqref{sam} is obeyed by $\xi''$.

\begin{proposition}\label{dfcor}  Let the notation and hypotheses be as in the preceding theorem.  Then there exist $a_0 \in B(S, 3\rho_2)$ and $\xi_0 \in \Z/p\Z$ such that
\begin{align*}
\sum_{n_0,n} \P(\n_0=n_0, \n=n) | \E f(n_0+\h+a_0-n) \overline{f}(n_0+\h) & e_p( (\xi''(n) -  \xi_0) \h ) |^2 \\ & \gg \eta^{C_1+O(1)},
\end{align*}
where $\n, \n_0,\h$ are drawn independently and regularly from the Bohr sets $B(S_1, \rho_3/4)$, $B(S,\rho_0)$, $B(S_1, \rho_4)$ respectively.
\end{proposition}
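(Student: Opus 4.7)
The plan is to trace the approximate identity
\[
\xi''(n) \;\approx\; \xi'(\a) - \xi'(\a - n) \;\approx\; \xi(\a - \a_{(2)}) - \xi(\a - n - \a_{(2)})
\]
in the word norm $\|\cdot\|_S$, which lets me set $a_0 := \a - \a_{(2)}$ and $\xi_0 := \xi(a_0)$ and obtain $\xi''(n) - \xi_0 \approx -\xi(a_0 - n)$ for many $n$.  The desired correlation then reduces to the derivative-frequency correlation estimate \eqref{sam} of Theorem \ref{first-step} applied with $n_2 := a_0 - n \in \Omega$.

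\textbf{Executing the chain of approximations.}  Draw $\a, \n, \a_{(2)}$ independently and regularly from $a_1 + B(S_1, \rho_{2,j})$, $B(S_1, \rho_3/4)$, and $a_{(2),1} + B(S_1, \rho_{2,j+2})$.  By \eqref{senile}, the event
\[
E_1 : \|\xi'(\a) - \xi'(\a - \n) - \xi''(\n)\|_S \leq 8/\rho_3
\]
holds with probability $\geq 1 - O(\eta^{C_1/200})$.  Applying \eqref{no-2} and \eqref{no} to $\a$, and then again to $\a - \n$ (using Lemma \ref{ati} to observe that $\a - \n$ is nearly regularly distributed in the same Bohr set as $\a$, with negligible TV error since $\rho_3/\rho_{2,j}$ is tiny), the events
\begin{align*}
E_2 &: \a_{(2)} \in A_{(2)},\; \a - \a_{(2)} \in A_{(1)},\; \|\xi'(\a) - \xi(\a - \a_{(2)}) - \xi(\a_{(2)})\|_S \leq 1/\rho_3, \\
E_3 &: \a - \n - \a_{(2)} \in A_{(1)},\; \|\xi'(\a - \n) - \xi(\a - \n - \a_{(2)}) - \xi(\a_{(2)})\|_S \leq 1/\rho_3
\end{align*}
each hold with conditional probability $\geq 0.9\alpha_1\alpha_2$ for typical $(\a,\n)$, so $\P(E_1 \cap E_2 \cap E_3) \gg \alpha_1\alpha_2 \gg \eta^{C_1 + O(1)}$ by \eqref{alphoid}.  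On this intersection, the triangle inequality gives $\|\xi''(\n) - \xi(a_0) + \xi(a_0 - \n)\|_S \leq 10/\rho_3$, where $a_0 = \a - \a_{(2)} \in A_{(1)} \subset \Omega \subset B(S, 2\rho_2)$ and $a_0 - \n \in A_{(1)} \subset \Omega$.  Pigeonholing over $(\a, \a_{(2)})$, I fix values $a, a_{(2)}$ and a set $N \subset B(S_1, \rho_3/4)$ with $\P(\n \in N) \gg \eta^{C_1 + O(1)}$ on which these properties hold; put $\xi_0 := \xi(a_0)$, noting $a_0 \in B(S, 2\rho_2) \subset B(S, 3\rho_2)$ as required.

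\textbf{From word norm to phase, and final assembly.}  Lemma \ref{edual}(i) converts the word-norm bound to a phase bound: for any $n \in N$ and $\h$ drawn from $B(S_1, \rho_4) \subset B(S, \rho_4)$,
\[
\left\|\tfrac{(\xi''(n) - \xi_0 + \xi(a_0 - n))\h}{p}\right\|_{\R/\Z} \leq \|\h\|_{S^\perp} \cdot \tfrac{10}{\rho_3} \leq \tfrac{10\rho_4}{\rho_3} \ll \exp(-K^{C_2}/2)
\]
by \eqref{rho-sep}, so $e_p((\xi''(n) - \xi_0)\h)$ and $e_p(-\xi(a_0 - n)\h)$ agree up to negligible error.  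Applying \eqref{sam} with $n_2 := a_0 - n \in \Omega$ yields a lower bound $\gg \eta$ for the corresponding correlation, but with inner variable drawn from $B(S, \rho_1)$ rather than $B(S_1, \rho_4)$.  I bridge by replacing $\n_1$ with $\n_1 + \h$ via Lemma \ref{ati} (TV error $O(|S|\rho_4/\rho_1)$, negligible by \eqref{rho-sep}), applying Cauchy--Schwarz in $\n_1$ to pull out $|\E_\h(\cdot)|^2$, and absorbing the residual $\n_1$ into $\n_0$ (whose convolution with $\n_1$ is TV-close to $B(S,\rho_0)$, again by Lemma \ref{ati}).  Weighted summation over $n \in N$ then produces the claimed lower bound $\gg \eta \cdot \P(\n \in N) \gg \eta^{C_1 + O(1)}$ on the full sum.

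\textbf{Main obstacle.}  The difficulty is probabilistic bookkeeping rather than a new idea: one must intersect three partially overlapping good events $E_1, E_2, E_3$ without losing the mass $\alpha_1\alpha_2$, pigeonhole in $(\a, \a_{(2)})$ so that the surviving set $N \ni n$ inherits both the word-norm control \emph{and} the Bohr-set membership $a_0 - n \in \Omega$, and then shrink the inner-variable scale in \eqref{sam} from $\rho_1$ all the way down to $\rho_4$ without the phase $e_p(-\xi(a_0 - n)\h)$ drifting appreciably.  Each step invokes only standard tools (Lemmas \ref{ati}, \ref{edual}, and Cauchy--Schwarz), but the bookkeeping across the four scales $\rho_0 > \rho_1 > \rho_3 > \rho_4$ and the nested frequency sets $S \subset S_1$ demands care.
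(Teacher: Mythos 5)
Your overall strategy matches the paper's argument closely; the one genuine issue lies in the choice of $\xi_0$ and the probability bookkeeping it forces on you.

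You set $\xi_0 := \xi(a_0)$ and derive the approximation $\xi''(n) - \xi_0 \approx -\xi(a_0 - n)$ by subtracting the word-norm estimates attached to both $E_2$ (the correlation estimate from \eqref{no-2} applied at $\a$) and $E_3$ (the same applied at $\a - \n$).  The intersection $E_2 \cap E_3$ imposes \emph{two independent} $A_{(1)}$-membership constraints — namely $\a - \a_{(2)} \in A_{(1)}$ and $\a - \n - \a_{(2)} \in A_{(1)}$ — together with the shared constraint $\a_{(2)} \in A_{(2)}$.  By the pseudorandomness of $A_{(1)}$ (Theorem \ref{fourth-step}(ii), or the weak mixing Lemma \ref{wmix}) these two $A_{(1)}$-events are essentially independent over $\a_{(2)}$, so the correct estimate is $\P(E_1 \cap E_2 \cap E_3) \gg \alpha_1^2 \alpha_2$, not the $\gg \alpha_1 \alpha_2$ you assert.  (The ``union bound'' reasoning only works when $\alpha_1\alpha_2$ is close to $1$, which it emphatically is not.)  The extra factor of $\alpha_1$ is fatal: \eqref{alphoid} only guarantees $\alpha_1\alpha_2\alpha_3\alpha_4 \gg \eta^{C_1 + O(1)}$, which gives no lower bound on $\alpha_1^2\alpha_2$ better than $\eta^{2C_1 + O(1)}$ in the worst case; this is strictly weaker than the claimed $\eta^{C_1 + O(1)}$, so the proposition as stated is not established by your argument.

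The fix is exactly what the paper does: drop $E_2$ entirely and set $\xi_0 := \xi'(a) - \xi(a_{(2)})$ (the paper writes $\xi(a_{(2)}) - \xi'(a)$, which appears to be a sign slip — follow the signs through and you will see the correct choice).  Then $E_1 \cap E_3$ alone gives
$\xi''(\n) \approx \xi'(\a) - \xi(a_0 - \n) - \xi(\a_{(2)})$ and hence $\xi''(\n) + \xi(a_0 - \n) - \xi_0 \approx 0$, using only one $A_{(1)}$-membership and one $A_{(2)}$-membership, so the probability is genuinely $\gg \alpha_1\alpha_2 \gg \eta^{C_1 + O(1)}$.  Your subsequent scale-bridging (insert $\h$ at scale $\rho_4$ via Lemma \ref{ati}, freeze $\n_1$ by the triangle/Minkowski inequality in $L^2$ followed by Jensen — not Cauchy--Schwarz, though the effect is the same — delete the deterministic phase in $n_1$, convert word norm to phase via Lemma \ref{edual}(i) \emph{only at the $\rho_4$ scale where it is small}, and absorb $\n_1$ into $\n_0$) is correct and is precisely the paper's sequence.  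So after this single repair your proof coincides with the paper's.
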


With this proposition and the previous theorem, we may now safely forget about the original function $\xi$, and work now with $\xi''$; the parameters $a_1, j$ will also no longer be relevant.

\begin{proof}  Let $\n$, $\a$, $\a_{(2)}$ be drawn independently and regularly from $B(S_1, \rho_3/4)$, $a_1 + B(S_1, \rho_{2,j})$, and $B(S_1, \rho_{2,j+2} )$ respectively.  From \eqref{senile} we have
$$ \| \xi'(\a) - \xi'(\a-\n) - \xi''(\n) \|_S \ll \frac{1}{\rho_3}$$
with probability $1-O( \eta^{C_1/200} )$.  Similarly, from \eqref{no}, \eqref{no-2}, \eqref{alphoid} we see that with probability $1-O(\eta^{C_1/200})$, the random variable $\a$ attains a value $a$ for which
\begin{align*}
\P\bigg( a - \a_{(2)} \in A_{(1)}; \a_{(2)} \in A_{(2)}; \| \xi'(a) - \xi(a-\a_{(2)})- & \xi(\a_{(2)}) \|_S \leq \frac{1}{\rho_3} \bigg) \\ & \gg \alpha_1 \alpha_2.
\end{align*}
Using Lemma \ref{ati} to compare $\a$ and $\a-\n$, we also see that with with probability $1-O(\eta^{C_1/200})$, the random variable $(\a,\n)$ attains a value $(a,n)$ for which
\begin{align*}
\P\bigg( a-n - & \a_{(2)} \in A_{(1)}; \a_{(2)} \in A_{(2)}; \\   & \| \xi'(a-n) - \xi(a-n-\a_{(2)})-\xi(\a_{(2)}) \|_S \leq \frac{1}{\rho_3} \bigg) \gg \alpha_1 \alpha_2.
\end{align*}
From the union bound and Fubini's theorem, we conclude that with probability $\gg \alpha_1 \alpha_2$, we simultaneously have the statements
\begin{align*}
\a-\n - \a_{(2)} &\in A_{(1)} \\
\a_{(2)} &\in A_{(2)}\\
 \| \xi'(\a) - \xi'(\a-\n) - \xi''(\n) \|_S &\ll \frac{1}{\rho_3} \\
\| \xi'(\a-\n) - \xi(\a-\n-\a_{(2)})-\xi(\a_{(2)}) \|_S &\leq \frac{1}{\rho_3}
\end{align*}
and hence by the triangle inequality
$$ \| \xi'(\a) - \xi(\a-\n-\a_{(2)}) - \xi(\a_{(2)}) - \xi''(\n) \|_S \ll \frac{1}{\rho_3}.$$
By the pigeonhole principle, we may thus find $a, a_{(2)} \in \Z/p\Z$ such that the statements
\[
a-\n - a_{(2)} \in A_{(1)} \]
\[ a_{(2)} \in A_{(2)} \]
\[  \| \xi'(a) - \xi(a-\n-a_{(2)}) - \xi(a_{(2)}) - \xi''(\n) \|_S \ll \frac{1}{\rho_3}  \]
simultaneously hold with probability $\gg \alpha_1 \alpha_2$, and thus with probability $\gg \eta^{C_1+O(1)}$ thanks to \eqref{alphoid}.  Writing $a_0 \coloneqq  a - a_{(2)}$ and $\xi_0 \coloneqq  \xi(a_{(2)}) - \xi'(a)$, and recalling from Theorem \ref{third-step} that $A_{(1)} \in S$, we thus have
$$ 
\P\left( a_0 - \n \in S; \| \xi''(\n) + \xi( a_0 - \n ) - \xi_0 \|_{S} \ll 1/\rho_3 \right) \gg \eta^{C_1+O(1)}.$$

In particular, since $\n \in B(S_1, \rho_3/4)$ and $S \subset B(S,2\rho_2)$, we have $a_0 \in B(S, 3\rho_2)$.

Let $\n_0, \n_1$ be drawn independently and regularly from $B(S,\rho_0), B(S,\rho_1)$ respectively, independently of all previous random variables.  From the above estimate and \eqref{sam}, we see that with probability $\gg \eta^{C_1+O(1)}$, the random variable $\n$ attains a value $n$ for which the statements
\begin{equation}
a_0 - n \in S \label{so0} \end{equation}
\begin{equation}
\| \xi''(n) + \xi( a_0 - n ) - \xi_0 \|_{S_1} \ll 1/\rho_3 \label{so1}\end{equation}
\begin{equation} \sum_{n_0} \P(\n_0 = n_0) 
\left| \E f(n_0+\n_1+a_0-n) \overline{f}(n_0+\n_1) e_p( -\xi(a_0-n) \n_1 )\right|^2 \geq \eta/8 \label{so2}
\end{equation}
simultaneously hold.  

Let $n$ obey the above estimates \eqref{so0}, \eqref{so1}, \eqref{so2}. If we now draw $\h$ regularly from $B(S_1, \rho_4)$, then by using Lemma \ref{ati} to compare $\n_1$ with $\n_1+\h$ in \eqref{so2}, we obtain
\begin{align*}
\sum_{n_0} \P(\n_0=n_0) \bigg| \E f(n_0+\n_1+\h+a_0-n) & \overline{f}(n_0+\n_1+\h) \times \\  & \times e_p( -\xi(a_0-n) (\n_1+\h) ) \bigg|^2  \gg \eta 
\end{align*}
and thus by the triangle inequality in $L^2$
\begin{align*}
\sum_{n_0,n_1} \P(\n_0=n_0,\n_1=n_1) \bigg|\E f(n_0+& n_1+\h+a_0-n)   \overline{f}(n_0+n_1+\h) \times \\ & \times e_p( -\xi(a_0-n) (n_1+\h) ) \bigg|^2 \gg \eta.
\end{align*}
We may delete the deterministic phase $e_p(-\xi(a_0-n) n_1)$ to obtain
\begin{align*}
\sum_{n_0,n_1} \P(\n_0=n_0,\n_1=n_1) \bigg|\E f(n_0+n_1& +\h+a_0-n)  \overline{f}(n_0+n_1+\h) \times \\ & \times e_p( -\xi(a_0-n) \h ) \bigg|^2 \gg \eta.
\end{align*}
Since $\h$ takes values in $B(S_1,\rho_4)$, we see from \eqref{so1} that
$$ e_p( -\xi(a_0-\n) \h ) = e_p( (\xi''(\n) - \xi_0) \h ) + O( \eta^{100} )$$
(say), and so
\begin{align*}
 \sum_{n_0,n_1} \P(\n_0=n_0,\n_1=n_1) \bigg|\E f(n_0+n_1&+\h+a_0-n) \overline{f}(n_0+n_1+\h) \times \\ & \times e_p( (\xi''(n) - \xi_0) \h )\bigg|^2 \gg \eta.
\end{align*}
Using Lemma \ref{ati} to compare $\n_0$ with $\n_0+\n_1$, we conclude that
\begin{align*}
 \sum_{n_0,n_1} \P(\n_0=n_0,\n_1=n_1) \bigg|\E f(n_0+\h+a_0-n) \overline{f}(n_0+\h) e_p( & (\xi''(n) - \xi_0) \h )\bigg|^2 \\ & \gg \eta.
\end{align*}
Multiplying by $\P(\n=n)$ and summing in $n$, we obtain the claim.
\end{proof}

\subsection{Seventh step: derivatives of $f$ correlate with a locally bilinear form}

We now pass to the ``cohomological'' phase of the argument, in which we remove the error $\xi''(n+m) - \xi''(n) - \xi''(m)$ in the linearity of $\xi''$ that appears in \eqref{linr}.  This improved linearity of the form $(n,h) \mapsto \xi(n) h$ in the $n$ aspect will come at the expense of the $h$ aspect, which will now merely be locally linear instead of globally linear.  However, this is a worthwhile tradeoff for our purposes (and in any event local linearity is more natural in this context than global linearity).

More precisely, the purpose of this subsection is to establish the following result towards the proof of Theorem \ref{locu3}.

\begin{theorem}\label{seventh-step}  Let the notation and hypotheses be as in Theorem \ref{locu3}.  Then there exists a set $S_1$ with $S \subset S_1 \subset \Z/p\Z$ and $|S_1| \leq |S| + O(\eta^{-O(C_1)})$, a locally bilinear map
$$ \Xi: B(S_1,\rho_4) \times B(S_1,\rho_4) \to \R/\Z,$$
a shift $a_1 \in B(S,4\rho_2)$, and a frequency $\xi_1 \in \Z/p\Z$ such that
\begin{equation}\label{no7}
\begin{split}
& \sum_{n_0,n_1} \P(\n_0=n_0, \n_1=n_1) \times \\
&\quad \left| \E f(n_0+\m_1+a_1-n_1) \overline{f}(n_0+\m_1) e\left( \Xi(n_1,\m_1) - \frac{\xi_1 \m_1}{p} \right) \right|^2 \gg \eta^{C_1+O(1)}
\end{split}
\end{equation}
if $\n_0, \m_1, \n_1$ are drawn independently and regularly from $B(S,\rho_0)$, $B(S_1,\rho_5)$, and $B(S_1, \rho_6)$ respectively.
\end{theorem}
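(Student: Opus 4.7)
The plan is to start from Proposition \ref{dfcor} and refine both the ranges of the random variables and the structure of the phase $e_p((\xi''(\n) - \xi_0)\h)$ until it matches \eqref{no7}. I will first shrink the variables: split $\h = h_* + \m_1$ with $\m_1$ drawn independently and regularly from $B(S_1, \rho_5)$, and $\n = n_* + \n_1$ with $\n_1$ drawn from $B(S_1, \rho_6)$. By Lemma \ref{ati} and the scale separation \eqref{rho-sep}, these reparametrisations are transparent, and pigeonholing in the residual variables fixes specific shifts $h_* \in B(S_1, \rho_4)$, $n_* \in B(S_1, \rho_3/4)$ for which the bound persists. Setting $a_1 := a_0 - n_* \in B(S, 4\rho_2)$ (and absorbing the extra shift $h_*$ into the outer variable $\n_0$ via another application of Lemma \ref{ati}), the resulting phase $(\xi''(n_* + \n_1) - \xi_0)(h_* + \m_1)/p$ splits into an $\m_1$-independent piece (which pulls out of the inner expectation and disappears after absolute-value-squaring) and the $\m_1$-dependent part $(\xi''(n_* + \n_1) - \xi_0)\m_1/p$. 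Using the approximate additivity \eqref{linr} and Lemma \ref{edual}(i), I expand $\xi''(n_* + \n_1) = \xi''(n_*) + \xi''(\n_1) + \delta$ with $\|\delta \m_1/p\|_{\R/\Z} \ll \rho_5/\rho_3$, which is negligible by \eqref{rho-sep}. This reduces the problem to a lower bound with phase $e(\xi''(\n_1)\m_1/p - \xi_1 \m_1/p)$, where $\xi_1 := \xi_0 - \xi''(n_*) \in \Z/p\Z$.

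To convert the approximately bilinear phase $\xi''(\n_1)\m_1/p$ into an exactly locally bilinear $\Xi$, I apply Corollary \ref{bohr-basis-cor} to $S_1$ to obtain generators $b_1, \ldots, b_d \in \Z/p\Z$ (with $d = |S_1|$) and parameters $N_1, \ldots, N_d$ such that every $n \in \Z/p\Z$ has a representation $n = \sum_i n_i b_i$ with $|n_i| \ll (2d)^{O(d)} N_i \|n\|_{S_1^\perp}$, unique under $|n_i| < N_i/2$. The scale gap \eqref{rho-sep} together with $d \leq K^{O(C_1)}$ ensures $(2d)^{O(d)} \rho_4 < 1/2$, so representations are additive for all $n, n' \in B(S_1, \rho_4)$; the same gap forces $n_i = 0$ for $n \in B(S_1, \rho_4)$ whenever $N_i \lesssim 1/\rho_3$, so only generators $b_i$ lying in $B(S_1, \rho_3)$ contribute, and $\xi''(b_i)$ is defined. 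I then set
\[ \Xi\Bigl(\sum_i n_i b_i,\ h\Bigr) := \sum_i n_i \cdot \frac{\xi''(b_i) h}{p}, \]
which is globally linear in $h$ and, by the additivity of the GAP coordinates, locally linear in $n$, hence locally bilinear on $B(S_1, \rho_4) \times B(S_1, \rho_4)$.

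Finally, I verify that $\Xi(\n_1, \m_1) \equiv \xi''(\n_1)\m_1/p$ mod $1$ up to negligible error. Iterating \eqref{linr} along the GAP representation yields $\xi''(\n_1) = \sum_i n_i \xi''(b_i) + E$ with $\|E\|_S \ll d \max_i |n_i| / \rho_3$, and Lemma \ref{edual}(i) converts this into a phase error $\|E\m_1/p\|_{\R/\Z} \ll (2d)^{O(d)} \rho_5 \rho_6 / \rho_3$, which is smaller than $\eta^{100 C_1}$ by \eqref{rho-sep}. Replacing $e(\xi''(\n_1)\m_1/p - \xi_1 \m_1/p)$ by $e(\Xi(\n_1, \m_1) - \xi_1 \m_1/p)$ thus preserves the lower bound, yielding \eqref{no7}. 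The hardest part of the plan will be the scale bookkeeping: the GAP representation, the iteration of the approximate additivity of $\xi''$, and the shrinking of the random variables all incur losses polynomial in $d = |S_1| \leq K^{O(C_1)}$ and in $1/\rho_3$, and it is essential to rely on the super-polynomial gap $\rho_5/\rho_3, \rho_6/\rho_3 \leq \exp(-K^{C_2})$ supplied by \eqref{rho-sep} to absorb all these losses before reaching the final $\eta^{C_1+O(1)}$ lower bound.
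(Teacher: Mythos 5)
Your plan is correct up through the first reduction: the reparametrisation $\h = h_\ast + \m_1$, $\n = n_\ast + \n_1$, the use of Lemma~\ref{ati}, the pigeonholing in $h_\ast, n_\ast$, the definitions $a_1 := a_0 - n_\ast$ and $\xi_1 := \xi_0 - \xi''(n_\ast)$, and the removal of the cross term $\xi''(\n_1+n_\ast)-\xi''(\n_1)-\xi''(n_\ast)$ via \eqref{linr} and Lemma~\ref{edual}(i) all go through and agree with what the paper does. The genuine problem is in the construction and verification of the bilinear map $\Xi$.

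You define $\Xi\bigl(\sum_i n_i b_i, h\bigr) := \sum_i n_i\,\xi''(b_i) h/p$ using the GAP coordinates from Corollary~\ref{bohr-basis-cor}, which does give a locally bilinear map. But the verification that $\Xi(\n_1,\m_1)$ is close to $\xi''(\n_1)\m_1/p$ fails. Writing $E := \xi''\bigl(\sum_i n_i b_i\bigr) - \sum_i n_i \xi''(b_i)$ and iterating \eqref{linr} does give $\|E\|_S \ll \bigl(\sum_i |n_i|\bigr)/\rho_3$, but you then assert $\max_i|n_i| \ll (2d)^{O(d)}\rho_6$. That is not what Corollary~\ref{bohr-basis-cor} gives: by \eqref{nis} the correct bound is $|n_i| \ll (2d)^{O(d)} N_i\,\|\n_1\|_{S_1^\perp} \ll (2d)^{O(d)} N_i\,\rho_6$, and the widths $N_i$ (whose product is controlled by \eqref{nip} in terms of $p$) are \emph{not} bounded in $K$ or $\eta$. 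In the simplest case $S_1=\{1\}$ one has $d=1$, $N_1\asymp p$, $b_1=1$, and for $n\in B(S_1,\rho_6)$ the single coordinate is $n_1 = n$, of size up to $\rho_6 p$. The accumulated word-norm error is then $\|E\|_S \ll p\rho_6/\rho_3$, and the phase error $\|E\m_1/p\|_{\R/\Z}\leq \|E\|_S\,\rho_5 \ll p\rho_5\rho_6/\rho_3$, which is enormous since $p$ is huge. Even a dyadic doubling scheme only reduces the number of applications of \eqref{linr} to $O(\log|n_i|) = O(\log p)$, yielding a phase error $\ll K^{O(C_1)}\,\rho_5\log p/\rho_3$; since the hypotheses supply a \emph{lower} bound on $p$ but no upper bound (indeed $\log p$ can be chosen larger than any expression involving $K$ and the $\rho_i$), this cannot be made $\ll \eta^{O(C_1)}$.

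The paper's proof circumvents precisely this difficulty, and does so by a structurally different mechanism. Rather than trying to approximate the approximately-additive $\xi''(n)m/p$ by an exactly bilinear form via iteration, it makes the bilinearity exact by a cohomological correction. The cocycle $\mu(n,m) := \xi''(n+m)-\xi''(n)-\xi''(m)$ is lifted to $\tilde\mu(n,m) \in \Z^S$ of magnitude $O(1/\rho_3)$; after projecting (via $\pi$) onto a complement of the span of short kernel vectors, $\pi\circ\tilde\mu$ is shown to be a coboundary, $\pi(\tilde\mu(n_1,n_2)) = F(n_1+n_2)-F(n_1)-F(n_2)$, with $F$ bounded by $K^{O(C_1)}/\rho_3$ (Proposition~\ref{projlift}). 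One then sets $\Xi(n,m) := \xi''(n)m/p - F(n)\cdot\iota(m)$, and the correction term cancels the cocycle \emph{exactly} (since $\iota(m)$ lies in the relevant orthogonal complement), making $\Xi$ exactly locally bilinear. The approximation error, $|F(n)\cdot\iota(\m_1)| \ll K^{O(C_1)}\rho_5/\rho_3$, is then a single term with no dependence on $p$ and no accumulation over iterations. This coboundary step (Lemma~\ref{pcycle} and Proposition~\ref{projlift}) is the key idea your argument is missing.
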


Once the proof of this theorem is completed, the auxiliary data $\xi, \xi', \xi'', j$, $\Omega, \VBQ$ used in the previous parts of the section are no longer needed and may be discarded.

We now prove Theorem \ref{seventh-step}.  Let $j_*, S_1$ be as in Theorem \ref{fourth-step}, let $a_*$, $\xi'$ be as in Theorem \ref{fifth-step}, let $\xi'': B(S_1,\rho_3) \to \Z/p\Z$ be as in Theorem \ref{sixth-step}, and let $a_0, \xi_0$ be as in Proposition \ref{dfcor}.  We will use a ``cohomological'' argument to construct the required bilinear map $\Xi$.  Namely, we define the \emph{cocycle} $\mu: B(S_1,\rho_3/2) \times B(S_1,\rho_3/2) \to \Z/p\Z$ to be the quantity
\begin{equation}\label{mun-def}
\mu(n,m) \coloneqq \xi''(n+m) - \xi''(n) - \xi''(m).
\end{equation}
Clearly \eqref{linr} is symmetric, and we have the \emph{cocycle equation}
\begin{equation}\label{cocycle}
 \mu(n_1, n_2 + n_3) + \mu(n_2,n_3) = \mu(n_1, n_2) + \mu(n_1 + n_2, n_3) 
\end{equation}
as well as the auxiliary equations
$$ \mu(n_1,n_2) = \mu(n_2, n_1); \quad \mu(n_1,0) = 0$$
whenever $n_1,n_2, n_3 \in B(S_1,\rho_3/4)$.  From \eqref{linr} we also have the estimate
\begin{equation}\label{linra}
 \|\mu(n,m) \|_S \leq \frac{24}{\rho_3}
\end{equation}
for all $n, m \in B(S_1,\rho_3/4)$.  

To construct the bilinear map $\Xi$, we will show that a certain projection of $\mu$ is a ``coboundary'' is a certain sense.  Let $\phi: \Z^S \to \Z/p\Z$ be the homomorphism
$$ \phi( (n_s)_{s \in S} ) \coloneqq \sum_{s \in S} n_s s.$$
From \eqref{linra}, we see that for each $n,m \in B(S_1, \rho_3/4)$ we have a representation of the form
\begin{equation}\label{mun}
 \mu(n,m) = \phi( \tilde \mu(n,m) )
\end{equation}
for some lift $\tilde \mu(n,m) \in \Z^S$ of size
\begin{equation}\label{mun2}
 |\tilde \mu(n,m)| \leq 24 / \rho_3.
\end{equation}
This lift $\tilde \mu(n,m)$ is only defined up to an element of the kernel $\mathrm{ker}(\phi) \coloneqq \{ p \in \Z^S: \phi(p) = 0 \}$ of $\phi$; to eliminate this ambiguity we will apply a projection.  Since $S$ contains a non-zero element, $\phi: \Z^S \to \Z/p\Z$ is a surjective homomorphism, and in particular, $\mathrm{ker}(\phi)$ is a sublattice of $\Z^S$ of index $p$.  Applying Lemma \ref{bohr-basis}, we may find generators $v_1, \dots, v_{|S|}$ of $\mathrm{ker}(\phi)$ and real numbers $N_1,\dots,N_{|S|} > 0$ with
\begin{equation}\label{ips}
 \prod_{i=1}^{|S|} N_i = O(K)^{O(K)} p 
\end{equation}
such that
\begin{align}\nonumber
 B_{\R^S} (0, O(K)^{-3K/2} t  ) \cap \mathrm{ker}(\phi)  \subset  \{ n_1 v_1 + \dots & + n_{|S|} v_{|S|}: |n_i| \leq tN_i \} \\ &  \subset B_{\R^S}(0,t) \cap \mathrm{ker}(\phi) \label{kco}
\end{align}
for all $t > 0$.  

By relabeling, we may take the $N_i$ to be non-increasing.  Let $d$, $0 \leq d \leq |S|$ be such that
\begin{equation}\label{ngrad}
 N_1 \geq \dots \geq N_d > \frac{\rho_3}{\exp(K^{C_1})} \geq N_{d+1} \geq \dots \geq N_{|S|}.
\end{equation}
From \eqref{ips}, \eqref{cstar2} we see that $d$ cannot equal $|S|$.  Let $V$ be the $d$-dimensional subspace of $\R^S$ spanned by $v_1,\dots,v_d$, let $V^\perp$ be the orthogonal complement of $V$ in $\R^S$, and let $\pi: \R^S \to V^\perp$ be the orthogonal projection.

We claim that $\pi(\tilde \mu(n,m))$ is now uniquely determined by $\mu(n,m)$ for $n,m \in B(S_1,\rho_3/4)$.  Indeed, if $\tilde \mu(n,m)$ and $\tilde \mu'(n,m)$ both obeyed \eqref{mun}, \eqref{mun2}, then their difference (call it $w$) would be of magnitude $O( 1/\rho_3)$ and lies in the kernel of $\phi$.  By \eqref{kco} with $t = \exp( - K^{C_1}) \rho_3$, we conclude that $w$ lies in $V$, and hence $\pi(\tilde \mu(n,m))$ and $\pi(\tilde \mu'(n,m))$ agree.

A variant of the above argument shows that $\pi \circ \tilde \mu$ also continues to obey the cocycle equation.

\begin{lemma}[Projected lift is a cocycle]\label{pcycle}  One has
$$ \pi(\tilde \mu(n_1, n_2 + n_3)) + \pi(\tilde \mu(n_2,n_3)) = \pi(\tilde \mu(n_1, n_2)) + \pi(\tilde \mu(n_1 + n_2, n_3))$$
and additionally
$$ \pi(\tilde \mu(n_1,n_2)) = \pi(\tilde \mu(n_2,n_1)); \quad \pi(\tilde \mu(n_1,0)) = 0$$
for all $n_1,n_2,n_3 \in B(S_1,\rho_3/4)$.
\end{lemma}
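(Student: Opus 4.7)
The plan is to exploit the fact that $\mu$ itself \emph{exactly} satisfies the cocycle equation \eqref{cocycle}, the symmetry relation, and the vanishing $\mu(n_1,0)=0$ inside $\Z/p\Z$, so that the corresponding integer-valued combinations of the lifts $\tilde\mu$ must lie in $\ker(\phi)$. A size analysis based on \eqref{kco} together with the scale split \eqref{ngrad} will then force those combinations to lie in $V = \mathrm{span}(v_1,\dots,v_d)$, whereupon the projection $\pi$ onto $V^\perp$ annihilates them.

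First I would form the ``cocycle defect''
\[ w \coloneqq \tilde\mu(n_1, n_2+n_3) + \tilde\mu(n_2,n_3) - \tilde\mu(n_1, n_2) - \tilde\mu(n_1+n_2, n_3) \in \Z^S. \]
Applying $\phi$ and using the cocycle identity \eqref{cocycle} for $\mu$ shows $\phi(w) = 0$, so $w \in \ker(\phi)$. The uniform bound \eqref{mun2} on the lifts gives $|w| \ll 1/\rho_3$ in the Euclidean norm on $\R^S$ (the four pairs $(n_i, n_j)$ or $(n_i+n_j, n_k)$ all lie in $B(S_1,\rho_3/2)\times B(S_1,\rho_3/2)$ under the standing hypothesis, shrinking the radius slightly if necessary).

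Next I would apply \eqref{kco} with $t \coloneqq C \cdot O(K)^{3K/2}/\rho_3$ for a suitable constant $C$, which produces a representation $w = \sum_{i=1}^{|S|} m_i v_i$ with integers $m_i$ satisfying $|m_i| \leq tN_i$. For indices $i > d$, the scale split \eqref{ngrad} gives $N_i \leq \rho_3/\exp(K^{C_1})$, whence
\[ |m_i| \leq \frac{C \cdot O(K)^{3K/2}}{\exp(K^{C_1})}. \]
Since $K \geq 2$ (as $\eta < 1/2$) and $C_1$ is taken sufficiently large relative to the implied constants, this upper bound is strictly less than $1$, forcing $m_i = 0$ for all $i > d$. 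Hence $w \in V$, and so $\pi(w) = 0$, which is exactly the cocycle identity for $\pi\circ\tilde\mu$.

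The symmetry identity is handled identically: by symmetry of $\mu$, the integer vector $\tilde\mu(n_1,n_2) - \tilde\mu(n_2,n_1)$ lies in $\ker(\phi)$ with magnitude $O(1/\rho_3)$, hence in $V$, hence projects to zero. The boundary identity $\pi(\tilde\mu(n_1,0)) = 0$ follows the same way after first normalising $\xi''$ so that $\xi''(0)=0$ (a harmless additive adjustment that preserves \eqref{linr} and \eqref{senile} up to absorbing a constant shift into $\xi_0$ later on). The main technical point throughout is simply the quantitative comparison $O(K)^{O(K)} \ll \exp(K^{C_1})$, which is where the hierarchy $C_1 \gg 1$ is essential; this is the only place the argument is non-routine, and it is why the threshold $\rho_3/\exp(K^{C_1})$ was chosen in \eqref{ngrad} in the first place.
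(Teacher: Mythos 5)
Your proposal is correct and follows essentially the same route as the paper: form the integer-valued ``defect'' $w$, use the exact cocycle/symmetry/vanishing relations for $\mu$ to place $w$ in $\ker(\phi)$, invoke the John-type inclusion \eqref{kco} together with the scale split \eqref{ngrad} and the hierarchy $O(K)^{O(K)} \ll \exp(K^{C_1})$ to force $w \in V$, and then apply $\pi$. Your observation that $\mu(n_1,0)=0$ requires normalising $\xi''(0)=0$ (which the paper implicitly arranges by taking $h_0=0$ in the construction of $\xi''$) is a legitimate detail that the paper leaves unstated.
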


\begin{proof}  By \eqref{mun2}, the quantity $w \coloneqq \tilde \mu(n_1, n_2 + n_3) + \tilde \mu(n_2,n_3) - \tilde \mu(n_1, n_2) - \tilde \mu(n_1 + n_2, n_3)$ has magnitude $O( 1/\rho_3)$; by \eqref{mun}, \eqref{cocycle}, $w$ lies in the kernel of $\phi$.  Repeating the previous arguments, we conclude that $w \in V$.  Applying the homomorphism $\pi$, we obtain the first claim.  The second claim is proven similarly.
\end{proof}

We can in fact make $\pi \circ \tilde \mu$ a coboundary, after shrinking the domain somewhat.

\begin{proposition}[Projected lift is a coboundary]\label{projlift}  There exists a map $F: B(S_1, 2\exp(-K^{C_1^2}) \rho_3) \to V^\perp$ with
\begin{equation}\label{fsm}
 F(n) \ll \frac{K^{O(C_1)}}{\rho_3}
\end{equation}
for all $n \in B(S_1, 2\exp(-K^{C_1^2}) \rho_3)$, such that
$$ \pi(\tilde \mu(n_1,n_2)) = F(n_1+n_2) - F(n_1) - F(n_2) $$
for all $n_1,n_2 \in B(S_1, \exp(-K^{C_1^2})  \rho_3)$.
\end{proposition}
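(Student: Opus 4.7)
The plan is to build $F$ by a recursive telescoping formula along ``monotone'' basis paths, exploiting the symmetric cocycle identity of Lemma \ref{pcycle} to make the construction path-independent. First, apply Corollary \ref{bohr-basis-cor} to the set $S_1$ to obtain basis vectors $a_1,\dots,a_{|S_1|} \in \Z/p\Z$ and scales $N_1,\dots,N_{|S_1|}$ so that every $n \in B(S_1, 2\exp(-K^{C_1^2})\rho_3)$ admits a unique representation $n = \sum_i n_i a_i$ with $|n_i| < N_i/2$. The coordinate bound $|n_i| \ll (2|S_1|)^{O(|S_1|)} N_i \|n\|_{S_1^\perp}$ of that corollary, combined with the smallness $\|n\|_{S_1^\perp} \leq 2\exp(-K^{C_1^2})\rho_3$, forces the path-length $L := \sum_i |n_i|$ to be tiny, and in particular ensures that every partial sum lies deep inside $B(S_1, \rho_3/4)$, the Bohr set on which $\tilde\mu$ and the cocycle identity of Lemma \ref{pcycle} are legal.

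For each such $n$, define
$$F(n) := \sum_{k=1}^L \pi\bigl(\tilde\mu(m_{k-1}, \epsilon_k a_{i_k})\bigr),$$
where $(0 = m_0, m_1, \dots, m_L = n)$ is the sequence of partial sums obtained by adding $\epsilon_k a_{i_k}$ (with $\epsilon_k \in \{\pm 1\}$) one basis vector at a time in a fixed canonical order (say, by increasing index). To verify the coboundary identity $\pi\tilde\mu(n_1, n_2) = F(n_1+n_2) - F(n_1) - F(n_2)$ for $n_1,n_2 \in B(S_1, \exp(-K^{C_1^2})\rho_3)$, the strategy is to replace the canonical path for $n_1+n_2$ by the concatenation of the canonical path for $n_1$ followed by a translate by $n_1$ of the canonical path for $n_2$. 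This requires path-independence, which reduces, via decomposition into adjacent swaps, to the key identity
$$\pi\tilde\mu(m, a_i) + \pi\tilde\mu(m+a_i, a_j) = \pi\tilde\mu(m, a_j) + \pi\tilde\mu(m+a_j, a_i),$$
which follows from two applications of Lemma \ref{pcycle} (once to $(m, a_i, a_j)$, once to $(m, a_j, a_i)$) combined with the symmetry of $\pi\tilde\mu$.

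Once path-independence is in hand, the difference $F(n_1+n_2) - F(n_1) - F(n_2)$ becomes a telescoping sum $\sum_k [\pi\tilde\mu(n_1 + m'_{k-1}, \epsilon'_k a_{j_k}) - \pi\tilde\mu(m'_{k-1}, \epsilon'_k a_{j_k})]$ along the canonical path for $n_2$; a single further application of Lemma \ref{pcycle} converts each bracketed term into $\pi\tilde\mu(n_1, m'_{k-1} + \epsilon'_k a_{j_k}) - \pi\tilde\mu(n_1, m'_{k-1})$, which telescopes to $\pi\tilde\mu(n_1, n_2) - \pi\tilde\mu(n_1, 0) = \pi\tilde\mu(n_1, n_2)$. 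The size bound $|F(n)| \ll K^{O(C_1)}/\rho_3$ follows from the crude estimate $|F(n)| \leq 24 L/\rho_3$ via \eqref{mun2} together with the path-length estimate coming from Corollary \ref{bohr-basis-cor}.

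The main obstacle is ensuring that every intermediate partial sum $m_k$ appearing in the monotone paths actually lies in the Bohr set $B(S_1, \rho_3/4)$ on which the cocycle identity of Lemma \ref{pcycle} is available, since outside this set $\tilde\mu$ need not even be defined. This is precisely why the proposition works on the substantially smaller Bohr radii $2\exp(-K^{C_1^2})\rho_3$ and $\exp(-K^{C_1^2})\rho_3$: the large exponential separation gives sufficient ``room'' to contain all intermediate sums even when the canonical path must oscillate across the coordinate directions, and likewise to absorb the polynomial losses from Corollary \ref{bohr-basis-cor} in both the domain and the size bound on $F$.
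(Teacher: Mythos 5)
Your cocycle-to-coboundary integration along monotone basis paths, and the homotopy/adjacent-swap argument for path-independence via Lemma \ref{pcycle}, match the second half of the paper's proof closely. But there is a genuine gap in your size bound for $F$, and it is precisely the point that forces the paper to introduce an extra averaging step that you have omitted.

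You assert that the coordinate bound $|n_i| \ll (2|S_1|)^{O(|S_1|)} N_i \|n\|_{S_1^\perp}$ together with $\|n\|_{S_1^\perp} \leq 2\exp(-K^{C_1^2})\rho_3$ forces the path length $L = \sum_i |n_i|$ to be tiny, and you estimate $|F(n)| \leq 24L/\rho_3$ by feeding \eqref{mun2} (which only gives $|\pi\tilde\mu| \ll 1/\rho_3$, uniformly) into the telescoping sum. This fails because the scales $N_i$ from Corollary \ref{bohr-basis-cor} are not bounded by any function of $K$ or $\rho_3$ alone: they only satisfy $\prod_i N_i^{-1} = (2d)^{O(d)}p$, and individual $N_i$ can be as large as $p$. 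Consequently $|n_i| \ll N_i \|n\|_{S_1^\perp}$ can be of size comparable to $p \rho_3$, and the path length $L$ can be polynomially large in $p$. No amount of shrinking the Bohr radius by factors like $\exp(-K^{C_1^2})$ can absorb this, because the loss comes from the arithmetic geometry of the Bohr set (it is a GAP with potentially very long sides), not from its radius. Your $|F(n)| \leq 24L/\rho_3$ is therefore weaker than \eqref{fsm} by an amount that can be a power of $p$.

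The paper handles this by performing a preliminary averaging trick before integrating: it replaces $\pi\tilde\mu$ with the modified cocycle $\sigma_1(n_1,n_2) = \pi\tilde\mu(n_1,n_2) + F_1(n_1+n_2) - F_1(n_1) - F_1(n_2)$, where $F_1(n) = \E\,\pi(\tilde\mu(n,\n_3))$ with $\n_3$ drawn regularly from $B(S_1,\rho_3/4)$. Averaging over $\n_3$ and using Lemma \ref{ati} upgrades the bound to $\sigma_1(n_1,n_2) \ll K^{O(C_1)}\|n_2\|_{S_1^\perp}/\rho_3^2$ --- i.e.\ the cocycle now has magnitude scaling with the \emph{size} of the second argument, not just $O(1/\rho_3)$. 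When you then integrate $\sigma_1$ along a path consisting of $|m_i|$ steps in direction $a_i$, each step contributes $\ll K^{O(C_1)}\|a_i\|_{S_1^\perp}/\rho_3^2 \ll K^{O(C_1)}/(N_i\rho_3^2)$, so the contribution from direction $i$ is $\ll |m_i|/N_i \cdot K^{O(C_1)}/\rho_3^2 \ll K^{O(C_1)}\|n\|_{S_1^\perp}/\rho_3^2 \ll K^{O(C_1)}/\rho_3$; the $N_i$ cancel out. This is exactly the cancellation your version misses, and it is why the final answer is $F = F_2 - F_1$ rather than the naive antiderivative of $\pi\tilde\mu$ alone.

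The rest of your plan — the closed-form/curl-free structure \eqref{fimj}, the choice of canonical paths, the reduction of path-independence to adjacent transpositions, and the telescoping verification of the coboundary identity — is sound and agrees with the paper.
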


\begin{proof}  As a first attempt at constructing $F$, we introduce the average
$$ F_1(n) \coloneqq \E \pi(\tilde \mu(n, \n_3))$$
for $n \in B(S_1,\rho_3/4)$, where $\n_3$ is drawn regularly from $B(S_1, \rho_3/4)$.  From \eqref{mun2} we have
$$ |F_1(n)| \leq \frac{24}{\rho_3}$$
for all $n \in B(S_1,\rho_3/4)$.  Also, since $|S_1| \ll K^{O(C_1)}$, if we replace $n_3$ by $\n_3$ in Lemma \ref{pcycle} and take expectations using Lemma \ref{ati}, we conclude that
$$ F_1(n_1) + F_1(n_2) = \pi(\tilde \mu(n_1,n_2)) + F_1(n_1+n_2) + O\left( \frac{K^{O(C_1)} \| n_2 \|_{S_1^\perp}}{\rho^2_3} \right)$$
for all $n_1,n_2 \in B(S_1,\rho_3/8)$.

If we now introduce the modified cocycle 
$$ \sigma_1(n_1,n_2) \coloneqq \pi(\tilde \mu(n_1,n_2)) + F_1(n_1+n_2) - F_1(n_1) - F_1(n_2) $$
for $n_1,n_2 \in B(S_1,\rho_3/8)$, then we have the cocycle equation
\begin{equation}\label{sig1-cocycle}
 \sigma_1(n_1,n_2+n_3) + \sigma_1(n_2,n_3) = \sigma_1(n_1,n_2) + \sigma_1(n_1+n_2,n_3), 
\end{equation}
the auxiliary equations
$$ \sigma_1(n_1,n_2) = \sigma_1(n_2,n_1); \quad \sigma_1(n_1,0) = 0$$
and the bound
\begin{equation}\label{sang}
 \sigma_1(n_1,n_2) \ll \frac{K^{O(C_1)} \| n_2 \|_{S_1^\perp}}{\rho^2_3} 
\end{equation}
for $n_1,n_2 \in B(S_1,\rho_3/16)$.

We now make $\sigma_1$ a coboundary by using a basis for $B(S_1,\rho_3/16)$.  Set $d \coloneqq |S_1| \leq K^{O(C_1)}$.  By Corollary \ref{bohr-basis-cor}, we can find $a_1,\dots,a_d$ of $\Z/p\Z$ and real numbers $N_1,\dots,N_d > 0$ such that
\begin{equation}\label{ain-1}
 \|a_i\|_{S_1^\perp} \leq N_i^{-1}
\end{equation}
for all $i=1,\dots,d$, and such that for any $a \in \Z/p\Z$, there exists a representation
\begin{equation}\label{nag-1}
 a = m_1 a_1 + \dots + m_d a_d
\end{equation}
with $m_1,\dots,m_d$ integers of size
\begin{equation}\label{nis-1}
 m_i \ll \exp( O(K^{O(C_1)}) ) N_i \| a\|_{S_1^\perp}
\end{equation}
for $i=1,\dots,d$, with at most one such representation obeying the bounds $|m_i| < N_i/2$ for $i=1,\dots,d$.

By relabeling we may assume that $N_i \geq 32 d' / \rho_3$ for $i=1,\dots,d'$ and $N_i <32 d' / \rho_3$ for $i=d'+1,\dots,d$ for some $0 \leq d' \leq d$.
By \eqref{ain-1} we have $a_i \in B(S_1, \rho_3 / 32d')$ for all $i=1,\dots,d'$.  In particular, from \eqref{sig1-cocycle} we see that for any $n \in B(S_1, \rho_3/32)$ and $1 \leq i,j \leq d'$, we have
$$ \sigma_1(n_1, a_i + a_j) + \sigma_1(a_i,a_j) = \sigma_1(n_1, a_i) + \sigma_1(n_1+a_i, a_j)$$
and hence by swapping $i$ and $j$ and subtracting
$$ \sigma_1(n_1 + a_j, a_i) - \sigma_1(n_1, a_i) = \sigma_1(n_1 + a_i, a_j) - \sigma_1( n_1, a_j ).$$
Let $P \subset \Z^{d'}$ denote the collection of tuples $(m_1,\dots,m_{d'}) \in \Z^{d'}$ with $|m_i| \leq \frac{\rho_3}{2N_i}$ for $i=1,\dots,d'$, and for each $m \in P$ and $i=1,\dots,d$, define the quantity
$$ f_i(m) \coloneqq \sigma_1( \phi(m), a_i)$$
where $\phi: \Z^{d'} \to \Z/p\Z$ is the homomorphism
$$ \phi(m_1,\dots,m_{d'}) \coloneqq \sum_{k=1}^{d'} m_k a_k.$$
Then from \eqref{ain-1} we have $\phi(P) \subset B(S_1, \rho_3/32)$.
The above identity then says that the ``$1$-form'' $(f_1,\dots,f_{d'})$ is ``closed'' or ``curl-free'' in the sense that
\begin{equation}\label{fimj}
 f_i(m + e_j) - f_i(m) = f_j(m + e_i) - f_j(m)
\end{equation}
whenever $i,j = 1,\dots,d'$ and $m, m+e_i, m+e_j \in P$, where $e_1,\dots,e_{d'}$ is the standard basis for $P$.  This implies that there exists a function $H: P \to V^\perp$ such that $F(0)=0$ and $f_i(m) = H(m+e_i) - H(m)$ whenever $i=1,\dots,d$ and $m,m+e_i \in P$.  Indeed, one can define $H$ to be an ``antiderivative'' of the $(f_1,\dots,f_{d'})$ by setting
$$ H(m) \coloneqq \sum_{l=0}^{L-1} f_{i_l}(m_l)$$
whenever $0=m_0,\dots,m_L = m$ is a path in $P$ with $m_{l+1} = m_l + e_{i_l}$ for $l=0,\dots,L-1$; a ``homotopy'' argument using \eqref{fimj} shows that the right-hand side does not depend on the choice of path.  From \eqref{sang}, \eqref{ain-1} we have
$$ f_i(m) \ll \frac{K^{O(C_1)}}{N_i \rho_3^2} $$
for $m \in P$ and $i=1,\dots,d'$, which on ``integrating'' (and recalling that $d' \leq d \ll K^{O(C_1)}$) implies that
$$ H(m) \ll \frac{K^{O(C_1)}}{\rho_3} $$
for all $m \in P$.

Since $\sigma_1( 0, e_i) = 0$, we have $f_i(0)=0$ and hence $H(e_i) = 0$ for all $i=1,\dots,d'$.  Thus we have
$$ \sigma_1( \phi(m), \phi(e_i) ) = H(m+e_i) - H(m) - H(e_i)$$
whenever $m, m+e_i \in P$.  An induction (on the magnitude of a vector $m'$) using \eqref{sig1-cocycle} then shows that
$$ \sigma_1( \phi(m), \phi(m') ) = H(m+m') - H(m) - H(m')$$
whenever $m, m', m+m' \in P$.  Now, if $n \in B( S_1, 2\exp(-K^{C_1^2}) \rho)$, then by \eqref{nag-1}, \eqref{nis-1} we see that $n = \phi(m)$ for some $m \in P$.  If we then define $F_2: B( S_1, 2\exp(-K^{C_1^2}) \rho) \to V^\perp$ by setting $F_2(n) \coloneqq H(m)$, we conclude that
$$ F_2(n) \ll \frac{K^{O(C_1)}}{\rho_3}$$
and
$$ \sigma_1( n, n' ) = F_2(n+n') - F_2(n) - F_2(n')$$
for all $n,n' \in B( S_1, \exp(-K^{C_1^2}) \rho)$. Setting $F \coloneqq F_2 - F_1$, we obtain the claim.
\end{proof}

Let $F$ be as in Proposition \ref{projlift}.  We use $F$ to construct the locally bilinear form 
$\Xi: B(S_1,\rho_4) \times B(S_1,\rho_4) \to \R/\Z$ as follows.  We first define the locally linear map $\iota: B(S_1, \rho_4) \to \R^{S}$ by the formula
$$ \iota(m) \coloneqq \left( \{ \frac{ms}{p} \} \right)_{s \in S},$$
where $x \mapsto \{x\}$ is the signed fractional map from $\R/\Z$ to $(-1/2,1/2]$; note that $\iota$ takes values in the box $[-\rho_4, \rho_4]^S$.  We then define
\begin{equation}\label{moody}
 \Xi( n, m ) \coloneqq \frac{\xi''(n) m}{p} - F(n) \cdot \iota( m ) 
\end{equation}
for $n,m \in B(S_1, \rho_4)$, where $\cdot$ denotes the dot product on $\R^S$.  It is clear that $\Xi$ is locally linear in $m$; we also claim that it is locally linear in $n$, thus
\begin{equation}\label{xil}
 \Xi(n_1+n_2,m) - \Xi(n_1,m) - \Xi(n_2,m) = 0
\end{equation}
whenever $n_1,n_2,n_1+n_2 \in B(S_1,\rho_4)$.  By \eqref{mun-def} and Proposition \ref{projlift}, the left-hand side of \eqref{xil} may be written as
$$ \frac{\mu(n_1,n_2) m}{p} - \pi(\tilde \mu(n_1,n_2)) \cdot \iota(m)\ \mathrm{mod}\ 1.$$
From \eqref{mun} we have
$$ \frac{\mu(n_1,n_2) m}{p} = \tilde \mu(n_1,n_2) \cdot \iota(m)\ \mathrm{mod}\ 1$$
so to prove \eqref{xil}, it suffices to show that $\iota(m)$ lies in $V^\perp$.  This is equivalent to showing that $\iota(m) \cdot v_i = 0$ for $i=1,\dots,d$.  Since $v_i \in \mathrm{ker}(\phi)$, we have
$$ \iota(m) \cdot v_i = 0 \ \mathrm{mod}\ 1.$$
On the other hand, we have $\iota(m) = O( K^{1/2} \rho_4 )$, and from \eqref{kco} with $t=N_i^{-1}$ followed by \eqref{ngrad}, we have 
$$|v_i| \leq N_i^{-1} < \frac{\exp(K^{C_1})}{\rho_3}$$
and hence $|\iota(m) \cdot v_i| < 1$.  The claim follows.

Now we verify \eqref{no7}.  Let $a_0, \xi_0$ be as in Proposition \ref{dfcor}.  Let $\n, \n_0,\h, \n_1$, $\m_1$ be drawn independently and regularly from the Bohr sets $B(S_1, \rho_3/4)$, $B(S,\rho_0)$, $B(S_1, \rho_4)$, $B(S_1, \rho_6)$, $B(S_1,\rho_5)$ respectively.  From Proposition \ref{dfcor} we have
\begin{align*}
 \sum_{n_0,n} \P(\n_0=n_0, \n = n) | \E f(n_0+\h+a_0-n) \overline{f}(n_0+\h) e_p( (\xi''( & n) - \xi_0) \h ) |^2 \\ & \gg \eta^{C_1+O(1)}.
\end{align*}
Using Lemma \ref{ati} to replace $\n$ by $\n+\n_1$, and to replace $\h$ by $\h+\m_1$, we have
\begin{align*}
 \sum_{n_0,n,n_1}&  \P(\n_0=n_0, \n=n, \n_1=n_1)  \bigg| \E f(n_0+\h+\m_1+a_0-n-n_1) \times \\  & \times \overline{f}(n_0+\h+\m_1)  e_p( (\xi''(n+n_1) - \xi_0) (\h+\m_1))\bigg|^2 \gg \eta^{C_1+O(1)}
\end{align*}
and thus by the triangle inequality we have
\begin{align*}
\sum_{n_0,n,n_1,h}& \P(\n_0=n_0, \n=n, \n_1=n_1, \h=h)  \bigg| \E f(n_0+h+\m_1+a_0-n-n_1) \times \\ & \times \overline{f}(n_0+h+\m_1) e_p( (\xi''(n+n_1) - \xi_0) (h+\m_1) ) \bigg|^2 \gg \eta^{C_1+O(1)}.
\end{align*}
The phase $e( (\xi''(n+n_1)-\xi_0) h )$ is deterministic and may thus be omitted:
\begin{align*} \sum_{n_0,n,n_1,h} & \P(\n_0=n_0, \n=n, \n_1=n_1, \h=h)  \bigg| \E f(n_0+h+\m_1+a_0-n-n_1) \times \\ & \times \overline{f}(n_0+h+\m_1) e_p( (\xi''(n+n_1) - \xi_0) \m_1) \bigg|^2 \gg \eta^{C_1+O(1)}.
\end{align*}
As the expectation only depends on the sum $n_0+h$ rather than the individual variables $n_0,h$, we thus have
\begin{align*} \sum_{n_0,n,n_1} & \P(\n_0+\h=n_0, \n=n, \n_1=n_1) \bigg| \E f(n_0+\m_1+a_0-n-n_1) \times \\ & \times \overline{f}(n_0+\m_1) e_p( (\xi''(n+n_1) - \xi_0) \m_1) \bigg|^2 \gg \eta^{C_1+O(1)}.
\end{align*}
By Lemma \ref{ati} we may replace $\n_0+\h$ here by $\n_0$.
From \eqref{linr} we have
$$ \| \xi''(n+n_1) - \xi''(n) - \xi''(n_1)) \m_1 \|_{\R/\Z} \ll \eta^{100C_1}$$
and so
\begin{align*}
\sum_{n_0,n,n_1} & \P(\n_0=n_0, \n=n, \n_1=n_1)  \bigg| \E f(n_0+a_0+\m_1-n-n_1) \times \\ & \times \overline{f}(n_0+\m_1) e_p( (\xi''(n)+ \xi''(n_1) - \xi_0) \m_1 ) 
\bigg|^2 \gg \eta^{C_1+O(1)}.
\end{align*}
By the pigeonhole principle, there thus exists $n \in B(S_*,\rho_3/4)$ such that
\begin{align*}
 \sum_{n_0,n_1} \P(\n_0=n_0 \n_1= & n_1)   | \E  f(n_0+a_0+\m_1-n-n_1) \overline{f}(n_0+\m_1) \times  \\ & \times e_p( (\xi''(n)+ \xi''(n_1) - \xi_0) \m_1 ) |^2 \gg \eta^{C_1+O(1)},
\end{align*}
which, if we write $a_1 \coloneqq  a_0-n$ and $\xi_1 \coloneqq  \xi_0 - \xi''(n)$, simplifies to
\begin{align*}
 \sum_{n_0,n_1} \P(\n_0=n_0 \n_1=n_1)    | \E f(& n_0+\m_1+a_1- n_1)  \overline{f}(n_0+\m_1) \times \\ & \times e_p( (\xi''(n_1) - \xi_1) \m_1 ) |^2 \gg \eta^{C_1+O(1)}.
\end{align*}
Since $a_0 \in B(S,3\rho_2)$ and $n \in B(S_*,\rho_3/4)$, we have $a_1 \in B(S,4\rho_2)$.

Now, from \eqref{moody} one has
$$ e_p( \xi''(n_1) \m_1 ) = e( \Xi(n_1,\m_1) ) e( - F(n_1) \cdot \iota( \m_1 ) );$$
but since $\m_1 \in B(S_*,\rho_5)$, we have $\iota(\m_1) = O( K \rho_5)$, and hence by \eqref{fsm} we have 
$$ \| F(n_1) \cdot \iota( \m_1 )  \|_{\R/\Z} \ll \eta^{100C_1},$$
and so
\begin{align*}
 \sum_{n_0,n_1} \P(\n_0=n_0; \n_1=n_1)     \bigg| \E&  f(n_0+\m_1+a_1-n_1) \overline{f}(n_0+\m_1) \times \\ & \times e( \Xi(n_1,\m_1) - \xi_1 \m_1 ) \bigg|^2 \gg \eta^{C_1+O(1)},
\end{align*}
which gives \eqref{no7}.   The proof of Theorem \ref{seventh-step} is now complete.

\subsection{Eighth step: making the frequency function symmetric}

The next step is the ``symmetry step'' from \cite{gt-inverseu3,sam}, which uses the Cauchy-Schwarz inequality to ensure that $\Xi$ is essentially symmetric.

\begin{theorem}\label{eighth-step} Let the notation and hypotheses be as in Theorem \ref{seventh-step}.
For $n,m \in B(S_1,\rho_4)$, define
$$ \{n,m\} \coloneqq  \Xi(n,m) -\Xi(m,n).$$
Then there exists a natural number $k$ with $1 \leq k \ll \exp(K^{O(C_1)})$ such that
$$ \| k\{n,m\} \|_{\R/\Z} \leq \frac{ \| n \|_{S_1^\perp}}{\rho_8} \frac{ \| m \|_{S_1}}{\rho_8}$$
for all $n,m \in B(S_1, \rho_9)$.
\end{theorem}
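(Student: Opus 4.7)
The plan is to implement the symmetry argument from the inverse $U^3$ theorem (see e.g.\ Gowers \cite{gowers-4-aps}, Green--Tao \cite{gt-inverseu3}, Samorodnitsky \cite{sam}), adapted to the present framework of random variables drawn regularly from Bohr sets. The proof splits into two stages. In the first, Cauchy--Schwarz (Lemma \ref{cauchy-schwarz}) is applied repeatedly to \eqref{no7} to eliminate the $f$-factors entirely, leaving a pure exponential sum estimate in the antisymmetrisation $\{h,k\}$. In the second, Proposition \ref{large-quadratic} is invoked to convert this exponential sum bound into the desired arithmetic constraint on $k\{n,m\}$.

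For the first stage, expand the square in \eqref{no7} by introducing an independent copy $\m_1'$ of $\m_1$, to obtain a lower bound
\[
\bigl|\E f(\n_0+\m_1+a_1-\n_1)\overline{f}(\n_0+\m_1)\overline{f(\n_0+\m_1'+a_1-\n_1)}f(\n_0+\m_1')\,e(\Xi(\n_1,\m_1-\m_1'))\bigr| \gg \eta^{C_1+O(1)}
\]
after summing weighted by $\P(\n_0=n_0,\n_1=n_1)$. A change of variables shifting $\n_0$ by $\m_1-\m_1'$ --- legitimate up to negligible total-variation loss by Lemma \ref{ati} and the separation \eqref{rho-sep} --- rearranges the four $f$-arguments into a form that, upon another Cauchy--Schwarz against an independent copy $\n_1'$ of $\n_1$, eliminates the remaining $f$-factors. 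The key phenomenon is that when the phases are tracked through this second Cauchy--Schwarz, one ordering produces $\Xi(\n_1-\n_1',\m_1-\m_1')$ while the other produces $\Xi(\m_1-\m_1',\n_1-\n_1')$, so their combined appearance yields exactly the antisymmetric bilinear form $\{\h, \k\}$ with $\h := \n_1-\n_1'$ and $\k := \m_1-\m_1'$. What survives is an estimate
\[
\bigl|\E e(\{\h, \k\} + \psi(\h) + \chi(\k))\bigr| \gg \eta^{C_1+O(1)},
\]
where $\h, \k$ are drawn independently and regularly from Bohr sets at scale $\rho_7$, and $\psi, \chi$ are locally linear phases arising as cross-terms of the Cauchy--Schwarz manipulation.

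The main obstacle will be the delicate bookkeeping in the Cauchy--Schwarz phase: the translations at each step must be absorbed into the $\n_0$-average with only $o(\eta^{C_1+O(1)})$ total-variation loss, which forces repeated use of Lemma \ref{ati} and relies essentially on the wide hierarchical separation \eqref{rho-sep} between the scales $\rho_0,\rho_5,\rho_6,\rho_7$. Once the exponential sum estimate is established, apply Proposition \ref{large-quadratic} to the locally bilinear form $(h,k) \mapsto \{h,k\}$ at scale $\rho_7$ with $\delta \asymp \eta^{O(1)}$; using the bound $|S_1| \leq K^{O(C_1)}$ from \eqref{dim-add}, this produces a natural number $k$ with $1 \leq k \ll \exp(K^{O(C_1)})$ satisfying
\[
\|k\{n,m\}\|_{\R/\Z} \ll \exp(K^{O(C_1)}) \frac{\|n\|_{S_1}\|m\|_{S_1}}{\rho^2}
\]
for $n,m$ in a sub-Bohr set at scale $\rho_8$. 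Finally, to convert the word-norm factor $\|n\|_{S_1}$ into the dual-norm factor $\|n\|_{S_1^\perp}$ demanded by the statement, apply the duality Lemma \ref{edual}(i) to the locally linear map $n \mapsto k\{n,m\}$ (for each fixed $m$), at the cost of a further factor of $|S_1|^{O(1)}$; this loss, together with the overall factor $\exp(K^{O(C_1)})$ from Proposition \ref{large-quadratic}, is then absorbed into the scale ratio $\rho_8/\rho_9$ by one more application of \eqref{rho-sep}.
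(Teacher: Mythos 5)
The heart of Theorem \ref{eighth-step} is the symmetry step, and your account of it has a genuine gap: the sequence of manipulations you describe does not produce the \emph{antisymmetrisation} $\{n,m\}=\Xi(n,m)-\Xi(m,n)$ in the phase. If you expand the square in \eqref{no7} with an independent copy $\m'_1$, shift $\n_0$, and apply one more Cauchy--Schwarz against a copy $\n'_1$, what appears in the phase is $\Xi(\n_1-\n'_1,\m_1-\m'_1)$ --- the full bilinear form, not its antisymmetric part. Nothing in your chain generates the \emph{transposed} form $\Xi(\m_1-\m'_1,\n_1-\n'_1)$, let alone with a minus sign; the assertion that ``one ordering produces $\Xi(\n_1-\n'_1,\m_1-\m'_1)$ while the other produces $\Xi(\m_1-\m'_1,\n_1-\n'_1)$'' is not justified and does not come out of an expand-and-Cauchy--Schwarz argument. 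Were it to succeed, your argument would yield $|\E e(\Xi(\h,\k))|\gg\eta^{O(C_1)}$, a much stronger (and generically false) statement saying the \emph{entire} bilinear form, not just its antisymmetric part, is essentially constant on Bohr sets. Also, after squaring and one Cauchy--Schwarz you still have four $f$-factors whose arguments form a box in variables the phase does not see, so they cannot be eliminated the way you claim.

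The mechanism that actually creates $\{\cdot,\cdot\}$ is a change of variables, not an extra Cauchy--Schwarz. After pigeonholing on $n_0$, absorbing everything but the phase into bounded factors $b_1(\n_1)b_2(\m_1-\n_1)b_3(\m_1)$, and one Cauchy--Schwarz in the $\m_1$ direction (introducing $\n'_1$), one substitutes $\k:=\n_1+\n'_1-\m_1$. The point is that
\[
\Xi(\n_1-\n'_1,\m_1)=\Xi(\n_1-\n'_1,\,\n_1+\n'_1-\k)
\]
splits by bilinearity in \emph{both} slots into a function of $(\n_1,\k)$, a function of $(\n'_1,\k)$, and the cross-term $\Xi(\n_1,\n'_1)-\Xi(\n'_1,\n_1)=\{\n_1,\n'_1\}$; that cross-term is where the antisymmetric form is born. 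One then replaces $\k$ by $\m_1$ (Lemma \ref{ati}), pigeonholes on $m_1$, and applies two further Cauchy--Schwarz steps --- three in total, not one --- to eliminate the remaining bounded factors and arrive at $|\E e(\{\n_1-\l_1,\n'_1-\l'_1\})|\gg\eta^{8C_1+O(1)}$, after which Proposition \ref{large-quadratic} is invoked as you say. Your quoted lower bound $\eta^{C_1+O(1)}$ is therefore also off: each Cauchy--Schwarz squares the saving, so three applications give $\eta^{8C_1+O(1)}$. The closing conversion between word norm and dual norm via Lemma \ref{edual}(i) is also not a clean step as stated, since that lemma applies to phases of the form $n\lambda/p$ rather than to a locally bilinear form on a Bohr set, but this is a side issue next to the missing change of variables.
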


\begin{proof}
Let $\n_0,\m_1,\n_1$ be as in Theorem \ref{seventh-step}.
From \eqref{no7} and the pigeonhole principle, we may find $n_0 \in \Z/p\Z$ such that
\begin{align*} \sum_{n_1} \P(\n_1=n_1)  \bigg| \E f(n_0+\m_1+a_1-&n_1) \overline{f}(n_0+\m_1)\times \\ & \times  e( \Xi(n_1,\m_1) - \xi_1 \m_1 ) \bigg|^2 \gg \eta^{C_1+O(1)}\end{align*}
which by the boundedness of the expectation implies
\begin{align*} \sum_{n_1} \P(\n_1=n_1) \bigg| \E f(n_0+\m_1+a_1-n_1) & \overline{f}(n_0+\m_1) \times \\ & \times e( \Xi(n_1,\m_1) - \xi_1 \m_1 ) \bigg| \gg \eta^{C_1+O(1)}\end{align*}
and thus we may find a $1$-bounded function $b_1: \Z/p\Z \to \C$ such that
$$ |\E b_1(\n_1) f(n_0+\m_1+a_1-\n_1) \overline{f}(n_0+\m_1) e( \Xi(\n_1,\m_1) - \xi_1 \m_1 )| \gg \eta^{C_1+O(1)}.$$
Writing $b_2(n) \coloneqq  f(n_0+a_1+n)$ and $b_3(n) \coloneqq  \overline{f}(n_0+\m_1) e(-\xi_1 \m_1)$, we may simplify this as
$$ |\E b_1(\n_1) b_2( \m_1-\n_1) b_3(\m_1) e( \Xi(\n_1,\m_1) )| \gg \eta^{C_1+O(1)}.$$
Using the Cauchy-Schwarz inequality (Lemma \ref{cauchy-schwarz}) to eliminate the $b_3(\m_1)$ factor, we conclude that
$$ |\E b_1(\n_1) \overline{b_1}(\n'_1) b_2( \m_1-\n_1) \overline{b_2}( \m_1-\n'_1 ) e( \Xi(\n_1,\m_1) - \Xi(\n'_1,\m_1)| \gg \eta^{2C_1+O(1)}$$
where $\n'_1$ is an independent copy of $\n_1$.  Writing $\k \coloneqq  \n_1 + \n'_1 - \m_1$, and noting from the local bilinearity of $\Xi$ that
\begin{align*}
\Xi(\n_1,\m_1) - \Xi(\n'_1,\m_1) &= \Xi(\n_1-\n'_1,\m_1) \\
&= \Xi(\n_1-\n'_1, \n_1+\n'_1-\k) \\
&= \Xi(\n_1,\n_1) - \Xi(\n'_1,\n'_1) + \{ \n_1, \n'_1 \} \\ & \qquad \qquad- \Xi(\n_1,\k) + \Xi(\n'_1,\k)
\end{align*}
we conclude that
$$ |\E b_3( \n_1, \k ) b_4( \n'_1, \k) e( \{ \n_1,\n'_1\} )| \gg \eta^{2C_1+O(1)},$$
where $b_3, b_4: \Z/p\Z \times \Z/p\Z \to \C$ are the $1$-bounded functions
$$ b_3(n_1,k) \coloneqq  b_1(n_1) \overline{b_2}( k - n_1 ) e( \Xi(n_1,n_1) - \Xi(n_1,k) )$$
and
$$ b_4(n'_1,k) \coloneqq  \overline{b_1}(n'_1) b_2( k - n'_1 ) e( -\Xi(n'_1,n'_1) + \Xi(n'_1,k) ).$$
For fixed $\n_1, \n'_1$, we see from Lemma \ref{ati} that $\k$ differs from $\m_1$ in total variation by $O( \eta^{100C_1} )$, and hence
$$ |\E b_3( \n_1, \m_1 ) b_4( \n'_1, \m_1) e( \{ \n_1,\n'_1 \} )| \gg \eta^{2C_1+O(1)}.$$
By the pigeonhole principle, we may thus find $m_1 \in \Z/p\Z$ such that
$$ |\E b_3( \n_1, m_1 ) b_4( \n'_1, m_1) e( \{\n_1,\n'_1\} )| \gg \eta^{2C_1+O(1)}.$$
Using Cauchy-Schwarz (Lemma \ref{cauchy-schwarz}) to eliminate $b_4(\n'_1,m_1)$, and using the local bilinearity of $\{,\}$, we conclude that
$$ |\E b_3( \n_1, m_1 ) \overline{b_3}( \l_1, m_1 ) e( \{\n_1-\l_1,\n'_1\})| \gg \eta^{4C_1+O(1)}$$
where $\l_1$ is an independent copy of $\n_1$; using a further application of Cauchy-Schwarz (Lemma \ref{cauchy-schwarz}) to eliminate $b_3( \n_1, m_1 ) \overline{b_3}( \l_1, m_1 )$, we conclude that
$$ |\E e( \{\n_1-\l_1,\n'_1-\l'_1\})| \gg \eta^{8C_1+O(1)}$$
where $\l'_1$ is an independent copy of $\n'_1$ (thus $\n_1, \n'_1, \l_1, \l'_1$ are jointly independent and drawn regularly from $B(S_1,\rho_6)$).  In particular, by the pigeonhole principle one can find $l_1, l'_1 \in B(S_1,\rho_6)$ such that
$$ |\E e( \{\n_1-l_1,\n'_1-l'_1\})| \gg \eta^{8C_1+O(1)}.$$
By local bilinearity, one can rewrite $\{\n_1-l_1,\n'_1-l'_1\}$ as $\{\n_1,\n'_1\}$ plus locally linear functions of $\n_1$ and $\n'_1$.
The claim now follows from Proposition \ref{large-quadratic}.
\end{proof}

\subsection{Ninth step: integrating the frequency function}

We may now finally prove Theorem \ref{locu3}.  Let the notation and hypotheses be as in that theorem, let $S_1$ and $\Xi$ be as in Theorem \ref{seventh-step}, and let $k$ be as in Theorem \ref{eighth-step}.  Thus if we let $\n_0, \n_1, \m_1$ be drawn independently and regularly from $B(S,\rho_0)$, $B(S_1, \rho_6)$, $B(S_1,\rho_5)$ respectively, we have
\begin{align}\nonumber
 \sum_{n_0,n_1} \P(\n_0=n_0,\n_1=n_1) \big| \E f(n_0+&m_1+a_1-n_1)  \overline{f}(n_0+\m_1) \times \\ & \times e( \Xi(n_1,\m_1) - \xi_1 \m_1 )\big|^2 \gg \eta^{C_1+O(1)}.\label{no-again}
\end{align}
Now let $\n_2, \m_2$ be drawn independently and regularly from the Bohr sets $B(S_1,\rho_9), B(S_1,\rho_{10})$ respectively, independently of all previous random variables.  By Lemma \ref{ati}, we may replace $\n_1, \m_1$ by $\n_1 + 2k\n_2$ and $\m_1 + 2k\m_2$ in \eqref{no-again}, leading to
\begin{align*}
 &\sum_{n_0,n_1,n_2} \P(\n_0 = n_0,\dots, \n_2 = n_2) \big| \E f(n_0+\m_1+2k\m_2+a_1-n_1-2kn_2)\times \\ & \times \overline{f}(n_0+\m_1+2k\m_2) e( \Xi(n_1+2kn_2,\m_1+2k\m_2) - \xi_1 (\m_1+2k\m_2) )\big|^2 \\ & \qquad\qquad\qquad\qquad\qquad\qquad\qquad\qquad\qquad\qquad\qquad\qquad \gg \eta^{C_1+O(1)}.
\end{align*}
Thus we may find $n_1 \in B(S_1,\rho_6)$, $m_1 \in B(S_1,\rho_5)$ such that
\begin{align*}
& \sum_{n_0,n_2} \P(\n_0=n_0,\n_2=n_2) \big| \E f(n_0+m_1+2k\m_2+a_1-n_1-2kn_2) \times \\ & \overline{f}(n_0+m_1+2k\m_2) e( \Xi(n_1+2kn_2,m_1+2k\m_2) - \xi_1 (m_1+2k\m_2) ) \big|^2 \\ & \qquad\qquad\qquad\qquad\qquad\qquad\qquad\qquad\qquad\qquad\qquad\qquad \gg \eta^{C_1+O(1)},
\end{align*}
which we can simplify slightly as
\begin{align*}
& \sum_{n_0,n_2} \P(\n_0=n_0, \n_2=n_2) \big| \E f(n_0+2k\m_2+a_2-2kn_2) \times \\  & \times \overline{f}(n_0+m_1+2k\m_2) e( \Xi(n_1+2kn_2,m_1+2k\m_2) - 2k \xi_1 \m_2 ) \big|^2 \\ & \qquad\qquad\qquad\qquad\qquad\qquad\qquad\qquad\qquad\qquad\qquad\qquad\gg \eta^{C_1+O(1)}
\end{align*}
where $a_2 \coloneqq  a_1 + m_1 - n_1$; since $a_1 \in B(S,4\rho_2)$, $m_1 \in B(S_1, \rho_5)$, $n_1 \in B(S_1,\rho_6)$, we have $a_2 \in B(S,5\rho_2)$.  By the local bilinearity of $\Xi$, we have
\begin{align*}
\Xi(& n_1+2kn_2, m_1+2k\m_2)\\  &= \Xi(n_1,m_1) + 2k \Xi(n_2,m_1) + 2k \Xi(n_1,\m_2) + 4k^2 \Xi(n_2,\m_2) \\
&= \Xi(n_1,m_1) + 2k \Xi(n_2,m_1) + 2k \Xi(n_1,\m_2) + 2k^2 \Xi(n_2+\m_2,n_2+\m_2) \\
&\quad - 2k^2 \Xi(n_2,n_2) - 2k^2 \Xi(\m_2,\m_2) + 2k^2 \{ n_2,\m_2 \} 
\end{align*}
and so we have
\begin{align*}
\sum_{n_0,n_2} \P(\n_0=n_0, \n_2=n_2) | \E F( n_0, n_2 - \m_2 ) G( n_0, \m_2 ) & e( 2k^2 \{ n_2, \m_2 \} ) |^2 \\ &  \gg \eta^{C_1+O(1)}\end{align*}
where
\begin{equation}\label{fnm}
 F(n, m) \coloneqq  f(n + a_2 - 2km) e( - k^2 \Xi( m, m ) )
\end{equation}
and
$$ G(n, m) \coloneqq  \overline{f}(n + m_1 + 2km) e( 2k \Xi(n_1, m) - 2k^2 \Xi(m,m) - 2k \xi_1 m ).$$
By Theorem \ref{eighth-step}, one has $\| k \{ \n_2, \m_2 \} \|_{\R/\Z} \ll \eta^{100 C_1}$, and thus
$$
\sum_{n_0,n_2} \P(\n_0=n_0, \n_2=n_2) | \E F( n_0, n_2 - \m_2 ) G( n_0, \m_2 ) |^2 \gg \eta^{C_1+O(1)}.$$
By boundedness of the expectation, this implies that
$$
\sum_{n_0,n_2} \P(\n_0=n_0, \n_2=n_2) | \E( F( n_0, n_2 - \m_2 ) G( n_0, \m_2 ) | \gg \eta^{C_1+O(1)}$$
and thus
$$
|\E F( \n_0, \n_2 - \m_2 ) G( \n_0, \m_2 ) H( \n_0, \n_2 )| \gg \eta^{C_1+O(1)}$$
for some $1$-bounded function $H: \Z/p\Z \times \Z/p\Z \to \C$.  By Cauchy-Schwarz (Lemma \ref{cauchy-schwarz}), we thus have
$$
|\E F( \n_0, \n_2 - \m_2 ) G( \n_0, \m_2 ) \overline{F}( \n_0, \n_2 - \m'_2 ) \overline{G}( \n_0, \m_2 )| \gg \eta^{2C_1+O(1)}$$
where $\m'_2$ is an independent copy of $\m_2$; by a second application of Cauchy-Schwarz (Lemma \ref{cauchy-schwarz}), we then have
$$
|\E F( \n_0, \n_2 - \m_2 ) \overline{F}( \n_0, \n_2 - \m'_2 ) \overline{F}( \n_0, \n'_2 - \m_2 ) F( \n_0, \n'_2 - \m'_2 ) | \gg \eta^{4C_1+O(1)}$$
where $\n'_2$ is an independent copy of $\n_2$.  Since the distributions of $\m_2, \m'_2$ are symmetric, we thus have
$$
|\E F( \n_0, \n_2 + \m_2 ) \overline{F}( \n_0, \n_2 + \m'_2 ) \overline{F}( \n_0, \n'_2 + \m_2 ) F( \n_0, \n'_2 + \m'_2 ) | \gg \eta^{4C_1+O(1)}.$$
In particular, with probability $\gg \eta^{4C_1+O(1)}$, the random variable $\n_0$ attains a value $n_0$ for which
\begin{equation}\label{event}
|\E F( n_0, \n_2 + \m_2 ) \overline{F}( n_0, \n_2 + \m'_2 ) \overline{F}( n_0, \n'_2 + \m_2 ) F( n_0, \n'_2 + \m'_2 ) | \gg \eta^{4C_1+O(1)}.
\end{equation}
If $n_0$ is such that \eqref{event} holds, then we may apply Theorem \ref{locu2} and conclude that there exists a frequency $\beta(n_0) \in \Z/p\Z$ such that
$$ |\sum_{n_2} \P(\n_2=n_2) \E( F( n_0, n_2 + \m_2 ) e( - \beta(n_0) \m_2 )| \gg \eta^{2C_1+O(1)}$$
and thus (defining $\beta(n_0)$ arbitrarily if \eqref{event} does not hold),
$$ \sum_{n_0,n_2} \P(\n_0=n_0, \n_2=n_2) |\E F( n_0, n_2 + \m_2 ) e( - \beta(n_0) \m_2 )| \gg \eta^{6C_1+O(1)}$$
and hence there exists $n_2 \in B(S_1,\rho_9)$ with
$$ \sum_{n_0} \P(\n_0=n_0) |\E( F( n_0, n_2 + \m_2 ) e( - \beta(n_0) \m_2 )| \gg \eta^{6C_1+O(1)}.$$
Applying \eqref{fnm}, we conclude that
\begin{align*} \sum_{n_0} \P(\n_0=n_0) \big|\E( f(n_0 + a_3 - 2k\m_2 ) e( -k^2 \Xi(\m_2,\m_2) - & \beta(n_0) \m_2 )\big| \\ &  \gg \eta^{6C_1+O(1)}\end{align*}
where $a_3 \coloneqq  a_2 - 2kn_2$; since $a_2 \in B(S,5\rho_2)$, $n_2 \in B(S_1,\rho_9)$, and $k = O( \exp(K^{O(C_1)}) )$, we have $a_3 \in B(S,6\rho_2)$.  In particular, by Lemma \ref{ati}, $\n_0$ and $\n_0+a_3$ differ in total variation by $O( \eta^{100C_1+O(1)} )$, and thus
$$ \sum_{n_0} \P(\n_0=n_0) |\E f(n_0 - 2k\m_2 ) e( -k^2 \Xi(\m_2,\m_2) - \beta(n_0) \m_2 )| \gg \eta^{6C_1+O(1)}.$$
Theorem \ref{locu3} then follows after a change of variables, noting that the map $\m_2 \mapsto \Xi(\m_2,\m_2)$ is locally quadratic on $B(S_1,\rho_9)$.

\end{document}